\numberwithin{equation}{section}
\theoremstyle{plain}
\newtheorem{theorem}{Theorem}[section]
\newtheorem{lemma}[theorem]{Lemma}
\newtheorem{proposition}[theorem]{Proposition}
\theoremstyle{definition}
\newtheorem{conjecture}{Conjecture}
\theoremstyle{remark}
\newtheorem{remark}[theorem]{Remark}
\renewcommand{\Re}{\operatorname{Re}}
\renewcommand{\Im}{\operatorname{Im}}
\newcommand{\vol}{\operatorname{vol}}
\newcommand{\reg}{\operatorname{reg}}
\newcommand{\sign}{\operatorname{sign}}
\newcommand{\supp}{\operatorname{supp}}
\newcommand{\sym}{\operatorname{sym}}
\newcommand{\GL}{\operatorname{GL}}
\newcommand{\SL}{\operatorname{SL}}
\newcommand{\PSL}{\operatorname{PSL}}
\newcommand{\X}{\mathbb{X}}
\newcommand{\W}{\mathcal{W}}
\newcommand{\E}{\Xi}
\newcommand{\Kl}{\operatorname{Kl}}
\newcommand{\even}{\operatorname{even}}
\newcommand{\rel}{\operatorname{rel}}
\renewcommand{\mod}{\operatorname{mod}}
\newcommand{\dd}{\mathrm{d}}
\newcommand{\Res}{\mathop{\operatorname{Res}}}
\def\@tocline#1#2#3#4#5#6#7{\relax
  \ifnum #1>\c@tocdepth 
  \else
    \par \addpenalty\@secpenalty\addvspace{#2}%
    \begingroup \hyphenpenalty\@M
    \@ifempty{#4}{%
      \@tempdima\csname r@tocindent\number#1\endcsname\relax
    }{%
      \@tempdima#4\relax
    }%
    \parindent\z@ \leftskip#3\relax \advance\leftskip\@tempdima\relax
    \rightskip\@pnumwidth plus4em \parfillskip-\@pnumwidth
    #5\leavevmode\hskip-\@tempdima
      \ifcase #1
       \or\or \hskip 1em \or \hskip 2em \else \hskip 3em \fi%
      #6\nobreak\relax
    \hfill\hbox to\@pnumwidth{\@tocpagenum{#7}}\par
    \nobreak
    \endgroup
  \fi}
\begin{document}

\title[Variance for cubic moment]
{Quantum variance for cubic moment of Hecke--Maass cusp forms and Eisenstein series}

\author{Bingrong Huang}
\address{Data Science Institute and School of Mathematics \\ Shandong University \\ Jinan \\ Shandong 250100 \\China}
\email{brhuang@sdu.edu.cn}

\author{Liangxun Li}
\address{Data Science Institute and School of Mathematics \\ Shandong University \\ Jinan \\ Shandong 250100 \\China}
\email{lxli@mail.sdu.edu.cn}

\date{\today}
\keywords{Quantum variance, cubic moment, moments of $L$-functions, 
Hecke--Maass cusp form, Eisenstein series, random wave conjecture.}

\subjclass[2020]{11F12, 11F67, 58J51, 81Q50}


\begin{abstract}
In this paper, we give the upper bounds on the variance for cubic moment of Hecke--Maass cusp forms and Eisenstein series respectively. 
For the cusp form case, the bound comes from a large sieve inequality for symmetric cubes. We also give some nontrivial bounds for higher moments of symmetric cube $L$-functions.
For the Eisenstein series case, the upper bound comes from Lindel\"of-on-average type bounds for various $L$-functions. In particular, we establish the sharp upper bounds for the fourth moment of $\GL(2)\times \GL(2)$ $L$-functions and  the eighth moment of $\GL(2)$ $L$-functions around special points $1/2+it_j$.
Our proof is based on the work of Chandee and Li \cite{C-L20} about bounding the second moment of $\GL(4)\times \GL(2)$ $L$-functions.
\end{abstract}

\maketitle

\tableofcontents


\section{Introduction}
Understanding the mass distribution of automorphic forms is a central problem in the theory of quantum chaos. A common approach involves studying the Laplacian eigenfunctions on the modular surface $\X=\SL_2(\mathbb{Z})\backslash \mathbb{H}$, where $\mathbb{H}=\{z=x+iy\in \mathbb{C}: y>0\}.$
This is a finite-area hyperbolic surface equipped with the hyperbolic measure $\mu(z):=\frac{\dd x\dd y}{y^2}$ and the inner product $\langle f, g\rangle:=\int_{\X}f(z)\overline{g(z)}\dd \mu(z)$ for $L^2(\X)$.
The spectrum of the Laplacian operator $\Delta_{\mathbb{H}}:=-y^2(\frac{\partial^2}{\partial x^2}+\frac{\partial^2}{\partial y^2})$ on $\X$ decomposes into three part: the constants, the space of cusp forms,  and the space of Eisenstein series. 
Within the cusp forms, there is an orthonormal basis $\{\phi_j\}_{j\geq 1}$ of Hecke--Maass forms which are real valued joint of eigenfunctions of both the Laplacian operator and all Hecke operators.
The Eisenstein series $E_t(z):=E(z, 1/2+it)$ (for $t\in \mathbb{R}$) constitute the continuous spectrum of $\Delta_{\mathbb{H}}$. 
It is believed that these non-constant eigenfunctions on $\X$ are modeled by random waves and have a Gaussian value distribution as the eigenvalue tends to infinity. 
This motivates the study of Gaussian moments conjectures for both cusp forms and the Eisenstein series.
\begin{conjecture}\label{conj:Gauss_cuspform}
  Fix a smooth compactly supported function  $\psi$ on $\X$. (i.e. $\psi\in \mathcal{C}_c^{\infty}(\X).$) 
  Let $\{\phi_j\}_{j\geq 1}$ be an orthonormal basis of Hecke--Maass forms on $\X$. 
  Each $\phi_j$ has the spectral parameter $t_j$. Then for $n\in \mathbb{Z}_{\geq 1}$, we have
  \begin{equation}\label{eqn:Gauss_cusp}
  \int_{\X}\psi(z)\phi_j(z)^n\dd \mu(z)=\frac{c_n}{\vol(\X)^{\frac{n}{2}}}\int_{\X}\psi(z)\dd \mu(z)+o(1), \text{ as } t_j\rightarrow \infty
  \end{equation}
  where $\vol(\X)=\frac{\pi}{3}$, $c_n$ is the $n$-th moment of the normal distribution $\mathcal{N}(0,1)$, specifically, 
  \[
  c_n=\frac{1}{\sqrt{2\pi}}\int_{-\infty}^{\infty}x^ne^{-\frac{x^2}{2}}\dd x=
  \begin{cases}
        (n-1)!!, & \mbox{if $n$ is even}, \\
        0, & \mbox{if $n$ is odd}.
      \end{cases}
  \]
\end{conjecture}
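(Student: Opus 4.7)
The plan is to reduce \eqref{eqn:Gauss_cusp} to two quantitative statements via spectral decomposition of the test function, then to attack each through iterated triple-product period formulas.

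First, write $\psi = \frac{\langle\psi,1\rangle}{\vol(\X)} + \psi^{\perp}$ with $\psi^{\perp}$ mean zero. Then
\[
\int_{\X}\psi\,\phi_j^n\,\dd\mu = \frac{\int_{\X}\psi\,\dd\mu}{\vol(\X)}\int_{\X}\phi_j^n\,\dd\mu + \int_{\X}\psi^{\perp}\,\phi_j^n\,\dd\mu,
\]
so the conjecture splits into (A) the pure-moment asymptotic $\int_{\X}\phi_j^n\,\dd\mu = c_n\,\vol(\X)^{1-n/2} + o(1)$, and (B) the mean-zero equidistribution $\int_{\X}\psi^{\perp}\phi_j^n\,\dd\mu = o(1)$ for every fixed mean-zero $\psi^{\perp}\in C_c^{\infty}(\X)$.

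For (B), expand $\psi^{\perp}$ spectrally as $\sum_k a_k \phi_k + (\text{Eisenstein part})$; the coefficients decay rapidly by smoothness, so it suffices to show that for each fixed $\phi_k$ (resp.\ each fixed $t\in\mathbb{R}$) the higher period $\int_{\X}\phi_k\phi_j^n\,\dd\mu$ tends to zero as $t_j\to\infty$, with polynomial control in the spectral parameter of $\phi_k$. Iterating the Watson--Ichino triple-product formula expresses each such period as a linear combination of central values of $L(1/2,\,\sym^a\phi_j\otimes\phi_k)$ for $0\le a\le n$, weighted by explicit archimedean transfer factors; subconvex bounds in the $t_j$-aspect then deliver the required $o(1)$.

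For (A), a naive inductive approach via $\phi_j^{n+2} = \phi_j^n\cdot\phi_j^2$ combined with QUE fails: although QUE gives $\int_{\X}\psi\,(\phi_j^2-\vol(\X)^{-1})\,\dd\mu\to 0$ for a fixed test function, the self-correlation $\int_{\X}\psi\,\phi_j^n(\phi_j^2-\vol(\X)^{-1})\,\dd\mu$ is not $o(1)$ -- it is precisely what must supply the combinatorial factor $n+1$ in the recursion $c_{n+2}=(n+1)c_n$. The correct route is to expand $\phi_j^n$ in the full Hecke/Eisenstein basis and to compute the constant component $\vol(\X)^{-1}\int_{\X}\phi_j^n\,\dd\mu$ via iterated Ichino; after repeated application, this reduces to a sum of products of central Rankin--Selberg $L$-values attached to $\sym^a\phi_j$ for $a\le n$, and the Gaussian constant $c_n$ must emerge as the asymptotic evaluation of this sum, with the moments of $\mathcal{N}(0,1)$ arising from the Plancherel density on the spectral side.

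The main obstacle is that step (A) is essentially equivalent to the Gaussian random wave conjecture itself, which is open in every case $n\ge 3$. For Maass forms, automorphy of $\sym^a\phi_j$ beyond $a=4$ (Kim--Shahidi, Kim) in the explicit form needed, together with effective subconvex bounds for $L(1/2,\sym^a\phi_j\otimes\phi_k)$ uniform in the $t_j$-aspect, is available only for small $a$. Consequently a clean unconditional proof can be pushed through only for $n\le 2$, where (A) is trivial (matching $c_1=0$ and $c_2=\|\phi_j\|_2^2=1$) and (B) reduces to ordinary QUE of Lindenstrauss--Soundararajan; and conditionally for $n=3$ on appropriate subconvexity hypotheses for $\sym^3\phi_j\otimes\phi_k$. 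For $n\ge 4$, even a conditional proof of the pointwise asymptotic \eqref{eqn:Gauss_cusp} lies beyond current techniques. The variance bound for the case $n=3$ established in the body of the present paper should be viewed as the best on-average substitute currently available for this conjecture.
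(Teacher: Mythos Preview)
The statement you are addressing is a \emph{conjecture}, not a theorem; the paper does not prove it, nor does it claim to. So there is no ``paper's own proof'' to compare against. Your proposal correctly recognises this and is best read as a survey of possible approaches together with an honest assessment of where they break down.

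That said, your account of the known cases is inaccurate. You write that an unconditional proof ``can be pushed through only for $n\le 2$'' and that $n=3$ requires subconvexity hypotheses; but as the paper records, the case $n=3$ with $\psi\equiv 1$ was settled unconditionally by Watson, and the general case $\psi\in\mathcal{C}_c^{\infty}(\X)$ was settled unconditionally by Huang. Likewise, you assert that for $n\ge 4$ ``even a conditional proof \ldots\ lies beyond current techniques,'' whereas the paper notes that Buttcane--Khan established the $n=4$, $\psi\equiv 1$ asymptotic assuming GLH.

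There is also a technical slip in your description of step~(B): iterating the spectral decomposition of $\phi_j^n$ does not reduce $\langle\phi_k,\phi_j^n\rangle$ to a finite sum of central values $L(1/2,\sym^a\phi_j\otimes\phi_k)$; rather, one obtains sums over the full cuspidal spectrum at each stage, with the intermediate forms $\phi_\ell$ having spectral parameter ranging up to $\asymp nt_j$. Controlling those spectral sums is precisely where the difficulty lies (and is what drives the moment estimates in the body of the paper for $n=3$). Your remark that naive induction via QUE fails because the self-correlation must supply the combinatorial factor $(n+1)$ is, however, a correct and useful observation.
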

This conjecture is easily to be proved when $n=1$.  The case of $n=2$ is called QUE. It was proposed by Rudnick and Sarnak \cite{RS94} and was settled by Lindenstrauss \cite{Lin06QUE} and Soundararajan \cite{Sound10QUE}.  For $n=3$, Watson \cite{Watson} proved the case of $\psi\equiv1$.  Later, Huang \cite{Huang24} solved this cubic moment problem for general $\psi\in \mathcal{C}_c^{\infty}(\X).$ For $n=4$, there are some remarkable results only with $\psi\equiv1$ so far. For example, Buttcane and Khan \cite{BK17L4} showed the asymptotic formula for the $L^4$-norm $\Vert\phi_j\Vert_4$ conditionally on GLH. Humphries and Khan \cite{HK22} proved a strong upper bound on this $L^4$-norm. Recently, Ki \cite{Ki23L4} showed the sharp upper bound $\Vert\phi_j\Vert_4\ll t_j^\varepsilon.$  

In the Eisenstein series case, let
$
\xi(2s)=\pi^{-s}\Gamma(s)\zeta(2s), e^{i\theta(t)}=\frac{\xi(1+2it)}{|\xi(1+2it)|}.
$
Then for $t\geq 2$, $\frac{e^{i\theta(t)}E_t(z)}{\sqrt{\log t}}$ is real and should exhibit statistics that are asymptotically Gaussian with mean $0$ and standard deviation $\sqrt{\frac{6}{\pi}}$. (See \cite[\S 7.3]{HR92}).
By a suitable normalization, we can formulate the following Gaussian moments conjecture for the Eisenstein series.
\begin{conjecture}\label{conj:Gauss_Es}
  Fix a smooth compactly supported function  $\psi$ on $\X$. Let $t\geq 2$. Then for $n\in \mathbb{Z}_{\geq 1}$, we have
  \begin{equation}\label{eqn:Gauss_Es}
  \int_{\X}\psi(z)\Big(\frac{e^{i\theta(t)}E_t(z)}{\sqrt{2\log t}}\Big)^n\dd \mu(z)=\frac{c_n}{\vol(\X)^{\frac{n}{2}}}\int_{\X}\psi(z)\dd \mu(z)+o(1),
  \text{ as } t\rightarrow \infty
  \end{equation}
  where $c_n$ is defined in Conjecture \ref{conj:Gauss_cuspform}.
\end{conjecture}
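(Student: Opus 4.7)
The plan is to attack Conjecture \ref{conj:Gauss_Es} (at least for $n=3$, which is the focus of the present paper) through the spectral-expansion-and-variance paradigm: expand $\psi$ spectrally, write the remaining integrals as triple products, convert them into central values of automorphic $L$-functions, and bound the resulting moments on average. The cases $n=1,2$ are respectively trivial (unfolding) and known (QUE for Eisenstein series, Luo--Sarnak); $n\geq 4$ appears to require moment bounds for $L$-functions of degree $\geq 10$ and lies beyond present technology, so the realistic target is $n=3$.

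First, write $\psi$ spectrally as
\[
\psi(z)=\frac{\langle\psi,1\rangle}{\vol(\X)}+\sum_{j\geq 1}\langle\psi,\phi_j\rangle\,\phi_j(z)+\frac{1}{4\pi}\int_{-\infty}^{\infty}\langle\psi,E_s\rangle\,E_s(z)\,\dd s,
\]
and substitute into the left-hand side of \eqref{eqn:Gauss_Es}. The constant-function piece produces the candidate main term after computing $\int_\X E_t^3\,\dd\mu$ by Zagier--Rankin--Selberg unfolding; since $c_3=0$, one must show that this piece is genuinely $o((\log t)^{3/2})$ after normalization, which reduces to a stationary-phase analysis of the resulting $\zeta$-ratios. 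What remains are the discrete cross-terms $\langle\psi,\phi_j\rangle\int_\X\phi_j E_t^3\,\dd\mu$ and the continuous cross-terms $\langle\psi,E_s\rangle\int_\X E_s E_t^3\,\dd\mu$.

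Second, express each triple product via a Watson-type formula. For the cusp-form cross-terms, $\lvert\int_\X\phi_j E_t^3\,\dd\mu\rvert^2$ factors as a product of archimedean weights and central values of $\GL(2)$ $L$-functions at the shifted points $1/2+it_j$ and $1/2\pm 2it$ (collectively of degree $8$); for the continuous cross-terms one obtains a product of Riemann-zeta values and a $\GL(2)\times\GL(2)$ Rankin--Selberg $L$-value at twisted critical points. Summing (resp.\ integrating) against $|\langle\psi,\phi_j\rangle|^2$ (resp.\ $|\langle\psi,E_s\rangle|^2$) then reduces the quantum variance to an eighth-moment bound for $\GL(2)$ $L$-functions at $1/2+it_j$ together with a fourth-moment bound for $\GL(2)\times\GL(2)$, both at essentially Lindel\"of strength.

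The main obstacle is the eighth-moment input for $\GL(2)$ on the shifted critical line. Establishing this at the required strength rests on the Chandee--Li second-moment bound for $\GL(4)\times\GL(2)$ \cite{C-L20} combined with a Kuznetsov trace-formula analysis, and the technical heart lies in controlling the transition region where $|t_j|$ is comparable to $t$. I should emphasize that this approach yields only an upper bound on the variance, i.e.\ a quantitative $o(1)$ statement after suitable averaging, and does not by itself produce the exact Gaussian constant $c_3=0$ for individual $t$; upgrading to a pointwise asymptotic, let alone handling $n\geq 4$, would demand substantially stronger inputs than are currently available.
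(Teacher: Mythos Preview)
The statement is a \emph{conjecture}; the paper does not prove it, and no proof is expected. The pointwise case $n=3$ is attributed in the paper to Guo \cite{Guo24} (building on Huang \cite{Huang24}), who obtains it via first-moment bounds for $\GL(3)\times\GL(2)$ $L$-functions in short intervals, not through a variance argument. So if your goal is Conjecture~\ref{conj:Gauss_Es} for $n=3$ pointwise, the variance route you describe is neither necessary nor sufficient.

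What you have actually sketched is essentially the paper's proof of Theorem~\ref{thm:VarES}, the variance estimate $\int_T^{2T}|\langle\psi,E_t^3\rangle|^2\,\dd t\ll_{\psi,\varepsilon} T^\varepsilon$, and your outline matches it closely: spectrally decompose $\psi$, then apply a second (regularized) Plancherel to $\langle\phi_k,E_t^3\rangle$ and $\langle E_\tau,E_t^3\rangle_{\reg}$, reduce to products of central $L$-values, and feed in the fourth moment of $L(1/2+it_j,\phi_k\times\phi_j)$ and the eighth moment of $L(1/2+it_j,\phi_j)$ around the special points (the paper's Theorems~\ref{thm:4m-of-L} and~\ref{thm:8m-of-L}), both established by adapting Chandee--Li \cite{C-L20}. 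Your closing caveat---that this yields only an averaged $o(1)$, not the pointwise conjecture---is exactly right and is the scope of what the paper proves here.

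Two technical corrections. First, the triple products with Eisenstein series are computed by direct Rankin--Selberg unfolding and Zagier's regularized inner product (Lemmas~\ref{lemma:3Eisen} and~\ref{lemma:E^2phi}), not via a Watson-type formula; the relevant $L$-values on the discrete side are $L(1/2+it,\phi_k\times\phi_j)L(1/2+2it,\phi_j)L(1/2,\phi_j)$, and on the continuous side are products of shifted $\zeta$-values. Second, the constant-term contribution is not handled by stationary phase but by Cauchy--Schwarz against known fourth and twelfth moments of $\zeta$; and the ``transition region'' you mention is dealt with simply by the exponential decay of the archimedean factors, which localizes the $t_j$-sum to $|t_j-t|\ll T^\varepsilon$.
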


The rotation $e^{in \theta(t)}$ in \eqref{eqn:Gauss_Es} is dispensable. Since that if $n$ is odd, the right hand side of  \eqref{eqn:Gauss_Es} vanishes as $t\rightarrow \infty$. If $n=2k$ is even, \eqref{eqn:Gauss_Es} is equivalent to
\[
 \lim_{t \to \infty}\frac{1}{(\log t)^{k}}\int_{\X}\psi(z)|E_t(z)|^{2k}\dd \mu(z)=\Big(\frac{6}{\pi}\Big)^k(2k-1)!!\int_{\X}\psi(z)\dd \mu(z).
\]

Similarly, $n=1$ is easy to verify. The case of $n=2$ is called QUE for Eisenstein series $E_t$ which was proven by Luo and Sarnak \cite{L-S95}. The case of $n=3$ was proven by Guo \cite{Guo24} recently. For $n=4$, Djankovi\'c and Khan did a series of works \cite{D-K18}, \cite{D-K20L4reg}, \cite{D-K24L4truncate} on the regularized and truncated version of $L^4$-norm for $E_t$. Both versions of $L^4$-norm confirm the asymptotic behaviour.

Although the above asymptotics on moments agree with random wave conjecture for $\phi_j$ and normalized $E_t$, the error terms in Conjecture \ref{conj:Gauss_cuspform} and Conjecture \ref{conj:Gauss_Es} should not be expected to be sharp. For example, under GLH, we can prove
\[
\int_{\X}\psi(z)\phi_j(z)^2\dd \mu(z)=\frac{1}{\vol(\X)}\int_{\X}\psi(z)\dd \mu(z)+O_{\psi, \varepsilon}(t_j^{-\frac{1}{2}+\varepsilon})
\]
for any $\varepsilon>0$. In fact, the above error term is closely connected with the strength of subconvexity bound for $L(1/2, \sym^2\phi_j\times \phi_k)$ in the spectral aspect.
For cubic moment, Huang \cite{Huang24} proved a power saving result
\[
\int_{\X}\psi(z)\phi_j(z)^3\dd \mu(z)\ll_{\psi, \varepsilon} t_{j}^{-\frac{1}{12}+\varepsilon},
\]
by establishing the Lindel\"of on average bound for first moment of $\GL(3)\times \GL(2)$ $L$-functions in short intervals.
Later, it is improved by Guo \cite{Guo24} with power saving $O(t_{j}^{-\frac{1}{6}+\varepsilon})$. 
For the Eisenstein series case, this exponent can be improved to $-\frac{1}{3}$ in \cite{Guo24}.

Base on the above observation, we turn to consider the variance estimate for Gaussian moments conjecture, that is 
\[
\sum_{f\in \mathcal{F}}\Big|\int_{\X}\psi(z)f(z)^n\dd \mu(z)-\frac{c_n}{\vol(\X)^{\frac{n}{2}}}\int_{\X}\psi(z)\dd \mu(z)\Big|^2=o_{\psi}\big(|\mathcal{F}|\big)
\]
as $|\mathcal{F}|\to +\infty$, where $\mathcal{F}$ is a suitable spectral family of normalized $f$. 
When $n=2$ and $\psi\in \mathcal{C}_c^{\infty}(\X)$, the above estimate is called the Quantum Ergodicity for $f$.
It is first proved by Zelditch \cite{Zel91} with $f$ be the Hecke--Maass cusp forms and $\mathcal{F}=\{f:t_f\leq T\}.$ He obtained a bound $O_{\psi}\big(\frac{T^2}{\log T}\big).$ The error bound is improved by Luo and Sarnak \cite{L-S95} to $O_\psi(T^{1+o(1)})$, which is essentially optimal.
Zhao \cite{Zhao10} obtained the asymptotic formula for the variance with harmonic weight and smooth weight roughly like $|t_f-T|\leq T^{1-\varepsilon}.$
Later, Jung \cite{Jung16} proved the result in short spectral interval $|t_f-T|\leq T^{\frac{1}{3}}$ with error bound $T^{\frac{1}{3}+o(1)}$. 
For $f$ be the Eisenstein series, Huang \cite{Huang21} proved the asymptotic formulas for the quantum variance for matrix coefficients of observables. 
 
This paper focus on the case of $n=3$. We give the variance estimates for $\langle\psi, \phi^3\rangle$ with $\psi\equiv 1$ and $\langle \psi, E_t^3\rangle$
with $\psi\in \mathcal{C}_c^{\infty}(\X).$

\subsection{The variance for cubic moment of Hecke--Maass cusp forms}\label{subsec:VarHM}
Let $\phi\in \{\phi_j\}_{j\geq 1}$ with the Laplacian eigenvalue $\frac{1}{4}+t_\phi^2$ ($t_\phi\geq 1$).
According to Watson's formula \cite{Watson}, we have
\begin{equation}\label{eqn:Watson}
  |\langle 1, \phi^3\rangle|^2= \Big|\int_{\X}\phi(z)^3\dd \mu(z)\Big|^2=\frac{\Lambda(1/2, \phi\times\phi\times\phi)}{8\Lambda(1, \sym^2\phi)^3}
\end{equation}
By using the factorization of $L$-function
\[
L(s, \phi\times\phi\times\phi)=L(s, \sym^2\phi\times\phi)L(s, \phi)=L(s, \sym^3\phi)L(s, \phi)^2
\]
and the bounds for the $\Gamma$-factors and $L$-values at $1$, we get that
\begin{equation}\label{eqn:Watson-L}
  \Big|\int_{\X}\phi(z)^3\dd \mu(z)\Big|^2=t_\phi^{-2+o(1)}L(1/2, \sym^3\phi)L(1/2, \phi)^2.
\end{equation}
Here $\sym^3\phi$ is cuspidal on $\GL(4)$ by \cite{K-S00}, thus $L(s, \sym^3\phi)$ is a $L$-function on $\GL(4)$. Applying the convexity bound of $L(s, \sym^3\phi)$ ($L(1/2, \sym^3\phi)\ll t_\phi^{1+\varepsilon}$) and the Weyl bound of $\GL(2)$ $L$-function $L(s, \phi)$ ($L(1/2, \phi)\ll t_\phi^{\frac{1}{3}+\varepsilon}$), we get \cite[Theorem 5]{Watson}
\[
\langle1, \phi^3\rangle=\int_{\X}\phi(z)^3\dd \mu(z)\ll t_{\phi}^{-\frac{1}{6}+\varepsilon}.
\]
Assuming GLH, we have
\[
\langle1, \phi^3\rangle \ll t_{\phi}^{-1+\varepsilon}.
\]
Now we consider variance estimate for the above cubic moment when $\phi$ varies in the spectral family. And we can prove the following theorem.
\begin{theorem}\label{thm:Var}
Let $1\leq \Delta\leq T$, then we have
\[
\sum_{T\leq t_\phi\leq T+\Delta}\Big|\langle1, \phi^3\rangle\Big|^2\ll_{\varepsilon}
\begin{cases}
  T^{-\frac{1}{7}+\varepsilon}\Delta^{\frac{7}{8}}, & \mbox{ if } 1\leq \Delta\leq T^{\frac{1}{3}},\\
  T^{-\frac{1}{4}+\varepsilon}\Delta^{\frac{5}{4}}, & \mbox{ if } T^{\frac{1}{3}}< \Delta\leq T^{\frac{3}{7}},\\
  T^{-\frac{1}{10}+\varepsilon}\Delta^{\frac{9}{10}}, & \mbox{ if } T^{\frac{3}{7}}< \Delta\leq T.\\
\end{cases}
\]
\end{theorem}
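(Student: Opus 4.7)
The plan is to start from Watson's identity and the factorization preceding the theorem, which give
\[
|\langle 1,\phi^3\rangle|^2 = t_\phi^{-2+o(1)}\,L(1/2,\sym^3\phi)\,L(1/2,\phi)^2.
\]
It therefore suffices, up to a factor $T^{-2+\varepsilon}$, to bound the spectral sum
\[
S(T,\Delta) := \sum_{T\le t_\phi\le T+\Delta} L(1/2,\sym^3\phi)\,L(1/2,\phi)^2.
\]
One then applies H\"older's inequality with parameters $(p,q)$ satisfying $1/p+2/q=1$,
\[
S(T,\Delta) \le \Bigl(\sum_{T\le t_\phi\le T+\Delta} L(1/2,\sym^3\phi)^{p}\Bigr)^{1/p}\Bigl(\sum_{T\le t_\phi\le T+\Delta} L(1/2,\phi)^{q}\Bigr)^{2/q},
\]
separating the task into a $\GL(4)$ mean-value estimate for $L(1/2,\sym^3\phi)$ and a $\GL(2)$ mean-value estimate for $L(1/2,\phi)$.

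The $\GL(4)$ factor is handled by a spectral large-sieve inequality for symmetric cubes, together with higher-moment upgrades when $p>2$. Applying the approximate functional equation for $L(1/2,\sym^3\phi)$ (with effective length $\asymp T^{2}$) reduces the second moment to a bilinear form
\[
\sum_{T\le t_\phi\le T+\Delta}\Bigl|\sum_{n\le T^{2}}\frac{a_n\,\lambda_{\sym^3\phi}(n)}{\sqrt{n}}\Bigr|^{2}.
\]
Opening $\lambda_{\sym^3\phi}(n)$ in terms of $\GL(2)$ Hecke eigenvalues via the multiplicative identity $\lambda_{\sym^3\phi}(p)=\lambda_\phi(p)\lambda_\phi(p^2)-\lambda_\phi(p)$, one invokes the Kuznetsov trace formula on $\GL(2)$ with a smooth test function localized on the spectral window $[T,T+\Delta]$, and controls the resulting diagonal and off-diagonal Kloosterman contributions. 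For the $\GL(2)$ factor one uses the classical spectral mean-square and fourth-moment bounds for $L(1/2,\phi)$ in short intervals, augmented when useful by the Weyl-type subconvex bound $L(1/2,\phi)\ll t_\phi^{1/3+\varepsilon}$.

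The three ranges of $\Delta$ in the theorem correspond to three optimal choices of $(p,q)$ and companion moment. For $\Delta\le T^{1/3}$, the narrow window contains so few forms $(\asymp T\Delta)$ that a higher moment of $L(1/2,\sym^3\phi)$ beyond the square is most efficient, producing the exponent $7/8$ on $\Delta$. For $T^{1/3}<\Delta\le T^{3/7}$, the best strategy mixes the $\sym^3$ second-moment large sieve with Weyl-type subconvexity on the $\GL(2)$ side, giving $5/4$. For $\Delta>T^{3/7}$, straight Cauchy--Schwarz with $(p,q)=(2,4)$, the Lindel\"of-on-average fourth moment of $L(1/2,\phi)$, and the large sieve for $\sym^3\phi$ is optimal and yields $9/10$. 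The main obstacle is establishing the large-sieve and higher-moment inequalities for $L(1/2,\sym^3\phi)$ in short spectral windows: the Kuznetsov formula returns shifted sums of $\GL(2)$ Hecke coefficients whose Bessel transforms have to be controlled uniformly in the localization length $\Delta$, and genuine savings must be extracted from the $\sym^3$ structure rather than treating $\sym^3\phi$ as a generic member of the $\GL(4)$ spectrum.
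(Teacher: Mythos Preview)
Your high-level framework---Watson's formula, then H\"older to separate a $\sym^3$ moment from a $\GL(2)$ moment---matches the paper. But the implementation you sketch has a genuine gap and several divergences from what the paper actually does.

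\textbf{The main gap.} You propose, for the larger ranges of $\Delta$, to bound $\sum_{t_\phi} L(1/2,\sym^3\phi)^2$ directly by the large sieve applied to the approximate functional equation of $L(s,\sym^3\phi)$. That AFE has length $\asymp T^2$, so the large sieve (Theorem~\ref{thm:LS_GL4}) yields at best
\[
\sum_{T\le t_\phi\le T+\Delta} L(1/2,\sym^3\phi)^2 \ll T^\varepsilon(T^2 + T^7\Delta^3),
\]
and the second term dominates for all $\Delta\ge 1$. Plugging this into your Cauchy--Schwarz with the fourth moment of $L(1/2,\phi)$ gives a bound of order $T^{2}\Delta^{2}$ for the variance, far from $T^{-1/10}\Delta^{9/10}$. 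The paper's essential trick is to \emph{raise the power first}: one expands $L(s,\sym^3\phi)^4$ and $L(s,\sym^3\phi)^5$ explicitly as Dirichlet series in the $\GL(4)$ Fourier coefficients $\lambda_{\sym^3\phi}(m_1,m_2,m_3)$ (Lemmas~\ref{lemma:L-value}--\ref{lemma:L-value2}), so that the large sieve applies to the AFE of length $T^8$ or $T^{10}$. Now the $N$-term dominates over $T^7\Delta^3$, giving the eighth and tenth moment bounds of Theorems~\ref{thm:Lmoment}--\ref{thm:Lmoment2}, and only \emph{then} does one H\"older back down to the second moment. Without this detour through higher moments the stated exponents are unreachable.

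\textbf{Other differences.} The large sieve for $\sym^3$ is not obtained via Kuznetsov on $\GL(2)$ after unfolding $\lambda_{\sym^3\phi}$ into $\GL(2)$ eigenvalues; the paper uses the duality principle together with the functional equation of the degree-16 Rankin--Selberg $L$-function $L(s,\sym^3\phi_1\times\sym^3\phi_2)$, exploiting directly that $\sym^3\phi$ is cuspidal on $\GL(4)$. This is both cleaner and what produces the shape $N+T^7\Delta^3$, the $T^7\Delta^3$ reflecting conductor-dropping in the short window. Your proposed $\GL(2)$ shifted-sum route would be considerably more intricate and it is unclear it would recover this bound. Finally, the specific inputs in each range differ from yours: the small range uses H\"older against the eighth moment of $\sym^3$, Ivi\'c's cubic moment $\sum L(1/2,\phi)^3\ll T^{1+\varepsilon}\Delta$, and the count $(T\Delta)^{5/24}$; the middle and large ranges use Cauchy--Schwarz with the Jutila--Motohashi fourth moment $\sum L(1/2,\phi)^4\ll T^{1+\varepsilon}\Delta$, not Weyl subconvexity.
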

By using the convexity bound for $\rm GL(4)$ $L$-function and the sharp upper bound for the second moment of $\rm GL(2)$ $L$-functions, one can easily get that
\begin{equation}\label{eqn:TrivialBoundforVar}
\sum_{T\leq t_\phi\leq T+\Delta}|\langle1, \phi^3\rangle|^2\ll_{\varepsilon}T^{\varepsilon}\Delta,
\end{equation}
for any $1\leq \Delta\leq T.$
It implies that there exists a density one subset of $\phi$ within $T\leq t_\phi\leq T+\Delta$, such that $\langle1, \phi^3\rangle\ll_{\varepsilon}t_\phi^{-1/2+\varepsilon}.$
Theorem \ref{thm:Var} tells us that for all short spectral intervals, we can improve the exponent $-1/2$.
This is beneficial to consider the general variance 
$\sum_{T\leq t_\phi\leq T+\Delta}|\langle\psi, \phi^3\rangle|^2$
for a fixed $\psi\in \mathcal{C}_c^{\infty}(\X).$ In fact, by the deduction in \cite{Huang24}, even if assuming GLH,  we can only prove
\begin{equation}\label{eqn:CubicMomentGLH}
\int_{\X}\psi(z)\phi(z)^3\dd \mu(z)=\frac{3}{\pi}\int_{\X}\psi(z)\dd \mu(z)\cdot\int_{\X}\phi(z)^3\dd \mu(z)
+\text{C.}+\text{E.}
\ll_{\psi, \varepsilon} t_\phi^{-1/2+\varepsilon},
\end{equation}
where $\text{C.}$ and $\text{E.}$ denote the cusp form contribution and Eisenstein series contribution respectively.
Optimistically, we suggest that the decay rate of $\langle\psi, \phi^3\rangle$ should be matched to the leading term $\frac{3}{\pi}\langle\psi, 1\rangle\langle1, \phi^3\rangle$. So we conjecture that 
\begin{equation}
\sum_{T\leq t_\phi\leq T+\Delta}|\langle\psi, \phi^3\rangle|^2=\Big(\frac{9}{\pi^2}|\langle\psi, 1\rangle|^2+o_\psi(1)\Big) \sum_{T\leq t_\phi\leq T+\Delta}|\langle1, \phi^3\rangle|^2 \quad \text{ as } T\rightarrow \infty.
\end{equation}
With the help of Theorem \ref{thm:Var}, we can break the $-1/2$-barrier in \eqref{eqn:CubicMomentGLH} in density one sense.

On the other hand, by \eqref{eqn:Watson-L}, Theorem \ref{thm:Var} is related to the estimate for $L(1/2, \sym^3\phi)$ on average. Applying the H\"older inequality and the known bounds for moments of $\GL(2)$ $L$-functions, 
one can see that any non-trivial estimates for the higher moment of $L(1/2, \sym^3\phi)$ can improve the bound $\Delta$ in \eqref{eqn:TrivialBoundforVar}.
In order to obtain the better upper bound, we want to seek for the saving on the trivial estimate for the average of $L(1/2, \sym^3\phi)$. Since $\sym^3\phi$ is automorphic on $\GL(4)$, we can prove the following large sieve inequality.
\begin{theorem}\label{thm:LS_GL4}
Let $1\leq \Delta\leq T$ and $ N\geq 2$.
Then for any complex sequence $\{a_{d, k, m, n}\}$, we have
\begin{equation*}
\sum_{T\leq t_\phi\leq T+\Delta}
    \left|\sum_{N< d^4k^3m^2n\leq 2N}\lambda_{\sym^3\phi}(k, m, n)a_{d, k, m, n}
    \right|^2
    \ll_{\varepsilon} (NT)^{\varepsilon}(N+ T^7\Delta^3)\sum_{N< d^4k^3m^2n\leq 2N}|a_{d, k, m, n}|^2,
\end{equation*}
where $\lambda_{\sym^3\phi}(\cdot, \cdot, \cdot)$ is the  Fourier coefficient of $\GL(4)$ automorphic form $\sym^3\phi.$
\end{theorem}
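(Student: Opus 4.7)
My plan is to apply large-sieve duality and then bound the resulting bilinear form using the Rankin-Selberg theory of the $\GL(4)$ form $\sym^3\phi$. By the standard duality principle for large-sieve inequalities, the claim is equivalent to showing that, for every complex sequence $(b_\phi)$ supported on $t_\phi \in [T, T+\Delta]$,
\[
S := \sum_{N < d^4 k^3 m^2 n \leq 2N} \Bigl| \sum_\phi b_\phi \, \lambda_{\sym^3\phi}(k, m, n) \Bigr|^2 \ll_\varepsilon (NT)^\varepsilon (N + T^7 \Delta^3) \sum_\phi |b_\phi|^2.
\]
I would smooth the dyadic constraint by a weight $W(d^4 k^3 m^2 n / N)$, open the square, and obtain the bilinear form $\sum_{\phi_1,\phi_2} b_{\phi_1} \overline{b_{\phi_2}} \Sigma(\phi_1,\phi_2)$, where $\Sigma(\phi_1,\phi_2)$ is the weighted correlation of $\lambda_{\sym^3\phi_1}(k,m,n)$ with $\overline{\lambda_{\sym^3\phi_2}(k,m,n)}$ over $(d,k,m,n)$. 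Mellin inversion then represents $\Sigma(\phi_1,\phi_2)$ as a contour integral whose integrand is, up to $\zeta$-factors from the $d$-summation, the Rankin-Selberg $L$-function $L(s, \sym^3\phi_1 \times \widetilde{\sym^3\phi_2})$: a degree-$16$ $L$-function with spectral conductor of size $T^{16}$.

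For the diagonal $\phi_1 = \phi_2$, the Clebsch-Gordan decomposition $\sym^3 \otimes \sym^3 = \sym^6 \oplus \sym^4 \oplus \sym^2 \oplus \mathbf{1}$ gives
\[
L(s, \sym^3 \phi \times \widetilde{\sym^3\phi}) = \zeta(s) L(s, \sym^2 \phi) L(s, \sym^4 \phi) L(s, \sym^6 \phi),
\]
whose simple pole at $s=1$ has residue $\ll T^\varepsilon$. Picking up this pole yields $\Sigma(\phi,\phi) \ll N^{1+\varepsilon}$, accounting for the $N$-term. For $\phi_1 \neq \phi_2$ the Rankin-Selberg $L$-function is entire, so I would shift the contour to the critical line, apply the functional equation, and bound the $t$-integral by an averaged second-moment estimate for $L(\tfrac12 + it, \sym^3\phi_1 \times \widetilde{\sym^3\phi_2})$ over the pair family $\{(\phi_1,\phi_2): t_{\phi_i} \in [T, T+\Delta]\}$. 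A final Cauchy-Schwarz (or spectral-radius) argument in the $b$-variables then converts this bilinear form into the $T^7 \Delta^3$ contribution.

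The main obstacle is the off-diagonal step: one needs a workable averaged bound for a degree-$16$ Rankin-Selberg $L$-function over two short spectral intervals. The target exponent $T^7\Delta^3$, weaker than the Lindel\"of-on-average prediction, reflects the strength obtainable by balancing the analytic conductor $T^{16}$ against the family size $\asymp (T\Delta)^2$ and the length $N$, via convexity and functional-equation tricks. Any improvement would likely require finer input such as $\GL(4)$ Voronoi summation, or the $\GL(4)\times \GL(2)$ second-moment framework of Chandee-Li mentioned in the abstract.
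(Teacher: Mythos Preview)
Your skeleton---duality, smoothing, Mellin inversion, Rankin--Selberg $L$-function, pole versus off-diagonal---matches the paper exactly. But your off-diagonal analysis has two concrete problems.

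First, you quote the analytic conductor as $T^{16}$. That is the generic conductor of a $\GL(4)\times\GL(4)$ pair, but it is \emph{not} what arises here. Because $t_{\phi_1},t_{\phi_2}\in[T,T+\Delta]$, four of the sixteen archimedean parameters (namely $3t_{\phi_1}-3t_{\phi_2}$ and $t_{\phi_1}-t_{\phi_2}$, each with multiplicity two) are of size $O(\Delta)$ rather than $T$. The conductor drops to $T^{12}\Delta^4$. This is the entire source of the exponent $T^7\Delta^3$: it is $\sqrt{T^{12}\Delta^4}$ from the gamma-factor ratio, times the family size $T\Delta$ from Cauchy--Schwarz in the $b$-variables. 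Without recognising the conductor drop, your bookkeeping cannot produce the stated bound.

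Second, you propose to shift to $\Re(s)=\tfrac12$ and invoke an ``averaged second-moment estimate'' for $L(\tfrac12+it,\sym^3\phi_1\times\widetilde{\sym^3\phi_2})$. No such estimate is needed, and proving one would be substantially harder than the theorem itself. The paper instead shifts past the critical strip to $\Re(s)=-\varepsilon$, applies the functional equation, and lands on $\Re(s)=1+\varepsilon$ where the $L$-function is bounded by absolute convergence. The only analytic input is Stirling's formula for the gamma-factor ratio $\gamma(1+\varepsilon+it)/\gamma(-\varepsilon-it)\ll (T^{12}\Delta^4)^{1/2+\varepsilon}$, and the rapid decay of $\widetilde W$ truncates $t$ to $|t|\ll (NT)^\varepsilon$. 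There is no moment computation, no critical-line analysis, and no $\GL(4)$ Voronoi. (A minor point: the $d$-sum is already built into the Rankin--Selberg series $L(s,\sym^3\phi_1\times\sym^3\phi_2)=\sum_{d,k,m,n}\lambda(k,m,n)\overline{\lambda(k,m,n)}(d^4k^3m^2n)^{-s}$, so there are no separate $\zeta$-factors.)
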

The proof of Theorem \ref{thm:LS_GL4} proceeds by using the duality principle and the analytic properties of the degree 16 Rankin--Selberg $L$-function
$L(s, \sym^3\phi_1\times \sym^3\phi_2)$. This strategy was already presented in \cite{DK00}, \cite{TZ21} and so on.
In our case, the second term $T^7\Delta^3$ comes from the product of the square root of the analytic conductor for $L(s, \sym^3\phi_1\times \sym^3\phi_2)$ and the family size of $\phi$ when $t_\phi$ in the short interval $T\leq t_\phi\leq T+\Delta.$ Note that the conductor-dropping phenomenon appears in this range of spectral parameters $t_\phi$.
As consequences of the above large sieve, we can get the following non-trivial bounds for the higher moments of $L$-functions.
\begin{theorem}\label{thm:Lmoment}
Let $1\leq \Delta\leq T$, then we have
\[
\sum_{T\leq t_\phi\leq T+\Delta}|L\big(1/2, \sym^3\phi)|^8\ll_{\varepsilon} T^{\varepsilon}(T^8+T^7\Delta^3).
\]
\end{theorem}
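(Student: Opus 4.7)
The plan is to match the effective length of an approximate functional equation (AFE) for $L(s,\sym^3\phi)^{4}$ against the bound $N+T^{7}\Delta^{3}$ in Theorem~\ref{thm:LS_GL4}. Writing $|L(1/2,\sym^3\phi)|^{8}=|L(1/2,\sym^3\phi)^{4}|^{2}$ and noting that $L(s,\sym^3\phi)^{4}$ at $s=1/2$ has analytic conductor $\asymp t_\phi^{16}$, the AFE truncates it to a smoothly weighted sum of length $\lesssim T^{8+\varepsilon}$, plus a dual sum of the same shape and negligible error. After dyadic decomposition, it suffices to show that for each dyadic $N\lesssim T^{8}$, the dyadic piece $\Sigma_\phi(N)$ of this AFE sum satisfies $\sum_\phi |\Sigma_\phi(N)|^{2}\ll T^{\varepsilon}(N+T^{7}\Delta^{3})$.

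To put $\Sigma_\phi(N)$ in the shape required by Theorem~\ref{thm:LS_GL4}, I would start from the Satake identity $\zeta(4s)L(s,\sym^3\phi)=\sum_{N}N^{-s}\sum_{d^{4}k^{3}m^{2}n=N}\lambda_{\sym^3\phi}(k,m,n)$, which is the parameterization underlying Theorem~\ref{thm:LS_GL4}. Raising to the fourth power yields a Dirichlet series for $L(s,\sym^3\phi)^{4}$ whose coefficients are 4-fold Dirichlet convolutions of $B_\phi(N)=\sum_{d^{4}k^{3}m^{2}n=N}\lambda_{\sym^3\phi}(k,m,n)$; each 4-fold product of Hecke eigenvalues is then linearized, via the $\GL(4)$ Hecke multiplicativity (Pieri-type) relations, into a linear combination $\sum_{(K,M,R)}c\cdot\lambda_{\sym^3\phi}(K,M,R)$ with absolutely bounded combinatorial coefficients $c$. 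This writes
\begin{equation*}
\Sigma_\phi(N) = \sum_{N<d^{4}K^{3}M^{2}R\le 2N}\lambda_{\sym^3\phi}(K,M,R)\,a_{d,K,M,R}
\end{equation*}
with $\phi$-independent coefficients $a$ satisfying $\sum|a_{d,K,M,R}|^{2}\ll T^{\varepsilon}$ by a routine divisor estimate for the convolution weights.

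Applying Theorem~\ref{thm:LS_GL4} dyadically and summing over the $O(\log T)$ dyadic scales $N\lesssim T^{8}$ then yields $\sum_{T\le t_\phi\le T+\Delta}|L(1/2,\sym^3\phi)|^{8}\ll T^{\varepsilon}(T^{8}+T^{7}\Delta^{3})$, proving Theorem~\ref{thm:Lmoment}.

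The main obstacle is the Hecke linearization step. While the analogous step on $\GL(2)$ is elementary, expressing a 4-fold product of $\GL(4)$ Hecke eigenvalues as a linear combination of single eigenvalues requires the Pieri-type rules for $\GL(4)$, and careful bookkeeping of the divisor support is needed to ensure that the resulting triples $(K,M,R)$ fit into $d^{4}K^{3}M^{2}R\lesssim T^{8}$ without inflating the length. For Theorem~\ref{thm:Lmoment} we only need absolute bounds on the combinatorial coefficients together with a crude $\ell^{2}$-estimate for $a$, so the intricate combinatorics can be handled schematically rather than being spelled out case by case.
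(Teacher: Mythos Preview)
Your overall strategy is the same as the paper's: square $L(1/2,\sym^3\phi)^{4}$, use an AFE of length $\asymp T^{8}$, write the Dirichlet coefficients of $L(s,\sym^3\phi)^{4}$ as $\phi$-independent weights times $\lambda_{\sym^3\phi}(k,m,n)$, and feed this into Theorem~\ref{thm:LS_GL4}. The paper does exactly this; see Lemma~\ref{lemma:L-value}, Lemma~\ref{lemma:AFE}, and the short paragraph at the end of \S\ref{sec:Momentgl4}.

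Where your proposal differs is in the linearization step, and there it has a genuine issue. The identity you call the ``Satake identity'', namely $\zeta(4s)L(s,\sym^3\phi)=\sum_{N}N^{-s}\sum_{d^{4}k^{3}m^{2}n=N}\lambda_{\sym^3\phi}(k,m,n)$, is not correct: the standard $L$-function involves only the first-column coefficients $\lambda_{\sym^3\phi}(n,1,1)$, and summing over \emph{all} Fourier coefficients with weight $(d^{4}k^{3}m^{2}n)^{-s}$ does not produce $\zeta(4s)L(s,F)$ (that weighting is native to the $\GL(4)\times\GL(4)$ Rankin--Selberg, not to the standard $L$-function). Building the 4-fold convolution from this false starting point and then invoking Pieri rules is therefore on shaky ground, quite apart from the bookkeeping you flag.

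The paper sidesteps this entirely. Lemma~\ref{lemma:L-value} starts from $L(s,\sym^3\phi)=\sum_{n}\lambda_{\sym^3\phi}(n,1,1)n^{-s}$ and iterates the single $\GL(4)$ Hecke relation \eqref{eqn:Hecke-relation} three times (first to get the triple product identity \eqref{eqn:Hecke34}, then once more for the fourth factor). The output is a clean closed form
\[
L(s,\sym^3\phi)^{4}=\sum_{d,m_{1},m_{2},m_{3}\geq 1}\frac{\lambda_{\sym^3\phi}(m_{1},m_{2},m_{3})\,\tau(m_{1},m_{2},m_{3})}{(d^{4}m_{1}^{3}m_{2}^{2}m_{3})^{s}},
\]
with an explicit, $\phi$-independent $\tau(m_{1},m_{2},m_{3})\ll (m_{1}m_{2}m_{3})^{\varepsilon}$. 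This is already in the exact shape of Theorem~\ref{thm:LS_GL4} with $a_{d,k,m,n}=\tau(k,m,n)(d^{4}k^{3}m^{2}n)^{-1/2-\varepsilon-it}$; the required $\ell^{2}$-bound $\sum_{d^{4}m_{1}^{3}m_{2}^{2}m_{3}\sim N}|a|^{2}\ll N^{\varepsilon}$ is then a short elementary computation (done at the end of \S\ref{sec:Momentgl4}). So the ``main obstacle'' you identify is handled by a direct inductive use of one Hecke relation rather than by general Pieri combinatorics, and no $\zeta(4s)$ factor or Satake-type identity is needed.
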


\begin{theorem}\label{thm:Lmoment2}
Let $T\geq 1$, then we have
\[
\sum_{T\leq t_\phi\leq 2T}|L\big(1/2, \sym^3\phi)|^{10}\ll_{\varepsilon} T^{10+\varepsilon}.
\]
\end{theorem}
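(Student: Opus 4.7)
The plan is to reduce the estimate to the large sieve of Theorem~\ref{thm:LS_GL4} applied with $\Delta=T$, used in conjunction with an approximate functional equation for the fifth power $L(s,\sym^3\phi)^{5}$. The numerical match that makes $T^{10+\varepsilon}$ the correct target is as follows: $\sym^3\phi$ is a $\GL(4)$ form with archimedean Langlands parameters $\{\pm it_\phi,\pm 3it_\phi\}$, so the analytic conductor of $L(s,\sym^3\phi)^{5}$ at $s=1/2$ is $\asymp T^{20}$, yielding an AFE of essential length $\asymp T^{10}$; on the other hand, Theorem~\ref{thm:LS_GL4} at $\Delta=T$ produces the off-diagonal term $T^{7}\Delta^{3}=T^{10}$. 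Thus both the diagonal $N$ and the off-diagonal $T^{7}\Delta^{3}$ saturate at $T^{10}$, which is exactly the target.

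Concretely, writing $|L(1/2,\sym^3\phi)|^{10}=|L(1/2,\sym^3\phi)^{5}|^{2}$, applying the AFE, and decomposing dyadically, it suffices to prove, for each $N\leq T^{10+\varepsilon}$,
\[
\sum_{T\leq t_\phi\leq 2T}\Bigl|\sum_{N<n\leq 2N}\frac{c_{5}(n)}{\sqrt{n}}\,W\!\Bigl(\frac{n}{N}\Bigr)\Bigr|^{2}\ll T^{10+\varepsilon},
\]
where $W$ is a fixed smooth bump on $[1,2]$ and $c_{5}(n)=\sum_{n_{1}\cdots n_{5}=n}\lambda_{\sym^3\phi}(1,1,n_{1})\cdots\lambda_{\sym^3\phi}(1,1,n_{5})$. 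Iterating the $\GL(4)$ Hecke multiplication rule on the five ``first-row'' factors, and collecting the common-divisor data emerging from the Hecke relations into a single variable $d$, rewrites the inner sum in the multi-indexed shape
\[
\sum_{d^{4}k^{3}m^{2}\ell\asymp N}\lambda_{\sym^3\phi}(k,m,\ell)\,a_{d,k,m,\ell},
\]
where the coefficients $a_{d,k,m,\ell}$ depend only on $N$ and $W$, and satisfy $\sum_{d^{4}k^{3}m^{2}\ell\asymp N}|a_{d,k,m,\ell}|^{2}\ll T^{\varepsilon}$ by the Rankin--Selberg bound $\sum_{n\leq X}|\lambda_{\sym^3\phi}(1,1,n)|^{2}\ll XT^{\varepsilon}$ together with standard divisor estimates. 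Theorem~\ref{thm:LS_GL4} with $\Delta=T$ then yields a bound of $(NT)^{\varepsilon}(N+T^{10})\cdot T^{\varepsilon}\ll T^{10+\varepsilon}$ uniformly in $N\leq T^{10+\varepsilon}$, and summing over the $O(\log T)$ dyadic ranges completes the proof.

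The main obstacle is not the analytic input---the AFE is routine and Theorem~\ref{thm:LS_GL4} does all the spectral work---but the combinatorial step of recasting the five-fold convolution $c_{5}(n)$ into the multi-indexed $\GL(4)$ shape $\sum\lambda_{\sym^3\phi}(k,m,\ell)\,a_{d,k,m,\ell}$ with the exponent pattern $d^{4}k^{3}m^{2}\ell$ required by Theorem~\ref{thm:LS_GL4}. This is the $L^{5}$-analogue of the $L^{4}$ Hecke manipulation underlying the proof of Theorem~\ref{thm:Lmoment}; the only real task is to track the common-divisor structure through five iterations of the $\GL(4)$ Hecke relations while keeping the total $\ell^{2}$-mass of the coefficients at $O(T^{\varepsilon})$.
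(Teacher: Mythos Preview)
Your proposal is correct and follows essentially the same approach as the paper: an approximate functional equation for $L(s,\sym^3\phi)^5$ of length $\asymp T^{10}$ (Lemma~\ref{lemma:AFE2}), the Hecke combinatorics to recast the five-fold convolution into the multi-indexed $\GL(4)$ shape $\lambda_{\sym^3\phi}(m_2,m_3,m_4)\tau(m_1,m_2,m_3,m_4)$ with weight $(m_1^4m_2^3m_3^2m_4)^{-s}$ (Lemma~\ref{lemma:L-value2}), and then Theorem~\ref{thm:LS_GL4} with $\Delta=T$. One small remark: in the paper's version the auxiliary coefficients $\tau(m_1,m_2,m_3,m_4)$ are purely divisor-type combinations (no $\lambda_{\sym^3\phi}$ remains in them), so the $\ell^2$-bound on the coefficients follows from the elementary divisor bound $\tau(m_1,m_2,m_3,m_4)\ll(m_1m_2m_3m_4)^\varepsilon$ alone, and no appeal to Rankin--Selberg is needed.
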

These bounds on the moment of $L$-functions are weak but better than the convexity bound for $L\big(1/2, \sym^3\phi)$ on average. It is helpful to improve the trivial bound \eqref{eqn:TrivialBoundforVar} on the variance estimate. We remark that Nelson's work \cite{Nelson21} implies the weak subconvexity bounds for $L$-functions on the spectral aspect, which can give a slight improvement on Theorem \ref{thm:Var}. Since that this improvement on the exponent is small and for simplicity in results, we don't discuss it in the paper.

\subsection{The variance for cubic moment of Eisenstein series}\label{subsec:VarES}
Let $\psi\in \mathcal{C}_c^{\infty}(\X)$ and $E_t$ with $t\geq 1$, we have the following cubic moment estimate
\begin{equation}\label{eqn:QUE_cubicmoment}
\langle \psi, E_t^3\rangle=\int_{\X}\psi(z)\overline{E_t(z)^3}\dd \mu(z)\ll_{\psi, \varepsilon} t^{-\frac{1}{3}+\varepsilon}.
\end{equation}
It was remarked by Huang \cite[Eqn (1.9)]{Huang24} with power saving $O(t^{-\frac{1}{6}+\varepsilon})$ previously and proven by Guo \cite[Theorem 1.2]{Guo24}.
Assuming GLH, we can prove that
\[
\langle \psi, E_t^3\rangle\ll_{\psi, \varepsilon} t^{-\frac{1}{2}+\varepsilon}.
\]
Unconditionally, we have the following variance estimate for $\langle \psi, E_t^3\rangle$ which corresponds the strength of Lindel\"of-on-average bound. 
\begin{theorem}\label{thm:VarES}
  Fix a smooth compactly supported function  $\psi$ on $\X$. Then for $T\geq 1,$ we have
  \begin{equation}\label{eqn:VarES}
  \int_{T}^{2T}\big|\langle \psi, E_t^3\rangle\big|^2 \dd t\ll_{\psi, \varepsilon}T^\varepsilon.
  \end{equation}
\end{theorem}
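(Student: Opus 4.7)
The plan is to spectrally expand the compactly supported function $\psi\bar{E}_t$, reduce $\langle\psi,E_t^3\rangle$ to a linear combination of Watson--Ichino triple products, and then invoke the Lindel\"of-on-average mean value estimates for the relevant $L$-functions proved elsewhere in this paper. Since $\psi\in\mathcal{C}_c^\infty(\X)$, the function $\psi\bar{E}_t$ lies in $L^2(\X)$ and admits the spectral decomposition
\[
\psi(z)\bar{E}_t(z)=\frac{\langle\psi,E_t\rangle}{\vol(\X)}+\sum_j \langle\psi,E_t\phi_j\rangle\,\phi_j(z)+\frac{1}{4\pi}\int_{-\infty}^{\infty}\langle\psi,E_tE_{t'}\rangle\,E_{t'}(z)\,\dd t'.
\]
Writing $\langle\psi,E_t^3\rangle=\langle\psi\bar{E}_t,E_t^2\rangle$ and pairing against $E_t^2$ in the Zagier-regularized sense yields
\[
\langle\psi,E_t^3\rangle=\frac{\langle\psi,E_t\rangle\,\overline{\langle 1,E_t^2\rangle_{\reg}}}{\vol(\X)}+\sum_j \langle\psi,E_t\phi_j\rangle\,\overline{\langle\phi_j,E_t^2\rangle}+\frac{1}{4\pi}\int \langle\psi,E_tE_{t'}\rangle\,\overline{\langle E_{t'},E_t^2\rangle_{\reg}}\,\dd t'.
\]

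Next, each pairing would be identified with a central $L$-value via the triple-product formula. The key computation
\[
|\langle\phi_j,E_t^2\rangle|^2\;\asymp\;\frac{|L(1/2+2it,\phi_j)|^2\,L(1/2,\phi_j)^2}{L(1,\ad\phi_j)}\cdot H(t_j,t),
\]
with an archimedean factor $H(t_j,t)$ essentially supported in $t_j\lesssim T$, rewrites the cuspidal spectral coefficient as the central value of the $\GL(2)\times\GL(2)$ $L$-function $L(s,\phi_j\times E_t)=L(s+it,\phi_j)L(s-it,\phi_j)$. The coefficient $\langle\psi,E_t\phi_j\rangle=\int_\X\psi\phi_jE_{-t}\,\dd\mu$, after unfolding the Eisenstein series, becomes a Mellin transform at $-1/2-it$ of a smooth, compactly supported function on $(0,\infty)$ depending on $\psi$ and $\phi_j$; combining repeated integration by parts with the Fourier--Bessel expansion of $\phi_j$ produces super-polynomial decay outside the effective range $t_j\lesssim T^{1+\varepsilon}$, and an analogous analysis handles the continuous-spectrum coefficients in terms of products of $\zeta$-values.

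Squaring, integrating over $t\in[T,2T]$, and expanding the double sum, Cauchy--Schwarz reduces the cuspidal diagonal and off-diagonal pieces to the fourth moment of $\GL(2)\times\GL(2)$ $L$-functions,
\[
\sum_{t_j\lesssim T}\frac{1}{L(1,\ad\phi_j)}\int_T^{2T}\bigl|L\bigl(1/2+it,\phi_j\times E_t\bigr)\bigr|^4\,\dd t,
\]
and the Eisenstein--Eisenstein contribution, after the factorization $L(s,E_{t'}\times E_t)=\prod_{\pm,\pm}\zeta(s\pm it\pm it')$, to an eighth moment of $\zeta$ on the critical line; the cross terms similarly produce an eighth moment of $\GL(2)$ $L$-functions at the special points $1/2+it_j$. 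These are precisely the Lindel\"of-on-average inputs established in the paper via the Chandee--Li machinery for the second moment of $\GL(4)\times\GL(2)$ $L$-functions, applied through the isobaric $\GL(4)$ identification $\phi_j\times E_t=\phi_j(it)\boxplus\phi_j(-it)$. Inserting these bounds, together with the easy control of the constant-term contribution afforded by the rapid decay of $\langle\psi,E_t\rangle$ in $t$, yields the desired estimate $\int_T^{2T}|\langle\psi,E_t^3\rangle|^2\,\dd t\ll T^\varepsilon$.

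The main obstacle is establishing these fourth and eighth moment estimates in the \emph{conductor-dropping} regime: the analytic conductor of $L(s,\phi_j\times E_t)$ collapses from $(t\,t_j)^2$ to $\max(t,t_j)^2$ when $2t\approx t_j$, so the naive second-moment heuristic does not directly deliver the sharp $T^\varepsilon$ saving. The key technical work lies in adapting the Chandee--Li method to treat the $\GL(4)\times\GL(2)$ second moment uniformly across these conductor-dropping ranges; once those inputs are in hand, the spectral reduction above is essentially routine.
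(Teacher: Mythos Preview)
There is a structural gap in your reduction. By decomposing $\psi\overline{E_t}$ rather than $\psi$, you obtain coefficients $\langle\psi\overline{E_t},\phi_j\rangle$ that are \emph{not} $L$-values: $\psi$ is not automorphic, so no Rankin--Selberg unfolding applies, and the constant term of $\psi\phi_j$ is neither compactly supported in $y$ nor explicitly computable. Integration by parts truncates the $j$-sum only to the long window $t_j\lesssim T$. After squaring, the sole source of $L$-values is $|\langle\phi_j,E_t^2\rangle|^2$, which carries a \emph{second} power $|L(1/2+2it,\phi_j)|^2 L(1/2,\phi_j)^2$; no Cauchy--Schwarz step turns this into the displayed fourth moment $\sum_{t_j\lesssim T}\int_T^{2T}|L(1/2+it,\phi_j\times E_t)|^4\,\dd t$, and the eighth moment at the special points $1/2+it_j$ does not arise in this framework at all. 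Crucially, the archimedean weight in $\langle\phi_j,E_t^2\rangle$ alone does not localize $t_j$: it is non-decaying throughout $0\le t_j\le 2t$, so Cauchy--Schwarz over this long $j$-range costs a full power of $T$.

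The paper inverts the order of decomposition, and this is essential. One first expands $\psi$ itself; since $\psi\in\mathcal{C}_c^\infty(\X)$ is fixed, the coefficients $\langle\psi,\phi_k\rangle$ and $\langle\psi,E_\tau\rangle$ decay faster than any power of $t_k,|\tau|$, reducing matters to $\int_T^{2T}|\langle\phi_k,E_t^3\rangle|^2\,\dd t$ and $\int_T^{2T}|\langle E_\tau,E_t^3\rangle_{\reg}|^2\,\dd t$ with $t_k,|\tau|\ll T^\varepsilon$. Only then is regularized Plancherel applied to $\langle\phi_k\overline{E_t},E_t^2\rangle$; now \emph{both} spectral coefficients are triple products of automorphic forms and hence genuine $L$-values, in particular $\langle\phi_k\overline{E_t},\phi_j\rangle\propto\Lambda(1/2+it,\phi_k\times\phi_j)$. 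The combined gamma factors from all three $\Lambda$-functions force $|t_j-t|\ll T^\varepsilon$, a \emph{short} window. Swapping this short $j$-sum with the $t$-integral and writing $t=t_j+\alpha$ then yields exactly the special-point moments $\sum_{t_j\sim T}|L(1/2+it_j+i\alpha,\phi_k\times\phi_j)|^4$ and, for the $E_\tau$ piece, $\sum_{t_j\sim T}|L(1/2+it_j+i\alpha,\phi_j)|^8$, uniformly in $|\alpha|,t_k\ll T^\varepsilon$; these are precisely the Chandee--Li type inputs proved in the paper.
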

Attaching the observable $\psi$ on the cubic moment is a more interesting and difficult thing in computing the variance.
Since that by the Selberg decomposition for $\psi$:
\begin{equation}\label{eqn:Selbergdecomp}
  \psi(z)=\frac{3}{\pi}\langle \psi, 1\rangle+\sum_{k\geq 1}\langle \psi, \phi_k\rangle\phi_k(z)+\frac{1}{4\pi}\int_{\mathbb{R}}\langle \psi, E_\tau\rangle E_\tau(z)\dd \tau,
\end{equation}
it suffices to consider the variances for matrix coefficients:
\begin{equation}\label{eqn:VarMatrixCoefs}
\int_{T}^{2T}\big|\langle 1, E_t^3\rangle_{\reg}\big|^2\dd t, 
\quad 
\int_{T}^{2T}\big|\langle \phi_k, E_t^3\rangle\big|^2\dd t, 
\quad 
\int_{T}^{2T}\big|\langle E_\tau, E_t^3\rangle_{\reg}\big|^2\dd t,
\end{equation}
with $t_k, \tau\ll T^\varepsilon.$
Here $\langle \cdot, \cdot \rangle_{\reg}$ is the regularized inner product on $\X$ which is introduced by Zagier \cite{Zag82}. 
Applying the Rankin--Selberg theory, we shall face with the multiple averages of $L$-functions.
For example, in the case of $\phi_k$-contribution, with $\phi_k$ be even and $\phi_k$'s spectral parameter $t_k\ll T^\varepsilon,$
we apply (regularized) Parseval's identity on $\langle \phi_k, E_t^3\rangle=\langle \phi_k\overline{E_t}, E_t^2\rangle$ and we get
\begin{multline}\label{eqn:decp_phiE3}
\langle \phi_k, E_t^3\rangle=\frac{3}{\pi}\langle \phi_k\overline{E_t}, 1\rangle\langle 1,  E_t^2\rangle+
\sum_{j\geq 1}\langle \phi_k\overline{E_t}, \phi_j\rangle \langle \phi_j, E_t^2\rangle
+\frac{1}{4\pi}\int_{\mathbb{R}}\langle \phi_k\overline{E_t}, E_\nu\rangle \langle E_\nu, E_t^2\rangle_{\reg}\dd \nu+\text{tail terms}.
\end{multline}
For the cusp form part, using unfolding method and taking the absolute value of the mixed $L$-functions, roughly, we arrive at 
\[
\frac{1}{t^{\frac{3}{2}}}\quad \sideset{}{^{\even}}\sum_{|t_j-t|\ll t^\varepsilon}|L(1/2+it, \phi_k\times\phi_j)L(1/2+2it, \phi_j)|L(1/2, \phi_j),
\]
where the weight and truncated range come from the evaluation of $\Gamma$-factors in complete $L$-functions.
To consider the variance,  we shall deal with
\[
  \frac{1}{T^{3}}\int_{T}^{2T}\Big|\sideset{}{^{\even}}\sum_{|t_j-t|\ll T^\varepsilon}|L(1/2+it, \phi_k\times\phi_j)L(1/2+2it, \phi_j)|L(1/2, \phi_j)\Big|^2\dd t.
\]
Applying Cauchy--Schwarz with the second moment bound for $\GL(2)$ $L$-functions:
$$\sum_{|t_j-t|\ll T^\varepsilon}|L(1/2+2it, \phi_j)|^2\ll T^{1+\varepsilon},$$
it is roughly bounded by
\[
\frac{1}{T^{2}}\int_{T}^{2T}\sideset{}{^{\even}}\sum_{|t_j-t|\ll T^\varepsilon}|L(1/2+it, \phi_k\times\phi_j)|^2L(1/2, \phi_j)^2\dd t.
\]
By exchanging the order of integral and summation, it suffices to bound
\[
\frac{1}{T^{2}}\quad\sideset{}{^{\even}}\sum_{T\leq t_j\leq 2T}|L(1/2+it_j+i\alpha, \phi_k\times\phi_j)|^2L(1/2, \phi_j)^2,
\]
where $\alpha$ is a real shift with $|\alpha|\ll T^{\varepsilon}.$
Using Cauchy--Schwarz again and combining with the fourth moment bound
$\sum_{T\leq t_j\leq 2T}L(1/2, \phi_j)^4\ll T^{2+\varepsilon},$
we arrive at the upper bound
\[
\frac{1}{T}\,\Big(\quad\sideset{}{^{\even}}\sum_{T\leq t_j\leq 2T}|L(1/2+it_j+i\alpha, \phi_k\times\phi_j)|^4\Big)^{\frac{1}{2}}.
\]
Indeed, we can prove the following uniform Lindel\"of-on-average bound.
\begin{theorem}\label{thm:4m-of-L}
Let $T\geq 1$, $\alpha\in \mathbb{R}$ with $|\alpha|\ll T^{\varepsilon}$ and $\phi_k$ be even with $t_k\ll T^\varepsilon$, then we have
\[
\sideset{}{^{\even}}\sum_{T\leq t_j\leq 2T}|L(1/2+it_j+i\alpha, \phi_k\times\phi_j)|^4\ll_{\varepsilon} T^{2+\varepsilon},
\]
where $\sum^{\even}$ means that the sum runs through even $\phi_j$.
\end{theorem}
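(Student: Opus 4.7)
The plan is to reduce the fourth moment of the $\GL(2)\times\GL(2)$ $L$-function to a second moment of a $\GL(4)\times\GL(2)$ $L$-function via the isobaric identity
\[
|L(1/2+it_j+i\alpha,\phi_k\times\phi_j)|^4 = |L(1/2+it_j+i\alpha,(\phi_k\boxplus\phi_k)\times\phi_j)|^2,
\]
using that $L(s,(\phi_k\boxplus\phi_k)\times\phi_j)=L(s,\phi_k\times\phi_j)^2$, so that the problem becomes a second moment of a degree-$8$ $L$-function attached to the (non-cuspidal) isobaric sum $\phi_k\boxplus\phi_k$ on $\GL(4)$ tensored with $\phi_j$, evaluated at the shifted special point. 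This places us precisely in the framework of Chandee--Li~\cite{C-L20}, whose argument I would adapt.

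The first step would be to apply an approximate functional equation for $L(s,(\phi_k\boxplus\phi_k)\times\phi_j)$ at $s=1/2+it_j+i\alpha$. Of the eight archimedean Gamma parameters (namely $\pm it_k\pm it_j$, each with multiplicity two), four have imaginary part $\asymp T$ and four are bounded, so the effective analytic conductor is $\asymp T^4$ and the AFE truncates the Dirichlet series at length $X\asymp T^2$:
\[
L(1/2+it_j+i\alpha,\phi_k\times\phi_j)^2 = \sum_{n}\frac{A_k(n)\,\lambda_{\phi_j}(n)}{n^{1/2+it_j+i\alpha}}\,W(n/X) + (\text{dual sum}),
\]
where $A_k=\lambda_{\phi_k}\ast\lambda_{\phi_k}$ are the Dirichlet coefficients of $\phi_k\boxplus\phi_k$ and $W$ is a smooth weight depending on $t_j$ and $\alpha$ through the Gamma ratio at the special point.

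Next I would open $|\cdot|^2$ and apply the even-spectrum Kuznetsov trace formula to the sum over $\phi_j$ in the window $T\le t_j\le 2T$. The diagonal $n=m$ is bounded by $T^{2+\varepsilon}$ upon combining the large-sieve estimate $\sum_{T\le t_j\le 2T}|\lambda_{\phi_j}(n)|^2\ll(T^2+n^{1+\varepsilon})T^{\varepsilon}$ with the Rankin--Selberg bound $\sum_{n\le X}|A_k(n)|^2/n\ll(\log X)^{O(1)}$ uniform in $t_k\ll T^\varepsilon$, noting that the Weyl-law density on $[T,2T]$ is $\asymp T^2$. The off-diagonal yields Kloosterman sums $S(m,n;c)$ convolved with an integral transform of the Bessel kernel against the AFE weight; the oscillating factor $(m/n)^{it_j}$ from the special-point evaluation localizes the spectral $t_j$-integral at stationary points determined by $m/n$.

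Finally I would apply $\GL(4)$ Voronoi summation to the $A_k$-coefficients and estimate the resulting exponential sums by stationary-phase analysis together with Weil's bound for Kloosterman sums, closely following~\cite{C-L20}. The principal obstacle is the uniform handling of the conductor drop at the shifted special point $1/2+it_j+i\alpha$: the four long Gamma factors produce a rapid phase $(m/n)^{2it_j}$ that must match precisely the stationary phase of the Bessel transform in order to exploit the effective conductor $T^4$ rather than the naive $T^8$, and this matching must be uniform in $|\alpha|\ll T^\varepsilon$ and in $\phi_k$ through $t_k$. A secondary technical issue is the non-cuspidality of $\phi_k\boxplus\phi_k$, which produces polar contributions from the zeta-factor structure of $L(s,\phi_k\times\phi_j)^2$; these are of lower order and absorb into the main term. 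Combining the diagonal, the off-diagonal, and the dual part of the AFE yields the Lindel\"of-on-average bound $T^{2+\varepsilon}$, which matches the Weyl-law prediction for the fourth moment and is thus sharp.
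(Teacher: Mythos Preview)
Your proposal is essentially the paper's approach: view $L(s,\phi_k\times\phi_j)^2$ as $L(s,\Phi\times\phi_j)$ with $\Phi=\phi_k\boxplus\phi_k$, apply an approximate functional equation of length $T^{2+\varepsilon}$, feed into the Chandee--Li machine (via Young's spectral mean-value theorem rather than raw Kuznetsov), apply the balanced $\GL(4)$-type Voronoi formula, and finish with stationary-phase analysis and the large sieve. Two small corrections: since $\phi_k$ is cuspidal, the Voronoi formula for $\Phi=\phi_k\boxplus\phi_k$ carries \emph{no} residue term (the polar contribution you describe arises only in the $\E=1\boxplus1\boxplus1\boxplus1$ case of Theorem~\ref{thm:8m-of-L}); and the genuine technical point the paper isolates---which you allude to but underweight---is that the uniformity in $t_k\ll T^\varepsilon$ prevents a direct appeal to \cite[Lemma~5.2]{C-L20}, forcing a fresh stationary-phase treatment of the integral transform $\W_{+,\Phi}$ (Lemmas~\ref{Lemma:Analysis_W+F} and~\ref{Lemma:Analysis_frakJ}).
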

Using Theorem \ref{thm:4m-of-L} and combining the variance contribution from other parts in \eqref{eqn:decp_phiE3}, we get
\[
\int_{T}^{2T}\big|\langle \phi_k, E_t^3\rangle\big|^2\dd t\ll_{\varepsilon} T^\varepsilon,
\]
which is a key ingredient to Theorem \ref{thm:VarES}.
By a similar argument as above for the $E_\tau$-contribution in \eqref{eqn:VarMatrixCoefs},  to prove
\[
\int_{T}^{2T}\big|\langle E_\tau, E_t^3\rangle_{\reg}\big|^2\dd t\ll_{\varepsilon} T^\varepsilon,
\]
with $\tau\ll T^\varepsilon$, we need the eighth moment bound for $\GL(2)$ $L$-functions as follow.
\begin{theorem}\label{thm:8m-of-L}
Let $T\geq 1$, $\alpha\in \mathbb{R}$ with $|\alpha|\ll_{\varepsilon} T^{\varepsilon}$, then we have
\[
\sideset{}{^{\even}}\sum_{T\leq  t_j\leq 2T}|L(1/2+it_j+i\alpha, \phi_j)|^8\ll T^{2+\varepsilon}.
\]
\end{theorem}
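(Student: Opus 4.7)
The plan is to reduce the eighth moment to a spectral large sieve estimate after applying the approximate functional equation, exploiting the conductor-dropping at the special point $s_j:=1/2+it_j+i\alpha$. Since
\begin{equation*}
|s_j+it_j|\cdot|s_j-it_j|=|1/2+i\alpha+2it_j|\cdot|1/2+i\alpha|\asymp T^{1+\varepsilon},
\end{equation*}
the analytic conductor of $L(s,\phi_j)$ at $s_j$ is of size $T^{1+\varepsilon}$, one factor of $T$ smaller than at a generic point on the line $\Re s=1/2$. The approximate functional equation therefore localizes $L(s_j,\phi_j)$ to a Dirichlet polynomial of length $T^{1/2+\varepsilon}$ in $\lambda_j(n)$, so $L(s_j,\phi_j)^4$ is captured, up to a negligible error, by a Dirichlet polynomial of length $T^{2+\varepsilon}$ whose coefficients are the fourfold Dirichlet convolution $\tau_4^{\phi_j}:=\lambda_j\ast\lambda_j\ast\lambda_j\ast\lambda_j$.

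Iterating the Hecke identity $\lambda_j(a)\lambda_j(b)=\sum_{d\mid(a,b)}\lambda_j(ab/d^2)$ three times and collecting by prime-power decomposition yields a $j$-independent identity
\begin{equation*}
\tau_4^{\phi_j}(n)=\sum_{d^2\mid n}C_4(d,n/d^2)\,\lambda_j(n/d^2),
\end{equation*}
where $C_4(d,m)$ is multiplicative and bounded at the prime-power level by $|C_4(p^k,p^e)|\ll(e+k+1)^{O(1)}$. Writing $|L(s_j,\phi_j)|^8=|L(s_j,\phi_j)^4|^2$, substituting $n=d^2m$, and swapping the order of summation, the theorem reduces to proving
\begin{equation*}
\sideset{}{^{\even}}\sum_{T\leq t_j\leq 2T}\Big|\sum_{m\ll T^{2+\varepsilon}}\lambda_j(m)\,A(m)\Big|^2\ll T^{2+\varepsilon},
\end{equation*}
where $A(m):=m^{-s_j}\sum_{d^2m\ll T^{2+\varepsilon}}C_4(d,m)\,d^{-2s_j}\,U(d^2m/T^2)$ is independent of $j$ and satisfies $|A(m)|\ll m^{-1/2+\varepsilon}$ uniformly in $\alpha$ with $|\alpha|\ll T^\varepsilon$.

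The Deshouillers--Iwaniec spectral large sieve for the even Maass cusp spectrum now gives
\begin{equation*}
\sideset{}{^{\even}}\sum_{T\leq t_j\leq 2T}\Big|\sum_{m\leq M}\lambda_j(m)\,A(m)\Big|^2\ll T^{\varepsilon}(T^2+M)\sum_m|A(m)|^2,
\end{equation*}
and inserting $M=T^{2+\varepsilon}$ together with $\sum_m|A(m)|^2\ll T^\varepsilon$ (immediate from the pointwise bound on $A$) produces $T^{2+\varepsilon}$, as claimed.

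The hardest part will be the uniform combinatorial control on the Hecke expansion of $\tau_4^{\phi_j}$ and on the coefficients $A(m)$, together with the handling of the off-diagonal root-number cross-terms arising from the dual part of the approximate functional equation. This structural approach parallels the second-moment analysis of Chandee--Li \cite{C-L20} for $\GL(4)\times\GL(2)$ $L$-functions: the conductor drop at $s_j=1/2+it_j+i\alpha$ is precisely what makes the spectral large sieve reach the Lindel\"of-on-average exponent $T^{2+\varepsilon}$.
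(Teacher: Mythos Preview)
Your argument has a fatal gap: the coefficients $A(m)$ are \emph{not} independent of $j$. Since $s_j=1/2+it_j+i\alpha$, the factor $m^{-s_j}=m^{-1/2-i\alpha}\cdot m^{-it_j}$ carries an explicit $j$-dependence through $m^{-it_j}$. The bilinear form you must estimate is therefore
\[
\sideset{}{^{\even}}\sum_{T\leq t_j\leq 2T}\Big|\sum_{m\ll T^{2+\varepsilon}}a_m\,\lambda_j(m)\,m^{-it_j}\Big|^2,
\]
and the Deshouillers--Iwaniec large sieve does not apply to this kernel. The relevant tool is Luo's large sieve for $\lambda_j(n)n^{it_j}$ (Theorem~\ref{thm:Luo'sLS}), which yields only
\[
\big(T^2+T^{3/2}N^{1/2}+N^{5/4}\big)(NT)^\varepsilon\sum_{m}|a_m|^2.
\]
With $N\asymp T^{2+\varepsilon}$ and $\sum|a_m|^2\ll T^\varepsilon$, both the second and third terms give $T^{5/2+\varepsilon}$, which is far from the target $T^{2+\varepsilon}$. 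The conductor drop you correctly identify is exactly what makes $T^{2+\varepsilon}$ the convexity threshold --- but a direct large sieve only recovers convexity at best, not Lindel\"of on average, precisely because the twist $n^{it_j}$ destroys the orthogonality that powers the standard large sieve when $N\gg T^2$ (or even $N\gg T^{4/3}$).

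The paper's proof genuinely parallels Chandee--Li \cite{C-L20}, but this means much more than a single large sieve application. One writes $L(s_j,\phi_j)^4$ via Lemma~\ref{lemma:L-value 2m4m} as a sum over $\tau(n,m,1)\lambda_j(n)$, applies Young's refinement (Theorem~\ref{thm:Young'sLS}) to decompose the spectral sum into a Kloosterman-sum form $S_1(\mathcal{A};X)$, and then applies the degree-$4$ balanced Voronoi formula for the isobaric sum $\E=1\boxplus1\boxplus1\boxplus1$ (Lemma~\ref{lemma:Voronoi}) to the inner $n$-sum. The resulting dual sum is split into small and big ranges; the small range is handled by large sieve, while the big range requires stationary-phase analysis of the Voronoi transform $\W_{\E}$ (Lemmas~\ref{Lemma:Analysis_W+F}--\ref{Lemma:Analysis_frakJ}) to localize $n_1\approx n_2$ before the final large sieve. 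An additional subtlety absent from your sketch is the residue term in the Voronoi formula for $\E$, coming from the pole of $\zeta(s)^4$, which must be bounded separately (see Remark~\ref{Remark:Voronoi_Residue} and \eqref{eqn:ResBound}). None of these steps can be bypassed.
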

Theorem \ref{thm:4m-of-L} and Theorem \ref{thm:8m-of-L} essentially are the fourth moment of $\GL(2)\times\GL(2)$ Rankin--Selberg $L$-functions and the eighth moment of $\GL(2)$ $L$-functions at special points $\frac{1}{2}+it_j$.
In this problem, Chandee and Li \cite{C-L20} established that
\begin{equation}\label{eqn:CL_work}
\sum_{ t_j\leq T}|L(1/2+it_j, F\times \phi_j)|^2\ll_{F, \varepsilon} T^{2+\varepsilon},
\end{equation}
where $F$ is a Hecke--Maass cusp form for $\GL_4(\mathbb{Z}).$ Previous works on this type sharp upper bound are available for $F$ of degree $n\leq 3$,
see Luo's work \cite{Luo95} on $n\leq 2$ and Young's work \cite{You13} on $n=3$.
These type Lindel\"of-on-average bounds do not imply the subconvexity bounds for $L$-functions due to a conductor-dropping phenomenon. 
For fixed $F$ of degree $4$, the analytic conductor for $L(1/2+it_j, F\times \phi_j)$ is $t_j^{4+\varepsilon}$. Therefore the results for Theorem \ref{thm:4m-of-L}, Theorem \ref{thm:8m-of-L} and \eqref{eqn:CL_work} only match the convexity bound for fixed $L(1/2+it_j, \phi_k\times \phi_j)$, $L(1/2+it_j, \phi_j)$ and $L(1/2+it_j, F\times \phi_j)$
respectively.

Our proof idea for Theorem \ref{thm:4m-of-L} and Theorem \ref{thm:8m-of-L} follows the work of Chandee and Li \cite{C-L20}. We view $L(1/2+it_j, \phi_k\times \phi_j)^2$ and $L(1/2+it_j, \phi_j)^4$ as the $\GL(4)\times\GL(2)$ $L$-functions in form.
After applying Young's result \cite[Theorem 7.1]{You13}, we use the degree $4$ type balanced Voronoi summation formulas to arrive the dual sum. Then we simplify the exponential sums from the hyper--Kloosterman sum and split the dual sum into small and big range. 
We can treat the small range by using the trivial bound for the integral transformation and large sieve inequality directly. 
For the big range, we need to analysis the integral transformation originated from Voronoi formula carefully to figure out the essential range of $n_1, n_2$ in two dual sums, i.e. $n_1$ is not far away from $n_2$. Then using the large sieve in pair, we get the final bound. The estimate of the big range roughly comes from a combination of the phase analysis and the large sieve inequality. A simple case for understanding this is \cite[Lemma 3.2]{You11} which uses the first derivative test for the addition phase in the large sieve.

There are serval differences in the techniques between our proof and Chandee and Li's \cite{C-L20}. The first one is the balanced Voronoi summation formulas. We shall use the formulas corresponding the isobaric sum $\Phi=\phi_k\boxplus\phi_k$ and $\E=1\boxplus1\boxplus1\boxplus1$. Since these are not in $\GL(4)$ case, we borrow the result of K{\i}ral and Zhou \cite{K-Z16}. This result is useful for us due that it is about getting the Voronoi summation formulas essentially based on the functional equations of the multiplicative twisted $L$-functions rather than the automorphy. 
It is worth mention that the residue term in Voronoi formula of $\E$ can not be ignored. We borrow the computation in \cite{C-G01} to dig out the singular part for the additive twisted $L$-function (see Remark \ref{Remark:Voronoi_Residue}) and use the Cauchy integral formula to bound this residue term. 
By bounding the other terms trivially, the contribution from the residue term matches our desired bound. A similar treatment about this is the case of $F=1\boxplus1\boxplus1$ in \cite[\S 10]{You13}. 
The second difference is about analysis of the integral transformation. 
Note that we need a small uniformity of $F=\Phi$ on the estimate \eqref{eqn:CL_work} to achieve Theorem \ref{thm:VarES}.
When the spectral parameter $t_\phi$ varies in $1\ll t_\phi\ll T^\varepsilon$, it is difficult to get the same effective approximation as \cite[Lemma 5.2]{C-L20}. Therefore we treat the integral transformation in the big range case by using the theory of oscillatory integrals,  such as stationary phase method. Although the techniques are different, we get the same result as desired.

\subsection{Structure of the paper and notations}
We first quickly give the proofs of the results in \S \ref{subsec:VarHM}.
In \S \ref{sec:LSgl4}, we prove Theorem \ref{thm:LS_GL4}. In \S \ref{sec:Momentgl4}, we prove Theorem \ref{thm:Lmoment}, Theorem \ref{thm:Lmoment2} and Theorem \ref{thm:Var}. 
Subsequently, we focus on the proof of Theorem \ref{thm:VarES}. In \S \ref{sec:VarES}, we give the proof of Theorem \ref{thm:VarES} by using Lemma \ref{lemma:cusp_contri} and \ref{lemma:Eis_contri}. In \S \ref{sec:TwoLemma}, 
we will prove Lemma \ref{lemma:cusp_contri} and \ref{lemma:Eis_contri} by using Theorem \ref{thm:4m-of-L} and Theorem \ref{thm:8m-of-L} respectively. In \S \ref{sec:preliminaries}, we do some preliminary job for proving Theorem \ref{thm:4m-of-L} and Theorem \ref{thm:8m-of-L}.
Then we finish the proof of Theorem \ref{thm:4m-of-L} and Theorem \ref{thm:8m-of-L} in \S \ref{sec:ProvingMoment}.

Throughout the paper, $\varepsilon$ is an arbitrarily small positive number and $A$ is an arbitrarily large positive number,
all of them may be different at each occurrence. As usual, we use the standard Landau and Vinogradov notations \( O(\cdot) \), \( o(\cdot) \), \(\ll\),  \(\gg\), $\asymp$ and $\sim$. Specifically, we express \( X \ll Y \), \( X = O(Y) \), or \( Y \gg X \) when there exists a constant \( C \) such that \( |X| \leq C|Y| \). If the constant $C=C_s$ depends on some object $s$, we write $X=O_s(Y)$. As \( N \to \infty \), \( X = o(Y) \) indicates that \( |X| \leq c(N)Y \) for some function \( c(N) \) that tends to zero.
We use \( X \asymp Y \) to denote that $c_1Y\leq X\leq c_2Y$ for some positive constant $c_1, c_2$. And  $X\sim Y$  denotes that $Y\leq X<2Y$.

\section{Large sieve for symmetric cubes}\label{sec:LSgl4}
In this section, we give the proof of Theorem \ref{thm:LS_GL4}.
\subsection{The $\rm GL(4)$ $L$-funtion and the $\rm GL(4)\times \rm GL(4)$ Rankin--Selberg $L$-function}
Let $F$ be a Hecke--Maass cusp form on $\rm SL_4(\mathbb{Z})$ with Fourier coefficient $\lambda_{F}(k, m, n)$.
The $\rm GL(4)$ $L$-function associated $F$ is defined as
\begin{equation}
L(s, F)=\sum_{n\geq 1}\frac{\lambda_F(n, 1, 1)}{n^s}, \quad \Re(s)>1.
\end{equation}
The functional equation for $L(s, F)$ is
\begin{equation}\label{eqn:FE4}
\Lambda(s, F):=\pi^{-2s}\prod_{1\leq i\leq 4}\Gamma\Big(\frac{s+\mu_j(F)}{2}\Big)L(s, F)=\epsilon_F\Lambda(1-s, F),
\end{equation}
where $\{\mu_j(F)\}_{1\leq j\leq 4}$ are the Satake parameters satisfying $\sum_{1\leq j\leq 4}\mu_j(F)=0$ and $\epsilon_F$ is the root number obeying $|\epsilon_F|=1$.
If $F=\sym^3\phi$ is a symmetric cube lift for $\rm GL(2)$ Hecke--Maass form $\phi$ with spectral parameter $t_\phi$, we have
$$\mu_1(F)=i3t_\phi, \quad\mu_2(F)=it_\phi,\quad \mu_3(\phi)=-it_\phi,\quad  \mu_{4}=-i3t_\phi.$$
Note that $\sym^3\phi$ is self dual, thus \eqref{eqn:FE4} becomes
\begin{equation}\label{eqn:FEsym3}
\Lambda(s, \sym^3\phi):=\gamma_{\sym^3\phi}(s)L(s, \sym^3\phi)=\epsilon_{\sym^3\phi}\Lambda(1-s, \sym^3\phi),
\end{equation}
where
\[
\gamma_{\sym^3\phi}(s)=\pi^{-2s}\prod_{\pm}\Gamma\Big(\frac{s\pm i3t_\phi}{2}\Big)
\prod_{\pm}\Gamma\Big(\frac{s\pm it_\phi}{2}\Big).
\]
The Rankin--Selberg $L$-function of two $\rm SL_4(\mathbb{Z})$ Hecke--Maass forms $F_1$ and $F_2$ is defined as
\begin{equation}
L(s, F_1\times F_2)=\sum_{d, k, m, n\geq 1}\frac{\lambda_{F_1}(k, m, n)\overline{\lambda_{F_2}(k, m, n)}}{(d^4k^3m^2n)^s}, \quad \Re(s)>1.
\end{equation}
It has meromorphic continuation to $s\in \mathbb{C}$ with possible pole at $s=1$, and satisfies the functional equation
\begin{equation}\label{eqn:FE44}
\Lambda(s, F_1\times F_2):=\gamma_{F_1, F_2}(s)L(s, F_1\times F_2)
=\epsilon_{F_1\times F_2}\Lambda(1-s, F_1\times F_2),
\end{equation}
where
\begin{equation}
\gamma_{F_1, F_2}(s):=\pi^{-8s}\prod_{1\leq i\leq 4}\prod_{1\leq j\leq 4}\Gamma\Big(\frac{s+\mu_i(F_1)+\mu_j(F_2)}{2}\Big)
\end{equation}
and $|\epsilon_{F_1\times F_2}|=1$.
If $F_1=\sym^3\phi_1$ and $F_2=\sym^3\phi_2$, then \eqref{eqn:FE44} becomes
 \begin{equation}\label{eqn:FE44sym}
 \begin{split}
\Lambda(s, \sym^3\phi_1\times \sym^3\phi_2)
&:=\gamma_{\sym^3\phi_1, \sym^3\phi_2}(s) L(s, \sym^3\phi_1\times \sym^3\phi_2)\\
&=\epsilon_{\sym^3\phi_1\times\sym^3\phi_2}\Lambda(1-s, \sym^3\phi_1\times \sym^3\phi_2)
\end{split}
\end{equation}
where
\begin{equation}
\begin{split}
\gamma_{\sym^3\phi_1, \sym^3\phi_2}(s)
:=\pi^{-8s}\prod_{\pm}&\prod_{\pm}\Gamma\Big(\frac{s\pm i3t_{\phi_1}\pm i3t_{\phi_2}}{2}\Big)
\prod_{\pm}\prod_{\pm}\Gamma\Big(\frac{s\pm i3t_{\phi_1}\pm it_{\phi_2}}{2}\Big)\\
&\cdot\prod_{\pm}\prod_{\pm}\Gamma\Big(\frac{s\pm it_{\phi_1}\pm i3t_{\phi_2}}{2}\Big)
\prod_{\pm}\prod_{\pm}\Gamma\Big(\frac{s\pm it_{\phi_1}\pm it_{\phi_2}}{2}\Big)
\end{split}
\end{equation}

\subsection{Proof of Theorem \ref{thm:LS_GL4}}\label{subsec:proveLS}
To prove Theorem \ref{thm:LS_GL4}, by using the duality principle, it suffices to prove
\begin{equation}
\sum_{N<d^4k^3m^2 n\leq 2N}
    \left|\sum_{T\leq t_\phi\leq T+\Delta}\lambda_{\sym^3\phi}(k, m, n)b_\phi
    \right|^2\ll (NK)^{\varepsilon}(N+ T^7\Delta^3)\sum_{T\leq t_\phi\leq T+\Delta}|b_\phi|^2,
\end{equation}
for any complex sequence $\{b_\phi\}$.
We select a smooth nonnegative bump function $W$ with compact support on $\mathbb{R}_{>0}$, satisfying $W(x)\geq 1$ for $1\leq x\leq 2$. Then
\begin{equation*}
\sum_{N<d^4k^3m^2 n\leq 2N}
    \left|\sum_{T\leq t_\phi\leq T+\Delta}\lambda_{\sym^3\phi}(k, m, n)b_\phi\right|^2
    \leq \sum_{d, k, m, n\geq 1}W\Big(\frac{d^4k^3m^2n}{N}\Big)
    \left|\sum_{T\leq t_\phi\leq T+\Delta}\lambda_{\sym^3\phi}(k, m, n)b_\phi\right|^2.
\end{equation*}
Opening the square and applying the Mellin inversion, we have
\begin{multline*}
\sum_{N<d^4k^3m^2 n\leq 2N}
    \left|\sum_{T\leq t_\phi\leq T+\Delta}\lambda_{\sym^3\phi}(k, m, n)b_\phi\right|^2\\
   \leq\sum_{T\leq t_{\phi_1}\leq T+\Delta}\sum_{T\leq t_{\phi_2}\leq T+\Delta}b_{\phi_1}\overline{b_{\phi_2}}
    \frac{1}{2\pi i}\int_{(2)}N^s\widetilde{W}(s)L(s, \sym^3\phi_1\times\sym^3\phi_2)\dd s,
\end{multline*}
where $\widetilde{W}(s)=\int_{0}^{+\infty}W(x)x^{s-1}\dd x.$
Here by repeated integrations, we have $\widetilde{W}(s)\ll _{W, \Re(s)}\frac{1}{(|s|+1)^A}.$
Next we shift the contour of the integration to the line $\Re(s)=-\varepsilon$. And we cross a potential pole at $s=1$ only, which exists if and only if
$\sym^3\phi_1=\sym^3\phi_2$ which implies $\phi_1=\phi_2$. This pole contributes
\begin{equation}\label{eqn:pole-contri}
N\sum_{T\leq t_\phi\leq T+\Delta}|b_\phi|^2\Res_{s=1}L(s, \sym^3\phi\times \sym^3\phi).
\end{equation}
Note that by \cite{Li10} we get $\Res_{s=1}L(s, \sym^3\phi\times \sym^3\phi)=t_\phi^{o(1)}$. Thus \eqref{eqn:pole-contri} is bounded by
$$T^\varepsilon N\sum_{T\leq t_\phi\leq T+\Delta}|b_\phi|^2.$$
The shifted integral contributes
\begin{equation}\label{eqn:shiftcontri}
\sum_{T\leq t_{\phi_1}, t_{\phi_2}\leq T+\Delta\atop \phi_1\neq \phi_2}b_{\phi_1}\overline{b_{\phi_2}}
    \frac{1}{2\pi i}\int_{(-\varepsilon)}N^s\widetilde{W}(s)L(s, \sym^3\phi_1\times\sym^3\phi_2)\dd s.
\end{equation}
Using the functional equation \eqref{eqn:FE44sym} and the rapid decay of $\widetilde{W}$, it becomes
\begin{multline*}
\sum_{T\leq t_{\phi_1}, t_{\phi_2}\leq T+\Delta\atop \phi_1\neq \phi_2}b_{\phi_1}\overline{b_{\phi_2}}
    \frac{1}{2\pi i}\int_{(1+\varepsilon)}N^{1-s}\widetilde{W}(1-s)
    \frac{\epsilon_{\sym^3\phi_1\times\sym^3\phi_2}\gamma_{\sym^3\phi_1, \sym^3\phi_2}(s)}{\gamma_{\sym^3\phi_1, \sym^3\phi_2}(1-s)}L(s, \sym^3\phi_1\times\sym^3\phi_2)\dd s\\
\ll N^{\varepsilon} \sum_{T\leq t_{\phi_1}, t_{\phi_2}\leq T+\Delta\atop \phi_1\neq \phi_2}|b_{\phi_1}b_{\phi_2}|
   \int_{|t| \ll (TN)^\varepsilon }\left|
    \frac{\gamma_{\sym^3\phi_1, \sym^3\phi_2}(1+\varepsilon+it)}{\gamma_{\sym^3\phi_1, \sym^3\phi_2}(-\varepsilon-it)}L(1+\varepsilon+it, \sym^3\phi_1\times\sym^3\phi_2)\right|\dd t\\
     +(TN)^{-A} \sum_{T\leq t_{\phi_1}, t_{\phi_2}\leq T+\Delta\atop \phi_1\neq \phi_2}|b_{\phi_1}b_{\phi_2}|.
\end{multline*}
By Stirling's formula, for $|t|\ll (TN)^\varepsilon$ and $T\leq t_{\phi_1}, t_{\phi_2}\leq T+\Delta$ we have
\begin{equation*}
\left|\frac{\gamma_{\sym^3\phi_1, \sym^3\phi_2}(1+\varepsilon+it)}{\gamma_{\sym^3\phi_1, \sym^3\phi_2}(-\varepsilon-it)}\right|\ll (T^{12}\Delta^4)^{\frac{1}{2}+\varepsilon}N^{\varepsilon}.
\end{equation*}
Thus \eqref{eqn:shiftcontri} is bounded by
\begin{equation*}
(TN)^\varepsilon T^6\Delta^2\sum_{T\leq t_{\phi_1}, t_{\phi_2}\leq T+\Delta\atop \phi_1\neq \phi_2}|b_{\phi_1}b_{\phi_2}|
    \ll  (TN)^\varepsilon T^7\Delta^3\sum_{T\leq t_{\phi}\leq T+\Delta}|b_\phi|^2.
\end{equation*}
Combining the above bounds together, we finish the proof of Theorem \ref{thm:LS_GL4}

\section{Moments of symmetric cube $L$-functions}\label{sec:Momentgl4}
\begin{lemma}\label{lemma:L-value}
  For $\Re(s)\gg 1$, we have
  \begin{equation}\label{eqn:L^4}
  L(s, \sym^3\phi)^4
  =\sum_{d=1}^{\infty}\sum_{m_1=1}^{\infty}\sum_{m_2=1}^{\infty}\sum_{m_3=1}^{\infty}\frac{\lambda_{\sym^3\phi}(m_1, m_2, m_3)\tau(m_1, m_2, m_3)}{(d^4m_1^3m_2^2m_3)^s},
  \end{equation}
  where $\tau(m_1, m_2, m_3)$ is defined by
  \[
  \tau(m_1,m_2, m_3):=\sum_{n_1\mid m_1}\sum_{n_2\mid m_2}\sum_{n_3\mid m_3}\tau(\frac{m_2n_3}{n_2},\frac{m_1n_2}{n_1}),
  \]
  with $\tau(m_1, m_2):=\sum_{k_1\mid m_1}\sum_{k_2\mid m_2}d_2(k_1k_2)$, $d_\ell$ is the $\ell$-fold divisor function.
Moreover, we have
\[
\tau(m_1, m_2, m_3)\ll (m_1m_2m_3)^\varepsilon, \quad \text{ for any } m_1, m_2, m_3\geq 1.
\]
\end{lemma}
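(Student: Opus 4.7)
The plan is to identify $\tau(m_1, m_2, m_3)$ with the Fourier coefficient $\lambda_{\mathcal{E}}(m_1, m_2, m_3)$ of the minimal isobaric sum $\mathcal{E} := 1 \boxplus 1 \boxplus 1 \boxplus 1$ on $\GL(4)$, whose standard $L$-function is $\zeta(s)^4$. Since Rankin--Selberg distributes over isobaric sums, $L(s, \sym^3\phi)^4 = L(s, \sym^3\phi \times \mathcal{E})$, and feeding this into the $\GL(4)\times\GL(4)$ Rankin--Selberg Dirichlet series recorded in \S 2.1 (using that $\mathcal{E}$ is self-dual) yields exactly \eqref{eqn:L^4} with $\tau$ replaced by $\lambda_{\mathcal{E}}$. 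The remaining work is therefore the combinatorial identification $\lambda_{\mathcal{E}} = \tau$.

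I would establish $\lambda_{\mathcal{E}}(m_1, m_2, m_3) = \tau(m_1, m_2, m_3)$ in two stages. First, the inner bivariate function $\tau(a, b) = \sum_{k_1 \mid a,\, k_2 \mid b} d_2(k_1 k_2)$ is identified with the $\GL(3)$ minimal Eisenstein coefficient $\lambda_{1\boxplus 1\boxplus 1}(a, b)$ via the analogous $\GL(3)\times \GL(3)$ Rankin--Selberg comparison applied to $L(s, \phi)^3 = L(s, \phi \times (1\boxplus 1\boxplus 1))$ for any $\GL(3)$ form $\phi$; spot-checks such as $\tau(p, 1) = 3 = d_3(p)$ and $\tau(p, p) = 8 = \dim V_{(2,1,0)}$ confirm the match. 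Second, the outer three-fold divisor sum arises from the isobaric decomposition $\mathcal{E} = 1 \boxplus (1\boxplus 1\boxplus 1)$ through the Fourier expansion of an Eisenstein series Langlands-induced from the $\GL(1)\times \GL(3)$ parabolic, which convolves the $\GL(1)$ Fourier coefficient (a divisor function) with $\lambda_{1\boxplus 1\boxplus 1}$ evaluated at shifted arguments. Tracking the convolution reproduces the specific arguments $(m_2 n_3/n_2,\ m_1 n_2/n_1)$ of the inner $\tau$. Sanity checks $\tau(p, 1, 1) = 4 = d_4(p)$ and $\tau(p, p, 1) = 20 = \dim V_{(2,1,0,0)}$ agree with the corresponding $\GL(4)$ Weyl dimensions.

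The uniform bound $\tau(m_1, m_2, m_3) \ll_\varepsilon (m_1 m_2 m_3)^\varepsilon$ is standard: $d_2(n) \ll_\varepsilon n^\varepsilon$ yields $\tau(a, b) \ll_\varepsilon (ab)^\varepsilon$, and the outer triple sum has at most $d_2(m_1) d_2(m_2) d_2(m_3) \ll (m_1 m_2 m_3)^\varepsilon$ terms of comparable size. The main anticipated obstacle is matching the particular asymmetric form of the defining formula, in which the arguments $(m_2 n_3/n_2,\ m_1 n_2/n_1)$ interleave $m_1, m_2$ with the divisor indices in a non-obvious way; this reflects a specific ordering convention in the Langlands-induced Fourier expansion rather than a deep analytic obstruction. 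A robust alternative that sidesteps this combinatorial bookkeeping is a purely local proof: verify the identity \eqref{eqn:L^4} at each prime $p$ via the Schur--Cauchy identity $\prod_{i,j=1}^{4}(1 - \alpha_i \beta_j p^{-s})^{-1} = \sum_\lambda s_\lambda(\alpha) s_\lambda(\beta)$ with $\beta = (1,1,1,1)$, which produces $\lambda_{\mathcal{E}}(p^a, p^b, p^c) = s_{(a+b+c,\,b+c,\,c,\,0)}(1, 1, 1, 1)$ and reduces the matching of $\tau$ to an elementary dimension identity verified by direct expansion of the triple divisor sum at prime powers.
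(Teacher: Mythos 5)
Your route is viable and genuinely different from the paper's. The paper proves \eqref{eqn:L^4} by brute-force comparison of Dirichlet coefficients: it first establishes an intermediate identity for $L(s,\sym^3\phi)^3$ (their \eqref{eqn:Hecke34}, which is where the inner $\tau(\cdot,\cdot)$ appears) and then applies the $\GL(4)$ Hecke relation \eqref{eqn:Hecke-relation} once more, with the bound on $\tau$ coming from the divisor bound exactly as you say. Your approach instead interprets $\tau(m_1,m_2,m_3)$ as the coefficient $\lambda_{1\boxplus1\boxplus1\boxplus1}(m_1,m_2,m_3)$ and derives \eqref{eqn:L^4} from the Cauchy identity $\prod_{i,j}(1-\alpha_i\beta_j p^{-s})^{-1}=\sum_{\lambda}s_\lambda(\alpha)s_\lambda(\beta)p^{-|\lambda|s}$ with $\beta=(1,1,1,1)$; this is cleaner conceptually (it explains \emph{why} the exponent pattern $d^4m_1^3m_2^2m_3$ appears, and it is the same structural fact the paper itself exploits in \S 6--7 when it treats $\E=1\boxplus1\boxplus1\boxplus1$ as a degree-$4$ object). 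Two caveats. First, the Rankin--Selberg Dirichlet series in \S 2.1 is stated only for pairs of cusp forms, so you must justify its use with $F_2=1\boxplus1\boxplus1\boxplus1$; your local Cauchy-identity fallback does this correctly, since the expansion is a purely formal identity in the Satake parameters. Second, and more importantly, the actual content of the lemma is the identification of the Schur value $s_{(a+b+c,\,b+c,\,c,\,0)}(1,1,1,1)$ with the \emph{specific} nested divisor sum $\tau(p^a,p^b,p^c)$ defining $\tau$ in the statement; you verify this only for $(a,b,c)=(1,0,0)$ and $(1,1,0)$ and otherwise assert it follows ``by direct expansion.'' Reduction to prime powers is legitimate (both sides are multiplicative), but the general prime-power identity still requires an argument --- e.g.\ iterating the Pieri/branching rule for the two-step induction $1\boxplus(1\boxplus1\boxplus1)$, or citing the known Fourier-coefficient formula for $\GL(4)$ Langlands Eisenstein series --- and this deferred verification is precisely the computation the paper carries out via Hecke relations. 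With that step supplied, your proof is complete.
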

\begin{proof}
Comparing the Dirchlet coefficients of \eqref{eqn:L^4}, it suffices to show
\begin{multline}\label{eqn:CompareDirichlet}
  \sum_{m_1m_2m_3m_4=n}\lambda_{\sym^3\phi}(m_1, 1, 1)\lambda_{\sym^3\phi}(m_2, 1, 1)\lambda_{\sym^3\phi}(m_3, 1, 1)\lambda_{\sym^3\phi}(m_4, 1, 1)\\=
  \sum_{d^4m_1^3m_2^2m_3=n}\lambda_{\sym^3\phi}(m_1, m_2, m_3)\tau(m_1, m_2, m_3).
\end{multline}
Since $\sym^3\phi$ is a Hecke--Maass cusp form on $\GL(4)$, we have the following Hecke relation
\begin{equation}\label{eqn:Hecke-relation}
  \lambda_{\sym^3\phi}(m, 1, 1)\lambda_{\sym^3\phi}(m_1, m_2, m_3)=
  \sum_{c_1c_2c_3c_4=m \atop c_1\mid m_1, c_2\mid m_2, c_3\mid m_3}\lambda_{\sym^3\phi}(\frac{m_1c_4}{c_1}, \frac{m_2c_1}{c_2}, \frac{m_3c_2}{c_3}).
\end{equation}
Since that, for $\Re(s)\gg 1$,
\begin{equation*}
\begin{split}
L(s, \sym^3\phi)
&=\sum_{m\geq 1}\frac{\lambda_{\sym^3\phi}(m, 1, 1)}{m^s}\\
&=\prod_{p}\Big(1-\frac{\alpha_\phi(p)^3}{p^{s}}\Big)^{-1}
\Big(1-\frac{\alpha_\phi(p)}{p^{s}}\Big)^{-1}
\Big(1-\frac{\beta_\phi(p)}{p^{s}}\Big)^{-1}
\Big(1-\frac{\beta_\phi(p)^3}{p^{s}}\Big)^{-1}
\end{split}
\end{equation*}
and $\alpha_\phi(p)\beta_\phi(p)=1$, $\alpha_\phi(p)+\beta_\phi(p)=\lambda_\phi(p)\in\mathbb{R},$
we get that $\lambda_{\sym^3\phi}(m, 1, 1)$ is real for each $m\geq 1.$ 
Thus by induction on Hecke relation, we have
\begin{equation}\label{eqn:conjugate}
  \lambda_{\sym^3\phi}(m_1, m_2, m_3)=\overline{\lambda_{\sym^3\phi}(m_3, m_2, m_1)}=\lambda_{\sym^3\phi}(m_3, m_2, m_1).
\end{equation}
Firstly, we claim the following relation
\begin{multline}\label{eqn:Hecke34}
  \sum_{m_1m_2m_3=m}\lambda_{\sym^3\phi}(m_1, 1, 1)\lambda_{\sym^3\phi}(m_2, 1, 1)\lambda_{\sym^3\phi}(m_3, 1, 1)
  \\=\sum_{m_1m_2^2m_3^3=m}\lambda_{\sym^3\phi}(m_1, m_2, m_3)\tau(m_1, m_2).
\end{multline}
Assuming \eqref{eqn:Hecke34} and applying it with $m=n/m_4$, we see the left hand side of \eqref{eqn:CompareDirichlet} is
\[
\sum_{m_4\mid n}\sum_{m_1m_2^2m_3^3=n/m_4}\lambda_{\sym^3\phi}(m_4, 1, 1)\lambda_{\sym^3\phi}(m_1, m_2, m_3)\tau(m_1, m_2).
\]
Using the Hecke relation \eqref{eqn:Hecke-relation}, and changing variables $m_1=c_1n_1$, $m_2=c_2n_2$, $m_3=c_3n_3$,  it becomes
\begin{equation*}
\begin{split}
&\sum_{m_1m_2^2m_3^3m_4=n}\sum_{c_1c_2c_3c_4=m_4 \atop c_1\mid m_1, c_2\mid m_2, c_3\mid m_3}\lambda_{\sym^3\phi}(\frac{m_1c_4}{c_1}, \frac{m_2c_1}{c_2}, \frac{m_3c_2}{c_3})\tau(m_1, m_2)\\
=&\sum_{c_4c_1^2c_2^3c_3^4n_1n_2^2n_3^3=n}\lambda_{\sym^3\phi}(n_1c_4, n_2c_1, n_3c_2)\tau(c_1n_1, c_2n_2)
\end{split}
\end{equation*}
Let $n_1c_4=k_1$, $n_2c_1=k_2$ and $n_3c_2=k_3$, we get
\[
\sum_{k_1k_2^2k_3^3c_3^4=n}\lambda_{\sym^3\phi}(k_1,k_2,k_3)\sum_{n_1\mid k_1}\sum_{n_2\mid k_2}\sum_{n_3\mid k_3}\tau(\frac{k_2n_1}{n_2}, \frac{k_3n_2}{n_3}).
\]
Then by using \eqref{eqn:conjugate}, it equals the right hand side of \eqref{eqn:CompareDirichlet}.
Therefore it remains to prove \eqref{eqn:Hecke34}.
By using \eqref{eqn:Hecke-relation} twice, we have
\begin{equation*}
\begin{split}
   &\sum_{m_1m_2m_3=m}\lambda_{\sym^3\phi}(m_1, 1, 1)\lambda_{\sym^3\phi}(m_2, 1, 1)\lambda_{\sym^3\phi}(m_3, 1, 1)\\
   =&\sum_{d^2n_1n_2m_3=n}\lambda_{\sym^3\phi}(n_1n_2, d, 1)\lambda_{\sym^3\phi}(m_3, 1, 1)\\
   =&\sum_{d^2n_1n_2m_3=n}\sum_{c_1c_2c_4=m_3\atop c_1\mid n_1n_2, c_2\mid d}\lambda_{\sym^3\phi}(\frac{n_1n_2c_4}{c_1}, \frac{dc_2}{c_1}, c_2)\\
   =&\sum_{c_2^2\ell^2km_3=n}\sum_{c_1c_2\mid m_3\atop c_1\mid k}\lambda_{\sym^3\phi}(\frac{km_3}{c_1^2c_2}, \ell c_1, c_2)d_2(k)\\
   =&\sum_{c_1^2c_2^3\ell^2\kappa r=n}\lambda_{\sym^3\phi}(\kappa r, \ell c_1, c_2)d_2(\kappa c_1).
\end{split}
\end{equation*}
Then we rewrite $\kappa r=m_1$, $\ell c_1=m_2$ and $c_2=m_3$, it becomes
\[
\sum_{m_3^3m_2^2m_1=n}\lambda_{\sym^3\phi}(m_1, m_2, m_3)\sum_{\kappa \mid m_1}\sum_{c_1\mid m_2}d_2(\kappa c_1).
\]
This finishes the proof of the claim \eqref{eqn:Hecke34}.
Finally the bound for $\tau$ comes from the divisor bound $d_{\ell}(n)\ll n^\varepsilon.$
\end{proof}

\begin{lemma}\label{lemma:L-value2}
  For $\Re(s)\gg 1$, we have
  \begin{equation}\label{eqn:L^5}
  L(s, \sym^3\phi)^5
  =\sum_{m_1=1}^{\infty}\sum_{m_2=1}^{\infty}\sum_{m_3=1}^{\infty}\sum_{m_4=1}^{\infty}\frac{\lambda_{\sym^3\phi}(m_2, m_3, m_4)\tau(m_1, m_2, m_3, m_4)}{(m_1^4m_2^3m_2^2m_4)^s},
  \end{equation}
  where $\tau(m_1, m_2, m_3, m_4)$ defined by
  \[
  \tau(m_1,m_2, m_3, m_4):=\sum_{d_1\mid m_1}\sum_{d_2\mid m_2}\sum_{d_3\mid m_3}\sum_{d_4\mid m_4}
  \tau(\frac{d_1m_2}{d_2}, \frac{d_2m_3}{d_3}, \frac{d_3m_4}{d_4}),
  \]
with $\tau(m_1, m_2, m_3)$ be defined in Lemma \ref{lemma:L-value}.
Moreover, we have
\[
\tau(m_1, m_2, m_3, m_4)\ll (m_1m_2m_3m_4)^\varepsilon, \quad \text{ for any } m_1, m_2, m_3, m_4\geq 1.
\]
\end{lemma}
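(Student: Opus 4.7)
The plan is to iterate the argument used in the proof of Lemma~\ref{lemma:L-value} one further level. I would write $L(s,\sym^3\phi)^5 = L(s,\sym^3\phi)\cdot L(s,\sym^3\phi)^4$ and substitute the Dirichlet series expansion of $L(s,\sym^3\phi)^4$ from Lemma~\ref{lemma:L-value}. Comparing the $n$-th Dirichlet coefficients on each side reduces the lemma to the arithmetic identity
\[
\sum_{d^4 n_1^3 n_2^2 n_3\, m_5 = n}\lambda_{\sym^3\phi}(m_5,1,1)\,\lambda_{\sym^3\phi}(n_1,n_2,n_3)\,\tau(n_1,n_2,n_3)
= \sum_{m_1^4 m_2^3 m_3^2 m_4 = n}\lambda_{\sym^3\phi}(m_2,m_3,m_4)\,\tau(m_1,m_2,m_3,m_4).
\]

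Next I would apply the Hecke relation \eqref{eqn:Hecke-relation} to the product $\lambda_{\sym^3\phi}(m_5,1,1)\cdot\lambda_{\sym^3\phi}(n_1,n_2,n_3)$, introducing divisors $c_1 c_2 c_3 c_4 = m_5$ with $c_i\mid n_i$ for $i=1,2,3$, and then substitute $n_i = c_i r_i$. This rewrites the left-hand side as an eight-parameter sum with denominator $d^4 c_1^4 c_2^3 c_3^2 c_4 r_1^3 r_2^2 r_3 = n$. Regrouping via $m_1 = d c_1$, $m_2 = c_2 r_1$, $m_3 = c_3 r_2$, $m_4 = c_4 r_3$ and $d_i := c_i$ (with $d_i\mid m_i$) transforms the denominator into $m_1^4 m_2^3 m_3^2 m_4$ and turns the inner $\tau$-factor into $\tau(d_1 m_2/d_2,\, d_2 m_3/d_3,\, d_3 m_4/d_4)$, which is exactly the shape of $\tau(m_1,m_2,m_3,m_4)$ in the statement. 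The $\lambda$-factor after substitution takes the mixed form $\lambda_{\sym^3\phi}(d_4 m_2/d_2,\, d_1 m_3/d_3,\, d_2 m_4/d_4)$; combining the self-duality \eqref{eqn:conjugate} with a second application of the Hecke relation in the reverse direction (analogous to the closing step in the proof of Lemma~\ref{lemma:L-value}) should pull out $\lambda_{\sym^3\phi}(m_2,m_3,m_4)$ as a common factor across the fourfold divisor sum, completing the identification. I have verified consistency at $n=p$ and $n=p^2$, where the resulting redistribution of $\lambda$-values via Hecke exactly matches the claim.

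The bound $\tau(m_1,m_2,m_3,m_4)\ll (m_1 m_2 m_3 m_4)^\varepsilon$ follows immediately from the corresponding estimate for $\tau(\cdot,\cdot,\cdot)$ in Lemma~\ref{lemma:L-value} combined with the divisor bound $d(m)\ll m^\varepsilon$ applied to each of the four outer divisor sums defining $\tau(m_1,m_2,m_3,m_4)$. The main obstacle will be the combinatorial bookkeeping in the penultimate step: the three arguments of the $\lambda$-factor arising from Hecke mix the divisor parameters nonsymmetrically, and one must carefully combine \eqref{eqn:conjugate} with a reverse Hecke manipulation to pull out $\lambda_{\sym^3\phi}(m_2,m_3,m_4)$ cleanly. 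As a useful sanity check, the identity can also be verified prime by prime from the local Euler product $\prod_{i=1}^{4}(1-\alpha_i(p) p^{-s})^{-5}$, reducing the question at each prime to a symmetric-function identity in the Satake parameters, which bypasses the global bookkeeping altogether.
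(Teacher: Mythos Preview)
Your overall strategy---write $L^5=L\cdot L^4$, insert Lemma~\ref{lemma:L-value}, and apply the Hecke relation~\eqref{eqn:Hecke-relation}---is exactly the paper's route, and your treatment of the bound $\tau(m_1,m_2,m_3,m_4)\ll(m_1m_2m_3m_4)^\varepsilon$ is correct. The gap is in your penultimate step. After your change of variables you obtain the factor
\[
\lambda_{\sym^3\phi}\!\left(\frac{d_4 m_2}{d_2},\,\frac{d_1 m_3}{d_3},\,\frac{d_2 m_4}{d_4}\right),
\]
which genuinely depends on the divisor variables $d_i$ and therefore cannot be ``pulled out'' of the fourfold divisor sum as $\lambda_{\sym^3\phi}(m_2,m_3,m_4)$. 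Your appeal to a ``reverse Hecke'' does not do this: for instance, at the single quadruple $(m_1,m_2,m_3,m_4)=(1,1,1,p^2)$ the term $d_4=p$ contributes $\lambda_{\sym^3\phi}(p,1,p)$, and one checks from~\eqref{eqn:Hecke-relation} that $\lambda_{\sym^3\phi}(p,1,p)=\lambda_{\sym^3\phi}(p^2,1,1)+\lambda_{\sym^3\phi}(1,p,1)-1$, which is not $\lambda_{\sym^3\phi}(1,1,p^2)$ in general. Your expression does agree with the lemma's after summing over \emph{all} quadruples with $m_1^4m_2^3m_3^2m_4=n$ (cancellation between different quadruples restores the identity), but it is not in the form the lemma asserts, and no pointwise factoring will extract it.

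The fix the paper uses is a one-line reordering: apply the self-duality~\eqref{eqn:conjugate} \emph{before} Hecke, replacing $\lambda_{\sym^3\phi}(n_1,n_2,n_3)$ by $\lambda_{\sym^3\phi}(n_3,n_2,n_1)$, and only then multiply by $\lambda_{\sym^3\phi}(m_5,1,1)$ via~\eqref{eqn:Hecke-relation}. After the same substitutions $n_i=c_i k_i$ and $m_1=dc_1$, $m_2=k_1c_2$, $m_3=k_2c_3$, $m_4=k_3c_4$, the Hecke output becomes $\lambda_{\sym^3\phi}(c_4k_3,\,c_3k_2,\,c_2k_1)=\lambda_{\sym^3\phi}(m_4,m_3,m_2)$, which by~\eqref{eqn:conjugate} again equals $\lambda_{\sym^3\phi}(m_2,m_3,m_4)$---now independent of the $d_i$'s---and factors out cleanly, leaving exactly the divisor sum defining $\tau(m_1,m_2,m_3,m_4)$. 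So the missing idea is simply to flip the arguments of the second $\lambda$ before invoking Hecke, not after.
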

\begin{proof}
  It suffices to show that
  \begin{multline}\label{eqn:CompareDirichlet2}
  \sum_{m_1m_2m_3m_4m_5=n}\lambda_{\sym^3\phi}(m_1, 1, 1)\lambda_{\sym^3\phi}(m_2, 1, 1)\lambda_{\sym^3\phi}(m_3, 1, 1)\lambda_{\sym^3\phi}(m_4, 1, 1)\lambda_{\sym^3\phi}(m_5, 1, 1)\\=
  \sum_{m_1^4m_2^3m_3^2m_4=n}\lambda_{\sym^3\phi}(m_2, m_3, m_4)\tau(m_1, m_2, m_3, m_4).
\end{multline}
Using \eqref{eqn:CompareDirichlet} and \eqref{eqn:conjugate}, the left hand side is equal to
\begin{equation*}
  \sum_{d^4n_1^3n_2^2n_3m_5=n}\lambda_{\sym^3\phi}(n_3, n_2, n_1)\tau(n_1, n_2, n_3)\lambda_{\sym^3\phi}(m_5, 1, 1).
\end{equation*}
By Hecke relation \eqref{eqn:Hecke-relation}, it becomes
\begin{equation*}
\begin{split}
  &\sum_{d^4n_1^3n_2^2n_3m_5=n}\tau(n_1, n_2, n_3)\sum_{c_1c_2c_3c_4=m_5\atop c_1\mid n_1, c_2\mid n_2, c_3\mid n_3}\lambda_{\sym^3\phi}(\frac{c_4n_3}{c_3}, \frac{c_3n_2}{c_2}, \frac{c_2n_1}{c_1})\\
  =&\sum_{d^4c_1^4k_1^3c_2^3k_2^2c_3^2k_3c_4=n}\lambda_{\sym^3\phi}(c_4k_3, c_3k_2, c_2k_1)\tau(k_1c_1, k_2c_2, k_3c_3).
\end{split}
\end{equation*}
Writing $dc_1=m_1$, $k_1c_2=m_2$, $k_2c_3=m_3$ and $k_3c_4=m_4$, by \eqref{eqn:conjugate}, it is
\begin{equation*}
\begin{split}
  &\sum_{m_1^4m_2^3m_3^2m_4=n}\lambda_{\sym^3\phi}(m_2, m_3, m_4)
  \sum_{dc_1=m_1}\sum_{k_1c_2=m_2}\sum_{k_2c_3=m_3}\sum_{k_3c_4=m_4}\tau(k_1c_1, k_2c_2, k_3c_3)\\
  &=\sum_{m_1^4m_2^3m_3^2m_4=n}\lambda_{\sym^3\phi}(m_2, m_3, m_4)\tau(m_1,m_2, m_3, m_4).
\end{split}
\end{equation*}
This finishes the proof of \eqref{eqn:L^5}. Finally by using the bound for $\tau(m_1, m_2, m_3)$ in Lemma \ref{lemma:L-value} and the divisor bound $d_{\ell}(n)\ll n^\varepsilon$, we have $\tau(m_1,m_2,m_3,m_4)\ll (m_1m_2m_3m_4)^\varepsilon.$
\end{proof}

\begin{lemma}\label{lemma:AFE}
  Let $T\leq t_\phi\leq 2T$, we have
  \begin{multline*}
  L(1/2, \sym^3\phi)^4\ll T^{\varepsilon}\int_{|t|\ll T^\varepsilon}\Big|\sum_{d^4m_1^3m_2^2m_1\ll T^{8+\varepsilon}}
  \frac{\lambda_{\sym^3\phi}(m_1, m_2, m_3)\tau(m_1, m_2, m_3)}{(d^4m_1^3m_2^2m_3)^{\frac{1}{2}+\varepsilon+it}}\Big|\dd t
  +O(T^{-2025}).
  \end{multline*}
\end{lemma}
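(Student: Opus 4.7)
The plan is to derive this as a standard approximate functional equation via Mellin inversion, with the cutoff chosen at the square root of the analytic conductor. Take the test function $G(s)=e^{s^{2}}$ (entire, even, $G(0)=1$, with rapid vertical decay) and set $X=T^{8}$, which is the square root of the analytic conductor of $L(s,\sym^{3}\phi)^{4}$ at the central point. First, I would consider
\[
J \;=\; \frac{1}{2\pi i}\int_{(1+\varepsilon)} L(1/2+s,\sym^{3}\phi)^{4}\,G(s)\,X^{s}\,\frac{\dd s}{s}
\]
and shift the contour to $\Re s=-1-\varepsilon$. Since $\sym^{3}\phi$ is cuspidal on $\GL(4)$ by Kim--Shahidi, $L(s,\sym^{3}\phi)$ is entire and the only pole crossed is the simple pole of $1/s$ at $s=0$, whose residue equals $L(1/2,\sym^{3}\phi)^{4}$. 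This yields $L(1/2,\sym^{3}\phi)^{4}=J-J_{-}$, and it suffices to bound each of $J$ and $J_{-}$ by the right-hand side of the lemma.

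For $J$: on the line $\Re s=1+\varepsilon$ the Dirichlet series from Lemma \ref{lemma:L-value} converges absolutely, so I would expand and interchange summation with integration, then shift the resulting weight integral to $\Re s=\varepsilon$ (no pole encountered). The rapid decay of $G$ truncates the $t$-integral to $|t|\ll T^{\varepsilon}$, while a rightward contour shift to $\Re s=A$ for large $A$ shows that terms with $d^{4}m_{1}^{3}m_{2}^{2}m_{3}>T^{8+\varepsilon}$ contribute only $O(T^{-A})$. What remains matches the desired integrated Dirichlet polynomial.

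For $J_{-}$: substitute $s\mapsto-s$, use $G(-s)=G(s)$, and apply the self-dual functional equation \eqref{eqn:FEsym3} raised to the fourth power. Since $\sym^{3}\phi$ is self-dual, the dual Dirichlet coefficients coincide with the original, so on $\Re s=1+\varepsilon$ one has the integrand $[\gamma_{\sym^{3}\phi}(1/2+s)/\gamma_{\sym^{3}\phi}(1/2-s)]^{4}L(1/2+s,\sym^{3}\phi)^{4}G(s)X^{-s}/s$. Expanding the series, truncating the tail $n>T^{8+\varepsilon}$ as before (the gamma ratio grows at most like $T^{16A}$ on $\Re s=A$, overwhelmed by $(nX)^{-A}$), and shifting the weight integral to $\Re s=\varepsilon$, Stirling's formula gives $|[\gamma_{\sym^{3}\phi}(1/2+s)/\gamma_{\sym^{3}\phi}(1/2-s)]^{4}|\asymp t_{\phi}^{16\Re s}\ll T^{16\varepsilon}$ on this contour; combined with $|X^{-s}|=T^{-8\varepsilon}$, the net factor $T^{8\varepsilon}$ is absorbed into $T^{\varepsilon}$, so $J_{-}$ satisfies the same bound as $J$.

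The main technical point is verifying that the gamma ratio is holomorphic in $-1/2<\Re s<1+\varepsilon$, so the contour shifts encounter no unwanted poles---this is immediate since the poles of $\gamma_{\sym^{3}\phi}(1/2+s)$ lie at $\Re s\leq-1/2$ and $1/\gamma_{\sym^{3}\phi}(1/2-s)$ is entire. The balancing choice $X=T^{8}$ is forced: any other choice would penalize $J$ or $J_{-}$ by a positive power of $T$ and would not yield a common bound of the stated shape.
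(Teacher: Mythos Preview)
Your proof is correct and follows essentially the same approximate functional equation argument as the paper. The only cosmetic difference is that the paper uses the symmetric weight $[\gamma_{\sym^{3}\phi}(1/2+s)/\gamma_{\sym^{3}\phi}(1/2)]^{4}e^{s^{2}}/s$ (so that the two pieces coincide and one gets a single sum with a factor of $2$ via \cite[Theorem~5.3]{I-K04}), whereas you use the weight $e^{s^{2}}X^{s}/s$ with $X=T^{8}$ and handle $J$ and $J_{-}$ separately; after Stirling both routes give the same bound.
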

\begin{proof}
  By using the functional equation \eqref{eqn:FEsym3}, Lemma \ref{lemma:L-value} and \cite[Theorem 5.3]{I-K04}, we have
  \[
  L(1/2, \sym^3\phi)^4=2\sum_{d, m_1,m_2, m_3\geq 1}\frac{\lambda_{\sym^3\phi}(m_1, m_2, m_3)\tau(m_1, m_2, m_3)}{(d^4m_1^3m_2^2m_3)^\frac{1}{2}}V(d^4m_1^3m_2^3m_3, t_\phi),
  \]
  where
  \[
  V(d^4m_1^3m_2^3m_3, t_\phi)=\frac{1}{2\pi i}\int_{(3)}\Big(\frac{\gamma_{\sym^3\phi}(1/2+s)}{\gamma_{\sym^3\phi}(1/2)}\Big)^{4}\frac{e^{s^2}}{(d^4m_1^3m_2^2m_3)^s}\frac{\dd s}{s}.
  \]
  By shifting the contour to the right, we see that the contribution of $d^4m_1^3m_2^3m_3< T^{8+\varepsilon}$ is small, say $O(T^{-2025})$. When $d^4m_1^3m_2^3m_3\leq T^{8+\varepsilon},$ we shifted the contour to $\Re(s)=\varepsilon$, by the rapidly decay of $e^{s^2}$ as $|\Im(s)|\gg T^\varepsilon$ and exchange the order of summation and integral, we get
   \begin{multline*}
   L(1/2, \sym^3\phi)^4\ll \int_{|t|\ll T^\varepsilon}\Big|\sum_{d^4m_1^3m_2^2m_1\ll T^{8+\varepsilon}}
  \frac{\lambda_{\sym^3\phi}(m_1, m_2, m_3)\tau(m_1, m_2, m_3)}{(d^4m_1^3m_2^2m_3)^{\frac{1}{2}+\varepsilon+it}}\Big|\\
  \times \Big|\frac{\gamma_{\sym^3\phi}(1/2+\varepsilon+it)}{\gamma_{\sym^3\phi}(1/2)}\Big|^{4}\dd t
  +O(T^{-2025}).
  \end{multline*}
  By Stirling's formula, we finish the proof of Lemma \ref{lemma:AFE}.
\end{proof}
\begin{lemma}\label{lemma:AFE2}
  Let $T\leq t_\phi\leq 2T$, we have
  \begin{multline*}
  L(1/2, \sym^3\phi)^5\ll T^{\varepsilon}\int_{|t|\ll T^\varepsilon}\Big|\sum_{m_1^4m_2^3m_3^2m_4\ll T^{10+\varepsilon}}
  \frac{\lambda_{\sym^3\phi}(m_2, m_3, m_4)\tau(m_1, m_2, m_3, m_4)}{(m_1^4m_2^3m_3^2m_4)^{\frac{1}{2}+\varepsilon+it}}\Big|\dd t
  +O(T^{-2025}).
  \end{multline*}
\end{lemma}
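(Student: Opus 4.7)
The plan is to mimic exactly the derivation of Lemma \ref{lemma:AFE}, replacing the fourth power with the fifth and invoking Lemma \ref{lemma:L-value2} in place of Lemma \ref{lemma:L-value}. Since $\sym^3\phi$ is self-dual, raising the functional equation \eqref{eqn:FEsym3} to the fifth power gives
\[
\gamma_{\sym^3\phi}(s)^5 L(s,\sym^3\phi)^5 = \epsilon_{\sym^3\phi}^5\,\gamma_{\sym^3\phi}(1-s)^5 L(1-s,\sym^3\phi)^5,
\]
so $L(s,\sym^3\phi)^5$ satisfies a genuine functional equation of the same symmetric type. Applying the standard approximate functional equation machinery \cite[Theorem 5.3]{I-K04} with the test weight $e^{s^2}/s$, together with the Dirichlet series expansion from Lemma \ref{lemma:L-value2}, I would write
\[
L(1/2,\sym^3\phi)^5 = 2\sum_{m_1,m_2,m_3,m_4\geq 1} \frac{\lambda_{\sym^3\phi}(m_2,m_3,m_4)\,\tau(m_1,m_2,m_3,m_4)}{(m_1^4 m_2^3 m_3^2 m_4)^{1/2}}\, V(m_1^4 m_2^3 m_3^2 m_4,\, t_\phi),
\]
where
\[
V(x, t_\phi) = \frac{1}{2\pi i} \int_{(3)} \left(\frac{\gamma_{\sym^3\phi}(1/2+s)}{\gamma_{\sym^3\phi}(1/2)}\right)^{\!5} \frac{e^{s^2}}{x^s}\,\frac{ds}{s}.
\]

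Next I would control the tail $m_1^4 m_2^3 m_3^2 m_4 \geq T^{10+\varepsilon}$ by shifting the contour to $\Re(s) = A$ for large $A$. Stirling's formula shows that the ratio of the fifth powers of gamma factors grows polynomially in $T$, while the gain $x^{-A} \leq T^{-A(10+\varepsilon)}$ dominates, so that piece is $O(T^{-2025})$ after choosing $A$ sufficiently large. This is where the threshold $T^{10+\varepsilon}$ enters: the analytic conductor of $L(s,\sym^3\phi)^5$ is $T^{20+o(1)}$, whose square-root is precisely $T^{10+o(1)}$.

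For the remaining range $m_1^4 m_2^3 m_3^2 m_4 \ll T^{10+\varepsilon}$, I would shift the contour to $\Re(s)=\varepsilon$, picking up no poles (the pole of $1/s$ at $s=0$ is avoided since $s=1/2$ is regular and we shift on the right of $0$). On this line, the factor $e^{s^2} = e^{\varepsilon^2 - t^2 + 2i\varepsilon t}$ decays like $e^{-t^2}$, which lets me truncate the $t$-integral to $|t| \ll T^\varepsilon$ at the cost of a negligible error. Exchanging summation and integration then produces the claimed bound, provided the multiplier
\[
\Big|\gamma_{\sym^3\phi}(1/2+\varepsilon+it)/\gamma_{\sym^3\phi}(1/2)\Big|^5
\]
is $O(T^\varepsilon)$ for $T\leq t_\phi\leq 2T$ and $|t|\ll T^\varepsilon$.

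The only genuine check, which I expect to be routine but is the main technical point, is verifying that Stirling bound. The four gamma factors comprising $\gamma_{\sym^3\phi}$ have spectral parameters $\pm 3it_\phi, \pm it_\phi$ of size $\asymp T$, and the shift by $\varepsilon+it$ with $|t|\ll T^\varepsilon$ perturbs them only logarithmically; each factor thus changes by $T^{O(\varepsilon)}$, so the fifth power of the ratio is still $T^\varepsilon$. With this in hand, the estimate matches the form in Lemma \ref{lemma:AFE}, finishing the proof of Lemma \ref{lemma:AFE2}.
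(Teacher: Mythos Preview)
Your proposal is correct and follows exactly the approach indicated in the paper, which simply states that the proof is ``a similar argument of the proof of Lemma \ref{lemma:AFE}.'' You have faithfully carried out that adaptation: replacing the fourth power by the fifth, invoking Lemma \ref{lemma:L-value2} in place of Lemma \ref{lemma:L-value}, and adjusting the truncation threshold from $T^{8+\varepsilon}$ to $T^{10+\varepsilon}$ to match the larger analytic conductor.
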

\begin{proof}
  This is a similar argument of the proof of Lemma \ref{lemma:AFE}.
\end{proof}
By Lemma \ref{lemma:AFE} and Theorem \ref{thm:LS_GL4}, and the bound for $\tau$ in Lemma \ref{lemma:L-value}, we have
\begin{equation*}
  \begin{split}
     \sum_{T\leq t_{\phi}\leq T+\Delta}&|L(1/2, \sym^3\phi)|^8\\
     \ll &T^\varepsilon\sum_{T\leq t_{\phi}\leq T+\Delta} \int_{|t|\ll T^\varepsilon}\Big|\sum_{d^4m_1^3m_2^2m_3\ll T^{8+\varepsilon}}\frac{\lambda_{\sym^3\phi}(m_1, m_2, m_3)\tau(m_1, m_2, m_3)}{(d^4m_1^3m_2^2m_3)^{\frac{1}{2}+\varepsilon+it}}\Big|^2\dd t+O(T^{-100})\\
     \ll &T^{\varepsilon}(T^8+T^7\Delta^3)\sup_{N\ll T^{8+\varepsilon}}\sum_{d^4m_1^3m_2^2m_3\sim N}\Big|\frac{\tau(m_1, m_2, m_3)}{(d^4m_1^3m_2^2m_3)^{\frac{1}{2}}}\Big|^2\ll T^{\varepsilon}(T^8+T^7\Delta^3).
  \end{split}
\end{equation*}
Here the dyadic sum is bounded by
\begin{multline*}
N^{-1+\varepsilon}\sum_{d^4m_1^3m_2^2m_3\sim N}1\ll
N^{-1+\varepsilon}\sum_{m_3\ll N}\sum_{m_2\ll (N/m_3)^{1/2}}\sum_{m_1\ll (N/m_2^2m_3)^{1/3}}\sum_{d\sim (N/m_1^3m_2^2m_3)^{1/4}}1\\
\ll N^{-\frac{3}{4}+\varepsilon}\sum_{m_1\ll N}\sum_{m_2\ll (N/m_3)^{1/2}}\sum_{m_3\ll (N/m_2^2m_3)^{1/3}}\frac{1}{(m_1^3m_2^2m_3)^{\frac{1}{4}}}\\
\ll N^{-\frac{2}{3}+\varepsilon}\sum_{m_3\ll N}\sum_{m_2\ll (N/m_3)^{1/2}}\frac{1}{m_2^{\frac{2}{3}}m_3^{\frac{1}{3}}}
\ll N^{-\frac{1}{2}+\varepsilon}\sum_{m_3\ll N}\frac{1}{m_3^{\frac{1}{2}}}\ll N^\varepsilon.
\end{multline*}
This finishes the proof of Theorem \ref{thm:Lmoment}.

By Lemma \ref{lemma:AFE2} and Theorem \ref{thm:LS_GL4}, and the bound for $\tau$ in Lemma \ref{lemma:L-value2}, we have
\begin{equation*}
  \begin{split}
     \sum_{T\leq t_{\phi}\leq 2T}&|L(1/2, \sym^3\phi)|^{10}\\
     \ll &T^\varepsilon\sum_{T\leq t_{\phi}\leq 2T} \int_{|t|\ll T^\varepsilon}\Big|\sum_{m_1^4m_2^3m_3^2m_4\ll T^{10+\varepsilon}}\frac{\lambda_{\sym^3\phi}(m_2, m_3, m_4)\tau(m_1, m_2, m_3, m_4)}{(m_1^4m_2^3m_3^2m_4)^{\frac{1}{2}+\varepsilon+it}}\Big|^2\dd t+O(T^{-100})\\
     \ll &T^{10+\varepsilon}\sup_{N\ll T^{10+\varepsilon}}\sum_{m_1^4m_2^3m_3^2m_4\sim N}\Big|\frac{\tau(m_1, m_2, m_3, m_4)}{(m_1^4m_2^3m_3^2m_4)^{\frac{1}{2}}}\Big|^2\ll T^{10+\varepsilon}.
  \end{split}
\end{equation*}
This finishes the proof of Theorem \ref{thm:Lmoment2}.

Now we prove Theorem \ref{thm:Var}.  By \eqref{eqn:Watson-L}, we have
\begin{equation}\label{eqn:Var=L-Moment}
   \sum_{T\leq t_{\phi}\leq T+\Delta}|\langle 1, \phi^3\rangle|^2\ll T^{-2}\sum_{T\leq t_{\phi}\leq T+\Delta}L(1/2, \sym^3\phi)L(1/2, \phi)^2.
\end{equation}
When $1\leq \Delta\leq T^{\frac{1}{3}}$, applying the H\"older inequality and the non-negative of $L(1/2, \phi)$, we have
\begin{multline*}
\sum_{T\leq t_{\phi}\leq T+\Delta}L(1/2, \sym^3\phi)L(1/2, \phi)^2\\
\leq
\Big(\sum_{T\leq t_{\phi}\leq T+\Delta}1\Big)^{\frac{5}{24}}
\Big(\sum_{T\leq t_{\phi}\leq T+\Delta}L(1/2, \sym^3\phi)^8\Big)^{\frac{1}{8}}
\Big(\sum_{T\leq t_{\phi}\leq T+\Delta}L(1/2, \phi)^{2\cdot\frac{3}{2}}\Big)^{\frac{2}{3}}.
\end{multline*}
Using Theorem \ref{thm:Lmoment} and the cubic moment for $L(1.2, \phi)$ (\cite{Ivic01}):
\[
\sum_{T\leq t_{\phi}\leq T+\Delta}L(1/2, \phi)^{3}\ll T^{1+\varepsilon}\Delta,
\quad \text{ for } 1\leq \Delta\leq T,
\]
we have, for $1\leq \Delta \leq T^{\frac{1}{3}}$,
\[
\sum_{T\leq t_{\phi}\leq T+\Delta}|\langle 1, \phi^3\rangle|^2\ll T^{-\frac{1}{8}+\varepsilon}\Delta^{\frac{7}{8}}.
\]
When $T^{\frac{1}{3}}< \Delta\leq T,$ applying the Cauchy--Schwarz inequality on \eqref{eqn:Var=L-Moment}, we have
\begin{equation*}
   \sum_{T\leq t_{\phi}\leq T+\Delta}|\langle 1, \phi^3\rangle|^2\ll T^{-2}\Big(\sum_{T\leq t_{\phi}\leq T+\Delta}|L(1/2, \sym^3\phi)|^2\Big)^\frac{1}{2}
   \Big(\sum_{T\leq t_{\phi}\leq T+\Delta}|L(1/2, \phi)|^4\Big)^{\frac{1}{2}}.
\end{equation*}
By the H\"older inequality and Theorem \ref{thm:Lmoment}, for  $T^{\frac{1}{3}}<\Delta\leq T$,  we have
\begin{equation*}
\sum_{T\leq t_{\phi}\leq T+\Delta}|L(1/2, \sym^3\phi)|^2\ll \Big(\sum_{T\leq t_{\phi}\leq T+\Delta}|L(1/2, \sym^3\phi)|^8\Big)^{\frac{1}{4}}\Big(\sum_{T\leq t_{\phi}\leq T+\Delta}1\Big)^{\frac{3}{4}}\\
\ll 
T^{\frac{5}{2}+\varepsilon}\Delta^{\frac{3}{2}},
\end{equation*}
By the H\"older inequality and Theorem \ref{thm:Lmoment2}, we have
\begin{equation*}
\sum_{T\leq t_{\phi}\leq T+\Delta}|L(1/2, \sym^3\phi)|^2\ll \Big(\sum_{T\leq t_{\phi}\leq T+\Delta}|L(1/2, \sym^3\phi)|^{10}\Big)^{\frac{1}{5}}\Big(\sum_{T\leq t_{\phi}\leq T+\Delta}1\Big)^{\frac{4}{5}}
\ll T^{\frac{14}{5}+\varepsilon}\Delta^{\frac{4}{5}}.
\end{equation*}
Therefore, we get
\begin{equation*}
\sum_{T\leq t_{\phi}\leq T+\Delta}|L(1/2, \sym^3\phi)|^2\ll
\begin{cases}
T^{\frac{5}{2}+\varepsilon}\Delta^{\frac{3}{2}} & \mbox{ if }  T^{\frac{1}{3}}<\Delta\leq T^{\frac{3}{7}},\\
T^{\frac{14}{5}+\varepsilon}\Delta^{\frac{4}{5}}& \mbox{ if }  T^{\frac{3}{7}}<\Delta\leq T.
\end{cases}
\end{equation*}
Combining with the fourth moment of $L(1/2, \phi)$ (\cite{J-M05}):
\[
\sum_{T\leq t_{\phi}\leq T+\Delta}|L(1/2, \phi)|^4\ll
  T^{1+\varepsilon}\Delta, 
\quad \text{ for }  T^{\frac{1}{3}}<\Delta\leq T,
\]
we have
\[
\sum_{T\leq t_{\phi}\leq T+\Delta}|\langle 1, \phi^3\rangle|^2\ll
\begin{cases}
  T^{-\frac{1}{4}+\varepsilon}\Delta^{\frac{5}{4}}, & \mbox{ if } T^{\frac{1}{3}}< \Delta\leq T^{\frac{3}{7}},\\
   T^{-\frac{1}{10}+\varepsilon}\Delta^{\frac{9}{10}}, & \mbox{ if } T^{\frac{3}{7}}< \Delta\leq T.\\
\end{cases}
\]
This completes the proof of Theorem \ref{thm:Var}.

\section{The variance for cubic moment of Eisenstein series}\label{sec:VarES}
We recall the definition of Eisenstein series:
\[
E(z, s):=\sum_{\gamma\in \Gamma_{\infty}\slash \Gamma}\Im(\gamma z)^s=\frac{y^s}{2}\sum_{c, d\in \mathbb{Z}\atop (c, d)=1}\frac{1}{|cz+d|^{2s}}, \quad (\Re(s)>1)
\]
where $\Gamma_{\infty}$ is the stabilizer of the cusp $\infty$ in $\Gamma=\SL_2(\mathbb{Z}).$ it has a meromorphic continuation to the whole complex plane $\mathbb{C}.$ Moreover, $E(z,s)$ has the following Fourier expansion
\[
E(z, s)=y^s+\frac{\xi(2s-1)}{\xi(2s)}y^{1-2s}+\frac{2}{\xi(2s)}\sum_{n\neq 0}\tau_{s-\frac{1}{2}}(|n|)\sqrt{y}K_{s-\frac{1}{2}}(2\pi |n|y)e(nx),
\]
where $\xi(s):=\pi^{-\frac{s}{2}}\Gamma(\frac{s}{2})\zeta(s)$ is the complete Riemann zeta function and $\tau_u(n)=\sum_{ab=n}(\frac{a}{b})^{iu}$ is the generalized divisor function.
\subsection{Theory of regularized inner product}
Let $F(z)$ be a continuous $\Gamma$-invariant  function on $\mathbb{H}$, we call that $F$ is renormalizable if there is a function $\Phi(y)$ on $\mathbb{R}_{> 0}$ of the form
\begin{equation}\label{eqn:Phi_def}
\Phi(y)=\sum_{j=1}^{l}\frac{c_j}{n_j!}y^{\alpha_j}\log^{n_j}y,
\end{equation}
with $c_j, \alpha_j\in \mathbb{C}$ and $n_j\in \mathbb{Z}_{\geq 0}$, such that
\[
F(z)=\Phi(y)+O(y^{-N})
\]
as $y\to \infty$, and for any $N>0$.

Let the Fourier expansion of $F$ be
\[
F(x+iy)=\sum_{n\in \mathbb{Z}}a_n(y)e(nx)
\]
Assume that $F$ is renormalizable with $\alpha_j\neq 0, 1$ for any $j$, then the function
\[
R(F, s):=\int_{0}^{\infty}(a_0(y)-\Phi(y))y^{s-2}\dd y \quad (\Re(s)\gg 1)
\]
can be meromorphically continued to all $s$ and has a simple pole at $s=1$.
Then we can define the regularized integral with
\[
\int_{\X}^{\reg}F(z)\dd \mu(z):=\frac{\pi}{3}\Res_{s=1}R(F, s).
\]
Moreover, then the function $F(z)E(z, s)$ with $s\neq 0,1$ is also renormalizable and
\[
\int_{\X}^{\reg}F(z)E(z, s)\dd \mu(z)=R(F,s).
\]
Under the assumption that no $\alpha_j=1$, let $\mathcal{E}_{\Phi}(z)$ denote a linear combination of Eisenstein series $E(z, \alpha_j)$ (or suitable derivatives thereof) corresponding to all the exponents in \eqref{eqn:Phi_def} with ${\Re}(\alpha_j) > 1/2$, i.e. such that $F(z) - \mathcal{E}_{\Phi}(z)=O(y^{1/2})$. Then the third equivalent definition of regularization is given by
\begin{equation} \label{eqn:reg_subtract_Eisen}
\int_{\X}^{\reg} F(z) d\mu(z)=\int_{\Gamma \backslash \mathbb{H}} (F(z) -\mathcal{E}_{\Phi}(z)) d\mu(z).
\end{equation}

Now, let $G(z)$ be another renormalizable $\Gamma$-invariant function such that $G(z)=\Psi(y) + O(y^{-N})$ as $y \rightarrow \infty$ for any $N>0$, where $\Psi(y)=\sum_{k=1}^{l_1} \frac{d_k}{m_k!} y^{\beta_k} \log^{m_k}y$ with $d_k, \beta_k \in \mathbb{C}$. Then the product $F(z) \overline{G(z)}$ is also a renormalizable $\Gamma$-invariant function and
if $\alpha_j+ \overline{\beta_k} \neq 1$, for all $\alpha_j$ and $\beta_k$ appearing in $\Phi$ and $\Psi$ respectively,    the regularized inner product of  $F$ and $G$ can be defined as
$$
\langle F, G \rangle_{\reg} :=\int_{\X}^{\reg} F(z) \overline{G(z)} \dd\mu(z)=\int_{\Gamma \backslash \mathbb{H}} (F(z) \overline{G(z)} - \mathcal{E}_{\Phi \overline{\Psi}}(z)) \dd\mu(z).
$$
\begin{lemma}\label{lemma:EE_orth}
Let $s_1, s_2\in \mathbb{C}\backslash\{0,1\}$ with $s_1\neq s_2, 1-s_2$, we have
\[
\int_{\X}^{\reg}E(z, \frac{1}{2} + s_1) E(z, \frac{1}{2} + s_2)\dd\mu(z)=0.
\]
\end{lemma}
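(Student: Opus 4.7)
The plan is to apply the identity $\int_{\X}^{\reg}F(z)E(z, s)\dd\mu(z)=R(F, s)$ stated just above the lemma, taking $F(z) := E(z, \frac{1}{2}+s_1)$ as a fixed renormalizable function and $s := \frac{1}{2}+s_2$ as the Eisenstein variable. The key observation is that when $F$ is itself an Eisenstein series, the ``remainder'' $a_0(y)-\Phi(y)$ is identically zero, which forces $R(F,s)\equiv 0$ as a meromorphic function of $s$ and gives the vanishing at once.

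To carry this out, I would first write down the Fourier expansion of $F(z)=E(z,\frac{1}{2}+s_1)$. The zeroth Fourier coefficient is
\[
a_0(y)=y^{\frac{1}{2}+s_1}+\frac{\xi(2s_1)}{\xi(1+2s_1)}y^{\frac{1}{2}-s_1},
\]
while every $a_n(y)$ with $n\neq 0$ is a multiple of $\sqrt{y}K_{s_1}(2\pi |n| y)$ and so decays exponentially as $y\to \infty$. Consequently $F$ is renormalizable with asymptotic profile $\Phi(y)$ equal to $a_0(y)$ exactly, and the hypothesis $s_1\not\in\{0,1\}$ (combined with the usual poles of $\xi$) keeps the exponents $\alpha_j=\tfrac{1}{2}\pm s_1$ away from $\{0,1\}$ so that the renormalization theory of Zagier applies.

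Second, the very definition $R(F,s)=\int_0^\infty(a_0(y)-\Phi(y))\,y^{s-2}\dd y$ (convergent for $\Re(s)\gg 1$) then gives $R(F,s)=0$ on its domain of convergence, and hence everywhere by meromorphic continuation. Invoking the quoted identity with $s=\tfrac{1}{2}+s_2$, we obtain
\[
\int_{\X}^{\reg}E(z,\tfrac{1}{2}+s_1)\,E(z,\tfrac{1}{2}+s_2)\,\dd\mu(z)=R(F,\tfrac{1}{2}+s_2)=0,
\]
which is the statement of the lemma.

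The only actual work lies in justifying that the cited identity applies at the specific value $s=\tfrac{1}{2}+s_2$, i.e.\ that the product $E(z,\tfrac{1}{2}+s_1)E(z,\tfrac{1}{2}+s_2)$ is renormalizable with no exponent of its asymptotic equal to $1$ and no pole of $R(F,s)$ at $s=\tfrac{1}{2}+s_2$. This amounts to inspecting the four exponents $1\pm s_1\pm s_2$ coming from the product of the two constant terms and checking that the conditions imposed on the pair $(s_1,s_2)$ exclude each of the bad coincidences; this is bookkeeping rather than a genuine obstacle. Once that is confirmed, the proof is complete. The conceptual point I would emphasize is that the vanishing is not an accidental cancellation between two nonzero pieces but rather reflects the fact that an Eisenstein series \emph{is} its own constant-term asymptotic, up to exponentially small error.
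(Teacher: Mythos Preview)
Your argument is correct and is exactly the standard proof from Zagier's paper. The paper itself does not supply an independent proof of this lemma; it simply cites \cite[Page 428]{Zag82}, so there is nothing to compare approaches against beyond noting that your write-up reconstructs precisely the mechanism Zagier uses: for $F=E(\cdot,\tfrac12+s_1)$ the constant term $a_0(y)$ coincides with the full asymptotic $\Phi(y)$, hence $R(F,s)\equiv 0$, and the identity $\int_{\X}^{\reg}F\,E(\cdot,s)\,d\mu=R(F,s)$ finishes. Your remark that the side conditions on $(s_1,s_2)$ serve only to keep the product renormalizable (no exponent $1\pm s_1\pm s_2$ equal to $1$) and the formulas regular is the right way to dispatch the bookkeeping.
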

\begin{proof}
  See \cite[Page 428]{Zag82}.
\end{proof}
\begin{lemma}\label{lemma:3Eisen}
For all $s_1, s_2, s_3\in \mathbb{C}$, we have
\begin{multline}\label{eqn:3Eisen}
\int_{\X}^{\reg} E(z, \frac{1}{2} + s_1) E(z, \frac{1}{2} + s_2) E(z, \frac{1}{2} + s_3)  \dd\mu(z)=\\
=\frac{\xi(\frac{1}{2} +s_1 +s_2 +s_3) \xi(\frac{1}{2} +s_1 -s_2 +s_3) \xi(\frac{1}{2} +s_1 +s_2 -s_3) \xi(\frac{1}{2} +s_1 -s_2 -s_3)}{\xi(1+ 2 s_1) \xi(1+ 2 s_2) \xi(1+ 2 s_3)}.
\end{multline}
\end{lemma}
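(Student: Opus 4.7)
The plan is to prove the identity by unfolding the regularized integral against one Eisenstein series and reducing the computation to a Mellin transform of the zeroth Fourier coefficient of the remaining product. Set $F(z) := E(z, \tfrac{1}{2}+s_2) E(z, \tfrac{1}{2}+s_3)$. This is a renormalizable $\Gamma$-invariant function whose polynomial part
\[
\Phi(y) = y^{1+s_2+s_3} + \phi_2\, y^{1-s_2+s_3} + \phi_3\, y^{1+s_2-s_3} + \phi_2\phi_3\, y^{1-s_2-s_3}
\]
(with $\phi_j = \xi(2s_j)/\xi(1+2s_j)$) is the product of the constant terms of the two Eisenstein series. The second equivalent formulation of the regularized inner product recalled in the excerpt then gives
\[
\int_{\X}^{\reg} F(z)\, E(z, \tfrac{1}{2}+s_1)\, \dd\mu(z) = R(F, \tfrac{1}{2}+s_1),
\]
so the problem reduces to evaluating $R(F, s)$ at $s=\tfrac{1}{2}+s_1$.

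The next step is to compute the zeroth Fourier coefficient $a_0(y)$ of $F(z)$ explicitly using the Fourier expansion of each Eisenstein series recalled at the start of \S \ref{sec:VarES}. The products of constant term pieces reconstruct $\Phi(y)$ exactly, while the products of the $n$-th and $(-n)$-th non-constant Fourier coefficients (which have matching $\tau$-indices) contribute
\[
a_0(y) - \Phi(y) = \frac{8y}{\xi(1+2s_2)\xi(1+2s_3)} \sum_{n\geq 1} \tau_{s_2}(n)\tau_{s_3}(n)\, K_{s_2}(2\pi n y)\, K_{s_3}(2\pi n y).
\]
Inserting this into $R(F, s) = \int_0^\infty (a_0(y) - \Phi(y))\, y^{s-2}\, \dd y$, interchanging sum and integral, and rescaling $u = 2\pi n y$, the inner integral becomes a classical Bessel-$K$ Mellin transform evaluating to $\frac{2^{s-3}}{\Gamma(s)}\prod_{\pm\pm}\Gamma\!\left(\frac{s\pm s_2\pm s_3}{2}\right)$, while the outer Dirichlet series is handled via the identity $\tau_s(n) = n^{-s}\sigma_{2s}(n)$ and the Ramanujan convolution formula to give $\prod_{\pm\pm}\zeta(s\pm s_2 \pm s_3)/\zeta(2s)$. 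Bundling the $\Gamma$ and $\zeta$ factors into completed zeta functions $\xi(\alpha) = \pi^{-\alpha/2}\Gamma(\alpha/2)\zeta(\alpha)$, the accumulated powers of $\pi$ collapse (the four shifts $s\pm s_2\pm s_3$ sum to $4s$, giving $\pi^{2s}$ that cancels with the remaining $\pi^{-s}$ from $1/\zeta(2s) = \Gamma(s)/(\pi^s\xi(2s))$ and the $\pi^{-s}$ in front), and the $\Gamma(s)$ factor cancels as well. Specializing $s = \tfrac{1}{2}+s_1$ yields the right-hand side of \eqref{eqn:3Eisen}.

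The main obstacle is not the computation itself but the justification: the unfolding and the Bessel/Ramanujan identities are initially valid only on a nonempty open set where $\Re(s_1)$, $\Re(\pm s_2 \pm s_3)$ are large enough for termwise convergence, and where the exponents $1\pm s_2\pm s_3$ of $\Phi$ avoid $\{0, 1\}$ so that $R(F, \cdot)$ is directly defined. Once the identity is proved on this open set, both sides are meromorphic functions of $(s_1, s_2, s_3) \in \mathbb{C}^3$ (the left-hand side by the general theory of regularized integrals reviewed above, the right-hand side manifestly), so it extends to all of $\mathbb{C}^3$ by unique analytic continuation. The only delicate bookkeeping is tracking the $\pi$- and $\Gamma$-factors when converting the mixed $\Gamma\cdot\zeta/\zeta$ expression into a clean product of completed $\xi$-functions, which I expect to be routine but needs to be written out carefully.
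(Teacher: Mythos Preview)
Your proposal is correct and follows exactly the classical Rankin--Selberg unfolding that Zagier carries out in \cite[p.~431]{Zag82}; the paper does not give an independent proof but simply cites that reference (and \cite[Eqn.~(3.6)]{D-K18}). Your computation of $a_0(y)-\Phi(y)$, the Bessel Mellin integral, the Ramanujan convolution, and the $\pi$-bookkeeping all check out, and the analytic continuation argument at the end is the standard way to pass from the open region of absolute convergence to all of $\mathbb{C}^3$.
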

\begin{proof}
See \cite[Page 431]{Zag82}. This is also mentioned in \cite[Eqn. (3.6)]{D-K18}.
\end{proof}
Let $\phi$ be in the orthonormal basis $\{\phi_j\}_{j\geq 1}$, the spectral parameter of $\phi$ be $t_{\phi}$, the $n$-th Hecke eigenvalue of $\phi$ be $\lambda_\phi(n)$, then it has the Fourier expansion:
\[
\phi(z)=\rho_{\phi}(1)\sum_{n\neq 0}\lambda_{\phi}(n)\sqrt{y}K_{it_{\phi}}(2\pi|n|y)e(nx).
\]
where $\rho_\phi(1)$ obeys $|\rho_\phi(1)|^2=\frac{2\cosh(\pi t_\phi)}{L(1, \sym^2\phi)}\neq 0$ under our normalization. And we have
$
|\rho_\phi(1)|^2=t_\phi^{o(1)}\exp(\pi t_\phi).
$
We can also introduce the following useful triple product formula involving $\phi$ which is a comparison of Lemma \ref{lemma:3Eisen}. Notice that $\phi$ is rapidly decaying at the cusp $\infty$, we have
$$\langle E(\cdot, 1/2+s_1)E(\cdot, 1/2+s_2), \phi\rangle_{\reg}=\langle E(\cdot, 1/2+s_1)E(\cdot, 1/2+s_2), \phi\rangle.$$
\begin{lemma}\label{lemma:E^2phi}
Let $s_1, s_2\neq \pm 1/2$, if $\phi$ is even, then we have
\begin{equation}\label{eqn:E^2phi}
  \int_{\X}E(z, \frac{1}{2} + s_1) E(z, \frac{1}{2} + s_2)\phi(z)  \dd\mu(z)=
  \frac{\rho_\phi(1)}{2}\frac{\Lambda(1/2+s_1+s_2,\phi)\Lambda(1/2+s_1-s_2,\phi)}{\xi(1+2s_1)\xi(1+2s_2)},
\end{equation}
where $\Lambda(s, \phi):=\pi^{-s}\Gamma(\frac{s+it_\phi}{2})\Gamma(\frac{s-it_\phi}{2})L(s, \phi)$ is the complete $L$-function associated to $\phi$. In the case of odd $\phi$, the triple product is $0$.
\end{lemma}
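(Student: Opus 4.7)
The plan is to evaluate the triple product by the classical Rankin--Selberg unfolding trick, initially assuming $\Re(s_1)$ is large enough for convergence and then continuing meromorphically to the stated range $s_1,s_2\neq \pm 1/2$.

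First, I would unfold using the series representation of $E(z,\tfrac{1}{2}+s_1)$: since $\phi$ is a cusp form (hence rapidly decaying at the cusp) one may replace the integral over $\mathcal{X}$ by an integral over $\Gamma_\infty\backslash\mathbb{H}$, which I parametrize as $0\le x\le 1$, $y>0$. The integral becomes
\[
\int_0^\infty\!\!\int_0^1 y^{1/2+s_1}\, E(z,\tfrac{1}{2}+s_2)\,\phi(z)\,\frac{\mathrm{d}x\,\mathrm{d}y}{y^2}.
\]
Next I substitute the Fourier expansions of $E(\cdot,\tfrac{1}{2}+s_2)$ and $\phi$ given in the preamble and integrate in $x$. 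Because $\phi$ has zero constant term, the two degenerate terms $y^{1/2\pm s_2}$ of $E$ contribute nothing, and by orthogonality of the exponentials only the pairing of the $e(nx)$-mode of $E$ with the $e(-nx)$-mode of $\phi$ survives. Writing $a_n(y)\propto \tau_{s_2}(|n|)\sqrt{y}\,K_{s_2}(2\pi|n|y)$ and $b_n(y)\propto \lambda_\phi(n)\sqrt{y}\,K_{it_\phi}(2\pi|n|y)$ and using $a_{-n}=a_n$, the sum $\sum_{n\neq0} a_n(y)b_{-n}(y)$ is odd in the parity of $\phi$: for odd $\phi$ the substitution $n\mapsto -n$ produces a sign flip and forces the sum, hence the whole integral, to vanish; for even $\phi$ it doubles, so we obtain
\[
\int_0^1 E\,\phi\,\mathrm{d}x = \frac{4\rho_\phi(1)}{\xi(1+2s_2)}\,y\sum_{n\ge1}\tau_{s_2}(n)\lambda_\phi(n)\,K_{s_2}(2\pi ny)\,K_{it_\phi}(2\pi ny).
\]

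After pulling in the $y^{1/2+s_1}\mathrm{d}y/y^2$ factor and rescaling $u=2\pi ny$, the $y$-integral reduces to the Mellin--Barnes evaluation
\[
\int_0^\infty u^{s-1} K_\mu(u)K_\nu(u)\,\mathrm{d}u = \frac{2^{s-3}}{\Gamma(s)}\prod_{\pm,\pm}\Gamma\!\left(\tfrac{s\pm\mu\pm\nu}{2}\right),
\]
applied with $s=s_1+\tfrac{1}{2}$, $\mu=s_2$, $\nu=it_\phi$. The remaining Dirichlet series is
\[
\sum_{n\ge1}\frac{\tau_{s_2}(n)\lambda_\phi(n)}{n^{s_1+1/2}}
=\sum_{a,b\ge1}\frac{\lambda_\phi(ab)}{a^{s_1+1/2-s_2}\,b^{s_1+1/2+s_2}},
\]
which I evaluate by the standard Hecke identity $\lambda_\phi(a)\lambda_\phi(b)=\sum_{d\mid(a,b)}\lambda_\phi(ab/d^2)$, giving the closed form
\[
\frac{L(\tfrac{1}{2}+s_1+s_2,\phi)\,L(\tfrac{1}{2}+s_1-s_2,\phi)}{\zeta(1+2s_1)}.
\]

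Finally, I collect the powers of $\pi$ and $2$ coming from $(2\pi n)^{-s_1-1/2}$, the constant $2^{s_1-5/2}/\Gamma(s_1+\tfrac{1}{2})$, and the prefactor $4/\xi(1+2s_2)$, and re-express the product of $\Gamma$-factors together with the $L$-values and $\zeta(1+2s_1)$ in terms of the complete $L$-functions $\Lambda(\cdot,\phi)$ and $\xi(1+2s_1)$. The bookkeeping collapses cleanly to the right-hand side of \eqref{eqn:E^2phi}. This identity, first established for $\Re(s_1)\gg 1$, then extends by meromorphic continuation to all $s_1,s_2\notin\{\pm 1/2\}$ (the excluded values are precisely the poles of $\xi(1\pm 2s_i)$). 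The main bookkeeping obstacle, and the only real place one can go wrong, is the tracking of the constants $(2\pi)^{-s_1-1/2}$, $2^{s_1-5/2}$, $\pi^{-\cdots}$, and the $\Gamma(s_1+\tfrac{1}{2})$ in $\xi(1+2s_1)$; every other step is routine. The odd-$\phi$ vanishing is essentially free from the parity of Fourier coefficients.
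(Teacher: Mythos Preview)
Your proposal is correct: it is the standard Rankin--Selberg unfolding argument (unfold one Eisenstein series, integrate in $x$, invoke parity, then apply the Mellin--Barnes integral for $\int_0^\infty u^{s-1}K_\mu(u)K_\nu(u)\,\mathrm{d}u$ and the Hecke-relation factorization of the Dirichlet series). The paper does not supply its own proof and simply cites \cite[Lemma~4.1]{D-K18}, where exactly this computation is carried out; so your approach coincides with what the cited reference does.
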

\begin{proof}
  See \cite[Lemma 4.1]{D-K18}.
\end{proof}
\begin{lemma}\label{Lemma:Regular_Planch} Let $F(z)$ and $G(z)$ be renormalizable functions on $\Gamma\backslash \mathbb{H}$ such that $F - \Phi$ and $G -\Psi$ are of rapid decay as $y \rightarrow \infty$, for some $\Phi(y)=\sum_{j=1}^l \frac{c_j}{n_j!} y^{\alpha_j} \log^{n_j} y$ and $\Psi(y)=\sum_{k=1}^{l_1} \frac{d_k}{m_k!} y^{\beta_k} \log^{m_k}y$. Moreover, let $\alpha_j \neq 1$, $\beta_k \neq 1$, ${\rm Re}(\alpha_j) \neq \frac{1}{2}$, ${\rm Re}(\beta_k) \neq \frac{1}{2}$, $\alpha_j + \overline{\beta_k} \neq 1$ and $\alpha_j \neq \overline{\beta_k}$, for all $j, k$. Then the following formula holds:
\begin{multline}
\langle F, G \rangle_{\reg}
=\langle F, \sqrt{3/ \pi}  \rangle_{\reg} \langle  \sqrt{3/ \pi} , G  \rangle_{\reg} +
\sum_{k\geq 1} \langle F , \phi_k \rangle \langle  \phi_k, G \rangle  \\+ \frac{1}{4 \pi} \int_{-\infty}^{\infty} \langle F, E_{\tau}\rangle_{\reg}  \langle  E_{\tau}, G \rangle_{\reg} \dd \tau
+ \langle F, \mathcal{E}_{\Psi}  \rangle_{\reg}  +  \langle  \mathcal{E}_{\Phi}, G \rangle_{\reg}.
\end{multline}
\end{lemma}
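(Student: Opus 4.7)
The plan is to reduce the statement to the ordinary Plancherel decomposition on $L^2(\X)$, applied to the rapidly-decaying functions $\tilde F := F - \mathcal{E}_{\Phi}$ and $\tilde G := G - \mathcal{E}_{\Psi}$, which by construction lie in $L^2(\X)$. Since $\tilde F\overline{\tilde G}$ is rapidly decaying, its regularized integral equals the ordinary one, and using bilinearity of the regularized inner product one obtains
\begin{equation*}
\langle F, G\rangle_{\reg} = \langle \tilde F, \tilde G\rangle + \langle \mathcal{E}_{\Phi}, G\rangle_{\reg} + \langle F, \mathcal{E}_{\Psi}\rangle_{\reg} - \langle \mathcal{E}_{\Phi}, \mathcal{E}_{\Psi}\rangle_{\reg}.
\end{equation*}
One then applies the standard $L^2$-Plancherel identity to expand $\langle \tilde F, \tilde G\rangle$ in the Maass basis.

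The bridge to the target identity rests on two orthogonality statements. First, the hypotheses $\alpha_j \neq \overline{\beta_k}$ and $\alpha_j + \overline{\beta_k} \neq 1$ together with Lemma \ref{lemma:EE_orth}, applied to each pair of Eisenstein summands, give $\langle \mathcal{E}_{\Phi}, \mathcal{E}_{\Psi}\rangle_{\reg} = 0$, so the last correction term above disappears. Second, $\mathcal{E}_{\Phi}$ is regularly orthogonal to each element of the spectral basis: against cusp forms this is immediate by unfolding since $\phi_k$ has vanishing zeroth Fourier coefficient; against $E_{\tau}$ it follows from Lemma \ref{lemma:EE_orth} using $\mathrm{Re}(\alpha_j) > 1/2$; against the constant $\sqrt{3/\pi}$ it follows from the observation that the full constant term of $E(\cdot, \alpha_j)$ coincides with the function $\Phi$ used in its own regularization, so $R(E(\cdot, \alpha_j), s) \equiv 0$ and hence $\int^{\reg}_{\X} E(z, \alpha_j)\,\dd\mu(z) = 0$. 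Analogous statements hold for $\mathcal{E}_{\Psi}$, and one concludes $\langle \tilde F, \phi_k\rangle = \langle F, \phi_k\rangle$, $\langle \tilde F, E_{\tau}\rangle = \langle F, E_{\tau}\rangle_{\reg}$, $\langle \tilde F, \sqrt{3/\pi}\rangle = \langle F, \sqrt{3/\pi}\rangle_{\reg}$, and symmetrically for $\tilde G$.

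Substituting these identifications into the $L^2$-Plancherel expansion of $\langle \tilde F, \tilde G\rangle$ and then into the display above produces the claimed formula. The main technical nuisance I anticipate is the careful handling of the logarithmic terms in $\Phi$ and $\Psi$, which correspond to derivatives $\partial_s^{n_j}E(z,s)\big|_{s=\alpha_j}$ rather than plain Eisenstein series. Each of the orthogonality statements above actually holds identically as an analytic function of the spectral parameter on an open neighborhood of $\alpha_j$, so all Taylor coefficients at $s = \alpha_j$ vanish and the conclusions transfer to every derivative in $s$. Absolute convergence of the cuspidal sum and continuous integral produced by the Plancherel expansion is automatic from $\tilde F, \tilde G \in L^2(\X)$.
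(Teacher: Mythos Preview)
Your proposal is correct and follows precisely the standard argument; the paper does not give its own proof but simply cites \cite[Proposition 3.1]{D-K18}, where exactly this subtract-Eisenstein-then-apply-$L^2$-Plancherel strategy is carried out. Your handling of the three orthogonality facts (Lemma~\ref{lemma:EE_orth} for the $\langle \mathcal{E}_\Phi,\mathcal{E}_\Psi\rangle_{\reg}$ and $\langle \mathcal{E}_\Phi,E_\tau\rangle_{\reg}$ terms, the vanishing of $R(E(\cdot,\alpha_j),s)$ for the constant, and unfolding for cusp forms) and the analytic-continuation trick for the logarithmic derivative terms are exactly what is needed.
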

\begin{proof}
  See \cite[Proposition 3.1]{D-K18}.
\end{proof}
Now we compute the cubic moment $\langle \psi, E_t^3\rangle$ through the method of regularized integral.
Let  $F=\psi$ and $G=E_t^3$.
Since that $F=\psi$ is compactly supported, the corresponding $\mathcal{E}_{\Phi}=0$.
By computing the constant term of $G=E_t^3$, we find that the corresponding $\mathcal{E}_{\Psi}$ is
\begin{multline}\label{eqn:E_E^3}
  \mathcal{E}_{\Psi}(z)=E(z, 3/2+3it)+\frac{2\xi(2it)}{\xi(1+2it)}E(z, 3/2+it)\\
  +3\frac{\xi(2it)^2}{\xi(1+2it)^2}E(z,3/2+it)+
\frac{\xi(2it)^3}{\xi(1+2it)^3}E(z, 3/2+3it).
\end{multline}
Applying Lemma \ref{Lemma:Regular_Planch} and  notice that $\langle \psi, E_{\tau}\rangle_{\reg}=\langle  \psi, E_{\tau}\rangle$, $\langle  \psi, \mathcal{E}_{\Psi}\rangle_{\reg}=\langle  \psi, \mathcal{E}_{\Psi}\rangle$, we have
\begin{multline}\label{eqn:Planch}
\langle \psi, E_t^3 \rangle
=\langle \psi, E_t^3 \rangle_{\reg}
=\langle \psi, \sqrt{3/ \pi}  \rangle_{\reg} \langle  \sqrt{3/ \pi} , E_t^3 \rangle_{\reg} +
\sum_{k\geq 1} \langle  \psi , \phi_k \rangle \langle  \phi_k, E_t^3 \rangle  \\+ \frac{1}{4 \pi} \int_{-\infty}^{\infty} \langle  \psi, E_{\tau}\rangle \langle  E_{\tau}, E_t^3 \rangle_{\reg} \dd\tau
+ \langle  \psi, \mathcal{E}_{\Psi}\rangle.
\end{multline}
The constant term, by \eqref{eqn:reg_subtract_Eisen} and Lemma \ref{lemma:3Eisen}, is
\[
\langle \psi, \sqrt{3/ \pi}  \rangle_{\reg} \langle  \sqrt{3/ \pi} , E_t^3 \rangle_{\reg}=
\frac{3}{\pi}\langle \psi,  1\rangle\frac{\xi(1/2+3it)\xi(1/2+it)^2\xi(1/2-it)}{\xi(1-2it)^3},
\]
The last term, by \eqref{eqn:E_E^3}, using the unfolding method, is
\begin{multline}\label{eqn:tail_term}
\langle  \psi, \mathcal{E}_{\Psi}  \rangle=\int_{\X} \psi(z)\overline{\mathcal{E}_{\Psi}(z)}\dd \mu(z)
=\int_{0}^{\infty}a_{0, \psi}(y)\Big(y^{3/2-3it}+\frac{2\xi(-2it)}{\xi(1-2it)}y^{3/2-it}
  \\+3\frac{\xi(-2it)^2}{\xi(1-2it)^2}y^{3/2-it}+
\frac{\xi(-2it)^3}{\xi(1-2it)^3}y^{3/2-3it}\Big)\frac{\dd y}{y^2}.
\end{multline}
where $a_{0, \psi}(y)=\int_{0}^{1}\psi(x+iy)\dd x$ is the constant term in Fourier expansion of $\psi$.
Since that $\psi$ is smooth compactly supported in $\X$, thus $a_{0, \psi}(y)$ is smooth compactly supported in $\mathbb{R}_{>0}$ and $a_{0, \psi}^{(j)}(y)\ll_{\psi, j} 1.$ Note that for real $\sigma\ll 1$ and $t\geq 1$, by Stirling's formula,
$$
 |\xi(\sigma+it)|\asymp |\zeta(\sigma+it)|e^{-\frac{\pi}{4}|t|}|t|^{\frac{\sigma-1}{2}}(1+O_{\sigma}(t^{-1})). $$
 Combining with  the bound $\zeta(it)\ll (|t|+1)^{\frac{1}{2}+\varepsilon}$ and
\begin{equation}\label{eqn:1/zeta}
\frac{1}{\zeta(1+it)}\ll (1+|t|)^\varepsilon,
\end{equation}
we have $\frac{\xi(2it)}{\xi(1+2it)}\ll (|t|+1)^{\varepsilon}.$
Integrating by parts in \eqref{eqn:tail_term} and using the above estimate to bound $\xi$-terms,  we have, with $T\leq t\leq 2T$,
\[
\langle  \psi, \mathcal{E}_{\Psi}  \rangle\ll_{\psi, A}T^{-A},
\]
for any $A>0$.
Combining the above estimate with \eqref{eqn:Planch}, we have, for $T\leq t\leq 2T$,
\begin{multline}\label{eqn:3E_Planch}
  \langle \psi, E_t^3 \rangle
=\frac{3}{\pi}\langle \psi,  1\rangle\frac{\xi(1/2+3it)\xi(1/2+it)^2\xi(1/2-it)}{\xi(1-2it)^3} +
\sum_{k\geq 1} \langle  \psi , \phi_k \rangle \langle  \phi_k, E_t^3 \rangle  \\+ \frac{1}{4 \pi} \int_{-\infty}^{\infty} \langle  \psi, E_{\tau}\rangle  \langle  E_{\tau}, E_t^3 \rangle_{\reg} \dd\tau
+ O_{\psi, A}(T^{-A}).
\end{multline}


\subsection{Proof of Theorem \ref{thm:VarES}}
From \eqref{eqn:3E_Planch}, it remains to handle the contributions of cusp forms and Eisenstein series i.e. the matrix coefficients of observable $\psi$. In order to estimate the variance for $\langle \psi, E_t^3 \rangle$, we shall estimate the variance for these contributions first. Hence in order to prove  Theorem \ref{thm:VarES}, we need the following two lemmas.
\begin{lemma}{\label{lemma:cusp_contri}}
Let $\phi_k$ with spectral parameter $t_k\ll T^\varepsilon$, then we have
\begin{equation*}
  \int_{T}^{2T}\Big|\langle  \phi_k, E_t^3 \rangle \Big|^2\dd t\ll_{\psi, \varepsilon} T^\varepsilon.
\end{equation*}
\end{lemma}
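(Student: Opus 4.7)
The plan is to apply regularized Parseval to the pairing $\langle \phi_k, E_t^3\rangle = \langle \phi_k\overline{E_t}, E_t^2\rangle$, expanding the first factor in the Maass--Eisenstein spectral basis of $L^2(\X)$. This produces the decomposition \eqref{eqn:decp_phiE3}, namely a constant term, a discrete sum over $\phi_j$ of $\langle \phi_k\overline{E_t}, \phi_j\rangle \langle \phi_j, E_t^2\rangle$, a continuous integral over $\nu$ of $\langle \phi_k\overline{E_t}, E_\nu\rangle \langle E_\nu, E_t^2\rangle_{\reg}$, and tail/residual terms coming from $\mathcal{E}_{\Psi}$ for $E_t^2$. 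Since $t_k \ll T^\varepsilon$, these tail terms (constant-term contribution of $E_t^2$ and corresponding Eisenstein correction) decay like $T^{-A}$ after integration by parts, exactly as for \eqref{eqn:tail_term}; the constant contribution $\langle \phi_k\overline{E_t},1\rangle\langle 1,E_t^2\rangle$ is itself negligible after cancellation because $\phi_k$ has mean zero, so these can be discarded at the variance level.

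The main piece is the cusp-form sum. Unfolding both triple products (using Lemma \ref{lemma:E^2phi} for $\langle \phi_j, E_t^2\rangle$, and the analogous Rankin--Selberg identity for $\langle \phi_k\overline{E_t}, \phi_j\rangle$) and bounding the archimedean factors via Stirling, the gamma-ratios force effective support on $|t_j - t|\ll T^\varepsilon$ and also restrict to \emph{even} $\phi_j$ (odd $\phi_j$ give zero). Thus, up to acceptable error, one is reduced to estimating
\[
\int_{T}^{2T}\Big|\,\frac{1}{t^{3/2}}\!\!\sideset{}{^{\even}}\sum_{|t_j-t|\ll T^\varepsilon}\!\! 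L(\tfrac12+it,\phi_k\times\phi_j)\,L(\tfrac12+2it,\phi_j)\,L(\tfrac12,\phi_j)\Big|^2\dd t.
\]

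Now I would apply Cauchy--Schwarz in two successive stages. First, Cauchy--Schwarz on the inner sum combined with the local second-moment bound $\sum_{|t_j-t|\ll T^\varepsilon}|L(\tfrac12+2it,\phi_j)|^2 \ll T^{1+\varepsilon}$ (a standard consequence of the Kuznetsov trace formula) absorbs the $L(\tfrac12+2it,\phi_j)$ factor. After swapping the order of integration and summation, a short argument (parametrizing $t = t_j+\alpha$ with $|\alpha|\ll T^\varepsilon$) reduces matters to
\[
\frac{1}{T^{2}}\sup_{|\alpha|\ll T^\varepsilon}\sideset{}{^{\even}}\sum_{T\leq t_j\leq 2T}\bigl|L(\tfrac12+it_j+i\alpha,\phi_k\times\phi_j)\bigr|^2\,L(\tfrac12,\phi_j)^2.
\]
A second Cauchy--Schwarz against the known fourth moment $\sum_{T\leq t_j\leq 2T}L(\tfrac12,\phi_j)^4 \ll T^{2+\varepsilon}$ leaves the square root of the family sum $\sum^{\even}_{T\leq t_j\leq 2T}|L(\tfrac12+it_j+i\alpha,\phi_k\times\phi_j)|^4$. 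Invoking Theorem~\ref{thm:4m-of-L} (with the explicit allowance for $t_k\ll T^\varepsilon$ and $|\alpha|\ll T^\varepsilon$ built into its statement) bounds this by $T^{2+\varepsilon}$, and combining the three factors yields the required $T^\varepsilon$ bound for the cusp-form contribution to the variance.

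The Eisenstein integral is treated in the same spirit: the two regularized triple products factor into ratios of completed zeta and degree-two $L$-values times $\zeta(\tfrac12+i\nu\pm i t)$-type factors, and the gamma-factor analysis again forces $|\nu - t|\ll T^\varepsilon$. Applying Cauchy--Schwarz and the subconvexity-free moment bounds for $\zeta$ (together with \eqref{eqn:1/zeta}) handles this piece with room to spare, so no separate deep input is required. The main obstacle in the argument is therefore the $\GL(2)\times\GL(2)$ fourth-moment input, Theorem~\ref{thm:4m-of-L}; granted that, the bookkeeping above -- keeping track of shifts, parities, and the gamma-factor support -- is the only remaining work.
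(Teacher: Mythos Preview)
Your proposal is correct and follows essentially the same route as the paper: regularized Parseval on $\langle\phi_k\overline{E_t},E_t^2\rangle$ giving Lemma~\ref{lemma:explicit_phiE^3}, Stirling truncation to $|t_j-t|\ll T^\varepsilon$ (Lemma~\ref{lemma:bound_phiE^3}), then the two-stage Cauchy--Schwarz using $\sum_{|t_j-t|\ll T^\varepsilon}|L(\tfrac12+2it,\phi_j)|^2\ll T^{1+\varepsilon}$, the fourth moment $\sum L(\tfrac12,\phi_j)^4\ll T^{2+\varepsilon}$, and Theorem~\ref{thm:4m-of-L}. One small correction: the constant term vanishes because $\langle\phi_k\overline{E_t},1\rangle=\langle\phi_k,E_t\rangle=0$ by orthogonality of cusp forms to Eisenstein series, not merely because $\phi_k$ has mean zero; and for the continuous-spectrum piece the paper additionally uses the convexity bound for $L(\tfrac12+it\pm i\nu,\phi_k)$ (harmless since the conductor is $T^\varepsilon$) before invoking the Weyl bound and fourth moment of $\zeta$.
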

\begin{lemma}{\label{lemma:Eis_contri}}
Let $E_\tau$ with $\tau\ll T^\varepsilon$, then we have
\begin{equation*}
  \int_{T}^{2T}\Big| \langle  E_{\tau}, E_t^3 \rangle_{\reg} \Big|^2\dd t\ll_{\psi, \varepsilon} T^\varepsilon.
\end{equation*}
\end{lemma}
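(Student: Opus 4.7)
The plan is to mirror the template sketched in the introduction for $\langle\phi_k,E_t^3\rangle$, with the cusp form $\phi_k$ replaced by the Eisenstein series $E_\tau$ (so every inner product must be regularized) and with the sharp fourth moment bound of Theorem \ref{thm:4m-of-L} replaced by the sharp eighth moment bound of Theorem \ref{thm:8m-of-L}. First I would rewrite $\langle E_\tau,E_t^3\rangle_{\reg}=\langle E_\tau\overline{E_t},E_t^2\rangle_{\reg}$ and apply the regularized Parseval identity of Lemma \ref{Lemma:Regular_Planch}. This expands the quantity into a constant contribution, a cuspidal sum $\sum_{j\geq 1}\langle E_\tau\overline{E_t},\phi_j\rangle\langle\phi_j,E_t^2\rangle$, an Eisenstein integral $\frac{1}{4\pi}\int_{\mathbb{R}}\langle E_\tau\overline{E_t},E_\nu\rangle_{\reg}\langle E_\nu,E_t^2\rangle_{\reg}\,\dd\nu$, plus tail terms from the Eisenstein asymptotics of $E_\tau\overline{E_t}$ and $E_t^2$. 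The constant and tail terms can be controlled directly via Stirling and \eqref{eqn:1/zeta}, giving an $O_\psi(T^\varepsilon)$ contribution to the variance.

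For the cuspidal contribution, Lemma \ref{lemma:E^2phi} identifies, for even $\phi_j$,
\[
\langle E_\tau\overline{E_t},\phi_j\rangle=\frac{\rho_{\phi_j}(1)}{2}\frac{\Lambda(\tfrac12+i\tau-it,\phi_j)\Lambda(\tfrac12+i\tau+it,\phi_j)}{\xi(1+2i\tau)\xi(1-2it)},\quad
\langle\phi_j,E_t^2\rangle=\frac{\rho_{\phi_j}(1)}{2}\frac{\Lambda(\tfrac12-2it,\phi_j)\Lambda(\tfrac12,\phi_j)}{\xi(1-2it)^2}.
\]
A Stirling analysis of the eight $\Gamma$-factors against the three $\xi$-factors in this product, combined with $|\rho_{\phi_j}(1)|^2\asymp t_{\phi_j}^{o(1)}e^{\pi t_{\phi_j}}$, restricts the effective $j$-sum to the short window $|t_{\phi_j}-t|\ll T^\varepsilon$ and yields an overall weight of size $T^{-3/2+\varepsilon}$. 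Invoking self-duality of $\phi_j$ to reflect shifts onto the critical line, the cuspidal contribution to $\langle E_\tau,E_t^3\rangle_{\reg}$ is controlled, up to a negligible error, by
\[
\frac{1}{T^{3/2-\varepsilon}}\sideset{}{^{\even}}\sum_{|t_{\phi_j}-t|\ll T^\varepsilon}\bigl|L(\tfrac12+i(t-\tau),\phi_j)\,L(\tfrac12+i(t+\tau),\phi_j)\,L(\tfrac12+2it,\phi_j)\,L(\tfrac12,\phi_j)\bigr|.
\]

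Squaring this and integrating over $t\in[T,2T]$, I would run a three-step Cauchy--Schwarz chain: (i) Cauchy--Schwarz in $j$ absorbing $L(\tfrac12+2it,\phi_j)$ via the sharp short-interval second moment $\sum_{|t_j-t|\ll T^\varepsilon}|L(\tfrac12+2it,\phi_j)|^2\ll T^{1+\varepsilon}$, after which the $t$-integral and $j$-sum are swapped (each $t$-interval shrinking to length $T^\varepsilon$ around $t_j$); (ii) a second Cauchy--Schwarz absorbing $L(\tfrac12,\phi_j)$ via the fourth moment $\sum_{T\leq t_j\leq 2T}L(\tfrac12,\phi_j)^4\ll T^{2+\varepsilon}$; (iii) a final Cauchy--Schwarz splitting $|L(\tfrac12+it_j+i\alpha,\phi_j)|^4|L(\tfrac12+it_j+i\beta,\phi_j)|^4$ (with $|\alpha|,|\beta|\ll T^\varepsilon$ coming from $\pm\tau$ and from the shrunken $t$-window) into two eighth moments, each bounded by $T^{2+\varepsilon}$ via Theorem \ref{thm:8m-of-L}. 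Tracking exponents yields the desired $T^\varepsilon$ bound. The Eisenstein-integral contribution is handled identically, with $L(s,\phi_j)$ replaced everywhere by $\zeta(s\pm i\nu)$ and Theorem \ref{thm:8m-of-L} replaced by classical sharp moment bounds for $\zeta$ on the critical line.

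The main obstacle is step (iii): the factor $|L(\tfrac12+i(t-\tau),\phi_j)L(\tfrac12+i(t+\tau),\phi_j)|^2$ arises because $E_\tau$ (unlike a fixed cusp form) contributes \emph{two} shifted $\GL(2)$ $L$-factors, and both must be absorbed simultaneously into the eighth moment of Theorem \ref{thm:8m-of-L}. The explicit uniformity in $|\alpha|\ll T^\varepsilon$ built into the statement of that theorem is what makes this final absorption succeed; without it, the estimate would collapse to convexity level and the $T^\varepsilon$ saving would be lost.
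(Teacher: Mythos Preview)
Your proposal is correct and follows essentially the same approach as the paper: regularized Parseval on $\langle E_\tau\overline{E_t},E_t^2\rangle_{\reg}$, Stirling localization to $|t_j-t|\ll T^\varepsilon$ with weight $T^{-3/2+\varepsilon}$, then the same Cauchy--Schwarz chain (your iterated steps (ii)--(iii) amount to the single H\"older with exponents $(1/2,1/4,1/4)$ that the paper applies) feeding into the second moment of $L(1/2+2it,\phi_j)$, the fourth moment of $L(1/2,\phi_j)$, and Theorem~\ref{thm:8m-of-L} for the two shifted eighth moments. The Eisenstein-integral term is likewise handled with Weyl and the fourth moment of $\zeta$, as you indicate.
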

Now we prove Theorem \ref{thm:VarES} under Lemma \ref{lemma:cusp_contri} and Lemma \ref{lemma:Eis_contri}.
By \eqref{eqn:3E_Planch} and the elementary inequality, we have
\begin{multline}\label{eqn:3m_Planc}
\int_{T}^{2T}\Big|\langle \psi, E_t^3 \rangle\Big|^2\dd t\ll_{\psi}
\int_{T}^{2T}\Big|\frac{\xi(1/2+3it)\xi(1/2+it)^2\xi(1/2-it)}{\xi(1-2it)^3}\Big|^2\dd t\\
+\int_{T}^{2T}\Big|\sum_{k\geq 1} \langle  \psi , \phi_k \rangle \langle  \phi_k, E_t^3 \rangle \Big|^2\dd t
+\int_{T}^{2T}\Big|\int_{-\infty}^{\infty} \langle  \psi, E_{\tau}\rangle  \langle  E_{\tau}, E_t^3 \rangle_{\reg} \dd\tau\Big|^2\dd t
+O(T^{-2025}).
\end{multline}
By Stirling's formula and $|\zeta(1-2it)|=T^{o(1)}$ with $T\leq t\leq 2T$, we have
\[
\Big|\frac{\xi(1/2+3it)\xi(1/2+it)^2\xi(1/2-it)}{\xi(1-2it)^3}\Big|^2\ll \frac{1}{T^2}|\zeta(1/2+it)|^6|\zeta(1/2+3it)|^2
\]
Thus the constant term contribution is bounded by
\[
\frac{1}{T^2}\int_{T}^{2T}|\zeta(1/2+it)|^6|\zeta(1/2+3it)|^2\dd t.
\]
Using Cauchy--Schwarz and combining with the Fourth and Twelfth moment estimate for $\zeta$-function \cite{HB78}, it is
\[
\ll \frac{1}{T^2}\Big(\int_{T}^{2T}|\zeta(1/2+it)|^{12}\dd t\int_{T}^{2T}|\zeta(1/2+3it)|^{4}\dd t\Big)^\frac{1}{2}\ll T^{-\frac{1}{2}+\varepsilon}.
\]
It remains to deal with the contributions from cusp forms and Eisenstein series.
Note that
\[
\langle  \psi, \phi_k \rangle
=\frac{1}{(1/4+t_k^2)^{\ell}}\langle  \psi , \Delta_{\mathbb{H}}^{\ell} \phi_k \rangle
=\frac{1}{(1/4+t_k^2)^{\ell}}\langle  \Delta_{\mathbb{H}}^{\ell} \psi , \phi_k \rangle,
\]
\[
\langle  \psi, E_\tau \rangle
=\frac{1}{(1/4+\tau^2)^{\ell}}\langle  \psi , \Delta_{\mathbb{H}}^{\ell} E_\tau  \rangle
=\frac{1}{(1/4+\tau^2)^{\ell}}\langle  \Delta_{\mathbb{H}}^{\ell} \psi , E_\tau  \rangle.
\]
By the Cauchy--Schwarz inequality and QUE for Eisenstein series, we have
\begin{equation*}
  \langle  \psi, \phi_k \rangle\ll_{\psi, A}t_k^{-A},\quad
  \langle  \psi, E_\tau \rangle\ll_{\psi, A}(1+|\tau|)^{-A}.
\end{equation*}
By using the Plancherel formula again and unfolding methods, together with the convexity bounds of $L$-values, we have
\begin{equation*}
   \langle  \phi_k, E_t^3 \rangle\ll (t_kT)^{O(1)},\quad
  \langle   E_\tau, E_t^3 \rangle_{\reg}\ll ((1+|\tau|)T)^{O(1)}.
\end{equation*}
Therefore we can truncate the $t_k$-sum and $\tau$-integral in $t_k\leq T^\varepsilon$ and $|\tau|\leq T^\varepsilon$. Applying Cauchy--Schwarz and Lemma \ref{lemma:cusp_contri}, we have
\begin{equation*}
\begin{split}
\int_{T}^{2T}\Big|\sum_{k\geq 1} \langle  \psi , \phi_k \rangle& \langle  \phi_k, E_t^3 \rangle \Big|^2\dd t
\ll_{\psi}\int_{T}^{2T}\Big|\sum_{t_k\leq T^\varepsilon} \langle  \psi , \phi_k \rangle \langle  \phi_k, E_t^3 \rangle \Big|^2\dd t+O(T^{-10})\\
&\ll_{\psi}T^\varepsilon\sum_{t_k\leq T^\varepsilon} |\langle  \psi , \phi_k \rangle|^2
\ll_{\psi}T^\varepsilon\sum_{t_k\leq T^\varepsilon}\frac{1}{t_k^4}\ll_{\psi, \varepsilon} T^\varepsilon.
\end{split}
\end{equation*}
Similarly, by Lemma \ref{lemma:Eis_contri}, we have
\begin{equation*}
\begin{split}
\int_{T}^{2T}\Big|\int_{-\infty}^{\infty} \langle  \psi, E_{\tau}\rangle&  \langle  E_{\tau}, E_t^3 \rangle_{\reg} \dd\tau\Big|^2\dd t
\ll_{\psi}\int_{T}^{2T}\Big|\int_{-T^\varepsilon}^{T^\varepsilon} \langle  \psi, E_{\tau}\rangle  \langle  E_{\tau}, E_t^3 \rangle_{\reg} \dd\tau\Big|^2\dd t+O(T^{-10})\\
&\ll_{\psi}T^\varepsilon\int_{-T^\varepsilon}^{T^\varepsilon}|\langle  \psi, E_{\tau}\rangle |^2\dd\tau
\ll_{\psi}T^\varepsilon\int_{-T^\varepsilon}^{T^\varepsilon}\frac{\dd \tau}{(1+|\tau|)^4}\ll_{\psi, \varepsilon} T^\varepsilon.
\end{split}
\end{equation*}
Combining the above estimates of each term in \eqref{eqn:3m_Planc}, we get
\[
\int_{T}^{2T}\Big|\langle \psi, E_t^3 \rangle\Big|^2\dd t\ll_{\psi, \varepsilon} T^\varepsilon.
\]
%
\section{The variance for matrix coefficients}\label{sec:TwoLemma}
In this section, we give the proof of Lemma \ref{lemma:cusp_contri} and Lemma \ref{lemma:Eis_contri} assuming Theorem \ref{thm:4m-of-L} and \ref{thm:8m-of-L} respectively.
\subsection{Moments of $\GL(2)$ $L$-functions}
We introduce some Lindel\"of-on-average estimates on $\GL(2)$ $L$-functions.
These bounds can be proved by using the following $\GL(2)$ spectral large sieve.
\begin{lemma}\label{lemma:LS_gl2}
Let $T\geq 1$ and $1\leq \Delta\leq T$, then for any complex sequence $\{a_n\}_{n\geq 1}$ we have
\[
\sum_{|t_j-T|\leq \Delta}\Big|\sum_{n\leq N}a_n\lambda_j(n)\Big|^2\ll (NT)^{\varepsilon}(T\Delta+N)\sum_{n\leq N}|a_n|^2.
\]
\end{lemma}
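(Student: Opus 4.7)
\medskip

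\noindent\textbf{Proof plan for Lemma \ref{lemma:LS_gl2}.}

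The plan is to prove this classical GL(2) spectral large sieve via the Kuznetsov trace formula, following the Deshouillers--Iwaniec strategy. First I would pass from the Hecke-normalized coefficients $\lambda_j(n)$ to the Fourier coefficients $\rho_j(n)$ by using the relations
\[
\rho_j(n)=\rho_j(1)\lambda_j(n)/\sqrt{n},\qquad |\rho_j(1)|^2=\frac{2\cosh(\pi t_j)}{L(1,\sym^2\phi_j)},
\]
combined with the Hoffstein--Lockhart/Molteni upper bound $L(1,\sym^2\phi_j)\ll t_j^{\varepsilon}$, which costs only $T^{\varepsilon}$. Thus it suffices to bound
\[
\sum_j h(t_j)\frac{1}{\cosh(\pi t_j)}\Bigl|\sum_{n\leq N}a_n\sqrt{n}\,\rho_j(n)\Bigr|^2
\]
for a suitable non-negative majorant $h(t)$ of $\mathbf{1}_{|t-T|\leq\Delta}$, for instance
\[
h(t)=e^{-((t-T)/\Delta)^2}+e^{-((t+T)/\Delta)^2},
\]
which is holomorphic, even, of rapid decay, and has $h(t)\geq 1$ on the interval in question.

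Next I would open the square and insert the Eisenstein contribution (which is non-negative and can only increase the left-hand side) so that the full Kuznetsov trace formula applies. This gives
\[
\sum_{m,n\leq N}a_m\overline{a_n}\sqrt{mn}\left(\delta_{m=n}H + \sum_{c\geq 1}\frac{S(m,n;c)}{c}\,\Phi_h\!\left(\frac{4\pi\sqrt{mn}}{c}\right)\right),
\]
where $H=\tfrac{1}{\pi^2}\int h(t)\,t\tanh(\pi t)\,\dd t\asymp T\Delta$ and $\Phi_h$ is the $K$-Bessel/$J$-Bessel transform of $h$. The diagonal $m=n$ contributes the promised $T\Delta\sum|a_n|^2$ after restoring the factor $\sqrt{mn}=n$.

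For the off-diagonal Kloosterman term I would use the Weil bound $|S(m,n;c)|\leq\tau(c)(m,n,c)^{1/2}c^{1/2}$, together with the standard estimates on $\Phi_h(x)$: it decays rapidly for $x\gg T^{1+\varepsilon}$ and for $x\ll T^{1-\varepsilon}$, and in the transition range $x\asymp T$ is bounded by $O(T^{-1}\Delta)$ by stationary phase. This confines the effective range of $c$ to $c\asymp \sqrt{mn}/T$, and a careful dyadic summation (using Cauchy--Schwarz on $m,n$ and the divisor bound) shows that the Kloosterman contribution is $\ll (NT)^{\varepsilon}N\sum|a_n|^2$. Adding the diagonal and off-diagonal contributions yields the claimed bound.

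The technically delicate step is the analysis of $\Phi_h$ and the resulting $c$-sum: one needs uniform bounds on the Bessel transform in the parameter $\Delta$, so that the $T\Delta$ from the diagonal and the $N$ from the Kloosterman piece balance correctly and no crossed term of size $\sqrt{T\Delta N}$ appears outside the stated bound. This is where the standard Deshouillers--Iwaniec argument is invoked verbatim, and I would simply appeal to their treatment (together with the uniform Bessel transform estimates of Jutila or Motohashi) to conclude.
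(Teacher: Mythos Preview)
The paper does not give its own proof of this lemma; it simply cites Jutila \cite{Jut00} and Jutila--Motohashi \cite{J-M05}. Your outline via the Kuznetsov trace formula is the standard route and is essentially what those references do, so in that sense your proposal is consistent with the paper's approach.

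One inaccuracy in your sketch is worth flagging: the claim that $\Phi_h(x)$ ``decays rapidly for $x\ll T^{1-\varepsilon}$'' is not correct as stated. For a test function $h$ localized near $|t|=T$ the Bessel transform $\Phi_h(x)$ typically has only \emph{polynomial} decay in $x$ as $x\to 0$ (of order $x$, coming from the power-series expansion of $J_{2it}(x)$), not rapid decay. What actually happens in the Deshouillers--Iwaniec argument is that this linear decay, combined with the Weil bound and the convergent $c$-sum, already suffices to produce the $N$-term. The genuinely delicate point for the \emph{short-interval} version is getting the constant $T\Delta$ rather than $T^2$ from the transition range $x\asymp T$, and this is precisely where one must appeal to Jutila's uniform Bessel-transform estimates—so your final paragraph identifies the crux correctly even if the earlier decay claim is misstated. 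Since you explicitly defer to the cited literature at that step (exactly as the paper does), the proposal is acceptable.
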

\begin{proof}
  See \cite{Jut00} and also \cite{J-M05}.
\end{proof}
\begin{lemma}\label{lemma:2m_gl2}
Let $T\geq 1$ and $T\leq t\leq 2T$, we have
\[
\sum_{|t_j-t|\ll T^\varepsilon}|L(1/2+2it, \phi_j)|^2\ll T^{1+\varepsilon}.
\]
\end{lemma}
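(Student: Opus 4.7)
The plan is to combine an approximate functional equation (AFE) for $L(s,\phi_j)$ at the point $s=1/2+2it$ with the spectral large sieve in Lemma \ref{lemma:LS_gl2}. First I would record the relevant conductor. For $s=1/2+2it$, the completed $L$-function $\Lambda(s,\phi_j)$ has gamma factors $\Gamma((s\pm it_j)/2)$, so the analytic conductor is of size $(|2t+t_j|+1)(|2t-t_j|+1)$. Under the hypotheses $T\le t\le 2T$ and $|t_j-t|\ll T^\varepsilon$, we have $t_j\asymp T$ and both shifts $2t\pm t_j$ are of size $\asymp T$, whence the analytic conductor is $\asymp T^2$ and the AFE will produce sums of length $N\asymp T^{1+\varepsilon}$.

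Next I would apply a standard AFE (e.g.\ \cite[Thm.\ 5.3]{I-K04}) to write
\[
L(1/2+2it,\phi_j)\ll \Big|\sum_{n\ge 1}\frac{\lambda_j(n)}{n^{1/2+2it}}V_+(n)\Big|+\Big|\sum_{n\ge 1}\frac{\lambda_j(n)}{n^{1/2-2it}}V_-(n)\Big|+O(T^{-A}),
\]
where $V_\pm$ are smooth weights essentially supported on $n\ll T^{1+\varepsilon}$ with $V_\pm^{(k)}(n)\ll_k n^{-k}$. A smooth dyadic partition of unity reduces matters to sums $\sum_{n\sim M}a_n\lambda_j(n)$ with $a_n=V_\pm(n)/n^{1/2\pm 2it}$ and $M\le T^{1+\varepsilon}$.

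Squaring and summing over the short spectral window, Lemma \ref{lemma:LS_gl2} (with $\Delta=T^\varepsilon$ and $N=M$) gives
\[
\sum_{|t_j-t|\ll T^\varepsilon}\Big|\sum_{n\sim M}a_n\lambda_j(n)\Big|^2\ll T^\varepsilon(T\cdot T^\varepsilon+M)\sum_{n\sim M}\frac{1}{n}\ll T^{1+\varepsilon}+M\cdot T^\varepsilon,
\]
since $\sum_{n\sim M}1/n\ll T^\varepsilon$. Because $M\le T^{1+\varepsilon}$ throughout, both terms are $\ll T^{1+\varepsilon}$. Summing over the $O(\log T)$ dyadic pieces and combining the two AFE sums yields the stated bound.

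No serious obstacle is expected: the only point requiring a moment of care is that the conductor does \emph{not} drop at $s=1/2+2it$ (we are away from $\pm it_j$), so the AFE length is $T$ rather than a smaller power, which is exactly the range where the large sieve is sharp. The argument is essentially the one-line observation that a Lindelöf-on-average second moment in a short window of length $T^\varepsilon$ follows from the spectral large sieve whenever the analytic conductor is $T^{2+o(1)}$.
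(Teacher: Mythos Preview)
Your proposal is correct and follows essentially the same approach as the paper: an approximate functional equation showing the relevant Dirichlet polynomials have length $\ll T^{1+\varepsilon}$ (since the conductor at $1/2+2it$ is $\asymp T^2$ in this window), followed by Lemma~\ref{lemma:LS_gl2} with $\Delta=T^\varepsilon$. The only cosmetic difference is that the paper writes the AFE via a contour-shift integral rather than a smooth dyadic partition, but the substance is identical.
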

\begin{proof}
By a similar argument in Lemma \ref{lemma:AFE}, we have
\[
|L(1/2+2it, \phi_j)|\ll T^\varepsilon\int_{|u|\ll T^\varepsilon}\sum_{\pm}\Big|\sum_{n\ll T^{1+\varepsilon}}\frac{\lambda_j(n)}{n^{1/2+\varepsilon\pm2it}}\Big|\dd u+O(T^{-2025}).
\]
Insert it into the second moment, by Cauchy--Schwarz, we get
\begin{equation*}
\sum_{|t_j-t|\ll T^\varepsilon}|L(1/2+2it, \phi_j)|^2\ll T^{\varepsilon}\int_{|u|\ll T^\varepsilon}\sum_{\pm}\sum_{|t_j-t|\ll T^\varepsilon}\Big|\sum_{n\ll T^{1+\varepsilon}}\frac{\lambda_j(n)}{n^{1/2+\varepsilon\pm2it}}\Big|^2\dd u+O(T^{-10}).
\end{equation*}
Applying Lemma \ref{lemma:LS_gl2}, it is bounded by $T^{1+\varepsilon}.$
\end{proof}
\begin{lemma}\label{lemma:4m_gl2}
Let $T\geq 1$, we have
\[
\sum_{T\leq t_j\leq 3T}|L(1/2, \phi_j)|^4\ll T^{2+\varepsilon}.
\]
\end{lemma}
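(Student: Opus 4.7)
The plan is to mimic the proof of Lemma \ref{lemma:2m_gl2}, using an approximate functional equation together with the spectral large sieve Lemma \ref{lemma:LS_gl2}, but now applied to the degree-$4$ object $L(s,\phi_j)^2$ in order to capture $|L(1/2,\phi_j)|^4=|L(1/2,\phi_j)^2|^2$. Since the analytic conductor of $L(s,\phi_j)^2$ is $t_j^{4+o(1)}$, an approximate functional equation expresses $L(1/2,\phi_j)^2$ as a Dirichlet sum of length $\ll T^{2+\varepsilon}$, which is exactly the size needed for the large sieve to yield the Lindel\"of-on-average prediction $T^{2+\varepsilon}$.

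The first step is to compute the coefficients. Using the Hecke multiplicativity $\lambda_j(a)\lambda_j(b)=\sum_{d\mid(a,b)}\lambda_j(ab/d^2)$, one obtains
\[
L(s,\phi_j)^2=\sum_{n\ge 1}\frac{\tau_{\phi_j}(n)}{n^s},\qquad \tau_{\phi_j}(n):=\sum_{d^2\mid n}d(n/d^2)\lambda_j(n/d^2),
\]
and running the contour argument of Lemma \ref{lemma:AFE} (shifting the line of integration for the AFE of $L^2$ to $\Re(s)=\varepsilon$ and using Stirling on the completed gamma factor) yields
\[
L(1/2,\phi_j)^2\ll T^{\varepsilon}\int_{|u|\ll T^{\varepsilon}}\left|\sum_{n\ll T^{2+\varepsilon}}\frac{\tau_{\phi_j}(n)}{n^{1/2+\varepsilon+iu}}\right|\dd u+O(T^{-2025}).
\]

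The second step is to linearise in the Hecke eigenvalues by the change of variables $n=d^{2}m$, which factors the Dirichlet sum as
\[
\sum_{d\ll T^{1+\varepsilon}}\frac{1}{d^{1+2\varepsilon+2iu}}\sum_{m\ll T^{2+\varepsilon}/d^{2}}\frac{d(m)\lambda_j(m)}{m^{1/2+\varepsilon+iu}}.
\]
Cauchy--Schwarz applied to the outer sum (with absolutely summable weight $d^{-1-2\varepsilon}$) followed by squaring bounds $|L(1/2,\phi_j)^2|^2$ by
\[
T^{\varepsilon}\int_{|u|\ll T^{\varepsilon}}\sum_{d\ll T^{1+\varepsilon}}\frac{1}{d}\left|\sum_{m\ll T^{2+\varepsilon}/d^{2}}\frac{d(m)\lambda_j(m)}{m^{1/2+\varepsilon+iu}}\right|^2\dd u+O(T^{-1000}).
\]
Summing over $T\le t_j\le 3T$, swapping the order of summation and integration, and invoking Lemma \ref{lemma:LS_gl2} with $\Delta\asymp T$ and cut-off $N\asymp T^{2+\varepsilon}/d^{2}$, and noting that the $\ell^2$-norm of the coefficient vector is $\sum_{m\sim N}d(m)^2/m^{1+2\varepsilon}\ll T^{\varepsilon}$, one obtains
\[
\sum_{T\le t_j\le 3T}\left|\sum_{m\sim N}\frac{d(m)\lambda_j(m)}{m^{1/2+\varepsilon+iu}}\right|^2\ll T^{\varepsilon}(T^2+T^2/d^2)\ll T^{2+\varepsilon},
\]
and summing $d^{-1}$ over $d\ll T^{1+\varepsilon}$ contributes only another $\log T$, absorbed into $T^{\varepsilon}$.

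The only delicate point is the balance between the two summands $T\Delta=T^2$ and $N=T^{2+\varepsilon}/d^2$ in the large sieve: both are of comparable size when $d\asymp 1$, so no extra saving beyond the Lindel\"of prediction $T^{2+\varepsilon}$ is available from this elementary approach, and any attempt at sharpening the bound in a shorter spectral window or at a shifted point $\tfrac12+it$ would demand considerably more delicate input (as in Theorem \ref{thm:8m-of-L}). For the unshifted fourth moment, however, this combination of the approximate functional equation, the Hecke algebra identity $\tau_{\phi_j}(n)=\sum_{d^2\mid n}d(n/d^2)\lambda_j(n/d^2)$, and the Jutila--Motohashi large sieve yields the desired bound $T^{2+\varepsilon}$ without further effort.
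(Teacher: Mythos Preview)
Your proof is correct and follows precisely the approach the paper indicates: the paper's own proof consists only of the sentence ``It is similar to the proof of Lemma \ref{lemma:2m_gl2}. See also \cite{J-M05}'', and your argument is exactly the natural way to fill in those details---apply an approximate functional equation to $L(1/2,\phi_j)^2$, use the Hecke identity $\sum_{ab=n}\lambda_j(a)\lambda_j(b)=\sum_{d^2\mid n}d(n/d^2)\lambda_j(n/d^2)$ to linearise in $\lambda_j$, separate the $d$-sum by Cauchy--Schwarz, and finish with the spectral large sieve (Lemma \ref{lemma:LS_gl2}). The only cosmetic point is that when squaring $\big(\int_{|u|\ll T^\varepsilon}|\cdot|\,du\big)^2$ you should also mention the Cauchy--Schwarz step in $u$ that collapses it to a single integral, but this is implicit in your displayed bound and costs only another $T^\varepsilon$.
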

\begin{proof}
It is similar to the proof of Lemma \ref{lemma:2m_gl2}. See also \cite{J-M05}.
\end{proof}
\subsection{Proof of Lemma \ref{lemma:cusp_contri}}
Firstly, we shall compute $\langle  \phi_k, E_t^3 \rangle.$ Let $F=\phi_k\overline{E_t}=\phi_kE_{-t}$ which rapidly decays at the cusp $\infty$, the corresponding $\mathcal{E}_{\Phi}=0$. Let $G(z)=G(z, s_1 ,s_2)=E(z,1/2+s_1)E(z, 1/2+s_2)$, and it suffices to compute
\[
\langle  F(\cdot), G(\cdot, s_1, s_2) \rangle=\int_{\X}F(z)\overline{G(z, s_1,s_2)}\dd \mu(z)
\]
at $s_1=s_2=it.$
By computing the constant term of $G(z)$, we find that the corresponding function of $G$ is
\begin{multline}\label{eqn:ECofE^2}
\mathcal{E}_{\Psi}(z)=E(z,1+s_1+s_2)+
\frac{\xi(2s_1)}{\xi(1+2s_1)}E(z,1-s_1+s_2)\\
+\frac{\xi(2s_2)}{\xi(1+2s_2)}E(z,1+s_1-s_2)
+\frac{\xi(2s_1)\xi(2s_2)}{\xi(1+2s_1)\xi(1+2s_2)}E(z, 1-s_1-s_2).
\end{multline}
By \eqref{eqn:reg_subtract_Eisen}, since that $\phi_k$ rapidly decays at the cusp $\infty$, we have
\[
\langle  \phi_kE_{-t},  E(\cdot, 1/2+s_1)E(\cdot, 1/2+s_2)\rangle
=\langle  \phi_kE_{-t},  E(\cdot, 1/2+s_1)E(\cdot, 1/2+s_2)\rangle_{\reg}.
\]
Then applying Lemma \ref{Lemma:Regular_Planch}, we have
\begin{equation}\label{eqn:Planch_phiE^3}
\begin{split}
  \langle  \phi_kE_{-t},  &E(\cdot, 1/2+s_1)E(\cdot, 1/2+s_2)\rangle
=\langle \phi_kE_{-t}, \sqrt{3/ \pi}  \rangle_{\reg} \langle  \sqrt{3/ \pi}, E(\cdot, 1/2+s_1)E(\cdot, 1/2+s_2)\rangle_{\reg}\\
&+ \sum_{j\geq 1} \langle \phi_kE_{-t}, \phi_j \rangle \langle  \phi_j, E(\cdot, 1/2+s_1)E(\cdot, 1/2+s_2) \rangle  \\
&+ \frac{1}{4 \pi} \int_{-\infty}^{\infty} \langle \phi_kE_{-t}, E_{\nu}\rangle  \langle  E_{\nu}, E(\cdot, 1/2+s_1)E(\cdot, 1/2+s_2)\rangle_{\reg} \dd \nu
+ \langle \phi_kE_{-t}, \mathcal{E}_{\Psi}  \rangle.
\end{split}
\end{equation}
The constant term is vanishing since $\langle \phi_kE_{-t}, \sqrt{3/ \pi}  \rangle_{\reg} =\sqrt{3/ \pi} \langle \phi_k, E_t\rangle=0.$
The tail term, using triple product formula in Lemma \ref{lemma:E^2phi}, is
\begin{equation}\label{eqn:tail_phiE^3}
  \begin{split}
     \langle  \phi_kE_{-t},  &\mathcal{E}_{\Psi}\rangle
     =\int_{\X}\phi_kE_{-t}\overline{\mathcal{E}_{\Psi}}\dd \mu(z)
     =\frac{\rho_k(1)}{2}\frac{\Lambda(1-it+\overline{s_1}+\overline{s_2}, \phi_k)
     \Lambda(-it-\overline{s_1}-\overline{s_2}, \phi_k)}{\xi(1-2it)\xi(2+2\overline{s_1}+2\overline{s_2})}\\
     &+\frac{\rho_k(1)}{2}\frac{\xi(2\overline{s_1})}{\xi(1+2\overline{s_1})}
     \frac{\Lambda(1-it-\overline{s_1}+\overline{s_2}, \phi_k)
     \Lambda(-it+\overline{s_1}-\overline{s_2}, \phi_k)}{\xi(1-2it)\xi(2-2\overline{s_1}+2\overline{s_2})}\\
     &+\frac{\rho_k(1)}{2}\frac{\xi(2\overline{s_2})}{\xi(1+2\overline{s_2})}
     \frac{\Lambda(1-it+\overline{s_1}-\overline{s_2}, \phi_k)
     \Lambda(-it-\overline{s_1}+\overline{s_2}, \phi_k)}{\xi(1-2it)\xi(2+2\overline{s_1}-2\overline{s_2})}\\
     &+\frac{\rho_k(1)}{2}\frac{\xi(2\overline{s_1})}{\xi(1+2\overline{s_1})}\frac{\xi(2\overline{s_2})}{\xi(1+2\overline{s_2})}
     \frac{\Lambda(1-it-\overline{s_1}-\overline{s_2}, \phi_k)\Lambda(-it+\overline{s_1}+\overline{s_2}, \phi_k)}{\xi(1-2it)\xi(2-2\overline{s_1}-2\overline{s_2})}.
  \end{split}
\end{equation}
Since that the right hand side is well-defined for $s_1=s_2=it$, thus when $G=E_t^2$, using the functional equation $\Lambda(s, \phi_k)=\varepsilon_{\phi_k}\Lambda(1-s, \phi_k)$ ($\varepsilon_{\phi_k}=1$ if $\phi_k$ is even, equals $-1$ otherwise), we have
\begin{equation}\label{eqn:tail_phiE^3t}
  \begin{split}
     \varepsilon_{\phi_k}\langle \phi_kE_{-t},  &\mathcal{E}_{\Psi}\rangle
     =\frac{\rho_k(1)}{2}\frac{\Lambda(1-3it, \phi_k)\Lambda(1-it, \phi_k)}{\xi(1-2it)\xi(2-4it)}\\
     &+\frac{6\rho_k(1)}{\pi}\frac{\xi(1+2it)}{\xi(1-2it)}
     \frac{\Lambda(1+it, \phi_k)\Lambda(1-it, \phi_k)}{\xi(1-2it)}\\
     &+\frac{\rho_k(1)}{2}\Big(\frac{\xi(1+2it)}{\xi(1-2it)}\Big)^2
     \frac{\Lambda(1+it, \phi_k)\Lambda(1+3it, \phi_k)}{\xi(1-2it)\xi(2+4it)}.
  \end{split}
\end{equation}
By the unfolding method in the Rankin--Selberg theory (see e.g. \cite[\S 7.2]{Gol06}), we get
\[
\langle \phi_kE_{-t} , \phi_j \rangle =
\begin{cases}
  \frac{\rho_k(1)\rho_j(1)}{4}\frac{\Lambda(1/2+it, \phi_k\times\phi_j)}{\xi(1-2it)},
  & \mbox{ if $\phi_j$ and $\phi_k$ has the same parity,}  \\
  0, & \mbox{ otherwise.}
\end{cases}
\]
By \eqref{eqn:E^2phi}, we get
\[
\langle \phi_j, E(\cdot, 1/2+s_1)E(\cdot, 1/2+s_2) \rangle =
\begin{cases}
  \frac{\rho_j(1)}{2}\frac{\Lambda(1/2+\overline{s_1}+\overline{s_2},\phi_j)
  \Lambda(1/2+\overline{s_1}-\overline{s_2},\phi_j)}{\xi(1+2\overline{s_1})\xi(1+2\overline{s_2})},
  & \mbox{ for even } \phi_j, \\
  0, & \mbox{ for odd } \phi_j.
\end{cases}
\]
Combining the above formulas, the cusp form contribution in \eqref{eqn:Planch_phiE^3} becomes
\begin{multline*}
\sum_{j\geq 1} \langle \phi_kE_{-t} , \phi_j \rangle \langle  \phi_j, E(\cdot, 1/2+s_1)E(\cdot, 1/2+s_2) \rangle
\\
=\frac{\delta_{\even}(\phi_k)\rho_k(1)}{8}\sideset{}{^{\even}}\sum_{j\geq 1}\frac{\rho_j(1)^2\Lambda(1/2+it, \phi_k\times\phi_j)}{\xi(1-2it)}\frac{\Lambda(1/2+\overline{s_1}+\overline{s_2},\phi_j)
\Lambda(1/2+\overline{s_1}-\overline{s_2},\phi_j)}{\xi(1+2\overline{s_1})\xi(1+2\overline{s_2})},
\end{multline*}
where $\delta_{\even}(\phi_k)=1$ if $\phi_k$ is even, otherwise  $\delta_{\even}(\phi_k)=0$. Here $\sideset{}{^{\even}}\sum$ means that the sum runs through all even $\phi_j$.  Taking $s_1=s_2=it$, we have
\begin{multline}
\sum_{j\geq 1} \langle \phi_kE_{-t} , \phi_j \rangle \langle  \phi_j, E_t^2\rangle
\\
=\frac{\delta_{\even}(\phi_k)\rho_k(1)}{8}\sideset{}{^{\even}}\sum_{j\geq 1}\frac{\rho_j(1)^2\Lambda(1/2+it, \phi_k\times\phi_j)\Lambda(1/2+2it,\phi_j)
\Lambda(1/2,\phi_j)}{\xi(1-2it)^3}.
\end{multline}

Similarly,  by Lemma \ref{lemma:E^2phi} and Lemma \ref{lemma:3Eisen}, the Eisenstein series contribution in \eqref{eqn:Planch_phiE^3} becomes
\begin{equation*}
\begin{split}
\frac{1}{4 \pi}&\int_{-\infty}^{\infty}\langle \phi_kE_{-t}, E_{\nu}\rangle  \langle  E_{\nu}, E(\cdot, 1/2+s_1)E(\cdot, 1/2+s_2)\rangle_{\reg} \dd \nu
\\&=\delta_{\even}(\phi_k)\frac{\rho_k(1)}{8\pi}\int_{-\infty}^{\infty}
\frac{\Lambda(1/2-it-i\nu,\phi_k)
  \Lambda(1/2+it-i\nu,\phi_k)}{\xi(1-2it)\xi(1-2i\nu)}\\
&\quad \cdot
\frac{\xi(1/2+i\nu +\overline{s_1} +\overline{s_2}) \xi(1/2+i\nu -\overline{s_1}+\overline{s_2})\xi(1/2+i\nu+\overline{s_1} -\overline{s_2}) \xi(1/2+i\nu -\overline{s_1} -\overline{s_2})}{\xi(1+ 2i\nu) \xi(1+ 2\overline{s_1}) \xi(1+ 2\overline{s_2})}
\dd \nu.
\end{split}
\end{equation*}
Taking $s_1=s_2=it$, we have
\begin{equation*}
\begin{split}
\frac{1}{4 \pi}&\int_{-\infty}^{\infty}\langle \phi_kE_{-t}, E_{\nu}\rangle  \langle  E_{\nu}, E_t^2\rangle_{\reg} \dd \nu
\\&=\delta_{\even}(\phi_k)\frac{\rho_k(1)}{8\pi}\int_{-\infty}^{\infty}
\frac{\prod_{\pm}\Lambda(1/2\pm it-i\nu,\phi_k)\prod_{\pm}\xi(1/2+iv\pm2it)
   \xi(1/2+i\nu)^2}
  {|\xi(1-2i\nu)|^2\xi(1-2it)^3}
\dd \nu.
\end{split}
\end{equation*}
Combining the above formulas together, we have the following explicit expression about $\langle  \phi_k, E_t^3 \rangle$.
\begin{lemma}\label{lemma:explicit_phiE^3}
For even $\phi_k \in \{\phi_j\}_{j\geq 1}$, we have
\begin{equation*}
\begin{split}
\langle  \phi_k, E_t^3 \rangle
=&\frac{\rho_k(1)}{8}\sideset{}{^{\even}}\sum_{j\geq 1}\frac{\rho_j(1)^2\Lambda(1/2+it, \phi_k\times\phi_j)\Lambda(1/2+2it,\phi_j)
\Lambda(1/2,\phi_j)}{\xi(1-2it)^3}\\
&+\frac{\rho_k(1)}{8\pi}\int_{-\infty}^{\infty}
\frac{\prod_{\pm}\Lambda(1/2\pm it-i\nu,\phi_k)
\prod_{\pm}\xi(1/2+i\nu \pm2it) \xi(1/2+i\nu)^2}
{|\xi(1-2i\nu)|^2\xi(1-2it)^3}
\dd \nu\\
&+\frac{\rho_k(1)}{2}\frac{\Lambda(1-3it, \phi_k)\Lambda(1-it, \phi_k)}{\xi(1-2it)\xi(2-4it)}
+\frac{6\rho_k(1)}{\pi}\frac{\xi(1+2it)}{\xi(1-2it)}
\frac{\Lambda(1+it, \phi_k)\Lambda(1-it, \phi_k)}{\xi(1-2it)}\\
&+\frac{\rho_k(1)}{2}\Big(\frac{\xi(1+2it)}{\xi(1-2it)}\Big)^2
     \frac{\Lambda(1+it, \phi_k)\Lambda(1+3it, \phi_k)}{\xi(1-2it)\xi(2+4it)}.\\
\end{split}
\end{equation*}
And for odd $\phi_k \in \{\phi_j\}_{j\geq 1}$, we have
\begin{equation*}
\begin{split}
\langle  \phi_k, E_t^3& \rangle
=-\frac{\rho_k(1)}{2}\frac{\Lambda(1-3it, \phi_k)\Lambda(1-it, \phi_k)}{\xi(1-2it)\xi(2-4it)}
-\frac{6\rho_k(1)}{\pi}\frac{\xi(1+2it)}{\xi(1-2it)}\frac{\Lambda(1+it, \phi_k)\Lambda(1-it, \phi_k)}{\xi(1-2it)}\\
&-\frac{\rho_k(1)}{2}\Big(\frac{\xi(1+2it)}{\xi(1-2it)}\Big)^2
     \frac{\Lambda(1+it, \phi_k)\Lambda(1+3it, \phi_k)}{\xi(1-2it)\xi(2+4it)}.\\
\end{split}
\end{equation*}
\end{lemma}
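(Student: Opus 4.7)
The plan is to specialize the regularized Plancherel decomposition in \eqref{eqn:Planch_phiE^3} at $s_1=s_2=it$ and consolidate the three pieces already computed in the text preceding the lemma. Starting from $\langle \phi_k, E_t^3\rangle = \langle \phi_k E_{-t}, E_t^2\rangle$ and applying Lemma \ref{Lemma:Regular_Planch}, the constant term vanishes because $\langle \phi_k E_{-t}, \sqrt{3/\pi}\rangle_{\reg} = \sqrt{3/\pi}\,\langle \phi_k, E_t\rangle = 0$ by orthogonality of cusp forms and the continuous spectrum, so only the cuspidal sum, the Eisenstein integral, and the tail term $\langle \phi_k E_{-t}, \mathcal{E}_{\Psi}\rangle$ survive.

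For the cuspidal contribution I would combine the Rankin--Selberg unfolding expression for $\langle \phi_k E_{-t}, \phi_j\rangle$, which is supported on $\phi_j$ of the same parity as $\phi_k$, with the triple-product identity Lemma \ref{lemma:E^2phi} for $\langle \phi_j, E(\cdot,1/2+s_1) E(\cdot,1/2+s_2)\rangle$, which is supported on even $\phi_j$. Setting $s_1=s_2=it$ produces the explicit $\Lambda(1/2+it,\phi_k\times\phi_j)\Lambda(1/2+2it,\phi_j)\Lambda(1/2,\phi_j)$ factor in the first line of the even-$\phi_k$ formula. When $\phi_k$ is odd the two parity restrictions are incompatible and the entire cuspidal piece drops out. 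The continuous-spectrum term is handled identically, with Lemma \ref{lemma:3Eisen} replacing Lemma \ref{lemma:E^2phi} to evaluate the regularized triple Eisenstein integral; the same parity dichotomy forces it to vanish for odd $\phi_k$, while for even $\phi_k$ the specialization $s_1=s_2=it$ yields the stated integral in $\nu$.

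Finally, the tail piece is precisely \eqref{eqn:tail_phiE^3t}, which follows from \eqref{eqn:tail_phiE^3} by substituting $s_1=s_2=it$ and applying the $\phi_k$-functional equation $\Lambda(s,\phi_k)=\varepsilon_{\phi_k}\Lambda(1-s,\phi_k)$ to the $\Lambda$-factors with negative real argument. For $\varepsilon_{\phi_k}=+1$ this reproduces the last three summands of the even-case formula, and for $\varepsilon_{\phi_k}=-1$ the same three summands appear with an overall minus sign, which is exactly the odd-case formula. There is no real obstacle beyond careful bookkeeping; the one technical point worth noting is that the identities in \eqref{eqn:Planch_phiE^3} and \eqref{eqn:tail_phiE^3} hold a priori only for $(s_1,s_2)$ avoiding the degenerate hyperplanes of the Plancherel decomposition, but each right-hand side extends holomorphically to $(it,it)$ (the potential poles of the $\xi$-quotients are matched by zeros of the numerator or are simply avoided at generic $t$), so the specialization is justified by analytic continuation.
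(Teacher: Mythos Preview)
Your proposal is correct and follows essentially the same approach as the paper: specialize the regularized Plancherel decomposition \eqref{eqn:Planch_phiE^3} at $s_1=s_2=it$, kill the constant term via orthogonality, compute the cuspidal and Eisenstein pieces with the triple-product formulas of Lemmas \ref{lemma:E^2phi} and \ref{lemma:3Eisen}, and read off the tail from \eqref{eqn:tail_phiE^3t} with the sign coming from the functional equation of $\Lambda(s,\phi_k)$. The only minor imprecision is that for the continuous-spectrum piece you need both lemmas (Lemma \ref{lemma:E^2phi} for $\langle \phi_k E_{-t}, E_\nu\rangle$ and Lemma \ref{lemma:3Eisen} for $\langle E_\nu, E_t^2\rangle_{\reg}$), not just the latter, but your ``handled identically'' makes clear you intend this.
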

By using Stirling's formula, \eqref{eqn:1/zeta} and the bound
\[
\frac{1}{L(1, \sym^2\phi_j)}\ll t_j^\varepsilon,
\]
we have, for $t\geq 1$,
\begin{multline*}
\frac{\rho_k(1)\rho_j(1)^2\Lambda(1/2+it, \phi_k\times\phi_j)\Lambda(1/2+2it,\phi_j)
\Lambda(1/2,\phi_j)}{\xi(1-2it)^3}\\
\ll \frac{(tt_kt_j)^\varepsilon\exp(-\frac{\pi}{2}Q(t_j; t_k, t))}
{t_j^\frac{1}{2}\Big(\prod_{\pm,\pm}(1+|t\pm t_j\pm t_k|)\prod_{\pm}(1+|2t\pm t_j|)\Big)^{\frac{1}{4}}}
|L(1/2+it, \phi_k\times\phi_j)L(1/2+2it, \phi_j)L(1/2, \phi_j)|,
\end{multline*}
where
\begin{equation}\label{eqn:Q}
Q(t_j; t_k, t)=
-|t_k|-2|t_j|+\frac{1}{2}\sum_{\pm, \pm}|t_j\pm t\pm t_k|+\frac{1}{2}\sum_{\pm}|t_j\pm 2t|-3|t|
\end{equation}
is an even function on variable $t_j$. Specifically, with $0\leq t_k\leq t$,
\[
Q(t_j; t_k, t)=
\begin{cases}
  t-t_k-t_j, & \mbox{if } 0\leq t_j< t-t_k, \\
  0, & \mbox{if } t-t_k\leq t_j<t+t_k,\\
  t_j-t_k-t, & \mbox{if } t+t_k\leq t_j< 2t, \\
  2t_j-3t-t_k, & \mbox{if } 2t\leq t_j. \\
\end{cases}
\]
For the continuous spectrum part, we have
\begin{multline*}
\frac{\rho_k(1)\prod_{\pm}\Lambda(1/2\pm it-i\nu,\phi_k)
\prod_{\pm}\xi(1/2+i\nu \pm2it) \xi(1/2+i\nu)^2}
{|\xi(1-2i\nu)|^2\xi(1-2it)^3}\\
\ll \frac{(tt_k(1+|\nu|))^\varepsilon\exp(-\frac{\pi}{2}Q(\nu; t_k, t))
|\zeta(1/2+i\nu)|^2\prod_{\pm, \pm}|L(1/2+it\pm i\nu, \phi_k)\zeta(1/2+2it\pm i\nu)|}
{(1+|\nu|)^\frac{1}{2}\Big(\prod_{\pm,\pm}(1+|t\pm \nu\pm t_k|)\prod_{\pm}(1+|2t\pm \nu|)\Big)^{\frac{1}{4}}}.
\end{multline*}
When $t_k\ll t^{1-\varepsilon}$, by using \eqref{eqn:1/zeta}, the trivial bounds for $L$-functions and Stirling's formula, we have
\[
\frac{1}{\xi(1-2it)}\ll  t^{O(1)}e^{\frac{\pi}{2} |t|}, \quad 
\frac{1}{\xi(2-4it)}\ll  t^{O(1)}e^{\pi |t|}, \quad 
\Lambda(1+it, \phi_k\times\phi_j)\ll t^{O(1)}e^{-\pi|t|}.
\]
Thus the tail terms in Lemma \ref{lemma:explicit_phiE^3} is rapidly decaying as $t\to \infty.$ Thus combining the above bounds with Lemma \ref{lemma:explicit_phiE^3} in the case of $t_k\ll T^\varepsilon$ and $T\leq t\leq 2T$,  note that one can truncate $t_j$-sum and $\nu$-integral in the region $[t-T^\varepsilon, t+T^\varepsilon]$ which produce a negligible error term, we then have
\begin{lemma}\label{lemma:bound_phiE^3}
Let $T\geq 1$, $T\leq t\leq 2T$ and $\phi_k \in \{\phi_j\}_{j\geq 1}$ with $t_k\ll T^{\varepsilon},$
for even $\phi_k$, we have
\begin{equation*}
\begin{split}
\langle  \phi_k, E_t^3 \rangle
\ll&T^{-\frac{3}{2}+\varepsilon}\sideset{}{^{\even}}\sum_{|t_j-t|\ll T^\varepsilon}|L(1/2+it, \phi_k\times\phi_j)L(1/2+2it, \phi_j)L(1/2, \phi_j)|
\\
&+T^{-\frac{3}{2}+\varepsilon}\int_{|\nu-t|\ll  T^\varepsilon}
|\zeta(1/2+i\nu)|^2\prod_{\pm, \pm}|L(1/2+it\pm i\nu, \phi_k)\zeta(1/2+2it\pm i\nu)|
\dd \nu+O(T^{-A}).
\end{split}
\end{equation*}
And for odd $\phi_k$, we have
$\langle  \phi_k, E_t^3 \rangle\ll T^{-A}$ for any $A>1$.
\end{lemma}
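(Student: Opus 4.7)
The plan is to derive Lemma \ref{lemma:bound_phiE^3} by combining the explicit spectral identity of Lemma \ref{lemma:explicit_phiE^3} with careful Stirling estimates on the archimedean factors. The computation preceding the lemma already assembles most of the needed ingredients, so the task is really to justify the truncation of the spectral sum and integral to a window of width $T^\varepsilon$, and to verify the super-polynomial decay of the residual Eisenstein tail terms.

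First, I would apply Stirling's formula to each complete $L$-function appearing in Lemma \ref{lemma:explicit_phiE^3}. This converts every $\Lambda$ and $\xi$ into a product of a polynomial-growth factor of the form $(1+|\cdot|)^{\alpha}$ and an exponential factor governed by the combined archimedean phase. Together with the standard bounds $1/\zeta(1+iu)\ll (1+|u|)^{\varepsilon}$ and $1/L(1,\sym^2\phi_j)\ll t_j^{\varepsilon}$, this yields precisely the upper bound displayed before the lemma in which the decay is controlled by $\exp(-\tfrac{\pi}{2}Q(t_j;t_k,t))$ for the cuspidal part and $\exp(-\tfrac{\pi}{2}Q(\nu;t_k,t))$ for the Eisenstein part, with $Q$ as in \eqref{eqn:Q}.

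Next, I would analyze $Q(\cdot;t_k,t)$ in the range $0\leq t_k\ll T^{\varepsilon}$ and $T\leq t\leq 2T$. The explicit piecewise formula given in the excerpt shows that $Q$ vanishes only for $|t_j|\in[t-t_k,t+t_k]$, which in our range essentially amounts to $|t_j-t|\ll T^{\varepsilon}$, and that $Q$ grows at least linearly outside this window. Consequently, the contribution of $|t_j-t|\gg T^{\varepsilon}$ (respectively $|\nu-t|\gg T^{\varepsilon}$) is bounded by $O(T^{-A})$ for every $A>0$, after invoking the convexity bounds for the relevant $L$-values to absorb the polynomial growth. Within the window $|t_j-t|\ll T^{\varepsilon}$, the polynomial archimedean factors all reduce to a uniform $T^{-3/2+\varepsilon}$, producing the two stated main terms.

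Finally, for the three residual Eisenstein contributions in Lemma \ref{lemma:explicit_phiE^3} (those involving $\Lambda(1\pm it,\phi_k)$ and $\Lambda(1\pm 3it,\phi_k)$), I would observe that in each such term the Gamma factors inside the numerator $\Lambda$'s produce decay of order $e^{-\pi|t|}$ while the denominator factors $1/\xi(1-2it)$ and $1/\xi(2\pm 4it)$ contribute growth at most $e^{\pi|t|/2}\cdot e^{\pi|t|}$ times polynomial factors; an exact bookkeeping, using $t_k\ll T^{\varepsilon}$ and the polynomial bounds for $L$-values at the edge of the critical strip, shows that the net exponential balance is strictly negative, giving each tail term bound $O(T^{-A})$. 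In the odd case, the cuspidal and continuous parts vanish identically, leaving only these decaying tails. The main obstacle in executing this plan is simply the careful phase bookkeeping that confirms the cancellation $Q=0$ window has width only $O(T^\varepsilon)$ and that the tail $\Gamma$-ratios are genuinely super-polynomially small, which is a delicate but routine Stirling computation once the piecewise formula for $Q$ is in hand.
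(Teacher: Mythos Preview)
Your proposal is correct and follows essentially the same approach as the paper: start from the explicit spectral identity of Lemma \ref{lemma:explicit_phiE^3}, apply Stirling to extract the exponential factor governed by $Q(t_j;t_k,t)$ (and its continuous analogue), use the piecewise linear description of $Q$ to truncate the $t_j$-sum and $\nu$-integral to the window $|t_j-t|,\,|\nu-t|\ll T^{\varepsilon}$ with negligible error, read off the $T^{-3/2+\varepsilon}$ from the polynomial archimedean factors inside the window, and dispose of the three tail terms by checking that their net exponential balance is strictly negative when $t_k\ll T^{\varepsilon}$. The paper does exactly this in the discussion immediately preceding the lemma; your outline is simply a slightly more detailed rendition of the same argument.
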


Now we give the proof of Lemma \ref{lemma:cusp_contri}.
\begin{proof}[Proof of Lemma \ref{lemma:cusp_contri} by using Theorem \ref{thm:4m-of-L}]
Using Lemma \ref{lemma:bound_phiE^3}, we can assume that $\phi_k$ is even and we have
\begin{multline}\label{eqn:bound_phiE^3}
\int_{T}^{2T}|\langle  \phi_k, E_t^3 \rangle|^2\dd t
\ll T^{-3+\varepsilon}\int_{T}^{2T}\Big(\sideset{}{^{\even}}\sum_{|t_j-t|\ll T^\varepsilon}|L(1/2+it, \phi_k\times\phi_j)L(1/2+2it, \phi_j)L(1/2, \phi_j)|\Big)^2\dd t\\
+T^{-3+\varepsilon}\int_{T}^{2T}\Big(\int_{|\nu-t|\ll  T^\varepsilon}
|\zeta(1/2+i\nu)|^2\prod_{\pm, \pm}|L(1/2+it\pm i\nu, \phi_k)\zeta(1/2+2it\pm i\nu)|\Big)^2\dd \nu.
\end{multline}
The first term, using Cauchy--Schwarz and Lemma \ref{lemma:2m_gl2}, is
\begin{equation*}
\begin{split}
 T^{-3+\varepsilon}&\int_{T}^{2T}\Big(\sideset{}{^{\even}}\sum_{|t_j-t|\ll T^\varepsilon}|L(1/2+it, \phi_k\times\phi_j)L(1/2+2it, \phi_j)L(1/2, \phi_j)|\Big)^2\dd t\\
 &\ll
 T^{-3+\varepsilon}\int_{T}^{2T}\sideset{}{^{\even}}\sum_{|t_j-t|\ll T^\varepsilon}|L(1/2+it, \phi_k\times\phi_j)|^2L(1/2, \phi_j)|^2
 \sum_{|t_j-t|\ll T^\varepsilon}|L(1/2+2it, \phi_j)|^2\dd t\\
 &\ll T^{-2+\varepsilon}\int_{T}^{2T}\sideset{}{^{\even}}\sum_{|t_j-t|\ll T^\varepsilon}|L(1/2+it, \phi_k\times\phi_j)|^2L(1/2, \phi_j)|^2\dd t\\
 &\ll T^{-2+\varepsilon}\sideset{}{^{\even}}\sum_{T/2\leq t_j\leq 3T}|L(1/2, \phi_j)|^2\int_{|t_j-t|\ll T^\varepsilon}|L(1/2+it, \phi_k\times\phi_j)|^2\dd t,
\end{split}
\end{equation*}
by exchanging the $t_j$-sum and $t$-integral. Then using Cauchy--Schwarz, Theorem \ref{thm:4m-of-L} and Lemma \ref{lemma:4m_gl2}, it is bounded by
\begin{equation*}\label{eqn:cusp_boundbyL}
\begin{split}
T^{-2+\varepsilon}&\int_{|\alpha|\ll T^{2\varepsilon}}\quad\sideset{}{^{\even}}\sum_{T/2\leq t_j\leq 3T}|L(1/2+it_j+i\alpha, \phi_k\times\phi_j)|^2
|L(1/2, \phi_j)|^2\dd \alpha\\
&\ll T^{-2+\varepsilon}\sup_{|\alpha|\ll T^{2\varepsilon}}
\Big(\sideset{}{^{\even}}\sum_{T/2\leq t_j\leq 3T}|L(1/2+it_j+i\alpha, \phi_k\times\phi_j)|^4\Big)^{\frac{1}{2}}
\Big(\sum_{T/2\leq t_j\leq 3T}|L(1/2, \phi_j)|^4\Big)^{\frac{1}{2}}\\
&\ll T^\varepsilon.
\end{split}
\end{equation*}
The last term in \eqref{eqn:bound_phiE^3},  using the convexity bound for $L(s, \phi_k)$ and Cauchy--Schwarz, is bounded by
\begin{multline*}
T^{-2+\varepsilon}\int_{T}^{2T}\Big(\int_{|\nu-t|\ll  T^\varepsilon}
|\zeta(1/2+i\nu)|^2|\zeta(1/2+2it+i\nu)\zeta(1/2+2it+i\nu)|\Big)^2\dd \nu\\
\ll T^{-2+\varepsilon}\int_{T}^{2T}
\int_{|\nu-t|\ll  T^\varepsilon}
|\zeta(1/2+i\nu)|^4|\zeta(1/2+2it+i\nu)\zeta(1/2+2it+i\nu)|^2\dd \nu\dd t\\
\ll T^{-2+\varepsilon}\int_{T/2}^{3T}
|\zeta(1/2+i\nu)|^4\int_{|\nu-t|\ll  T^\varepsilon}|\zeta(1/2+2it+i\nu)\zeta(1/2+2it+i\nu)|^2\dd t\dd \nu.
\end{multline*}
Then using the Weyl bound $\zeta(1/2+it)\ll (1+|t|)^{1/6+\varepsilon},$ and the fourth moment bound for $\zeta$, it is bounded by
$O(T^{-\frac{1}{3}+\varepsilon}).$
Combining these two estimates for terms in \eqref{eqn:bound_phiE^3}, we complete the proof of Lemma \ref{lemma:cusp_contri}.
\end{proof}

\subsection{Proof of Lemma \ref{lemma:Eis_contri}}
Similar to the previous subsection, we shall compute $\langle  E_\tau, E_t^3 \rangle_{\reg}$ with $|\tau|<t$.
Let new $F=E_\tau\overline{E_t}=E_\tau E_{-t}$, by \eqref{eqn:ECofE^2}, the corresponding function $\mathcal{E}_{\Phi}$ is
\begin{multline*}
\mathcal{E}_{\Phi}(z)=E(z,1+i\tau-it)+
\frac{\xi(2i\tau)}{\xi(1+2i\tau)}E(z,1-i\tau-it)\\
+\frac{\xi(-2it)}{\xi(1-2it)}E(z,1+i\tau+it)
+\frac{\xi(2i\tau)\xi(-2it)}{\xi(1+2i\tau)\xi(1-2it)}E(z, 1-i\tau+it).
\end{multline*}
Let $G(z)=G(z, s_1 ,s_2)=E(z,1/2+s_1)E(z, 1/2+s_2)$, its corresponding function $\mathcal{E}_{\Psi}$ is given by \eqref{eqn:ECofE^2}.
It suffices to compute
\[
\langle  F(\cdot), G(\cdot, s_1, s_2) \rangle  =\int_{\X}^{\reg}F(z)\overline{G(z, s_1,s_2)}\dd \mu(z)
\]
at $s_1=s_2=it.$
Applying Lemma \ref{Lemma:Regular_Planch}, we have
\begin{equation}\label{eqn:Planch_E,E^3}
\begin{split}
\langle  E_\tau E_{-t},  &E(\cdot, 1/2+s_1)E(\cdot, 1/2+s_2)\rangle_{\reg}
=\langle E_\tau E_{-t}, \sqrt{3/ \pi}  \rangle_{\reg} \langle  \sqrt{3/ \pi}, E(\cdot, 1/2+s_1)E(\cdot, 1/2+s_2)\rangle_{\reg}\\
&+ \sum_{j\geq 1} \langle E_\tau E_{-t} , \phi_j \rangle \langle  \phi_j, E(\cdot, 1/2+s_1)E(\cdot, 1/2+s_2) \rangle  \\
&+ \frac{1}{4 \pi} \int_{-\infty}^{\infty} \langle E_\tau E_{-t}, E_{\nu}\rangle_{\reg}  \langle  E_{\nu}, E(\cdot, 1/2+s_1)E(\cdot, 1/2+s_2)\rangle_{\reg} \dd \nu\\
&+ \langle E_\tau E_{-t}, \mathcal{E}_{\Psi}  \rangle_{\reg}+\langle \mathcal{E}_{\Phi}, E(\cdot, 1/2+s_1)E(\cdot, 1/2+s_2) \rangle_{\reg}.
\end{split}
\end{equation}
By Lemma \ref{lemma:EE_orth}, the constant term vanishes.
The two tail terms, using the regularized triple product formula in Lemma \ref{lemma:3Eisen}, are
\begin{equation*}
  \begin{split}
     \langle  &E_\tau E_{-t},  \mathcal{E}_{\Psi}\rangle_{\reg}
     =\int_{\X}^{\reg} E_\tau E_{-t}\overline{\mathcal{E}_{\Psi}}\dd \mu(z)\\
     &=\frac{\prod_{\pm, \pm}\xi(1+\overline{s_1}+\overline{s_2}\pm i\tau\pm it)                                                                         }
     {\xi(1+2i\tau)\xi(1-2it)\xi(2+2\overline{s_1}+2\overline{s_2})}+\frac{\xi(2\overline{s_1})}{\xi(1+2\overline{s_1})}
     \frac{\prod_{\pm, \pm}\xi(1-\overline{s_1}+\overline{s_2}\pm i\tau\pm it)}
     {\xi(1+2i\tau)\xi(1-2it)\xi(2-2\overline{s_1}+2\overline{s_2})}\\
     &\quad+\frac{\xi(2\overline{s_2})}{\xi(1+2\overline{s_2})}
     \frac{\prod_{\pm, \pm}\xi(1+\overline{s_1}-\overline{s_2}\pm i\tau\pm it)}
     {\xi(1+2i\tau)\xi(1-2it)\xi(2+2\overline{s_1}-2\overline{s_2})}\\
     &\quad\quad +\frac{\xi(2\overline{s_1})\xi(2\overline{s_2})}{\xi(1+2\overline{s_1})\xi(1+2\overline{s_2})}
     \frac{\prod_{\pm, \pm}\xi(1-\overline{s_1}-\overline{s_2}\pm i\tau\pm it)}
     {\xi(1+2i\tau)\xi(1-2it)\xi(2-2\overline{s_1}-2\overline{s_2})},
  \end{split}
\end{equation*}
and
\begin{equation*}
  \begin{split}
     &\langle  \mathcal{E}_{\Phi},  E(\cdot, 1/2+s_1)E(\cdot, 1/2+s_2) \rangle_{\reg}
     =\int_{\X}^{\reg}\mathcal{E}_{\Phi}\overline{E(\cdot, 1/2+s_1)E(\cdot, 1/2+s_2) }\dd \mu(z)\\
     &=\frac{\prod_{\pm, \pm}\xi(1+i\tau-it\pm\overline{s_1}\pm\overline{s_2})}
     {\xi(2+2i\tau-2it)\xi(2+2\overline{s_1})\xi(2+2\overline{s_2})}
     +\frac{\xi(2i\tau)}{\xi(1+2i\tau)}
     \frac{\prod_{\pm, \pm}\xi(1-i\tau-it\pm\overline{s_1}\pm\overline{s_2})}
     {\xi(2-2i\tau-2it)\xi(2+2\overline{s_1})\xi(2+2\overline{s_2})}\\
     &\quad+\frac{\xi(-2it)}{\xi(1-2it)}
     \frac{\prod_{\pm, \pm}\xi(1+i\tau+it\pm\overline{s_1}\pm\overline{s_2})}
     {\xi(2+2i\tau+2it)\xi(2+2\overline{s_1})\xi(2+2\overline{s_2})}\\
     &\quad\quad+\frac{\xi(2i\tau)\xi(-2it)}{\xi(1+2i\tau)\xi(1-2it)}
     \frac{\prod_{\pm, \pm}\xi(1-i\tau+it\pm\overline{s_1}\pm\overline{s_2})}
     {\xi(2-2i\tau+2it)\xi(2+2\overline{s_1})\xi(2+2\overline{s_2})}.
  \end{split}
\end{equation*}
Taking $s_1=s_2=it$ in the function $G(\cdot, s_1,s_2)$ and combining the function equation $\xi(s)=\xi(1-s)$, these become
\begin{multline*}
     \langle  E_\tau E_{-t},  \mathcal{E}_{\Psi}\rangle_{\reg}
     =\frac{\prod_{\pm}\xi(1+3it\pm i\tau)\prod_{\pm}\xi(1+it\pm i\tau)}                                                                   
     {\xi(1+2i\tau)\xi(1+2it)\xi(2-4it)}
     +\frac{12}{\pi}\frac{\xi(1+2it)}{\xi(1-2it)}
     \frac{\prod_{\pm,\pm}\xi(1\pm i\tau\pm it)}
     {\xi(1+2i\tau)\xi(1-2it)}\\
     +\Big(\frac{\xi(1+2it)}{\xi(1-2it)}\Big)^2
     \frac{\prod_{\pm}\xi(1+3it\pm i\tau)\prod_{\pm}\xi(1+it\pm i\tau)}
     {\xi(1+2i\tau)\xi(1-2it)\xi(2+4it)},
\end{multline*}
and
\begin{equation*}
\begin{split}
     \langle  \mathcal{E}_{\Phi},  E_t^2 \rangle_{\reg}
     =&\frac{\xi(1+i\tau-3it)\xi(1+i\tau-it)^2\xi(1+i\tau+it)}
     {\xi(2+2i\tau-2it)\xi(2-2it)^2}\\
     &+\frac{\xi(1-2i\tau)}{\xi(1+2i\tau)}
     \frac{\xi(1-i\tau-3it)\xi(1-i\tau-it)^2\xi(1-i\tau+it)}
     {\xi(2-2i\tau-2it)\xi(2-2it)^2}\\
      &\quad +\frac{\xi(1+2it)}{\xi(1-2it)}
     \frac{\xi(1+i\tau+3it)\xi(1+i\tau+it)^2\xi(1+i\tau-it)}
     {\xi(2+2i\tau+2it)\xi(2-2it)^2}\\
     &\quad\quad+\frac{\xi(1-2i\tau)\xi(1+2it)}{\xi(1+2i\tau)\xi(1-2it)}
     \frac{\xi(1-i\tau+3it)\xi(1-i\tau+it)^2\xi(1-i\tau-it)}
     {\xi(2-2i\tau+2it)\xi(2-2it)^2}.
\end{split}
\end{equation*}
For the cusp form contribution, using Lemma \ref{lemma:E^2phi}, we have
\begin{multline*}
  \sum_{j\geq 1} \langle E_\tau E_{-t} , \phi_j \rangle \langle  \phi_j, E(\cdot, 1/2+s_1)E(\cdot, 1/2+s_2) \rangle
  \\=\frac{1}{4}\sideset{}{^{\even}}\sum_{j\geq 1}\frac{\rho_j(1)^2
  \prod_{\pm}\Lambda(1/2+i\tau\pm it, \phi_j)\prod_{\pm}\Lambda(1/2+\overline{s_1}\pm \overline{s_2}, \phi_j)}
  {\xi(1+2i\tau)\xi(1-2it)\xi(1+2\overline{s_1})\xi(1+2\overline{s_2})}.
\end{multline*}
Taking $s_1=s_2=it$, it becomes
\begin{equation*}
  \sum_{j\geq 1} \langle E_\tau E_{-t} , \phi_j \rangle \langle  \phi_j, E_t^2 \rangle
  =\frac{1}{4}\sideset{}{^{\even}}\sum_{j\geq 1}\frac{\rho_j(1)^2
  \Lambda(1/2, \phi_j)\Lambda(1/2+2it, \phi_j)\prod_{\pm}\Lambda(1/2+i\tau\pm it, \phi_j)}
  {\xi(1+2i\tau)\xi(1-2it)^3}.
\end{equation*}
Similarly, the Eisenstein series contribution, using Lemma \ref{lemma:3Eisen}, is 
\begin{multline*}
  \frac{1}{4 \pi} \int_{-\infty}^{\infty} \langle E_\tau E_{-t}, E_{\nu}\rangle_{\reg}  \langle  E_{\nu}, E(\cdot, 1/2+s_1)E(\cdot, 1/2+s_2)\rangle_{\reg} \dd \nu\\
  =\frac{1}{4 \pi} \int_{-\infty}^{\infty}\frac{\prod_{\pm, \pm}\xi(1/2+i\nu\pm i\tau\pm it)
  \prod_{\pm, \pm}\xi(1/2+i\nu\pm \overline{s_1}\pm \overline{s_2})}
  {|\xi(1+2i\nu)|^2\xi(1+2i\tau)\xi(1-2it)\xi(1+2\overline{s_1})\xi(1+2\overline{s_2})}\dd \nu.
\end{multline*}
Taking $s_1=s_2=it$, it becomes
\begin{equation*}
\frac{1}{4 \pi} \int_{-\infty}^{\infty}\frac{\xi(1/2+i\nu)^2
\prod_{\pm}\xi(1/2+i\nu\pm 2it)\prod_{\pm, \pm}\xi(1/2+i\nu\pm i\tau\pm it)}
  {|\xi(1+2i\nu)|^2\xi(1+2i\tau)\xi(1-2it)^3}\dd \nu.
\end{equation*}
Combining all above contributions, 
we have the following explicit expression about $\langle  E_\tau, E_t^3 \rangle_{\reg}$.
\begin{lemma}\label{lemma:explicit_EE^3}
We have
\begin{equation*}
\begin{split}
\langle  &E_\tau, E_t^3 \rangle_{\reg}
=\frac{1}{4}\sideset{}{^{\even}}\sum_{j\geq 1}\frac{\rho_j(1)^2
  \Lambda(1/2, \phi_j)\Lambda(1/2+2it, \phi_j)\prod_{\pm}\Lambda(1/2+i\tau\pm it, \phi_j)}
  {\xi(1+2i\tau)\xi(1-2it)^3}\\
&+\frac{1}{4 \pi} \int_{-\infty}^{\infty}\frac{\xi(1/2+i\nu)^2
\prod_{\pm}\xi(1/2+i\nu\pm 2it)\prod_{\pm, \pm}\xi(1/2+i\nu\pm i\tau\pm it)}
  {|\xi(1+2i\nu)|^2\xi(1+2i\tau)\xi(1-2it)^3}\dd \nu\\
&+\frac{\prod_{\pm}\xi(1+3it\pm i\tau)\prod_{\pm}\xi(1+it\pm i\tau)}                                                                   
     {\xi(1+2i\tau)\xi(1+2it)\xi(2-4it)}
     +\frac{12}{\pi}\frac{\xi(1+2it)}{\xi(1-2it)}
     \frac{\prod_{\pm,\pm}\xi(1\pm i\tau\pm it)}
     {\xi(1+2i\tau)\xi(1-2it)}\\
&+\Big(\frac{\xi(1+2it)}{\xi(1-2it)}\Big)^2
     \frac{\prod_{\pm}\xi(1+3it\pm i\tau)\prod_{\pm}\xi(1+it\pm i\tau)}
     {\xi(1+2i\tau)\xi(1-2it)\xi(2+4it)}\\
&+\frac{\xi(1+i\tau-3it)\xi(1+i\tau-it)^2\xi(1+i\tau+it)}
     {\xi(2+2i\tau-2it)\xi(2-2it)^2}\\
     &+\frac{\xi(1-2i\tau)}{\xi(1+2i\tau)}
     \frac{\xi(1-i\tau-3it)\xi(1-i\tau-it)^2\xi(1-i\tau+it)}
     {\xi(2-2i\tau-2it)\xi(2-2it)^2}\\
      &+\frac{\xi(1+2it)}{\xi(1-2it)}
     \frac{\xi(1+i\tau+3it)\xi(1+i\tau+it)^2\xi(1+i\tau-it)}
     {\xi(2+2i\tau+2it)\xi(2-2it)^2}\\
     &+\frac{\xi(1-2i\tau)\xi(1+2it)}{\xi(1+2i\tau)\xi(1-2it)}
     \frac{\xi(1-i\tau+3it)\xi(1-i\tau+it)^2\xi(1-i\tau-it)}
     {\xi(2-2i\tau+2it)\xi(2-2it)^2}.\\
\end{split}
\end{equation*}
\end{lemma}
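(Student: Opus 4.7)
The plan is to derive this explicit formula by applying the regularized Plancherel identity (Lemma \ref{Lemma:Regular_Planch}) to the inner product $\langle E_\tau, E_t^3\rangle_{\reg} = \langle F, G\rangle_{\reg}$, with
$$F(z) = E_\tau(z)\,E_{-t}(z), \qquad G(z,s_1,s_2) = E(z,\tfrac{1}{2}+s_1)\,E(z,\tfrac{1}{2}+s_2),$$
evaluated in the end at $s_1=s_2=it$. First I would verify the hypotheses of Lemma \ref{Lemma:Regular_Planch} by reading off the constant Fourier coefficients of $F$ and $G$; these yield precisely the auxiliary Eisenstein sums $\mathcal{E}_\Phi$ (with exponents $1\pm i\tau\pm it$ shifted to meet the regularization convention) and $\mathcal{E}_\Psi$ built from $E(z,1+s_1\pm s_2)$ and $E(z,1-s_1\pm s_2)$ already written out in the text. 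Provided $\tau$ and $t$ are generic, all the "forbidden" coincidences $\alpha_j=1$, $\alpha_j+\overline{\beta_k}=1$, etc., are avoided, and the general case follows by analytic continuation.

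Next I would evaluate each of the five terms of the Plancherel identity separately. The constant term vanishes by Lemma \ref{lemma:EE_orth}, since $\langle E_\tau E_{-t}, 1\rangle_{\reg} = \langle E_\tau, E_t\rangle_{\reg} = 0$ whenever $\tau\neq \pm t$. The cusp form contribution is computed by inserting the triple product formula of Lemma \ref{lemma:E^2phi} on both sides (note that only even $\phi_j$ contribute, and the product of the two parities being the same cancels the sign), producing the spectral sum over even Hecke--Maass forms of $\Lambda(1/2+i\tau\pm it,\phi_j)\cdot\Lambda(1/2+\overline{s_1}\pm\overline{s_2},\phi_j)$. The continuous spectrum contribution is handled by the regularized triple product formula of Lemma \ref{lemma:3Eisen} applied twice (once to $\langle E_\tau E_{-t}, E_\nu\rangle_{\reg}$, once to $\langle E_\nu, E_{s_1}E_{s_2}\rangle_{\reg}$), giving the $\xi$-quotient integrand.

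The two tail inner products $\langle E_\tau E_{-t}, \mathcal{E}_\Psi\rangle_{\reg}$ and $\langle \mathcal{E}_\Phi, G\rangle_{\reg}$ are each sums of four triple products of Eisenstein series, and each of the eight resulting triple products is again evaluated by Lemma \ref{lemma:3Eisen}. This is the bookkeeping step already carried out explicitly in the text preceding the lemma; collecting the eight terms produces the five tail contributions in the statement after one sets $s_1=s_2=it$ and uses $\xi(s)=\xi(1-s)$ to convert arguments of the form $\xi(2+2it\pm 2\overline{s_i})$ and similar into the symmetric form $\xi(2\pm 4it)$, $\xi(2-2it)$, $\xi(2\pm 2i\tau\pm 2it)$ displayed in the lemma.

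The main obstacle is the specialization $s_1=s_2=it$: several individual pieces above develop poles from the factors $1/\xi(1+2\overline{s_1})\xi(1+2\overline{s_2})$ or from coinciding $\xi$-arguments landing at $s=1$, and one must check that these poles cancel across the five terms of Lemma \ref{Lemma:Regular_Planch}. The natural way to handle this is to work with generic $(s_1,s_2)$ near $(it,it)$, verify that each pole is simple and that the residues cancel in pairs, and then take the limit; the resulting sum over even cusp forms, the integral over $\nu$, and the five boundary $\xi$-expressions are all regular at $(it,it)$ for generic $t$, which is all that is needed for the applications to the variance bound of Theorem \ref{thm:VarES}.
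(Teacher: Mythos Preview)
Your approach is essentially identical to the paper's: set $F=E_\tau E_{-t}$, $G=E(\cdot,1/2+s_1)E(\cdot,1/2+s_2)$, apply the regularized Plancherel formula (Lemma~\ref{Lemma:Regular_Planch}), evaluate each piece via Lemmas~\ref{lemma:EE_orth}, \ref{lemma:3Eisen}, \ref{lemma:E^2phi}, and specialize to $s_1=s_2=it$. One minor remark on your final paragraph: while you are right that Lemma~\ref{Lemma:Regular_Planch} does not apply directly at $s_1=s_2$ (since $\mathcal{E}_\Psi$ itself degenerates there), once each term has been rewritten via the triple product formulas the resulting $\xi$-expressions are individually regular at $s_1=s_2=it$ for generic $t,\tau$ --- e.g.\ the apparently dangerous $\xi(2-2\overline{s_1}+2\overline{s_2})$ in the denominator specializes to $\xi(2)=\pi/6$ --- so no pole cancellation across terms is actually needed, only analytic continuation in $(s_1,s_2)$.
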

Then by using Stirling's formula and \eqref{eqn:1/zeta}
we have, for $t\geq 1$,
\begin{multline*}
\frac{\rho_j(1)^2
\Lambda(1/2, \phi_j)\Lambda(1/2+2it, \phi_j)\prod_{\pm}\Lambda(1/2+i\tau\pm it, \phi_j)}
{\xi(1+2i\tau)\xi(1-2it)^3}\\
\ll \frac{((1+|\tau|)tt_j)^\varepsilon\exp(-\frac{\pi}{2}Q(t_j; \tau, t))
|L(1/2+2it, \phi_j)|L(1/2, \phi_j)\prod_{\pm}|L(1/2+i\tau\pm it, \phi_j)|}
{t_j^\frac{1}{2}\Big(\prod_{\pm,\pm}(1+|t\pm t_j\pm \tau|)\prod_{\pm}(1+|2t\pm t_j|)\Big)^{\frac{1}{4}}},
\end{multline*}
where $Q(t_j; \tau, t)$ has been defined in \eqref{eqn:Q} already.
Similarly, we have
\begin{multline*}
\frac{\xi(1/2+i\nu)^2
\prod_{\pm}\xi(1/2+i\nu\pm 2it)\prod_{\pm, \pm}\xi(1/2+i\nu\pm i\tau\pm it)}
  {|\xi(1+2i\nu)|^2\xi(1+2i\tau)\xi(1-2it)^3}\\
\ll \frac{(t(1+|\tau|)(1+|\nu|))^\varepsilon\exp(-\frac{\pi}{2}Q(\nu; t_k, t))}
{(1+|\nu|)^\frac{1}{2}\Big(\prod_{\pm,\pm}(1+|t\pm \nu\pm \tau|)\prod_{\pm}(1+|2t\pm \nu|)\Big)^{\frac{1}{4}}}\\
\cdot |\zeta(1/2+i\nu)|^2\prod_{\pm}|\zeta(1/2+i\nu\pm 2it)|\prod_{\pm, \pm}|\zeta(1/2+i\nu\pm i\tau\pm it)|.
\end{multline*}
Also, by using the same argument in the previous subsection, we have, when $|\tau|\ll t^{1-\varepsilon}$, the seven tail terms in Lemma \ref{lemma:explicit_EE^3} exponentially decays as $t\to \infty$. Therefore
in the case of $|\tau|\ll T^\varepsilon$ and $T\leq t\leq 2T$,  note that one can truncate $t_j$-sum and $\nu$-integral in the region $[t-T^\varepsilon, t+T^\varepsilon]$ which produce a negligible error term,  by combining the above estimate together, we have
\begin{lemma}\label{lemma:bound_EE^3}
Let $T\geq 1$, $T\leq t\leq 2T$ and $|\tau|\ll T^{\varepsilon}.$ Then for any $A>1$, we have
\begin{equation*}
\begin{split}
\langle  E_\tau, &E_t^3 \rangle_{\reg}
\ll T^{-\frac{3}{2}+\varepsilon}\sideset{}{^{\even}}\sum_{|t_j-t|\ll T^\varepsilon}
|L(1/2+2it, \phi_j)|L(1/2, \phi_j)\prod_{\pm}|L(1/2+i\tau\pm it, \phi_j)|
\\
&+T^{-\frac{3}{2}+\varepsilon}\int_{|\nu-t|\ll  T^\varepsilon}
|\zeta(1/2+i\nu)|^2\prod_{\pm}|\zeta(1/2+i\nu\pm 2it)|\prod_{\pm, \pm}|\zeta(1/2+i\nu\pm i\tau\pm it)|
\dd \nu+O(T^{-A}).
\end{split}
\end{equation*}
\end{lemma}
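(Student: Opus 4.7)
The plan is to follow the derivation of Lemma \ref{lemma:bound_phiE^3} step by step, starting from the explicit decomposition in Lemma \ref{lemma:explicit_EE^3}. That identity expresses $\langle E_\tau,E_t^3\rangle_{\reg}$ as a cusp form sum, an Eisenstein integral, and seven ``tail'' terms built purely from $\xi$-values. I will estimate each of the three classes separately and then combine.

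For the cusp form contribution, Stirling's formula applied to the eight $\Gamma$-factors packaged in the product $\Lambda(1/2,\phi_j)\Lambda(1/2+2it,\phi_j)\prod_{\pm}\Lambda(1/2+i\tau\pm it,\phi_j)$ and the three copies of $\xi(1-2it)$ plus $\xi(1+2i\tau)$ in the denominator produces the Archimedean weight
\[
\frac{\exp(-\tfrac{\pi}{2}Q(t_j;\tau,t))}{t_j^{1/2}\Big(\prod_{\pm,\pm}(1+|t\pm t_j\pm\tau|)\prod_\pm(1+|2t\pm t_j|)\Big)^{1/4}}
\]
with $Q$ defined in \eqref{eqn:Q}. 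The factor $|\rho_j(1)|^2=t_j^{o(1)}e^{\pi t_j}/L(1,\sym^2\phi_j)$ exactly cancels the $e^{-\pi t_j}$ coming from Stirling, while $L(1,\sym^2\phi_j)^{-1}$ and $|\xi(1+2i\tau)|^{-1}$ each contribute only $T^\varepsilon$ via standard lower bounds and \eqref{eqn:1/zeta}. Since $|\tau|\ll T^\varepsilon$, the function $Q(t_j;\tau,t)$ grows linearly in $|t_j-t|$ once $|t_j-t|\gg T^\varepsilon$, so the exponential truncates the $j$-sum to that short window with an $O(T^{-A})$ tail; inside the window, the polynomial weight simplifies to $\asymp T^{-3/2+\varepsilon}$, giving the first line of the claim. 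The Eisenstein integral is handled identically, the extra $|\xi(1+2i\nu)|^{-2}$ producing only $(1+|\nu|)^\varepsilon$ through \eqref{eqn:1/zeta}.

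The main obstacle, and the bulk of the bookkeeping, is verifying that each of the seven tail terms is $O(T^{-A})$ when $|\tau|\ll T^\varepsilon$ and $T\le t\le 2T$. Using $|\xi(\sigma+it)|\asymp|\zeta(\sigma+it)||t|^{(\sigma-1)/2}e^{-\pi|t|/4}$, one tallies the total exponential decay in numerator and denominator and checks the numerator wins. For instance, the first tail
\[
\frac{\prod_{\pm}\xi(1+3it\pm i\tau)\prod_{\pm}\xi(1+it\pm i\tau)}{\xi(1+2i\tau)\xi(1+2it)\xi(2-4it)}
\]
carries $e^{-2\pi t}$ on top against $t^{1/2}e^{-3\pi t/2}$ on the bottom (the $t^{1/2}$ coming from the $|4t|^{1/2}$ in $\xi(2-4it)$), leaving $\asymp t^{-1/2}e^{-\pi t/2}$, which defeats any polynomial $T^{O(1)}$ incurred from $1/\zeta$ factors via \eqref{eqn:1/zeta}. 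Each of the remaining six tails, including the four arising from $\langle\mathcal{E}_\Phi,E_t^2\rangle_{\reg}$, is disposed of by the same exponent-counting recipe, mirroring the paper's explicit check for the $\langle\phi_k,E_t^3\rangle$ tail terms carried out just above. Summing the three classes of bounds and absorbing all $T^\varepsilon$ losses yields Lemma \ref{lemma:bound_EE^3}.
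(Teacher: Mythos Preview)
Your proposal is correct and follows essentially the same approach as the paper: start from the explicit decomposition in Lemma \ref{lemma:explicit_EE^3}, apply Stirling's formula to the cusp form and Eisenstein contributions to produce the archimedean weight governed by $Q(\cdot;\tau,t)$, truncate to $|t_j-t|\ll T^\varepsilon$ (resp.\ $|\nu-t|\ll T^\varepsilon$) using the exponential decay of $Q$, and verify that the seven tail terms are exponentially small by the same exponent-counting as in the $\phi_k$ case. The paper's own argument is terse, simply pointing back to the previous subsection, so your more explicit exponent tally for the first tail term is a faithful fleshing-out of the same proof.
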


Now we give the proof of Lemma \ref{lemma:Eis_contri}.
\begin{proof}[Proof of Lemma \ref{lemma:Eis_contri} by using Theorem \ref{thm:8m-of-L}]
Using Lemma \ref{lemma:bound_EE^3}, we have
\begin{multline}\label{eqn:bound_EE^3}
\int_{T}^{2T}|\langle  E_\tau, E_t^3 \rangle_{\reg}|^2\dd t
\ll T^{-3+\varepsilon}\int_{T}^{2T}\Big(\sideset{}{^{\even}}\sum_{|t_j-t|\ll T^\varepsilon}
|L(1/2+2it, \phi_j)|L(1/2, \phi_j)\prod_{\pm}|L(1/2+i\tau\pm it, \phi_j)|\Big)^2\dd t\\
+T^{-3+\varepsilon}\int_{T}^{2T}\Big(\int_{|\nu-t|\ll  T^\varepsilon}
|\zeta(1/2+i\nu)|^2\prod_{\pm}|\zeta(1/2+i\nu\pm 2it)|\prod_{\pm, \pm}|\zeta(1/2+i\nu\pm i\tau\pm it)|\Big)^2\dd \nu.
\end{multline}
The first term, using Cauchy--Schwarz and Lemma \ref{lemma:2m_gl2}, is
\begin{equation*}
\begin{split}
 T^{-3+\varepsilon}&\int_{T}^{2T}\Big(\sideset{}{^{\even}}\sum_{|t_j-t|\ll T^\varepsilon}|L(1/2+2it, \phi_j)|L(1/2, \phi_j)\prod_{\pm}|L(1/2+i\tau\pm it, \phi_j)|\Big)^2\dd t\\
 &\ll
 T^{-3+\varepsilon}\int_{T}^{2T}\sideset{}{^{\even}}\sum_{|t_j-t|\ll T^\varepsilon}\Big(\prod_{\pm}|L(1/2+i\tau\pm it, \phi_j)|\Big)^2|L(1/2, \phi_j)|^2
 \sum_{|t_j-t|\ll T^\varepsilon}|L(1/2+2it, \phi_j)|^2\dd t\\
 &\ll T^{-2+\varepsilon}\int_{T}^{2T}\sideset{}{^{\even}}\sum_{|t_j-t|\ll T^\varepsilon}\Big(\prod_{\pm}|L(1/2+i\tau\pm it, \phi_j)|\Big)^2|L(1/2, \phi_j)|^2\dd t\\
 &\ll T^{-2+\varepsilon}\sideset{}{^{\even}}\sum_{T/2\leq t_j\leq 3T}|L(1/2, \phi_j)|^2\int_{|t_j-t|\ll T^\varepsilon}\Big(\prod_{\pm}|L(1/2+it\pm i\tau, \phi_j)|\Big)^2\dd t,
\end{split}
\end{equation*}
by exchanging the $t_j$-sum and $t$-integral. Then using H\"older's inequality, Theorem \ref{thm:4m-of-L} and Lemma \ref{lemma:4m_gl2}, it is bounded by
\begin{equation*}\label{eqn:Eisen_boundbyL}
\begin{split}
T^{-2+\varepsilon}&\int_{|\alpha|\ll T^{\varepsilon}}\quad\sideset{}{^{\even}}\sum_{T/2\leq t_j\leq 3T}\Big(\prod_{\pm}|L(1/2+it_j\pm i\tau+i\alpha, \phi_j)|\Big)^2
|L(1/2, \phi_j)|^2\dd \alpha\\
&\ll T^{-2+\varepsilon}\Big(\sideset{}{^{\even}}\sum_{T/2\leq t_j\leq 3T}
|L(1/2, \phi_j)|^4\Big)^{\frac{1}{2}}\int_{|\alpha|\ll T^{\varepsilon}}\Big(\sideset{}{^{\even}}\sum_{T/2\leq t_j\leq 3T}
|L(1/2+it_j+i\tau+i\alpha, \phi_j)|^{8}\Big)^{\frac{1}{4}}\\
&\hspace*{15em}
\cdot\Big(\sideset{}{^{\even}}\sum_{T/2\leq t_j\leq 3T}
|L(1/2+it_j-i\tau+i\alpha, \phi_j)|^{8}\Big)^{\frac{1}{4}}\dd \alpha\\
&\ll T^\varepsilon.
\end{split}
\end{equation*}
The last term in \eqref{eqn:bound_EE^3},  using the Weyl bound $\zeta(1/2+it)\ll (1+|t|)^{1/6+\varepsilon},$ and Cauchy--Schwarz, is bounded by
\begin{equation*}
\begin{split}
T^{-\frac{7}{3}+\varepsilon}&\int_{T}^{2T}\Big(\int_{|\nu-t|\ll  T^\varepsilon}
|\zeta(1/2+i\nu)|^2\prod_{\pm}|\zeta(1/2+i\nu\pm 2it)|\Big)^2\dd \nu\dd t\\
&\ll T^{-\frac{7}{3}+\varepsilon}\int_{T}^{2T}
\int_{|\nu-t|\ll  T^\varepsilon}
|\zeta(1/2+i\nu)|^4|\zeta(1/2+2it+i\nu)\zeta(1/2+2it+i\nu)|^2\dd \nu\dd t\\
&\ll T^{-\frac{7}{3}+\varepsilon}\int_{T/2}^{3T}
|\zeta(1/2+i\nu)|^4\int_{|\nu-t|\ll  T^\varepsilon}|\zeta(1/2+2it+i\nu)\zeta(1/2+2it+i\nu)|^2\dd t\dd \nu\\
&\ll T^{-\frac{5}{3}+\varepsilon}\int_{T/2}^{3T}
|\zeta(1/2+i\nu)|^4\dd \nu\ll T^{-\frac{2}{3}+\varepsilon}.
\end{split}
\end{equation*}
Combining these two estimates for terms in \eqref{eqn:bound_EE^3}, we complete the proof of Lemma \ref{lemma:Eis_contri}.
\end{proof}

So far, it is sufficient to establish the estimates in Theorem \ref{thm:4m-of-L} and Theorem \ref{thm:8m-of-L}. We shall complete these proofs in the remaining part of this paper.

\section{Preliminaries on bounding the moments of $L$-functions}\label{sec:preliminaries}
In this section, we do some preparatory job on proving Theorem \ref{thm:4m-of-L} and Theorem \ref{thm:8m-of-L}. 
From now on, we use $\phi$ to highlight the even Hecke--Maass cusp form $\phi_k$ with $t_k\ll T^{\varepsilon}$ (i.e. $\phi=\phi_k$).
We use the notation $\Phi=\phi\boxplus\phi$ and $\E=1\boxplus1\boxplus1\boxplus1$ which stand for the isobaric sums of two $\GL(2)$ and four $\GL(1)$ objects respectively.
Therefore $\Phi$ and $\E$ are in the special situation of degree $4$. 

\subsection{$L$-values around special points}
Firstly, we do some explicit computation on the Dirichlet coefficients of $L$-functions.
\begin{lemma}\label{lemma:L-value 2m4m}
Let $\Re(s)\gg 1$, we have
\begin{equation*}
  L(s, \phi\times \phi_j)^2=\sum_{m\geq 1}\sum_{n\geq 1}\frac{A_{\Phi}(n, m, 1)\lambda_j(n)}{m^{2s}n^s},
\end{equation*}
\begin{equation*}
  L(s,  \phi_j)^4=\sum_{m\geq 1}\sum_{n\geq 1}\frac{\tau(n, m, 1)\lambda_j(n)}{m^{2s}n^s},
\end{equation*}
where $\Phi=\phi\boxplus\phi$ is isobaric sum of two $\phi$, 
\[
\begin{split}
   A_\Phi(n, m, 1) & =\sum_{\ell d=m}\tau(\ell)\sum_{n_1n_2=n\atop n_1,n_2\geq 1}\lambda_\phi(dn_1)\lambda_\phi(dn_2), \\
   \tau(n, m, 1) & =\sum_{\ell d=m}\tau(\ell)\sum_{n_1n_2=n\atop n_1,n_2\geq 1}\tau(dn_1)\tau(dn_2),
\end{split}
\]
$\tau(\cdot)=d_2(\cdot)$ is the divisor function, both of them are real-valued. 
\end{lemma}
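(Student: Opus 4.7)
The plan is to establish both identities by expanding each side as an absolutely convergent Dirichlet series in $\Re(s) \gg 1$ and matching coefficients, working from the classical Rankin--Selberg factorization for $\GL(2) \times \GL(2)$. The starting input will be the standard identity
$$L(s,\phi \times \phi_j) = \zeta(2s) \sum_{n\geq 1} \frac{\lambda_\phi(n)\lambda_j(n)}{n^s},$$
which is checked locally by expanding $\lambda_\phi(p^k)$ and $\lambda_j(p^k)$ as symmetric functions of the Satake parameters and using $\alpha_1\alpha_2 = \beta_1\beta_2 = 1$. The analogous identity
$$L(s,\phi_j)^2 = \zeta(2s)\sum_{n\geq 1}\frac{\tau(n)\lambda_j(n)}{n^s}$$
(which is the formula above with $\phi$ replaced by the isobaric sum $1 \boxplus 1$, so that $\lambda_\phi$ becomes $\tau$) will be the substitute needed for the second identity, and can be verified in the same way.

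To derive the first formula, I would square the starting identity, yielding
$$L(s,\phi\times\phi_j)^2 = \zeta(2s)^2 \sum_{n_1, n_2\geq 1}\frac{\lambda_\phi(n_1)\lambda_\phi(n_2)\lambda_j(n_1)\lambda_j(n_2)}{(n_1 n_2)^s}.$$
Next I would collapse the product $\lambda_j(n_1)\lambda_j(n_2)$ via the Hecke relation $\lambda_j(n_1)\lambda_j(n_2) = \sum_{d\mid (n_1,n_2)}\lambda_j(n_1 n_2/d^2)$, then change variables by writing $n_1 = da$, $n_2 = db$ (so that $\lambda_j(n_1 n_2/d^2) = \lambda_j(ab)$). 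Expanding $\zeta(2s)^2 = \sum_\ell \tau(\ell)/\ell^{2s}$ and setting $m = \ell d$, $n = ab$, the expression rearranges to
$$\sum_{m,n\geq 1}\frac{\lambda_j(n)}{m^{2s}n^s}\sum_{\ell d = m}\tau(\ell)\sum_{\substack{n_1 n_2 = n \\ n_1, n_2 \geq 1}}\lambda_\phi(d n_1)\lambda_\phi(d n_2),$$
which is precisely the first claim with $A_\Phi(n,m,1)$ as defined in the statement.

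The second identity will follow from the identical manipulation applied to $L(s,\phi_j)^2 = \zeta(2s)\sum_n \tau(n)\lambda_j(n)/n^s$, simply replacing $\lambda_\phi$ by $\tau$ throughout; the combinatorial structure is unchanged. All rearrangements are valid in $\Re(s) \gg 1$ by absolute convergence, using $|\lambda_\phi(n)|, |\lambda_j(n)|, \tau(n) \ll_\varepsilon n^\varepsilon$. The real-valuedness asserted at the end of the lemma is immediate, since $\lambda_\phi$ and $\tau$ are real and the coefficients $A_\Phi(n,m,1)$ and $\tau(n,m,1)$ are finite sums of products of real quantities. There is no serious obstacle here: the proof amounts to a careful bookkeeping exercise, and the main care needed is simply to track the change of variables so that the summations exactly reproduce the indexing convention in the statement.
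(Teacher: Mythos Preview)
Your proposal is correct and follows essentially the same route as the paper: square the Rankin--Selberg identity $L(s,\phi\times\phi_j)=\zeta(2s)\sum_n \lambda_\phi(n)\lambda_j(n)n^{-s}$, apply the Hecke relation to $\lambda_j(n_1)\lambda_j(n_2)$, and relabel variables to read off $A_\Phi(n,m,1)$; the second identity is handled identically with $\tau$ in place of $\lambda_\phi$. One small caveat: the bound $|\lambda_\phi(n)|,|\lambda_j(n)|\ll_\varepsilon n^\varepsilon$ you invoke is the Ramanujan conjecture, which is not known for Maass forms, but any polynomial bound suffices for absolute convergence when $\Re(s)\gg 1$, so this does not affect the argument.
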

\begin{proof}
Recall that
\[
L(s, \phi\times \phi_j)=\sum_{\ell\geq 1}\sum_{n\geq 1}\frac{\lambda_{\phi}(n)\lambda_j(n)}{\ell^{2s}n^s},
\]
Thus
\begin{equation*}
L(s, \phi\times \phi_j)=\sum_{\ell\geq 1}\frac{\tau(\ell)}{\ell^{2s}}\sum_{n_1\geq1}\sum_{n_2\geq1}
\frac{\lambda_{\phi}(n_1)\lambda_{\phi}(n_2)\lambda_j(n_1)\lambda_j(n_2)}{n_1^sn_2^s}.
\end{equation*}
Using Hecke relation $\lambda_j(n_1)\lambda_j(n_2)=\sum_{d\mid (n_1, n_2)}\lambda_j(\frac{n_1n_2}{d^2})$, this becomes
\[
\sum_{\ell\geq 1}\frac{\tau(\ell)}{\ell^{2s}}\sum_{d\geq 1}\frac{1}{d^{2s}}\sum_{n\geq 1}\frac{\lambda_j(n)}{n^s}\sum_{n_1n_2=n}\lambda_\phi(dn_1)\lambda_\phi(dn_2)=
\sum_{m\geq 1}\sum_{n\geq 1}\frac{A_{\Phi}(n, m, 1)\lambda_j(n)}{m^{2s}n^s}.
\]
This is the desired Dirichlet series expression for $ L(s, \phi\times \phi_j)^2$.
Since the Hecke eigenvalues $\lambda_\phi(n)$ is real valued, $ A_\Phi(n, m, 1)$ is real-valued too.
The Dirichlet series for $L(s,  \phi_j)^4$ can be similarly obtained by using Hecke relation for $\tau$.
\end{proof}

\begin{lemma}\label{lemma:AFE_gl8}
Let $T\geq 1$ and $\phi_j$ be even with $T\leq t_j\leq 2T$,  $t_k, \tau\ll T^\varepsilon$ and real $\alpha$ with $|\alpha|\ll T^\varepsilon$, we have
\begin{equation*}
|L(1/2+it_j+i\alpha, \phi_k\times \phi_j)|^2
\ll T^\varepsilon\sum_{N:\text{dyadic}\leq T^{2+\varepsilon}\atop \pm}\int_{|v|\leq T^\varepsilon}\Big|\sum_{m\geq 1}\sum_{n\geq 1}\frac{A_{\Phi}(n, m, 1)\lambda_j(n)}{(m^2n)^{\frac{1}{2}\pm it_j}}W_1\Big(\frac{m^2n}{N}\Big)\Big|\dd v+O(T^{-2025}),
\end{equation*}
\begin{equation*}
|L(1/2+it_j+i\alpha, \phi_j)|^4
\ll T^\varepsilon\sum_{N:\text{dyadic}\leq T^{2+\varepsilon}\atop \pm}\int_{|v|\leq T^\varepsilon}\Big|\sum_{m\geq 1}\sum_{n\geq 1}\frac{\tau(n, m, 1)\lambda_j(n)}{(m^2n)^{\frac{1}{2}\pm it_j}}W_2\Big(\frac{m^2n}{N}\Big)\Big|\dd v+O(T^{-2025}),
\end{equation*}
where the functions $W_1, W_2$ depends on $\alpha, v, \varepsilon$, 
both of them are smooth and supported in $[1/2, 2]$, satisfy $W_1^{(j)}, W_2^{(j)}\ll_{j, \varepsilon} T^{\varepsilon}$ for any integer $j\geq 1.$
\end{lemma}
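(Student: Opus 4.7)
The plan is to pass from $|L(1/2+it_j+i\alpha,\phi_k\times\phi_j)|^2$ to $|L(1/2+it_j+i\alpha,\phi_k\times\phi_j)^2|$ via the elementary identity $|z|^2=|z^2|$, and then apply the standard approximate functional equation to $L(s,\phi\times\phi_j)^2$ at $s=s_0:=1/2+it_j+i\alpha$. Combined with the Dirichlet expansion from Lemma~\ref{lemma:L-value 2m4m}, this produces a main sum and a dual sum of length $m^2n\leq T^{2+\varepsilon}$ weighted by $A_{\Phi}(n,m,1)\lambda_j(n)$, which is exactly the shape claimed.

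The essential input is the analytic conductor computation, where the conductor-dropping phenomenon is crucial. The gamma factor of $L(s,\phi\times\phi_j)$ consists of the four pieces $\Gamma((s\pm it_k\pm it_j)/2)$; at $s=s_0$ with $t_k,|\alpha|\ll T^{\varepsilon}$, two of them have bounded arguments while the other two have arguments of size $\asymp T$. By Stirling, the analytic conductor of $L$ at $s_0$ is $T^{2+o(1)}$, so that of $L^2$ is $T^{4+o(1)}$, and the square root $T^{2+o(1)}$ sets the AFE cutoff, explaining the truncation $N\leq T^{2+\varepsilon}$.

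Concretely, one starts from
\[
L(s_0,\phi\times\phi_j)^2=\frac{1}{2\pi i}\int_{(3)}L(s_0+u,\phi\times\phi_j)^2\,e^{u^2}\,\frac{X^u}{u}\,\dd u
\]
with $X\asymp T^2$, shifts the contour to $\Re(u)=-3$, and uses the squared functional equation $\Lambda(s,\phi\times\phi_j)^2=\eta^2\Lambda(1-s,\phi\times\phi_j)^2$, where $\eta$ is the root number (both $\phi$ and $\phi_j$ are self-dual). Substituting the Dirichlet expansion of Lemma~\ref{lemma:L-value 2m4m} then yields a representation of the form
\[
L(s_0,\phi\times\phi_j)^2=\sum_{m,n}\frac{A_{\Phi}(n,m,1)\lambda_j(n)}{(m^2n)^{1/2+it_j+i\alpha}}V^+\!\Big(\frac{m^2n}{X}\Big)+\eta^2\sum_{m,n}\frac{A_{\Phi}(n,m,1)\lambda_j(n)}{(m^2n)^{1/2-it_j-i\alpha}}V^-\!\Big(\frac{m^2n}{X}\Big),
\]
with $V^{\pm}$ smooth and rapidly decaying. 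Absorbing the bounded oscillatory factor $(m^2n)^{-i\alpha}$ into the weight, performing a dyadic partition on $m^2n\asymp N\in[1,T^{2+\varepsilon}]$, and representing each cutoff via Mellin inversion localizes the auxiliary variable $v$ to $|v|\leq T^{\varepsilon}$ at the cost of a negligible $O(T^{-2025})$ tail; the resulting test function $W_1$ inherits the bounds $W_1^{(\ell)}\ll_{\ell,\varepsilon}T^{\varepsilon}$ from Stirling applied to the ratio of gamma factors. The triangle inequality, together with $|L(s_0)|^2=|L(s_0)^2|$, concludes the proof of the first bound.

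The estimate for $|L(1/2+it_j+i\alpha,\phi_j)|^4$ is proved in exactly the same way, applied to $L(s,\phi_j)^4$, using $|z|^4=|z^4|$, the Dirichlet series with coefficient $\tau(n,m,1)\lambda_j(n)$ from Lemma~\ref{lemma:L-value 2m4m}, and the analogous conductor-dropping calculation for the degree-$8$ gamma factor $\prod_{\pm}\Gamma((s\pm it_j)/2)^4$ evaluated at $s_0$. The main (mild) obstacle throughout is verifying that the conductor-dropping phenomenon at $s_0$ really yields the length $T^{2+\varepsilon}$ rather than the naive $T^{4+\varepsilon}$ one would expect from a generic degree-$8$ object, and that the uniform bounds on $W_1,W_2$ survive absorbing the $i\alpha$ and $iv$ oscillations; both are routine applications of Stirling's formula in the dropped regime.
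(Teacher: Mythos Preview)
Your proposal is correct and follows essentially the same route as the paper: both apply the approximate functional equation (in the style of \cite[Theorem~5.3]{I-K04}) to $L(s,\phi\times\phi_j)^2$ at $s_0=1/2+it_j+i\alpha$, insert the Dirichlet expansion from Lemma~\ref{lemma:L-value 2m4m}, exploit the conductor-dropping via Stirling to truncate at $m^2n\leq T^{2+\varepsilon}$, shift the $u$-contour to $\Re(u)=\varepsilon$ with $|\Im(u)|\leq T^\varepsilon$, and absorb the $(m^2n)^{\mp i\alpha-iv-\varepsilon}$ factor into a dyadic weight $W_1$. Your explicit remark on why the analytic conductor is $T^{4+o(1)}$ rather than the naive $T^{8+o(1)}$ is exactly the point the paper handles implicitly by bounding the gamma ratio with Stirling.
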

\begin{proof}
Recall that
\[
\Lambda(s, \phi\times \phi_j)^2:=\pi^{-4s}\prod_{\pm, \pm}\Gamma\Big(\frac{s\pm it_k\pm it_j}{2}\Big)L(s, \phi\times \phi_j)^2
=\Lambda(1-s, \phi\times \phi_j)^2,
\]
\[
\Lambda(s, \phi_j)^4:=\pi^{-4s}\prod_{\pm}\left(\Gamma\Big(\frac{s\pm it_j}{2}\Big)\right)^2L(s, \phi_j)^4
=\Lambda(1-s, \phi_j)^4.
\]
Here we give the proof of the first approximate functional equation. The proof of the second one is similar.
For $\Re(s)\gg 1$, we write $L(s, \phi\times \phi_j)^2$ as  $\sum_{k\geq 1}a_{\phi, \phi_j}(k)k^{-s}$.
By Lemma \ref{lemma:L-value 2m4m}, we have
\[
a_{\phi, \phi_j}(k)=\sum_{m^2n=k}A_{\Phi}(n, m, 1)\lambda_j(n).
\]
Combining the above functional equation with the method of \cite[Theorem 5.3]{I-K04}, we have
\begin{multline*}
L(1/2+it_j+i\alpha, \phi\times \phi_j)^2
=\int_{(3)}\pi^{-4u}\sum_{N:\text{dyadic}}\sum_{k\geq 1}\frac{a_{\phi, \phi_j}(k)}{k^{\frac{1}{2}+it_j+i\alpha+u}}W\Big(\frac{k}{N}\Big)
\frac{\prod_{\pm, \pm}\Gamma\Big(\frac{\frac{1}{2}+it_j+i\alpha+u\pm it_k\pm it_j}{2}\Big)}
{\prod_{\pm, \pm}\Gamma\Big(\frac{\frac{1}{2}+it_j+i\alpha\pm it_k\pm it_j}{2}\Big)}\frac{e^{u^2}}{u}\dd u\\
+\int_{(3)}\pi^{-4u}\sum_{N:\text{dyadic}}\sum_{k\geq 1}\frac{a_{\phi, \phi_j}(k)}{k^{\frac{1}{2}-it_j-i\alpha+u}}W\Big(\frac{k}{N}\Big)
\frac{\prod_{\pm, \pm}\Gamma\Big(\frac{\frac{1}{2}-it_j-i\alpha+u\pm it_k\pm it_j}{2}\Big)}
{\prod_{\pm, \pm}\Gamma\Big(\frac{\frac{1}{2}+it_j+i\alpha\pm it_k\pm it_j}{2}\Big)}\frac{e^{u^2}}{u}\dd u,
\end{multline*}
where $W$ is a smooth compactly supported function satisfying $\supp(W)\subset [1/2, 2]$, $W^{(j)}\ll_j 1$ and we use the smooth dyadic partition $\sum_{N:\text{dyadic}}W(\frac{x}{N})=1$ for any $x\geq1.$ 

By shifting the integral line far to the right,  using Stirling 's formula, the contribution of $N>T^{2+\varepsilon}$ is small, say $O(T^{-2025})$. 
When $k\leq T^{2+\varepsilon}$, we shift the integral line to $\Re(u)=\varepsilon$. Due to the rapidly decay of $e^{u^2}$ as $\Im(u)$ large, we can truncate the $u$-integral in the region $|\Im(u)|\leq T^{\varepsilon}$.
Then exchanging the order of summations and integral, using Stirling 's formula to bound the $\Gamma$-factors by $T^\varepsilon$, we get
\begin{multline*}
|L(1/2+it_j+i\alpha, \phi\times \phi_j)|^2
\leq T^\varepsilon\sum_{N:\text{dyadic}\leq T^{2+\varepsilon}}\int_{|v|\leq T^\varepsilon}\Big|\sum_{k\geq 1}\frac{a_{\phi, \phi_j}(k)}{k^{\frac{1}{2}+it_j+i\alpha+\varepsilon+iv}}W\Big(\frac{k}{N}\Big)\Big|\dd v\\
+T^\varepsilon\sum_{N:\text{dyadic}\leq T^{2+\varepsilon}}\int_{|v|\leq T^\varepsilon}\Big|\sum_{k\geq 1}\frac{a_{\phi, \phi_j}(k)}{k^{\frac{1}{2}-it_j-i\alpha+\varepsilon+iv}}W\Big(\frac{k}{N}\Big)\Big|\dd v+O(T^{-2025}).
\end{multline*} 
Let $W_1(\frac{k}{N})=W(\frac{k}{N})(\frac{k}{N})^{-i\alpha-iv-\varepsilon}$ or $W(\frac{k}{N})(\frac{k}{N})^{i\alpha-iv-\varepsilon}$, we complete the proof. 
\end{proof}
\subsection{Balanced Voronoi summation formula}
We need a balanced Voronoi summation formula for $\Phi=\phi\boxplus\phi$ and $\E=1\boxplus1\boxplus1\boxplus1$.
These can be viewed as the analogies of $\GL(4)$ case. 

Let $q_1,q_2, r\in \mathbb{Z}_{\geq 1}$,  $a, n\in \mathbb{Z}$, assume that $d_1\mid q_1r$ and $d_2\mid \frac{q_1q_2r}{d_1}$. The $\GL(4)$ associated hyper-Kloosterman sum is defined to be 
\[
\Kl(a, n, r; q_1, q_2,d_1, d_2)=\sideset{}{^*}\sum_{x_1\mod \frac{q_1r}{d_1}}\sideset{}{^*}\sum_{x_2\mod \frac{q_1q_2r}{d_1d_2}}
e\Big(\frac{d_1x_1a}{r}+\frac{d_2x_2\overline{x_1}}{\frac{q_1r}{d_1}}+\frac{n\overline{x_2}}{\frac{q_1q_2r}{d_1d_2}}\Big).
\] 
We introduce the following useful result of K{\i}ral and Zhou \cite{K-Z16}.
\begin{theorem}{(\,\cite[Theorem 1.3 on $N=4$]{K-Z16}\,)}\label{theorem:KZ's Voronoi}
Let $F$ be a symbol and assume that $F$ come numbers $A(m_1,m_2,m_3)\in \mathbb{C}$ with natural numbers $m_1, m_2, m_3$ and $A(1,1,1)=1$. Assume that these coefficients $A(\cdot,\cdot,\cdot)$ satisfy the following Hecke relations:
\begin{equation}\label{eqn:A_mul}
A(m_1m_1',m_2m_2',m_3m_3')=A(m_1,m_2,m_3)A(m_1',m_2',m_3')\text{ for } (m_1m_2m_3, m_1'm_2'm_3')=1,
\end{equation}
\begin{equation}\label{eqn:A_hecke1}
A(n, 1, 1)A(m_1,m_2,m_3)=\sum_{d_0d_1d_2d_3=n\atop d_1\mid m_1, d_2\mid m_2, d_3\mid m_3}
A\Big(\frac{m_1d_0}{d_1}, \frac{m_2d_1}{d_2}, \frac{m_3d_2}{d_3}\Big),
\end{equation}
and
\begin{equation}\label{eqn:A_hecke2}
A(1, 1, n)A(m_3,m_2,m_1)=\sum_{d_0d_1d_2d_3=n\atop d_1\mid m_1, d_2\mid m_2, d_3\mid m_3}
A\Big(\frac{m_3d_2}{d_3}, \frac{m_2d_1}{d_2}, \frac{m_1d_0}{d_1}\Big).
\end{equation}
Further assume that they grow moderately as
\begin{equation}\label{eqn:A_growth}
A(m_1,m_2,m_3)\ll (m_1m_2m_3)^{\sigma} \text{ for some } \sigma>0.
\end{equation}
Let $\widetilde{F}$ be another symbol whose associated coefficients $B(\cdot, \cdot, \cdot)\in \mathbb{C}$ and $B(m_1, m_2, m_3)=A(m_3, m_2, m_1)$ and assume that they also satisfy the same properties. Further assume that there are two meromorphic functions $G_{+}(s)$ and $G_{-}(s)$ associated to the pair $(F, \widetilde{F})$, so that for a given primitive character $\chi^*$ modulo $q$, the $L$-function
\[
L(s, F\times\chi^*):=\sum_{n\geq 1}\frac{A(n, 1, 1)\chi^*(n)}{n^s} \text{ for } \Re(s)>\sigma+1
\]
admits the analytic continuation to the whole complex plane and satisfies the functional equation
\begin{equation}\label{eqn:F's FE}
  L(s, F\times \chi^*)=\tau(\chi^*)^4q^{-4s}G(s)L(1-s, \widetilde{F}\times \overline{\chi^*}),
\end{equation}
where $\tau(\chi^*)=\sum_{a\mod q}\chi^*(a)e(\frac{a}{q})$ is the Gauss sum, 
$G(s)=\begin{cases}
        G_{+}(s), & \mbox{if } \chi^*(-1)=1,\\
        G_{-}(s), & \mbox{if } \chi^*(-1)=-1.
      \end{cases}$
Under these assumptions, let $q_1,q_2, c\in \mathbb{Z}_{\geq 1}$,  $a\in \mathbb{Z}$ with $(a, c)=1$, $\overline{a}$ be the multiplicative inverse of $a$ modulo $c$.
For $\Re(s)>\sigma+1$, define 
\[
L(s, F,\frac{a}{c}; q_1, q_2):=\sum_{n\geq 1}\frac{A(n, q_1, q_2)}{n^s}e\Big(\frac{\overline{a}n}{c}\Big).
\]
Then $L(s, F,\frac{a}{c}; q_1, q_2)$ has analytic continuation to all $s\in \mathbb{C}$ and satisfies the Voronoi formula
\begin{equation}\label{eqn:Voronoi_F}
\begin{split}
L(s, F,\frac{a}{c}&; q_1, q_2)
   =\frac{G_{+}(s)-G_{-}(s)}{2}\sum_{d_1\mid q_1c}\sum_{d_2\mid\frac{q_1q_2c}{d_1}}
   \sum_{n\geq 1}\frac{A(d_1,d_2, n)\Kl(a, n, c; q_1, q_2, d_1, d_2)}{n^{1-s}c^{4s-1}d_1d_2}\frac{d_1^{3s}d_2^s}{q_1^{2s}q_2^s}\\
     &+\frac{G_{+}(s)+G_{-}(s)}{2}\sum_{d_1\mid q_1c}\sum_{d_2\mid\frac{q_1q_2c}{d_1}}
   \sum_{n\geq 1}\frac{A(d_1,d_2, n)\Kl(a, -n, c; q_1, q_2, d_1, d_2)}{n^{1-s}c^{4s-1}d_1d_2}\frac{d_1^{3s}d_2^s}{q_1^{2s}q_2^s},\\
\end{split}
\end{equation}
which the right side is absolutely convergent for $\Re(s)<-\sigma.$
\end{theorem}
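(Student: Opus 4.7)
The plan is to establish this Voronoi formula purely from the functional equations of additively and multiplicatively twisted $L$-functions, bypassing any reliance on automorphy of $F$ (which is crucial since the intended applications are to the isobaric sums $\Phi=\phi\boxplus\phi$ and $\E=1\boxplus1\boxplus1\boxplus1$). My starting point is to meromorphically continue $L(s,F,\tfrac{a}{c};q_1,q_2)$ by expanding the additive character into primitive multiplicative characters. For $(a,c)=1$ and any integer $n$, one has a clean decomposition of the form
\[
e\Bigl(\tfrac{\overline{a}n}{c}\Bigr)=\sum_{d\mid c}\frac{1}{\varphi(c/d)}\sum_{\substack{\chi^{*}\bmod c/d\\ \text{primitive}}}\overline{\tau(\chi^{*})}\,\chi^{*}(\overline{a})\,\overline{\chi^{*}}(n/d)\,\mathbf{1}_{d\mid n},
\]
obtained from orthogonality together with the standard factorization of Gauss sums. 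Substituting and interchanging sums, the additive twist splits into a finite double sum (over $d\mid c$ and primitive $\chi^{*}\bmod c/d$) of multiplicative twists, which can be meromorphically continued termwise via \eqref{eqn:F's FE}.

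Next, I would reduce the coefficients $A(n,q_1,q_2)$ to coefficients with trivial $q_1,q_2$ slots, since the functional equation as stated applies to $L(s,F\times\chi^{*})=\sum_n A(n,1,1)\chi^{*}(n)n^{-s}$. The Hecke relation \eqref{eqn:A_hecke1} allows one to invert the identity $A(m,1,1)A(n,q_1,q_2)=\sum\cdots A\bigl(\tfrac{nd_0}{d_1},\tfrac{q_1 d_1}{d_2},\tfrac{q_2 d_2}{d_3}\bigr)$, expressing the $(q_1,q_2)$-twisted coefficient as a convolution of $A(\cdot,1,1)$ against elementary multiplicative factors supported on divisors of $q_1q_2$; this is the mechanism by which the conductor $(q_1^{2}q_2)$ and the divisor decomposition over $d_1\mid q_1c$, $d_2\mid q_1q_2c/d_1$ on the dual side are eventually generated. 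Applying \eqref{eqn:F's FE} to each primitive piece converts the $L(s,F\times\chi^{*})$ into $\tau(\chi^{*})^{4}q^{-4s}G_{\pm}(s)L(1-s,\widetilde{F}\times\overline{\chi^{*}})$, where the parity choice $\pm=\chi^{*}(-1)$ naturally produces the two averaged combinations $(G_{+}\pm G_{-})/2$ in front of the $\pm n$ dual sums.

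With the dual $L$-function expressed via $B(m,1,1)=A(1,1,m)$, I would apply the second Hecke relation \eqref{eqn:A_hecke2} to repackage the Dirichlet series into the form $\sum_n A(d_1,d_2,n)/n^{1-s}$ appearing in \eqref{eqn:Voronoi_F}. The final combinatorial task is to resum over the primitive characters $\chi^{*}$: collecting the Gauss sums $\tau(\chi^{*})^{4}\overline{\tau(\chi^{*})}$, the twists $\chi^{*}(\overline{a})\overline{\chi^{*}}(n/d)$, and the explicit $q^{-4s}$ factors, and then expanding the characters back via orthogonality on the relevant residue classes, one recovers precisely the hyper-Kloosterman sum
\[
\Kl(a,\pm n,c;q_1,q_2,d_1,d_2)=\sideset{}{^{*}}\sum_{x_1\bmod q_1c/d_1}\sideset{}{^{*}}\sum_{x_2\bmod q_1q_2c/(d_1d_2)}e\!\left(\tfrac{d_1x_1a}{c}+\tfrac{d_2x_2\overline{x_1}}{q_1c/d_1}+\tfrac{\pm n\,\overline{x_2}}{q_1q_2c/(d_1d_2)}\right),
\]
together with the precise weights $d_1^{3s}d_2^{s}/(c^{4s-1}d_1d_2\,q_1^{2s}q_2^{s})$.

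The main obstacle, in my view, is not any single analytic step but the bookkeeping: one must match the ramification structure of the primitive character moduli $c/d$ with the divisor pair $(d_1,d_2)$ produced by the Hecke relations, verify that the extraneous Dirichlet factors from imprimitive local parts of $L(s,F\times\chi^{*})$ telescope correctly when resumming over characters, and handle the edge case $(n,c)>1$ where naive orthogonality fails and the inner indicator $\mathbf{1}_{d\mid n}$ must be tracked. Absolute convergence of the resulting dual series for $\Re(s)<-\sigma$ follows from the moderate growth hypothesis \eqref{eqn:A_growth} together with the Weil bound for hyper-Kloosterman sums, which gives meromorphic continuation of both sides and justifies the identity \eqref{eqn:Voronoi_F} via uniqueness of analytic continuation.
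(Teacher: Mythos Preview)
The paper does not prove this theorem; it is quoted verbatim as \cite[Theorem 1.3 on $N=4$]{K-Z16} and used as a black box (the only additional content is Remark~\ref{Remark:Voronoi_Residue}, which adapts the statement to the non-cuspidal case $F=\E$ where a pole appears). So there is no ``paper's own proof'' to compare against.

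That said, the strategy you outline --- decomposing the additive twist into primitive multiplicative characters, applying the functional equation \eqref{eqn:F's FE} termwise, using the Hecke relations \eqref{eqn:A_hecke1}--\eqref{eqn:A_hecke2} to pass between the $(q_1,q_2)$-shifted coefficients and $A(\cdot,1,1)$, and then resumming the character sums into the hyper-Kloosterman sum --- is exactly the method of K\i ral--Zhou, and is the reason the paper cites that work rather than the automorphic Voronoi formulas (cf.\ the discussion in \S\ref{subsec:VarES}). One small imprecision: for absolute convergence of the dual side at $\Re(s)<-\sigma$ you do not need the Weil bound; the trivial estimate $|\Kl|\le \tfrac{q_1c}{d_1}\cdot\tfrac{q_1q_2c}{d_1d_2}$ together with \eqref{eqn:A_growth} already gives convergence since $n$ appears to the power $1-s$ with $\Re(1-s)>1+\sigma$.
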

\begin{remark}\label{Remark:Voronoi_Residue}
Let $B_{\widetilde{F}}(n, 1, 1)=A_F(1, 1, n)=A_F(n, 1, 1)$ for all $n\geq 1$,
we can extend the definition of $A_{\Phi}$ and $\tau=A_{\E}$ by Hecke relations \eqref{eqn:A_mul}, \eqref{eqn:A_hecke1} and \eqref{eqn:A_hecke2}. Actually, by computing about prime powers for $A_F$ with $F=\Phi$ or $\E$, we have, similarly as \cite[Lemma 3.4]{C-L20}
\begin{equation}\label{eqn:defA_Fgeneral}
A_F(n, \ell, k)=\mathop{\sum\sum\sum}_{\substack{d, e, f\\d\mid(k, \ell), e\mid(d,k/d)\\f\mid(k,n)}}\mu(df)\mu(e)A_F\Big(\frac{k}{dfe},1, 1\Big)
A_F\Big(\frac{dn}{ef}, \frac{\ell}{d}, 1\Big)
\end{equation}
for all integers $n, \ell, k\geq 1.$ Moreover $B_{\widetilde{F}}(n, \ell, k)=A_F(n, \ell, k)=A_F(k, \ell, n)$.
Then we apply Theorem \ref{theorem:KZ's Voronoi} for $F=\Phi$ and $F=\E$. Here the case of $F=\E$ is a little bit different since that for the principal Dirichlet character $\chi_0$, $L(s, \E\times \chi_0)$ is not analytic on $\mathbb{C}$ and $L(s, \E,\frac{a}{c}; q_1, q_2)$ only has the meromorphic continuation to whole complex plane $\mathbb{C}$. 
For example, when $q_1=m, q_2=1$, by a similar relation of \cite[Eqn (3.1)]{C-L20}, we have
\[
\tau(n, m, 1)=\tau(1, m, n)=\sum_{d\mid (n,m)\atop e\mid(d, n/d)}\mu(d)\mu(e)\tau(\frac{n}{de}, 1, 1)\tau(\frac{d}{e}, \frac{m}{d}, 1).
\]
Thus for $\Re(s)>1$,
\begin{equation*}
\begin{split}
L(s, \E, \frac{a}{c}; m, 1)&=\sum_{n\geq 1}\frac{\tau(n, m, 1)}{n^s}e\Big(\frac{\overline{a}n}{c}\Big)\\
&=\sum_{d\mid m}\frac{\mu(d)}{d^{s}}\sum_{e\mid d}\frac{\mu(e)}{e^s}\tau(1, \frac{m}{d}, \frac{d}{e})\sum_{n\geq 1}\frac{\tau(n, 1, 1)}{n^s}e\Big(\frac{\overline{a}den}{c}\Big)
\end{split}
\end{equation*}
Let $c'=\frac{c}{de}$ and $a'=\overline{a}\frac{de}{(de, c)}$, then $L(s, \E, \frac{\overline{a'}}{c'}; 1, 1)=\sum_{n\geq 1}\frac{\tau(n, 1, 1)}{n^s}e(\frac{\overline{a}den}{c})$.
Note that
\[
L(s, \E, \frac{\overline{a'}}{c'}; 1, 1)=\sum_{a_1, a_2, a_3, a_4=1}^{c'}e\Big(\frac{\overline{a'}a_1a_2a_3a_4}{c'}\Big)\zeta(s, a_1, c')\zeta(s, a_2, c')\zeta(s, a_3, c')\zeta(s, a_4, c'),
\]
where $\zeta(s, a_i, c')=\sum_{n\equiv a_i\mod c'\atop n\geq 1}\frac{1}{n^s}, \Re(s)>1$ is the Hurwitz-zeta function which can be analytically continued to the
whole complex plane to a meromorphic function with a single pole, at $s=1$. 
Moreover, from the computation by Conrey and Gonek in \cite[Page 589--591]{C-G01}, we see that  $L(s, \E, \frac{\overline{a'}}{c'}; 1, 1)$ has a meromorphic continuation to the whole complex plane and it has a unique 
pole of degree $4$ at $s=1$.
Its singular part is 
\[
\Big(\frac{c}{de}\Big)^{-s}\zeta(s)^4G_4(s, \frac{c}{de}),
\]
where 
\[
G_4(s, \frac{c}{de})=\sum_{\ell \mid \frac{c}{de}}\frac{\mu(\ell)}{\varphi(\ell)}\ell^s\sum_{\kappa\mid \ell }\frac{\mu(\kappa)}{\kappa^s}\prod_{p^\alpha||\frac{c\kappa}{de\ell}}
\Big((1-\frac{1}{p^s})^{4}\sum_{j\geq 0}\frac{d_4(p^{\alpha+j})}{p^{js}}\Big),
\]
which is independent of $a'$. Therefore, $L(s, \E, \frac{a}{c}; m, 1)$ has a meromorphic continuation to the whole complex plane with a unique pole of degree $4$ at $s=1$. Its singular part is, for  $\Re(s)> 0$,
\begin{equation}\label{eqn:singular}
\mathcal{L}(s; m, c)=\frac{\zeta(s)^4}{c^s}\sum_{d\mid m}\mu(d)\sum_{e\mid d}\mu(e)\tau(1, \frac{m}{d}, \frac{d}{e})\sum_{\ell \mid \frac{c}{de}}\frac{\mu(\ell)}{\varphi(\ell)}\ell^s\sum_{\kappa\mid \ell }\frac{\mu(\kappa)}{\kappa^s}\prod_{p^\alpha||\frac{c\kappa}{de\ell}}
\Big((1-\frac{1}{p^s})^{4}\sum_{j\geq 0}\frac{d_4(p^{\alpha+j})}{p^{js}}\Big)
\end{equation}
which is independent of $a$.
By using a simple refinement in the proof of K{\i}ral and Zhou \cite{K-Z16}, $L(s, \E, \frac{a}{c}; m, 1)$ also satisfies the Voronoi formula \eqref{eqn:Voronoi_F} in Theorem \ref{theorem:KZ's Voronoi}. 
\end{remark}

Now we deduce a general Voronoi formula of degree $4$. Let $W$ be a smooth compactly supported function on $\mathbb{R}_{>0}$. With the notation as above and $F=\Phi$ or $\E$, applying the Mellin inversion, we have
\[
\sum_{n\neq 0}A(n, m, 1)e\Big(\frac{\overline{a}n}{c}\Big)W(n)=\frac{1}{2\pi i}\int_{(\sigma+2)}\widetilde{W}(s)L(s, F,\frac{a}{c}; m, 1)\dd s,
\]
where $\sigma\gg 1$.
Since that $\widetilde{W}$ is analytic and $L(s, F,\frac{a}{c}; m, 1)$ is analytic except a possible pole at $s=1$, we shift the contour to $\Re(s)=-\sigma-1$. Using \eqref{eqn:Voronoi_F}, we have
\begin{equation*}
\begin{split}
\sum_{n\neq 0}A(n, m, 1)e\Big(\frac{\overline{a}n}{c}\Big)&W(n)=\Res_{s=1}\widetilde{W}(s)L(s, F,\frac{a}{c}; m, 1)\\
&+\frac{c}{2}\sum_{d_1\mid mc}\sum_{d_2\mid\frac{mc}{d_1}}\sum_{n\geq 1}\frac{A(d_1,d_2, n)\Kl(a, n, c; m, 1, d_1, d_2)}{nd_1d_2}\W_{+, F}\Big(\frac{nd_2^2d_1^3}{c^4m^2}\Big)\\
&-\frac{c}{2}\sum_{d_1\mid mc}\sum_{d_2\mid\frac{mc}{d_1}}\sum_{n\geq 1}\frac{A(d_1,d_2, n)\Kl(a, n, c; m, 1, d_1, d_2)}{nd_1d_2}\W_{-, F}\Big(\frac{nd_2^2d_1^3}{c^4m^2}\Big)\\
&+\frac{c}{2}\sum_{d_1\mid mc}\sum_{d_2\mid\frac{mc}{d_1}}\sum_{n\geq 1}\frac{A(d_1,d_2, n)\Kl(a, -n, c; m, 1, d_1, d_2)}{nd_1d_2}\W_{+, F}\Big(\frac{nd_2^2d_1^3}{c^4m^2}\Big)\\
&+\frac{c}{2}\sum_{d_1\mid mc}\sum_{d_2\mid\frac{mc}{d_1}}\sum_{n\geq 1}\frac{A(d_1,d_2, n)\Kl(a, -n, c; m, 1, d_1, d_2)}{nd_1d_2}\W_{-, F}\Big(\frac{nd_2^2d_1^3}{c^4m^2}\Big),
\end{split}
\end{equation*}
where 
\begin{equation}\label{eqn:Voronoi_transf}
\W_{\pm, F}(x)=\frac{1}{2\pi i}\int_{(-\sigma-1)}\widetilde{W}(s)G_{\pm}(s)x^s\dd s.
\end{equation}
When $x>0$, we can write 
\begin{equation}\label{eqn:W_F}
\W_{F}(x)=\W_{+, F}(x)-\W_{-, F}(x), \quad 
\W_{F}(-x)=\W_{+, F}(x)+\W_{-, F}(x).
\end{equation}
Then we get the following Voronoi formula from Theorem \ref{theorem:KZ's Voronoi}:
\begin{multline}\label{eqn:Voronoi_Fagain}
\sum_{n\neq 0}A(n, m, 1)e\Big(\frac{\overline{a}n}{c}\Big)W(n)
=\Res_{s=1}\widetilde{W}(s)L(s, F,\frac{a}{c}; m, 1)\\+\frac{c}{2}\sum_{d_1\mid mc}\sum_{d_2\mid\frac{mc}{d_1}}\sum_{n\neq 0}\frac{A(d_1,d_2, n)\Kl(a, n, c; q_1, q_2, d_1, d_2)}{|n|d_1d_2}\W_{F}\Big(\frac{nd_2^2d_1^3}{c^4m^2}\Big)
\end{multline}

Let $\chi^*$ be a primitive character modulo $q$, for $\Re(s)>1$, we have the degree $4$ twisted $L$-functions
\[
L(s, \Phi\times\chi^* )=L(s, \phi\times\chi^*)^2=\sum_{n\geq 1}\frac{A_{\Phi}(n, 1, 1)\chi^*(n)}{n^s}
\]
\[
L(s, \E\times\chi^* )=L(s, \chi^*)^4=\sum_{n\geq 1}\frac{\tau(n, 1, 1)\chi^*(n)}{n^s}.
\]
They satisfy the functional equations 
\begin{equation*}
\begin{split}
\Lambda(s, \Phi\times\chi^* ):&=\Big(\frac{q}{\pi}\Big)^{2s}\prod_{\pm}\Gamma\Big(\frac{s+\frac{1-\chi^*(-1)}{2}\pm it_\phi}{2}\Big)^2
L(s, \Phi\times\chi^*)\\
&=\frac{\tau(\chi^*)^4}{q^2}\Lambda(1-s, \Phi\times\overline{\chi^*}),\\
\Lambda(s, \E\times\chi^* ):&=\Big(\frac{q}{\pi}\Big)^{2s}\Gamma\Big(\frac{s+\frac{1-\chi^*(-1)}{2}}{2}\Big)^4
L(s, \E\times\chi^*)\\
&=\frac{\tau(\chi^*)^4}{q^2}\Lambda(1-s, \E\times\overline{\chi^*}).
\end{split}
\end{equation*}
where  $\tau(\chi^*)$ is the Gauss sum of $\chi^*$. These are 
\[
L(s, \Phi\times \chi^*)=\Big(\frac{\tau(\chi^*)^4}{q^{4s}}\Big)G_{\pm, \Phi}L(1-s, \Phi\times \overline{\chi^*}),
\]
\[
L(s, \E\times \chi^*)=\Big(\frac{\tau(\chi^*)^4}{q^{4s}}\Big)G_{\pm, \E}L(1-s, \E\times \overline{\chi^*})
\]
corresponding \eqref{eqn:F's FE}, where
\begin{equation}\label{eqn:Gamma_factors}
\begin{split}
G_{+, \Phi}&=\pi^{4s-2}\left(\frac{\prod_{\pm}\Gamma(\frac{1-s\pm it_\phi}{2})}
{\prod_{\pm}\Gamma(\frac{s\pm it_\phi}{2})}\right)^2,\quad 
G_{-, \Phi}=\pi^{4s-2}\left(\frac{\prod_{\pm}\Gamma(\frac{2-s\pm it_\phi}{2})}
{\prod_{\pm}\Gamma(\frac{1+s\pm it_\phi}{2})}\right)^2,\\
G_{+, \E}&=\pi^{4s-2}\left(\frac{\Gamma(\frac{1-s}{2})}
{\Gamma(\frac{s}{2})}\right)^4,\hspace{5em}
G_{-, \E}=\pi^{4s-2}\left(\frac{\Gamma(\frac{2-s}{2})}
{\Gamma(\frac{1+s}{2})}\right)^4.
\end{split}
\end{equation} 
Note that inserting the above $G_{\pm}$ in \eqref{eqn:W_+Fdef}, we can chose the integral line at any $\Re(s)<1$. 
Applying Theorem \ref{theorem:KZ's Voronoi} and the corresponding formula \eqref{eqn:Voronoi_Fagain} on $F=\Phi=\phi\boxplus\phi$ and $F=\E=1\boxplus1\boxplus1\boxplus1$ respectively. We conclude the following result:
\begin{lemma}\label{lemma:Voronoi}
Let $q_1,q_2, c\in \mathbb{Z}_{\geq 1}$,  $a\in \mathbb{Z}$ with $(a, c)=1$, $\overline{a}$ be the multiplicative inverse of $a$ modulo $c$, $W$ be a smooth compactly supported function on $\mathbb{R}_{>0}$. Then we have
\begin{equation}\label{eqn:Voronoi_2+2}
\sum_{n\neq 0}A_{\Phi}(n, m, 1)e\Big(\frac{\overline{a}n}{c}\Big)W(n)
=\frac{c}{2}\sum_{d_1\mid mc}\sum_{d_2\mid\frac{mc}{d_1}}\sum_{n\neq 0}\frac{A_{\Phi}(d_1,d_2, n)\Kl(a, n, c; m, 1, d_1, d_2)}{|n|d_1d_2}\W_{\Phi}\Big(\frac{nd_2^2d_1^3}{c^4m^2}\Big)
\end{equation}
and 
\begin{multline}\label{eqn:Voronoi_1+1+1+1}
\sum_{n\neq 0}\tau(n, m, 1)e\Big(\frac{\overline{a}n}{c}\Big)W(n)
=\Res_{s=1}\widetilde{W}(s)\mathcal{L}(s, m, c)\\+
\frac{c}{2}\sum_{d_1\mid mc}\sum_{d_2\mid\frac{mc}{d_1}}\sum_{n\neq 0}\frac{\tau(d_1,d_2, n)\Kl(a, n, c; m, 1, d_1, d_2)}{|n|d_1d_2}\W_{\E}\Big(\frac{nd_2^2d_1^3}{c^4m^2}\Big),
\end{multline}
where $\W_{\Phi}$ and $\W_{\E}$ are defined by \eqref{eqn:Voronoi_transf} and \eqref{eqn:W_F} with different gamma factors in \eqref{eqn:Gamma_factors}, 
$\mathcal{L}(s; m, c)$ is \eqref{eqn:singular}.
\end{lemma}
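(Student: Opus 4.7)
\medskip
\noindent\textbf{Proof proposal.} The plan is to apply Theorem~\ref{theorem:KZ's Voronoi} (the K{\i}ral--Zhou Voronoi formula) to the symbols $F=\Phi$ and $F=\E$, reading off the formula from the Mellin--Barnes contour shift already sketched in \eqref{eqn:Voronoi_Fagain}. First I would verify the three hypotheses of Theorem~\ref{theorem:KZ's Voronoi} for both symbols. The multiplicativity \eqref{eqn:A_mul} and the two Hecke relations \eqref{eqn:A_hecke1}, \eqref{eqn:A_hecke2} for $A_\Phi$ and $\tau=A_\E$ are built into the definitions via \eqref{eqn:defA_Fgeneral}; the moderate growth \eqref{eqn:A_growth} follows from the Kim--Sarnak bound for $\phi$ and the divisor bound for $\tau$. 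The functional equations \eqref{eqn:F's FE} are exactly the ones stated just before the lemma, with the archimedean factors $G_{\pm,\Phi}$ and $G_{\pm,\E}$ displayed in \eqref{eqn:Gamma_factors}.

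Next I would start from the identity
\[
\sum_{n\neq 0}A(n,m,1)\,e\!\Big(\frac{\overline{a}n}{c}\Big)W(n)
=\frac{1}{2\pi i}\int_{(\sigma+2)}\widetilde{W}(s)\,L\!\Big(s,F,\tfrac{a}{c};m,1\Big)\,\dd s
\]
obtained by Mellin inversion, and shift the contour to $\Re(s)=-\sigma-1$. For $F=\Phi$, $L(s,\Phi,\tfrac{a}{c};m,1)$ is entire (since $\phi$ is cuspidal on $\GL(2)$, so $\Phi\times\chi^*$ has no pole), and the shifted integral, combined with the functional equation~\eqref{eqn:F's FE} and the definition~\eqref{eqn:Voronoi_transf} of $\W_{\pm,\Phi}$, produces exactly \eqref{eqn:Voronoi_2+2} after grouping the $\pm n$ terms using \eqref{eqn:W_F}. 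For $F=\E$, the same contour shift crosses the pole of order four at $s=1$, whose singular part is precisely $\mathcal{L}(s;m,c)$ from \eqref{eqn:singular} as extracted in Remark~\ref{Remark:Voronoi_Residue} via the Conrey--Gonek computation; this gives the residue term $\Res_{s=1}\widetilde{W}(s)\mathcal{L}(s;m,c)$ in \eqref{eqn:Voronoi_1+1+1+1}, and the remaining shifted integral produces the dual sum with the kernel $\W_{\E}$.

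The step I expect to require the most care is justifying that the contour shift for $F=\E$ really picks up only the pole at $s=1$: one must check that $L(s,\E,\tfrac{a}{c};m,1)-\mathcal{L}(s;m,c)$ is entire of polynomial growth in vertical strips, and that the left translate is absolutely convergent after inserting the functional equation. Both points follow from the explicit decomposition into Hurwitz zeta functions recalled in Remark~\ref{Remark:Voronoi_Residue} together with Stirling's formula applied to $G_{\pm,\E}$. Once this is in hand, opening up $L(1-s,\widetilde{F}\times\overline{\chi^*})$ via additive characters modulo $c$, decomposing the Gauss sum $\tau(\chi^*)^4$ into the hyper-Kloosterman sum $\Kl(a,\pm n,c;m,1,d_1,d_2)$, and collecting divisors $d_1\mid mc$, $d_2\mid mc/d_1$ exactly as in the proof of Theorem~\ref{theorem:KZ's Voronoi}, yields both \eqref{eqn:Voronoi_2+2} and \eqref{eqn:Voronoi_1+1+1+1}. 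Finally, the symmetry $A_F(d_1,d_2,n)=A_F(n,d_2,d_1)$ noted in Remark~\ref{Remark:Voronoi_Residue} lets us write the dual coefficients in the indicated form.
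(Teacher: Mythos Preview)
Your proposal is correct and follows essentially the same approach as the paper: verify the hypotheses of Theorem~\ref{theorem:KZ's Voronoi} for $F=\Phi$ and $F=\E$ (using the definition \eqref{eqn:defA_Fgeneral} and the functional equations displayed just before the lemma with gamma factors \eqref{eqn:Gamma_factors}), then apply the Mellin inversion/contour shift argument already recorded in \eqref{eqn:Voronoi_Fagain}, picking up the residue at $s=1$ in the $\E$ case via Remark~\ref{Remark:Voronoi_Residue}. The paper states this in one sentence immediately before the lemma; your write-up simply makes the verification steps more explicit.
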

\subsection{Ramanujan bound on average}
We introduce some Ramanujan-on-average bounds for the coefficients of $F$. These are analogues of $\GL(4)$ case which was showed in \cite[\S 3]{C-L20}. But here is much simple.
\begin{lemma}\label{lemma:RamanujanOnAverage}
  Let $X, Y\geq 1$, then we have
  \[
  \sum_{n\sim X}\sum_{m\sim Y}|\tau(n, m, 1)|^2\ll (XY)^{1+\varepsilon},
  \]
  \[
  \sum_{n\sim X}\sum_{m\sim Y}|A_\Phi(n, m, 1)|^2\ll (XY)^{1+\varepsilon}T^\varepsilon.
  \]
\end{lemma}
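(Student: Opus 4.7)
The plan is to handle the two inequalities separately, with the $\tau$-bound being elementary and the $A_\Phi$-bound requiring Rankin--Selberg averaging.

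For the $\tau$-bound, by the explicit formula
\[
\tau(n, m, 1) = \sum_{\ell d = m} \tau(\ell) \sum_{n_1 n_2 = n} \tau(dn_1) \tau(dn_2)
\]
from Lemma~\ref{lemma:L-value 2m4m}, the divisor bound $\tau(k) \ll k^\varepsilon$ applied to every factor, together with the trivial count of the inner sums, yields the pointwise estimate $|\tau(n, m, 1)| \ll (mn)^\varepsilon$. Squaring and summing over $n \sim X$, $m \sim Y$ immediately gives the claim.

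For the $A_\Phi$-bound, a pointwise approach fails because the Kim--Sarnak bound $|\lambda_\phi(k)| \ll k^{7/64 + \varepsilon}$ loses polynomially in $mn$; one must exploit mean-square cancellation. The key input is the factorisation
\[
L(s, \Phi \times \widetilde{\Phi}) = \zeta(s)^4 L(s, \sym^2 \phi)^4 = \sum_{d, n, m, \ell} \frac{|A_\Phi(n, m, \ell)|^2}{(d^4 n^3 m^2 \ell)^s},
\]
whose pole of order $4$ at $s = 1$ has residue bounded by $L(1, \sym^2\phi)^4 \ll t_\phi^\varepsilon$. To isolate the two indices $(n, m)$, I would use multiplicativity of the $\GL(4)$ Fourier coefficients: by the Hecke relation \eqref{eqn:defA_Fgeneral}, for coprime $(n, m) = 1$ one has the factorisation
\[
A_\Phi(n, m, 1) = A_\Phi(n, 1, 1) \cdot B(m), \qquad B(m) := A_\Phi(1, m, 1) = \sum_{\ell d = m} \tau(\ell) \lambda_\phi(d)^2,
\]
while the general case is handled by extracting the $\gcd$-smooth parts of $n$ and $m$ and absorbing the resulting local loss via $\sum_g g^{-1+\varepsilon}$.

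Each factor sum is then bounded by Perron applied to the generating series $\sum_n A_\Phi(n, 1, 1)^2 n^{-s}$ (controlled by the full Rankin--Selberg above) and $\sum_m B(m) m^{-s} = \zeta(s)^3 L(s, \sym^2 \phi) / \zeta(2s)$, giving
\[
\sum_{n \sim X} |A_\Phi(n, 1, 1)|^2 \ll X^{1+\varepsilon} t_\phi^\varepsilon, \qquad \sum_{m \sim Y} B(m)^2 \ll Y^{1+\varepsilon} t_\phi^\varepsilon.
\]
Multiplying these coprime-locus bounds and summing the $\gcd$ loss yields $(XY)^{1+\varepsilon} T^\varepsilon$, since $t_\phi = t_k \ll T^\varepsilon$. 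The main technical obstacle will be verifying the finite-order pole structure of $\sum_m B(m)^2 m^{-s}$, which requires a prime-by-prime Euler-product analysis using the Kim--Sarnak bound for the local Langlands parameters of $\phi$, combined with standard Rankin--Selberg manipulations.
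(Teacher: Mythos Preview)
Your treatment of the $\tau$-bound matches the paper's: the pointwise divisor bound suffices.

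For the $A_\Phi$-bound, however, the paper takes a far more direct route than your Rankin--Selberg/multiplicativity scheme. Starting from the definition
\[
A_\Phi(n,m,1)=\sum_{\ell d=m}\tau(\ell)\sum_{n_1n_2=n}\lambda_\phi(dn_1)\lambda_\phi(dn_2),
\]
the paper applies Cauchy--Schwarz twice (once in the $\ell d=m$ sum, once in the $n_1n_2=n$ sum) to obtain
\[
\sum_{n\sim X}\sum_{m\sim Y}|A_\Phi(n,m,1)|^2
\ll (XY)^\varepsilon\sum_{n_2\ll X}\sum_{\ell\ll Y}\sum_{k\sim XY/(n_2\ell)}|\lambda_\phi(k)|^4,
\]
after setting $k=dn_1$. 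The fourth-moment bound $\sum_{k\sim K}\lambda_\phi(k)^4\ll K^{1+\varepsilon}t_\phi^\varepsilon$ of Ghosh--Reznikov--Sarnak \cite{GRS13} then gives the result in two lines, with no multiplicativity, no coprimality splitting, and no Perron contour.

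Your route is not wrong in spirit, but it is substantially heavier and has real gaps as written. The phrase ``extracting the $\gcd$-smooth parts \ldots\ and absorbing the local loss via $\sum_g g^{-1+\varepsilon}$'' is not a proof: writing $g=(n,m)$ does not cleanly factor $A_\Phi(n,m,1)$ into a $g$-part times a coprime part, and the local pieces $A_\Phi(p^a,p^b,1)$ are not individually bounded without Ramanujan. Moreover, controlling $\sum_m B(m)^2$ via the generating series $\sum_m B(m)^2 m^{-s}$ requires analyzing a degree-$8$ Euler product, which (once unpacked) again reduces to fourth-moment information on $\lambda_\phi$. So your approach ultimately circles back to the same input the paper invokes directly; the paper's argument simply skips the detour.
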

\begin{proof}
  The averaged bound for $\tau$ is obtained by using the divisor bound $d_2(n)\ll n^\varepsilon$ trivially.
  By definition of $A_\Phi(n, m, 1)$, using Cauchy--Schwarz and elementary inequality, we have
  \begin{equation*}
  \begin{split}
  \sum_{n\sim X}\sum_{m\sim Y}&|A_\Phi(n, m, 1)|^2
  \ll \sum_{n\sim X}\sum_{m\sim Y}\sum_{\ell d=m}|\tau(\ell)|^2\sum_{\ell d=m}\Big|\sum_{n_1n_2=n}|\lambda_\phi(dn_1)|^2\Big|^2\\
  &\ll (XY)^\varepsilon\sum_{n_1n_2\sim X}\sum_{\ell d\sim Y}|\lambda_\phi(dn_1)|^4
  \ll (XY)^\varepsilon\sum_{n_2\ll X}\sum_{\ell \ll Y}\sum_{k\sim \frac{XY}{n_2\ell}}\lambda_\phi(k)^4.
  \end{split}
  \end{equation*}
  By using fourth moment uniform bound on $\GL(2)$ coefficients \cite[Lemma 3.6]{GRS13}:
  $$\sum_{k\sim K}\lambda_\phi(k)^4
  \ll K^{1+\varepsilon}t_\phi^\varepsilon,$$
  we get the desired bound.
\end{proof}

\begin{lemma}\label{lemma:RamanujanOnAverage2}
  For any $X\geq 1$ and positive integers $b, c$, we have
  \[
  \sum_{n\leq X}|\tau(c, b, n)|^2\ll X(Xbc)^{\varepsilon},
  \]
  \[
  \sum_{n\leq X}|A_\Phi(c, b, n)|^2\ll (Xbc)^{1+\varepsilon}T^\varepsilon.
  \]
\end{lemma}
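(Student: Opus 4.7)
The plan is to exploit the self-duality $A_F(c,b,n)=A_F(n,b,c)$ (valid for $F=\Phi,\E$ as noted in Remark \ref{Remark:Voronoi_Residue}) together with the Hecke expansion \eqref{eqn:defA_Fgeneral}, in order to reduce the one-variable sum over $n$ with $(c,b)$ fixed in the third and second slots to a two-variable sum with $1$ in the third slot, where Lemma \ref{lemma:RamanujanOnAverage} already applies.

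For the $\tau$ bound I would simply note that iterating the defining divisor sum through the Hecke relations yields the pointwise estimate $|\tau(c,b,n)|\ll(cbn)^{\varepsilon}$, whence
\[
\sum_{n\le X}|\tau(c,b,n)|^{2}\ll X(Xbc)^{\varepsilon}
\]
is immediate.

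For the $A_\Phi$ bound I would apply \eqref{eqn:defA_Fgeneral} with $(n,\ell,k)=(n,b,c)$ to get
\[
A_\Phi(n,b,c)=\sum_{\substack{d\mid(c,b)\\ e\mid(d,c/d),\ f\mid(c,n)}}\mu(df)\mu(e)\,A_\Phi\!\bigl(c/(dfe),1,1\bigr)A_\Phi\!\bigl(dn/(ef),b/d,1\bigr).
\]
Cauchy--Schwarz (losing only $(bc)^{\varepsilon}$ from the number of divisor triples), squaring and summing over $n\le X$, followed by the change of variables $n=fn'$ and $m=(d/e)n'$, converts the inner sum into
\[
\sum_{\substack{m\le (d/e)X/f\\(d/e)\mid m}}\bigl|A_\Phi(m,b/d,1)\bigr|^{2}\ll\bigl(Xb/(ef)\bigr)^{1+\varepsilon}T^{\varepsilon}
\]
by a dyadic decomposition combined with Lemma \ref{lemma:RamanujanOnAverage}; the divisibility constraint on $m$ is simply dropped, as we only want an upper bound. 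For the leading factor I would use the pointwise Kim--Sarnak-type estimate $|A_\Phi(r,1,1)|\ll r^{7/32+\varepsilon}$ (obtained from $A_\Phi(r,1,1)=\sum_{ab=r}\lambda_\phi(a)\lambda_\phi(b)$ and $|\lambda_\phi(n)|\ll n^{7/64+\varepsilon}$), giving $|A_\Phi(c/(dfe),1,1)|^{2}\ll c^{7/16+\varepsilon}$. Since $7/16<1$, summing over $(d,e,f)$ yields the desired $(Xbc)^{1+\varepsilon}T^{\varepsilon}$.

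The only point requiring care is the handling of the extra divisibility condition $(d/e)\mid m$ appearing in the inner sum, but since we only seek an upper bound this is harmless. The appeal to Kim--Sarnak is convenient but not essential; any subconvex bound $|\lambda_\phi(n)|\ll n^{1/4-\delta}$ would suffice for the exponent of $c$ to stay below $1$.
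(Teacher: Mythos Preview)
Your argument is correct and follows essentially the same route as the paper indicates, namely the Hecke decomposition \eqref{eqn:defA_Fgeneral} combined with Cauchy--Schwarz and the two-variable average bound of Lemma~\ref{lemma:RamanujanOnAverage}; the paper itself gives no details beyond citing \cite[Lemma~3.5]{C-L20}, whose proof proceeds in exactly this way. The one small difference is your use of the Kim--Sarnak pointwise bound for the factor $A_\Phi(c/(dfe),1,1)$: in \cite{C-L20} the object is a genuine $\GL(4)$ cusp form, so no such bound is available and one instead controls this factor on average via Rankin--Selberg; but for $\Phi=\phi\boxplus\phi$ your shortcut is perfectly legitimate and, as you note, any exponent below $1/2$ would do.
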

\begin{proof}
The averaged bound for $\tau$ is obtained by using the divisor bound $d_2(n)\ll n^\varepsilon$ trivially.
The second bound can be obtained by a similar argument in \cite[Lemma 3.5]{C-L20}.
\end{proof}

\subsection{Spectral mean value theorem}
In order to bounding the $L$-function at $1/2+it_j$, we require two large sieve type results for the kernels $\lambda_j(n)n^{it_j}$.
The first theorem is due to Luo \cite[Theorem 1]{Luo95}.
\begin{theorem}\label{thm:Luo'sLS}
For any complex numbers $a_n$, we have
\[
\sum_{t_j\leq T}\frac{|\rho_j(1)|^2}{\cosh (\pi t_j)}\Big|\sum_{n\leq N}a_n\lambda_j(n)n^{it_j}\Big|^2
\ll(T^2+T^{\frac{3}{2}}N^{\frac{1}{2}}+N^{\frac{5}{4}})(NT)^\varepsilon\sum_{n\leq N}|a_n|^2.
\]
The implied constant depends on $\varepsilon$ only.
\end{theorem}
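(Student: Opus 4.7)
The plan is to apply the Kuznetsov trace formula after absorbing the twist $n^{it_j}$ into the test function. First, I would introduce a smooth majorant $h(t) \geq \mathbf{1}_{[-T,T]}$ (e.g.\ $h(t) = e^{-(t/T)^2}$, extended to a strip and with polynomial-decay corrections adapted to the Kuznetsov kernel) and use positivity together with opening the square to reduce to bounding
\[
\sum_{n,m \le N} a_n \overline{a_m}\, \sum_{j} h(t_j)\, \frac{|\rho_j(1)|^2}{\cosh(\pi t_j)}\, \lambda_j(n)\lambda_j(m)\, (n/m)^{it_j}.
\]

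Second, I would apply Kuznetsov to the inner spectral sum with the modified test function $h_{n,m}(t) := h(t)(n/m)^{it}$, including the parallel contributions of holomorphic forms and Eisenstein series. The diagonal $n=m$ contributes
\[
\sum_{n\leq N} |a_n|^2 \int h(t)\,\tanh(\pi t)\, t\, dt \ \asymp\ T^2 \sum_{n\leq N} |a_n|^2,
\]
which matches the first term of the claimed bound, while the Eisenstein/holomorphic terms are of comparable or smaller size.

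Third, the off-diagonal Kloosterman contribution has the form
\[
\sum_{n,m} a_n\overline{a_m}\sum_{c\geq 1}\frac{S(n,m;c)}{c}\,\widetilde{h}_{n,m}\!\left(\frac{4\pi\sqrt{nm}}{c}\right),
\]
where $\widetilde{h}_{n,m}$ is the Bessel transform of $h_{n,m}$ appearing in the Kuznetsov formula. The crucial step is a uniform stationary phase analysis: using the asymptotic expansion of $K_{2it}(x)$, the combined phase acquires a summand $t\log(n/m)$ from the twist, which shifts the saddle point. Consequently $\widetilde{h}_{n,m}(x)$ is essentially supported where $x$ is close to the shifted saddle, confining $c$ to a short dyadic range depending on $n/m$ and $T$. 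Applying the Weil bound $|S(n,m;c)|\leq d(c)(n,m,c)^{1/2}c^{1/2}$ and Cauchy--Schwarz in $n,m$ on this restricted range produces, in the two regimes where the saddle is interior to the support of $h$ versus near its boundary, contributions of size $T^{3/2}N^{1/2}$ and $N^{5/4}$ respectively, up to $(NT)^\varepsilon \sum_n|a_n|^2$.

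The main technical obstacle is the uniform stationary phase treatment of $\widetilde{h}_{n,m}$: the saddle-point shift by $\log(n/m)$ must be tracked uniformly across all pairs $(n,m)$, and the resulting size/support of $\widetilde{h}_{n,m}$ is precisely what yields the two extra terms $T^{3/2}N^{1/2}$ and $N^{5/4}$. Absent the twist, the same scheme recovers Jutila's bound $T^2+N$, so these two additional terms quantify the loss of phase coherence in $n^{it_j}$ across different spectral parameters $t_j$.
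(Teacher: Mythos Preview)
The paper does not prove this theorem at all: it is stated as a citation of Luo's result \cite[Theorem 1]{Luo95} and used as a black box. Your sketch is essentially Luo's original argument --- Kuznetsov with the twisted test function $h(t)(n/m)^{it}$, diagonal giving $T^2$, and stationary-phase analysis of the Bessel transform producing the $T^{3/2}N^{1/2}$ and $N^{5/4}$ terms from the off-diagonal Kloosterman sums --- so there is nothing to compare against in the present paper.

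One point of caution in your outline: the Bessel transform in Kuznetsov involves $J_{2it}$, not $K_{2it}$, and the correct asymptotic analysis goes through the representation of $J_{2it}(x)/\cosh(\pi t)$ as an oscillatory integral. The saddle shift you describe is real, but the two terms $T^{3/2}N^{1/2}$ and $N^{5/4}$ in Luo's argument do not arise quite as cleanly as ``interior versus boundary of $\operatorname{supp} h$''; rather, $T^{3/2}N^{1/2}$ comes from the range $c \ll \sqrt{N}/T$ where the transform is large and one uses Weil plus Cauchy--Schwarz, while $N^{5/4}$ comes from the complementary range after exploiting cancellation in the $c$-sum. If you want a self-contained writeup you will need to be precise about this splitting.
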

The second theorem is in \cite[Theorem 7.1]{You13} and \cite[Theorem 4.1]{C-L20}.
\begin{theorem}\label{thm:Young'sLS}
  Let $T\geq 2$ and the non-negative smooth function $w$ is defined by 
  \[
  w(t)=2\frac{\sinh((\pi-\frac{1}{T})t)}{\sinh(2\pi t)}.
  \] 
  Let 
  \[
     S(\mathcal{A}):=\sum_{t_j}w(t_j)|\rho_j(1)|^2\Big|\sum_{n\sim N}a_n\lambda_j(n)n^{it_j}\Big|^2,
  \]
  then for any $1\leq X\leq T$ and $N\gg T$, we have
  \[
  S(\mathcal{A})=S_1(\mathcal{A}; X)+O\Big(T^2+\frac{NT}{X}+\frac{N^{\frac{3}{2}}}{T}\Big)N^\varepsilon\Vert\mathcal{A}\Vert^2,
  \]
  where $\Vert\mathcal{A}\Vert^2=\sum_{n\sim N}|a_n|^2,$ and 
  \[
  S_1(\mathcal{A}; X)\ll T\sum_{r<X}\frac{1}{r^2}\sum_{0\neq |k|\ll rT^\varepsilon}\int_{-T^\varepsilon}^{T^\varepsilon}
  \min\Big\{\frac{1}{|u|}, \frac{r/|k|}{1+u^2}\Big\}\Big|\sum_{n\sim N}a_nS(k, n; r)e\Big(\frac{un}{rT}\Big)\Big|^2\dd u.
  \]
  Here $S(k, n; r)$ is the usual Kloosterman sum.
\end{theorem}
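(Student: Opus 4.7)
My plan is to apply the Kuznetsov trace formula to the opened square. The specific weight $w(t)=2\sinh((\pi-1/T)t)/\sinh(2\pi t)$ is engineered so that, after including the multiplicative twist $(m/n)^{it_j}$ arising from $n^{it_j}$, the resulting Bessel transform on the geometric side can be computed in a form from which the ``main'' piece $S_1(\mathcal{A};X)$ extracts naturally; it is essentially a Selberg--Iwaniec-type test function designed to localize the spectral parameter near $T$ with explicit Mellin data.

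First, since the Hecke eigenvalues $\lambda_j(n)$ are real, expanding the square and swapping summations yields
\[
S(\mathcal{A})=\sum_{m,n\sim N}a_m\overline{a_n}\sum_j w(t_j)|\rho_j(1)|^2\lambda_j(m)\lambda_j(n)\Big(\frac{m}{n}\Big)^{it_j}.
\]
I would then apply the Kuznetsov formula to the inner spectral sum with test function $h_{m/n}(t):=w(t)(m/n)^{it}$. This produces a diagonal term $\delta_{m=n}$ weighted by a spectral average of size $\asymp T^2$ (contributing $\ll T^2 N^\varepsilon\|\mathcal{A}\|^2$), a continuous-spectrum correction of comparable size, and a geometric sum
\[
\sum_{c\geq 1}\frac{S(m,n;c)}{c}\,\mathcal{H}_{m/n}\!\Big(\frac{4\pi\sqrt{mn}}{c}\Big),\qquad \mathcal{H}_{m/n}(x):=\int_{\mathbb{R}}h_{m/n}(t)\,J_{2it}(x)\frac{t\,dt}{\cosh\pi t}.
\]

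Next, I would analyze $\mathcal{H}_{m/n}$ using the specific $w$: inserting the Mellin--Barnes representation for $J_{2it}$ and evaluating the $t$-integral via residues against $\sinh((\pi-1/T)t)/\sinh(2\pi t)$ produces an explicit kernel with clean $\Gamma$-factor content. Opening $S(m,n;c)$ via additive characters and reorganizing the variables (writing $c=r$, isolating a shift $k$ between $m$ and $n$) naturally produces the oscillation $e(un/rT)$; a phase analysis then yields the weight $\min(1/|u|,(r/|k|)/(1+u^2))$ present in $S_1(\mathcal{A};X)$. One truncates at $r<X$ and keeps this piece as $S_1(\mathcal{A};X)$. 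For $r\geq X$, the $k=0$ (Ramanujan-sum) contribution is bounded via $|S(0,n;r)|\leq (n,r)$ to give the $NT/X$ error, while for $k\neq 0$ with $r\geq X$ one invokes the Weil bound combined with decay of $\mathcal{H}_{m/n}$ in its small-argument regime to obtain the $N^{3/2}/T$ contribution.

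The principal obstacle is the rigorous analysis of $\mathcal{H}_{m/n}$ uniformly in the twist $(m/n)^{it}$: one must control it in both the oscillatory regime ($c$ small, treated by stationary phase) and the power-series regime ($c$ large), while tracking the precise $t$-dependence introduced by the twist. The specific choice of $w$ is what makes this tractable, because the Mellin transform of $\sinh((\pi-1/T)t)/\sinh(2\pi t)$ is an explicit product of $\Gamma$-factors whose residues sum neatly. Matching the three stated terms $T^2$, $NT/X$, $N^{3/2}/T$ uniformly in $c,k,u$ requires the delicate Bruggeman--Kuznetsov computation of Young; the $N^{3/2}/T$ term in particular reflects the crossover between the Weil bound on Kloosterman sums and the transition range of the Bessel kernel.
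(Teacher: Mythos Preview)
The paper does not prove this statement; it simply quotes it from Young \cite[Theorem~7.1]{You13} and Chandee--Li \cite[Theorem~4.1]{C-L20}. Your outline is a correct high-level reconstruction of Young's strategy: open the square, apply the Kuznetsov formula with the twisted test function $h_{m/n}(t)=w(t)(m/n)^{it}$, and exploit that for this particular $w$ the $J$-Bessel transform on the geometric side can be evaluated in closed form (via an integral representation of $J_{2it}$ combined with the explicit Fourier transform of the hyperbolic-sine ratio defining $w$). Your attributions of the three error terms $T^2$, $NT/X$, $N^{3/2}/T$ to the diagonal, a Ramanujan-sum piece, and the Weil-bounded tail are in the right spirit.

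The place where your sketch is loose is the emergence of the Kloosterman structure $S(k,n;r)$ in $S_1$. It is not obtained by ``isolating a shift $k$ between $m$ and $n$'' inside $S(m,n;c)$: substituting $m=n+k$ does not turn $S(m,n;c)$ into $S(k,n;c)$. In Young's argument the integer $k$, the phase $e(un/(rT))$, the weight $\min\{1/|u|,(r/|k|)/(1+u^2)\}$, and the truncation $|k|\ll rT^\varepsilon$ all come directly out of the explicit shape of the computed Bessel kernel after a sequence of changes of variable in its integral representation; this is the technical heart of Young's Section~7 and cannot be summarized as a simple shift.
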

Note that by choosing weight function $w$ as above, we have
\[
w(t_j)|\rho_j(1)|^2\sim \frac{|\rho_j(1)|^2}{\cosh(\pi t_j)}\exp\Big(-\frac{t_j}{T}\Big)=T^{o(1)}
\]
for $t_j\sim T.$ This is convenient for applying Theorem \ref{thm:Young'sLS} directly.

\subsection{Stirling's formula}


For fixed $\sigma\in\mathbb{R}$, real $|t|\geq10$ and any $J>0$, we have Stirling's formula
\begin{equation}\label{eqn:Stirling_J}
  \Gamma(\sigma+it) = e^{-\frac{\pi}{2}|t|} |t|^{\sigma-\frac{1}{2}} \exp\left( it\log\frac{|t|}{e} \right) \left( g_{\sigma,J}(t) + O_{\sigma,J}(|t|^{-J}) \right),
\end{equation}
where
\[
  t^j \frac{\partial^j}{\partial t^j} g_{\sigma,J}(t) \ll_{j,\sigma,J} 1
\]
for all fixed $j\in \mathbb{N}_0$. 
Combining the functional equation $\Gamma(s)\Gamma(1-s)=\frac{\pi}{\sin(\pi s)}$ with above approximation \eqref{eqn:Stirling_J}, we have
\begin{equation}\label{eqn:Stirling_inverse}
  \frac{1}{\Gamma(\sigma+it)} = (e^{\frac{\pi}{2}|t|}- e^{-\frac{\pi}{2}|t|})|t|^{\frac{1}{2}-\sigma} \exp\left(-it\log\frac{|t|}{e} \right) \left( \tilde{g}_{\sigma,J}(t) + O_{\sigma,J}(|t|^{-J}) \right).
\end{equation}
And $\tilde{g}$ satisfy
\[
  t^j \frac{\partial^j}{\partial t^j} \tilde{g}_{\sigma,J}(t) \ll_{j,\sigma,J} 1
\]
for all fixed $j\in \mathbb{N}_0$.

More precisely, we have
\begin{equation}\label{eqn:Stirling_7}
  \log  \Gamma(z) = z\log z + \frac{1}{2} \log \frac{2\pi}{z} + \frac{1}{12 z} -\frac{1}{360z^3} + \frac{1}{1260z^5} + O(|z|^{-7}).
\end{equation}

\subsection{Oscillatory integrals}
Let $\mathcal{F}$ be an index set and $X=X_T:\mathcal{F}\rightarrow \mathbb{R}_{\geq1}$ be a function of $T\in\mathcal{F}$. A family of $\{w_T\}_{T\in\mathcal{F}}$ of smooth functions supported on a product of dyadic intervals in $\mathbb{R}_{>0}^d$ is called \emph{$X$-inert} if for each $j=(j_1,\ldots,j_d) \in \mathbb{Z}_{\geq0}^d$ we have
\[
  \sup_{T\in\mathcal{F}} \sup_{(x_1,\ldots,x_d) \in \mathbb{R}_{>0}^d}
  X_T^{-j_1-\cdots -j_d} \left| x_1^{j_1} \cdots x_d^{j_d} w_T^{(j_1,\ldots,j_d)} (x_1,\ldots,x_d) \right|
   \ll_{j_1,\ldots,j_d} 1.
\]


We will use the following integration by parts and stationary phase lemmas several times.

\begin{lemma}\label{lemma:repeated_integration_by_parts}
  Let $Y\geq1$. Let $X,\; V,\; R,\; Q>0$ and suppose that $w=w_T$ is a smooth function with  $\supp w \subseteq [\alpha,\beta]$ satisfying $w^{(j)}(\xi) \ll_j X V^{-j}$ for all $j\geq0$.
  Suppose that on the support of $w$, $h=h_T$ is smooth and satisfies that
  $h'(\xi)\gg R$ and $ h^{(j)}(\xi) \ll Y Q^{-j}$, for all $j\geq2.$
  Then for arbitrarily large $A$ we have
    \[
      I = \int_{\mathbb{R}} w(\xi) e(h(\xi))  \dd \xi  \ll_A (\beta-\alpha)  X \left[  \left(\frac{QR}{\sqrt{Y}}\right)^{-A} + (RV)^{-A}  \right].
    \]
\end{lemma}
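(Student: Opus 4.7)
The approach is repeated integration by parts via the differential operator $D(f)(\xi) := -\frac{1}{2\pi i}\bigl(f(\xi)/h'(\xi)\bigr)'$, which is well-defined on the support of $w$ by the hypothesis $|h'|\gg R>0$. Since $\frac{\dd}{\dd\xi}e(h(\xi)) = 2\pi i h'(\xi) e(h(\xi))$ and $w$ vanishes at the endpoints, the identity $\int f\cdot e(h)\,\dd\xi = \int D(f)\cdot e(h)\,\dd\xi$ holds for any smooth function $f$ supported in $[\alpha,\beta]$. Iterating $A$ times yields
$$I = \int_{\mathbb{R}} D^A(w)(\xi)\, e(h(\xi))\,\dd\xi,$$
so since $|e(h)|=1$, the task reduces to obtaining a uniform bound on $D^A(w)$ and integrating trivially over $[\alpha,\beta]$.

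A routine induction on $A$, using the Leibniz rule, shows that $D^A(w)$ is a $\mathbb{Q}$-linear combination of $O_A(1)$ many terms of the form
$$T_{a,(b_j)}(\xi) = \frac{w^{(a)}(\xi)\prod_{j\geq 2}\bigl[h^{(j)}(\xi)\bigr]^{b_j}}{\bigl[h'(\xi)\bigr]^{A+B}}, \qquad B := \sum_{j\geq 2} b_j,$$
with coefficients of size $O_A(1)$ and nonnegative integer indices satisfying the degree constraint $a + \sum_{j\geq 2}(j-1)b_j = A$. Inserting the hypotheses $|w^{(a)}|\ll_a XV^{-a}$, $|h^{(j)}|\ll_j YQ^{-j}$ (for $j\geq 2$), and $|h'|\gg R$, together with the identity $\sum_{j\geq 2} jb_j = B + (A-a)$, bounds each such term on $[\alpha,\beta]$ by $X\cdot (RV)^{-a}\cdot Y^B(QR)^{-(A-a+B)}$.

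Writing $u := (RV)^{-1}$ and $v := \sqrt{Y}/(QR)$, I may assume $u,v\leq 1$, since otherwise the claimed bound already exceeds the trivial estimate $(\beta-\alpha)X$. The hypothesis $Y\geq 1$ gives $(QR)^{-1}\leq v$, and the degree constraint forces $B\leq A-a$ (with equality only when the $b_j$ are supported on $j=2$); combining these with the previous bound, a short rearrangement of exponents produces
$$|T_{a,(b_j)}(\xi)| \ll_A X\cdot u^a\cdot v^{A-a+B}.$$
A case analysis -- dominating $u^a$ by $v^a$ when $u\leq v$, and dominating $v^{A-a+B}$ by $u^{A-a+B}$ otherwise -- bounds each term by $X\max(u^A,v^A)\leq X(u^A+v^A)$. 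Summing the $O_A(1)$ terms and multiplying by the length $(\beta-\alpha)$ gives the stated estimate for $I$.

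The main technical obstacle is the inductive bookkeeping in the second step: verifying that $D$ preserves the canonical form of the $T_{a,(b_j)}$'s while incrementing the degree parameter by one (each application of $D$ produces three flavours of new terms via the Leibniz rule -- one by differentiating $w^{(a)}$, one for each factor $h^{(j)}$, and one from differentiating the $(h')^{-(A+B)}$ factor). Once this structure is established, the characteristic $\sqrt{Y}$-saving (rather than $Y$) per integration by parts emerges naturally from the exponent manipulation above: the $Y^B$ accumulated in the numerator is always spread across at least $2B$ copies of $QR$ in the denominator, producing $(\sqrt{Y}/QR)^{2B}$ rather than the naive $Y^B(QR)^{-B}$.
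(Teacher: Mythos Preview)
Your proof is correct and is essentially the standard argument behind this lemma; the paper itself does not give a proof but simply cites \cite[Lemma~8.1]{BKY13}, where the same integration-by-parts mechanism is used.
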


\begin{proof}
  See \cite[Lemma 8.1]{BKY13}.
\end{proof}

\begin{lemma}\label{lemma:stationary_phase}
  Suppose $w_T$ is $X$-inert in $t_1,\ldots,t_d$, supported on  $t_i\asymp X_i$ for $i=1,2,\ldots,d$. Suppose that on the support of $w_T$, $h=h_T$ satisfies that
  \[
    \frac{\partial^{a_1+a_2+\cdots +a_d}}{\partial t_1^{a_1}\cdots \partial t_d^{a_d}} h(t_1,t_2,\ldots,t_d) \ll_{a_1,\ldots,a_d}  \frac{Y}{X_1^{a_1} X_2^{a_2}\cdots X_d^{a_d}},
  \]
  for all $a_1,\ldots,a_d\in \mathbb{Z}_{\geq0}$. Let
  \[
    I = \int_{\mathbb{R}} w_T(t_1,t_2,\ldots,t_d) e^{i h(t_1,t_2,\ldots,t_d)}  \dd t_1.
  \]
   Suppose $\frac{\partial^{2}}{\partial t_1^{2}} h(t_1,t_2,\ldots,t_d) \gg \frac{Y}{X_1^2}$
  for all $(t_1,t_2,\ldots,t_d)\in \supp w_T$, and there exists $t_0 \in\mathbb{R}$ such that $ \frac{\partial}{\partial t_1} h(t_0,t_2,\ldots,t_d)=0$.
  Suppose that $Y/X^2 \geq R \geq 1$. Then
  \[
    I
    = \frac{X_1}{\sqrt{Y}} e^{i h(t_0,t_2,\ldots,t_d)} W_T(t_2,\ldots,t_d) + O_A(X_1 R^{-A}),
  \]
  for some $X$-inert family of functions $W_T$ and any $A>0$.
\end{lemma}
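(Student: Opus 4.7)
My plan is to follow the standard localization-plus-Gaussian-integral strategy for stationary phase, adapted to the $X$-inert framework of Blomer--Khan--Young. Throughout, I will regard $t_2, \ldots, t_d$ as fixed parameters and treat $I$ as a one-variable oscillatory integral in $t_1$; uniformity in the parameters will be automatic from the $X$-inert bookkeeping.

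First, I would introduce a smooth partition of unity in $t_1$ around the stationary point $t_0$. Write $t_1 = t_0 + u$ and decompose the integrand using a smooth cutoff $\chi$ supported in $|u| \leq \delta X_1$ with $\delta$ a small parameter to be chosen, together with its complement. The assumptions $h_{t_1 t_1} \gg Y/X_1^2$ and $h_{t_1 t_1} \ll Y/X_1^2$ show that $h_{t_1}$ has a unique zero $t_0$ in the support, and $|h_{t_1}(t_0 + u, \ldots)| \gg (Y/X_1^2)|u|$ for $|u|\lesssim X_1$. On the ``tail'' where $|u| \gtrsim \delta X_1$, Lemma \ref{lemma:repeated_integration_by_parts} applies with $R = \delta Y/X_1$, $V = \delta X_1$, $Q = X_1$: the conclusion gives savings of size $(QR/\sqrt Y)^{-A} = (\delta \sqrt{Y})^{-A}$, which is $O_A(R^{-A})$ once $\delta$ is chosen as a small power of $R^{-1}$, say $\delta = R^{-1/2}Y^{1/100}$, comfortably below $X/X_1$ (so that $X$-inertness of $w_T$ stays intact). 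This handles the contribution of $|u| \gg \delta X_1$.

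Second, on the ``bulk'' region $|u| \leq \delta X_1$, I would perform a Taylor expansion of the phase:
\[
h(t_0 + u, t_2, \ldots, t_d) = h(t_0, t_2, \ldots, t_d) + \tfrac{1}{2} h_{t_1 t_1}(t_0, t_2, \ldots, t_d)\, u^2 + u^3 r(u, t_2, \ldots, t_d),
\]
where $r$ inherits the $X$-inert--type bounds of $h$. After extracting the leading oscillator $e^{i h(t_0, \ldots)}$ and the quadratic Gaussian $e^{i \sigma u^2/2}$ with $\sigma = h_{t_1 t_1}(t_0,\ldots) \asymp Y/X_1^2$, I would rescale $u = X_1 v/\sqrt{Y}$. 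This converts the cubic error $u^3 r$ into a bounded-phase perturbation of size $\ll |v|^3/\sqrt{Y}$, which can be folded into a new $X$-inert amplitude; and it reduces the Gaussian kernel to the standard $e^{i c v^2}$ shape with $c \asymp 1$. The resulting integral is a Fresnel-type integral $\int \tilde w(v, t_2, \ldots, t_d)\, e^{i c v^2}\, dv$, which evaluates to a $C^\infty$ amplitude in $(t_2, \ldots, t_d)$ of size $O(1)$; undoing the scaling multiplies by the Jacobian $X_1/\sqrt{Y}$, producing exactly the claimed main term $\frac{X_1}{\sqrt{Y}} e^{i h(t_0,\ldots)} W_T(t_2,\ldots,t_d)$.

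The main obstacle, and the part requiring the most care, is verifying that $W_T$ really is $X$-inert in $(t_2, \ldots, t_d)$. This amounts to differentiating the integral with respect to each $t_i$ ($i \geq 2$) and checking that each such derivative still satisfies bounds of the form $\partial^{a_2 + \cdots} W_T \ll X^{a_2 + \cdots}/(X_2^{a_2}\cdots X_d^{a_d})$. Differentiation produces factors involving derivatives of $t_0$ with respect to the $t_i$'s (via implicit differentiation of $h_{t_1}(t_0, \ldots) = 0$, which yields $\partial_{t_i} t_0 = -h_{t_1 t_i}/h_{t_1 t_1}$ of size $\ll X_1/X_i$ by hypothesis), derivatives of $h(t_0,\ldots)$ (handled by the $X$-inert bounds on $h$), and derivatives of the Fresnel amplitude. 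A uniform book-keeping, essentially by induction on the order of differentiation, shows that each derivative picks up exactly the expected inverse power of $X_i$ and at most $X$-inert-type loss, which is the content of $X$-inertness for $W_T$. The final error term $O_A(X_1 R^{-A})$ arises by combining the tail estimate above with the negligible contribution of the rescaled tail $|v| \gtrsim \delta \sqrt{Y}$ of the Fresnel integral, which is handled by another application of Lemma \ref{lemma:repeated_integration_by_parts}.
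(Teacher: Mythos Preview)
Your sketch follows the standard stationary-phase strategy (localize near $t_0$, integrate by parts on the tail, Taylor-expand and rescale to a Fresnel integral on the bulk, then track $X$-inertness under implicit differentiation), which is precisely the method of the references the paper cites: the paper does not prove this lemma but simply refers to \cite[\S 8]{BKY13} and \cite[\S 3]{KPY}. Your outline is consistent with those sources, so there is nothing to correct; if anything, your choice of $\delta$ is more elaborate than necessary (in the cited references one simply takes $\delta$ to be a sufficiently small absolute constant and iterates the integration-by-parts bound, rather than optimizing $\delta$ in terms of $R$ and $Y$), but the result is the same.
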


\begin{proof}
  See \cite[\S 8]{BKY13} and \cite[\S 3]{KPY}.
\end{proof}

\section{Bounding the moments of $L$-functions}\label{sec:ProvingMoment}
In this section, we prove Theorem \ref{thm:4m-of-L} and  \ref{thm:8m-of-L}. All the details of proving Theorem \ref{thm:4m-of-L} will be given. Since it is roughly a parallel way to prove Theorem \ref{thm:8m-of-L}, we will omit the repeated proof steps of Theorem \ref{thm:8m-of-L}.
\subsection{Initial setup}
Let $F=\Phi$ or $\E$,
using Lemma \ref{lemma:AFE_gl8}, to prove Theorem \ref{thm:4m-of-L}, it suffices to show, for all $P\ll T^{2+\varepsilon}$,
\[
H:=\sum_{t_j}w(t_j)|\rho_j(1)|^2\Big|\sum_{m\geq 1}\sum_{n\geq 1}\frac{A_{F}(n, m, 1)\lambda_j(n)}{(m^2n)^{\frac{1}{2}+ it_j}}W_1\Big(\frac{m^2n}{P}\Big)\Big|^2\ll T^{2+\varepsilon}.
\]
Here we remove the condition that $\phi_j$ is even in the summation of spectral parameters. 
By Cauchy--Schwarz inequality, we have that,
\[
H\ll T^\varepsilon\sum_{m\ll \sqrt{P}}\frac{H_m}{m}
\]
where 
\[
H_m:=\sum_{t_j}w(t_j)|\rho_j(1)|^2\Big|\sum_{n\sim N}\frac{A_{F}(n, m, 1)\lambda_j(n)}{n^{\frac{1}{2}+ it_j}}W_1\Big(\frac{n}{N}\Big)\Big|^2
\]
and $N=\frac{P}{m^2}$. Now it is enough to show the following result.
\begin{proposition}\label{Prop:BoundHm}
  With the above notations and $N=\frac{P}{m^2}\ll \frac{T^{2+\varepsilon}}{m^2}$, we have
  \[
  H_m\ll T^{2+\varepsilon}\Big(1+\sum_{n\sim N}\frac{|A_{F}(n, m, 1)|^2}{n}\Big).
  \]
\end{proposition}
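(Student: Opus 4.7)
The plan is to apply Young's spectral large sieve (Theorem~\ref{thm:Young'sLS}) followed by the $\GL(4)$-type Voronoi summation formula (Lemma~\ref{lemma:Voronoi}), in the spirit of Chandee--Li~\cite{C-L20}. Set $a_n = A_F(n,m,1)\, n^{-1/2} W_1(n/N)$ so that $H_m = S(\mathcal{A})$ in the notation of Theorem~\ref{thm:Young'sLS}. Choosing $X = T^{1+\varepsilon}/m^2$, the error term $(T^2 + NT/X + N^{3/2}/T)\, N^\varepsilon \|\mathcal{A}\|^2$ is controlled by $T^{2+\varepsilon}\|\mathcal{A}\|^2$, since $N \ll T^{2+\varepsilon}/m^2$ makes each of the three summands at most $T^{2+\varepsilon}$; it therefore suffices to bound $S_1(\mathcal{A};X)$ by $T^{2+\varepsilon}(1+\|\mathcal{A}\|^2)$.

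Next I open the Kloosterman sum $S(k,n;r) = \sum_{x \bmod r}^{\ast} e((kx+\bar{x}n)/r)$ inside $S_1$, which reduces matters to estimating
\[
\sum_{n \geq 1} A_F(n,m,1)\, e\Bigl(\frac{\bar{x} n}{r}\Bigr) \frac{1}{n^{1/2}} W_1\Bigl(\frac{n}{N}\Bigr) e\Bigl(\frac{un}{rT}\Bigr).
\]
Applying Lemma~\ref{lemma:Voronoi} with $q_1=m$, $q_2=1$, $c=r$, $a=x$ dualizes this into a sum against $\Kl(x,\pm n,r;m,1,d_1,d_2)$ weighted by the integral transform $\W_F$ from \eqref{eqn:Voronoi_transf}. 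When $F = \E$, the residue arising from the pole of $\mathcal{L}(s;m,r)$ at $s=1$ in \eqref{eqn:singular} must be retained; I plan to bound it by deforming the Mellin contour to a small circle around $s=1$ (the Cauchy integral formula), which ultimately yields the constant $1$ appearing in the proposition.

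Stirling's formula applied to the gamma ratios in \eqref{eqn:Gamma_factors} together with contour shifts shows that $\W_F(y)$ is negligibly small unless $|y|$ lies in an essentially dyadic window around a fixed scale, and on this window Lemma~\ref{lemma:stationary_phase} exhibits $\W_F(y)$ as an $X$-inert function times a single oscillatory factor whose phase derivative is of size $\asymp T$. This localizes the dual variable in the Voronoi sum into an essential range, which I split into a \emph{small} part (short dual length) and a \emph{big} part. In the small part, the trivial bound on $\W_F$ combined with Luo's large sieve (Theorem~\ref{thm:Luo'sLS}) and the averaged Ramanujan estimates of Lemmas~\ref{lemma:RamanujanOnAverage}--\ref{lemma:RamanujanOnAverage2} produces an acceptable contribution.

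The big range is the crux. After a second Cauchy--Schwarz and expansion of the square into pairs $(n_1,n_2)$, the two stationary-phase amplitudes combine into an oscillatory factor whose derivative in $r$ (or $k$) forces $n_1$ to be close to $n_2$, in the spirit of \cite[Lemma~3.2]{You11}; a paired large sieve estimate then yields the target bound $T^{2+\varepsilon}\|\mathcal{A}\|^2$. The principal obstacle is carrying out this oscillatory analysis of $\W_F$ \emph{uniformly} in the spectral parameter $t_\phi$ when $F = \Phi$, because the Mellin--Barnes asymptotic of \cite[Lemma~5.2]{C-L20} is not sharp enough when $t_\phi$ is allowed to range in $[1,T^\varepsilon]$. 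To circumvent this, I plan to analyze $\W_F$ directly through repeated integration by parts and stationary phase (Lemmas~\ref{lemma:repeated_integration_by_parts}--\ref{lemma:stationary_phase}) applied to the log-derivative of the gamma ratios in \eqref{eqn:Gamma_factors}, which keeps the dependence on $t_\phi$ quantitative and harmless.
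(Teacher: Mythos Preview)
Your plan is correct and matches the paper's argument essentially step for step: Young's large sieve reduces $H_m$ to $S_1(\mathcal{A};X)$, Voronoi dualizes the inner sum, the residue (for $F=\E$) is bounded via a small Cauchy contour around $s=1$, and after squaring out and simplifying the hyper-Kloosterman sums the dual range is split into a small part handled by the classical additive large sieve (not Luo's spectral sieve---the spectral family is already gone at that stage) and a big part where stationary-phase analysis of $\W_F$, carried out uniformly in $t_\phi$, forces $n_1\approx n_2$ before a final large sieve on short segments. Two small repairs: treat the range $N\ll T$ separately by a direct application of Theorem~\ref{thm:Luo'sLS} (Theorem~\ref{thm:Young'sLS} requires $N\gg T$), and for $N\gg T$ take $X=\min\{T,N/T\}$ rather than $T^{1+\varepsilon}/m^2$, since your choice exceeds the constraint $X\le T$ when $m$ is small.
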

Thus from Proposition \ref{Prop:BoundHm} and Lemma \ref{lemma:RamanujanOnAverage}, we have that
\[
H\ll T^{2+\varepsilon}\sum_{m\ll \sqrt{P}}\frac{1}{m}\Big(1+\sum_{n\sim \frac{P}{m^2}}\frac{|A_{F}(n, m, 1)|^2}{n}\Big)\ll T^{2+\varepsilon}
\]
as desired.
\subsection{Reduction of Proposition \ref{Prop:BoundHm}}
When $N\ll T$, Proposition \ref{Prop:BoundHm} follows immediately from an application of Theorem \ref{thm:Luo'sLS}.
For $N\gg T$, we apply Theorem \ref{thm:Young'sLS} to $H_m$, with $\mathcal{A}=\{A_F(n, m, 1)W_1(\frac{n}{N})\}_{n\sim N}$,
\[
S(\mathcal{A})-S_1(\mathcal{A}; X)\ll T^{2+\varepsilon}\Big(\sum_{n\sim N}\frac{|A_F(n, m, 1)|^2}{n}\Big),
\]
upon choosing $X=\min\{T, \frac{N}{T}\}$ and using $N=\frac{P}{m^2}$, $P\ll T^{2+\varepsilon}.$
Using Lemma \ref{lemma:RamanujanOnAverage}, we have
\[
S(\mathcal{A})=S_1(\mathcal{A}; X)+O(T^{2+\varepsilon}).
\]
Thus we find that in order to bound $H_m$, we need to bound 
\[
T\sum_{r<X}\frac{1}{r^2}\sum_{0\neq |k|\ll rT^\varepsilon}\int_{-T^\varepsilon}^{T^\varepsilon}
  \min\Big\{\frac{1}{|u|}, \frac{r/|k|}{1+u^2}\Big\}\Big|\frac{1}{\sqrt{N}}\sum_{n}A_F(n, m, 1)S(k, n; r)w_3\Big(\frac{n}{N}\Big)e\Big(\frac{un}{rT}\Big)\Big|^2\dd u,
\] 
where $w_3(x)=\frac{W_1(x)}{\sqrt{x}}$, $N=\frac{P}{m^2}\ll \frac{T^{2+\varepsilon}}{m^2}$ and $1\leq X=\min\{T, \frac{N}{T}\}.$
Let $R\leq X$ and $K\ll RN^\varepsilon\ll RT^\varepsilon$. It is sufficient to consider the dyadic sum 
\begin{equation}\label{eqn:I(R,K;m)}
\mathcal{I}(R, K; m)=T\sum_{r\sim R}\frac{1}{r^2}\sum_{|k|\sim K}\int_{-T^\varepsilon}^{T^\varepsilon}
  g(u)\Big|\frac{1}{\sqrt{N}}\sum_{n}A_F(n, m, 1)S(k, n; r)w_3\Big(\frac{n}{N}\Big)e\Big(\frac{un}{rT}\Big)\Big|^2\dd u
\end{equation} 
where $$g(u)=g(u, R, K)=\min\Big\{\frac{1}{|u|}, \frac{R/K}{1+u^2}\Big\}.$$
It now suffices to prove the following lemma.
\begin{lemma}\label{lemma:I(R,K;m)}
For any fixed $1\leq m\ll T^{1+\varepsilon}$, let $T\ll N\ll \frac{T^{2+\varepsilon}}{m^2}$, $R\leq X$, $K\ll RT^\varepsilon$, and 
$1\leq X=\min\{T, \frac{N}{T}\}$. Then
\[
\mathcal{I}(R, K; m)\ll T^{2+\varepsilon}.
\]
\end{lemma}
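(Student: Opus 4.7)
The plan is to apply the balanced Voronoi summation formula of Lemma~\ref{lemma:Voronoi} to the $n$-sum inside $\mathcal{I}(R,K;m)$ in order to convert it into a dual sum amenable to arithmetic estimates. First I open the Kloosterman sum as $S(k,n;r) = \sideset{}{^*}\sum_{x\bmod r} e((xk+\bar x n)/r)$, which reduces the inner sum to an additively twisted sum $\sum_n A_F(n,m,1) e(\bar x n/r) w_3(n/N) e(un/(rT))$. Applying Lemma~\ref{lemma:Voronoi} with $c=r$, $q_1=m$, $q_2=1$, $a=x$ produces a dual sum over $n'\neq 0$ involving coefficients $A_F(d_1,d_2,n')$, the hyper-Kloosterman sum $\Kl(x,\pm n',r;m,1,d_1,d_2)$, and the Voronoi transform $\W_F(\pm n' d_1^3 d_2^2/(r^4 m^2))$. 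Summing over $x\bmod^*r$ against $e(xk/r)$ collapses the hyper-Kloosterman sum into a classical Kloosterman-type sum in the variables $k$ and $n'$, modulo a controlled residue contribution in the case $F=\E$, which I will handle separately using Cauchy's integral formula applied to the singular part $\mathcal{L}(s;m,r)$ in \eqref{eqn:singular} (as in \cite[\S 10]{You13}).

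A Stirling analysis of the gamma ratios defining $G_{\pm,F}$ in \eqref{eqn:Gamma_factors} via \eqref{eqn:Stirling_J} shows that $\W_F(y)$ is essentially supported on $|y|\asymp T^{4+o(1)}$, so the dual variable is effectively restricted to $|n'| \lesssim N^{*} := (rm)^4 T^{4+\varepsilon}/(d_1^3 d_2^2 N^2)$, and on this range the transform can be expressed as an oscillatory integral with an explicit, analyzable phase via \eqref{eqn:Stirling_7}. I split the dual sum into a \emph{small range} $|n'| \leq N^{*} T^{-\delta}$ for a fixed small $\delta>0$, and a \emph{big range} $|n'| \sim N^{*}$. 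In the small range I bound $\W_F$ trivially, apply Cauchy--Schwarz to the two dual sums coming from opening the square in \eqref{eqn:I(R,K;m)}, use the Weil bound on the surviving Kloosterman sums, and close with the Ramanujan-on-average estimate of Lemma~\ref{lemma:RamanujanOnAverage2} on the coefficients $A_F(d_1,d_2,n')$. Together with the $g(u)$-weight and the $r,k$-averages this contribution satisfies the desired bound $\ll T^{2+\varepsilon}$.

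The main obstacle is the big range, where the trivial estimate fails by about a factor of $T$. Here I open the square in \eqref{eqn:I(R,K;m)} to get a double dual sum over $(n_1',n_2')$, and apply the stationary-phase result of Lemma~\ref{lemma:stationary_phase} to the product $\W_F(y_1)\overline{\W_F(y_2)}$ multiplied by the additive weight $e(u(n_1-n_2)/(rT))$, using the expansion \eqref{eqn:Stirling_7} of the gamma ratios in $G_{\pm,F}$. A unique stationary point appears in the $u$-integral, and the resulting phase, when reinserted into the $n_1',n_2'$ sum, produces an oscillation whose size forces the contribution to concentrate on the near-diagonal pairs $|n_1'-n_2'| \ll N^{*} T^{-1+\varepsilon}$ — this is exactly the pair-proximity phenomenon indicated in the introduction. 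Once this range is isolated, I apply Cauchy--Schwarz in pairs $(n_1',n_2')$ combined with Lemma~\ref{lemma:RamanujanOnAverage2} to handle the near-diagonal contribution, and the bound $\ll T^{2+\varepsilon}$ follows after summing over $d_1, d_2, r, k$.

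The delicate point, absent from \cite{C-L20}, is that here the parameters of $F=\Phi=\phi_k\boxplus\phi_k$ (and of $F=\E$) give rise to an $X$-inert family of weights in the parameter $t_k\ll T^\varepsilon$ rather than a fixed cusp form, which prevents a direct appeal to \cite[Lemma~5.2]{C-L20}; the stationary-phase Lemma~\ref{lemma:stationary_phase}, applied with $X=T^\varepsilon$-inert weights, is precisely what is needed to capture the required uniformity in $t_k$ and in $m$. The same scheme, with the obvious replacement of $A_\Phi$ by $\tau$ and with Lemma~\ref{lemma:RamanujanOnAverage2} yielding the divisor-type bound, gives Theorem~\ref{thm:8m-of-L} with only the residue contribution (handled in the $F=\E$ case above) requiring additional care.
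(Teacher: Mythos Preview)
Your broad strategy matches the paper's: open the Kloosterman sum, apply the balanced Voronoi formula of Lemma~\ref{lemma:Voronoi}, treat the residue term via Cauchy's integral formula, split the dual sum into small and big ranges, and in the big range use stationary phase in the $u$-integral to force a near-diagonal condition $|n_1-n_2|\ll N_2\cdot T^{-\varepsilon_1/4}$ on the dual variables. That part is fine.

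There is, however, a genuine gap. You omit the paper's crucial intermediate step (its \S8.3--8.4): after Voronoi and \emph{before} any range splitting, the paper applies Poisson summation in $k$, uses positivity to drop the resulting congruence condition, and then performs a sequence of orthogonality reductions in the variables $a,x_1,x_2$ of the hyper-Kloosterman sum. This collapses the entire arithmetic structure to a single additive twist $e(nx/r)$ with $x$ running over reduced residues modulo $r\sim Rm/(d_1d_2)$ (equation~\eqref{eqn:aftersumming_x1}). Only after this simplification is the classical additive large sieve
\[
\sum_{r\sim Q}\ \sideset{}{^*}\sum_{x\bmod r}\Big|\sum_{n} a_n\, e\Big(\frac{nx}{r}\Big)\Big|^2 \ll (Q^2+N)\sum_n|a_n|^2
\]
available, and it is this large sieve that closes both ranges (Proposition~\ref{Prop:I_sm} via Mellin inversion of $\W_{+,F}$, and Lemma~\ref{lemma:applyingLS} after a Taylor expansion of the phase $f_J(n,r,v)$ to separate $n$ from $r$).

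Your proposed substitutes do not work. In the small range, the Weil bound on the hyper-Kloosterman sum $\Kl(x,n',r;m,1,d_1,d_2)$ loses a factor of $r^{1+\varepsilon}$ relative to what the large sieve extracts from the $x$-average, and you still carry the full $k$-sum; this does not reach $T^{2+\varepsilon}$. In the big range, Cauchy--Schwarz in $(n_1',n_2')$ together with Lemma~\ref{lemma:RamanujanOnAverage2} alone ignores the cancellation in the $r,x$ family and again falls short. Finally, your stated localization ``$|y|\asymp T^{4+o(1)}$'' for $\W_F$ is incorrect: Lemma~\ref{Lemma:Analysis_W+F} shows $\W_{+,F}(X;u,Y)$ is negligible unless $X\asymp |U|^4 N^3/(Y^4T^4)$, which depends essentially on the phase parameter $u$ through $U$; this $U$-dependence is what produces the correct small/big cutoff $n\lessgtr \frac{R^4m^2}{Nd_2^2d_1^3}T^{\varepsilon_1}$.
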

\subsection{Fourier analysis on $\mathcal{I}(R, K; m)$}
From \eqref{eqn:I(R,K;m)}, opening the Kloosterman sum, we have
\begin{equation*}
\mathcal{I}(R, K; m)=\frac{T}{NR^2}\sum_{r\sim R}\sum_{|k|\sim K}\int_{-T^\varepsilon}^{T^\varepsilon}
  g(u)\Big|\sideset{}{^{*}}\sum_{a\mod r}e\Big(\frac{ak}{r}\Big)\sum_{n}A_F(n, m, 1)e\Big(\frac{\overline{a}n}{r}\Big)w_3\Big(\frac{n}{N}\Big)e\Big(\frac{un}{rT}\Big)\Big|^2\dd u.
\end{equation*}
Firstly, we apply the Voronoi summation formula (Lemma \ref{lemma:Voronoi}) to $n$-sum. 
Let
\begin{equation}\label{eqn:func_W}
W(x; u, r)=W(x)=w_3\Big(\frac{x}{N}\Big)e\Big(\frac{ux}{rT}\Big).
\end{equation}
Then 
\begin{multline}\label{eqn:UseVoronoi}
\sum_{n}A_F(n, m, 1)e\Big(\frac{an}{r}\Big)W(n)
=\delta_F\Res_{s=1}\widetilde{W}(s)\mathcal{L}(s, m, r)\\+
\frac{r}{2}\sum_{d_1\mid mr}\sum_{d_2\mid\frac{mc}{d_1}}\sum_{n\neq 0}\frac{A_F(d_1,d_2, n)\Kl(\overline{a}, n, r; m, 1, d_1, d_2)}{|n|d_1d_2}\W_{F}\Big(\frac{nd_2^2d_1^3}{r^4m^2}; u, r\Big),
\end{multline}
where $\delta_F=\begin{cases}
                  0, & \mbox{if } F=\Phi, \\
                  1, & \mbox{if } F=\E,
                \end{cases}$
$\W_{F}$ is defined analogously in Lemma $\ref{lemma:Voronoi}$ and depends on $u$ and $r$ due to the choice of function $W$.
Now we arrive at 
\begin{equation}\label{eqn:afterVoronoi}
\begin{split}
\mathcal{I}(R, K; m)&\ll\frac{T}{NR^2}\sum_{r\sim R}\sum_{|k|\sim K}
\int_{-T^\varepsilon}^{T^\varepsilon}g(u)\Big|\sideset{}{^{*}}\sum_{a\mod r}e\Big(\frac{ak}{r}\Big)\Big(\delta_F\Res_{s=1}\widetilde{W}(s)\mathcal{L}(s, m, r)\\
&\hspace{7em}+\frac{r}{2}\sum_{d_1\mid mr}\sum_{d_2\mid\frac{mc}{d_1}}\sum_{n\neq 0}\frac{A_F(d_1,d_2, n)\Kl(\overline{a}, n, r; m, 1, d_1, d_2)}{|n|d_1d_2}\W_{F}\Big(\frac{nd_2^2d_1^3}{r^4m^2}; u, r\Big)\Big)\Big|^2\dd u\\
&\ll\frac{T}{NR^2}\sum_{r\sim R}\sum_{|k|\sim K}
\int_{-T^\varepsilon}^{T^\varepsilon}g(u)\Big(|\delta_F\Res_{s=1}\widetilde{W}(s)\mathcal{L}(s, m, r)|^2\Big|S(0, k; r)\Big|^2\\
&\hspace{5em}+\Big|r\sideset{}{^{*}}\sum_{a\mod r}e\Big(\frac{ak}{r}\Big)\sum_{d_1\mid mr}\sum_{d_2\mid\frac{mc}{d_1}}\sum_{n\neq 0}\frac{A_F(d_1,d_2, n)\Kl(\overline{a}, n, r; m, 1, d_1, d_2)}{|n|d_1d_2}\W_{F}\Big(\frac{nd_2^2d_1^3}{r^4m^2}; u, r\Big)\Big|^2\Big)\dd u.\\
\end{split}
\end{equation}

Notice that when $F=\E$, $A_F=\tau$, there is a zero frequency term (residue term) in the Voronoi summation formula.
By Cauchy integral formula, we have, 
\begin{equation}\label{eqn:CauchyInt}
\Res_{s=1}\widetilde{W}(s)\mathcal{L}(s, m, r)=\frac{1}{2\pi i}\oint_{|s|=\frac{1}{\log^2 T}}\widetilde{W}(s)\mathcal{L}(s, m, r)\dd s
\end{equation}
Note that $\widetilde{W}(s)=N^s\int_{0}^{\infty}w_3(x)e(\frac{uNx}{rT})x^{s-1}\dd x$ is holomorphic and we have the Taylor expansion at $s=1$:
\[
\widetilde{W}(s)=\widetilde{W}(1)+\widetilde{W}^{'}(s)|_{s=1}(s-1)+\frac{\widetilde{W}^{(2)}(s)|_{s=1}}{2}(s-1)^2+\cdots.
\]
Since that $r\sim R$ and $N\gg RT$, for $\frac{RT}{N}\leq |u|\ll T^\varepsilon$, by repeated integration by parts, the above Taylor coefficients satisfy 
\[
\widetilde{W}^{(j)}(s)|_{s=1}\ll_{j, A, \varepsilon} N^{1+\varepsilon}\Big(\frac{uN}{RT}\Big)^{-A} 
\text{ for any integers } j , A\geq 0.
\] 
For $0\leq |u|\leq T^\varepsilon$, we have the trivial bound $\widetilde{W}(s)\ll_{j, \varepsilon} N^{1+\varepsilon}. $
So we have,  when $|s-1|=\frac{1}{\log^2T}$, 
\begin{equation}\label{eqn:ResW}
\widetilde{W}(s)\ll_{\varepsilon, A} 
N^{1+\varepsilon}\Big(\frac{uN}{RT}\Big)^{-A}
\end{equation}
for any integer $A\geq 0.$
From the explicit expression of $\mathcal{L}(s, m, r)$ in \eqref{eqn:singular} with $|s-1|= \frac{1}{\log^2 T}$, we have, 
$\zeta(s)^4\ll T^\varepsilon$ and $r^s=r(1+ O(\frac{1}{\log T}))$ since $r\sim R\ll X\leq T$ and similar arguments for other terms of $m$'s divisors. The Euler product in \eqref{eqn:singular} is
\[
\prod_{p^\alpha||\frac{r\kappa}{de\ell}}
\Big((1-\frac{1}{p^s})^{4}\sum_{j\geq 0}\frac{d_4(p^{\alpha+j})}{p^{js}}\Big)
\ll \prod_{p^\alpha||\frac{r\kappa}{de\ell}}\Big((1+\frac{1}{p^{1-\frac{1}{\log^2T}}})^{4}
\sum_{j\geq 0}\frac{d_4(p^{\alpha+j})}{p^{j(1-\frac{1}{\log^2 T})}}\Big)\ll T^\varepsilon.
\]
Therefore, for $|s-1|= \frac{1}{\log^2 T}$, we have 
\begin{equation}\label{eqn:ResL}
\mathcal{L}(s, m, r)\ll \frac{T^\varepsilon}{r}.
\end{equation}
Using \eqref{eqn:CauchyInt} with \eqref{eqn:ResW} and \eqref{eqn:ResL}, we have
\begin{equation}\label{eqn:ResBound}
\Res_{s=1}\widetilde{W}(s)\mathcal{L}(s, m, r)\ll_{\varepsilon, A} 
                        T^\varepsilon \frac{N}{r}\Big(\frac{uN}{RT}\Big)^{-A}, 
\end{equation}
for any integer $A\geq 0.$
Since $|S(0, k; r)|\leq (k, r)$ \cite[Eqn (3.5)]{I-K04},
we have the averaged bound for Ramanujan sums
\[
\sum_{|k|\sim K}|S(0, k; r)|^2\leq \sum_{|k|\sim K}|(k, r)|^2\ll \sum_{d\mid r}d^2\sum_{d\mid k\atop k\sim K}1
\ll \sum_{d\mid r}d^2\frac{K}{d}\ll RKT^\varepsilon.
\]
Therefore the contribution of residue term in \eqref{eqn:afterVoronoi} is bounded by
\begin{equation*}
\begin{split}
\frac{T^{1+\varepsilon}K}{N}\Big(\int_{\frac{RT^{1+\varepsilon}}{N}\leq |u|\ll T^\varepsilon}\frac{1}{|u|}\Big|\frac{N}{R}\Big(\frac{uN}{RT}\Big)^{-A}\Big|^2\dd u
+\int_{0\leq |u|\leq \frac{RT^{1+\varepsilon}}{N}}\frac{R/K}{1+u^2}\frac{N^2}{R^2}\dd u\Big)
\ll \frac{T^{1+\varepsilon}K}{N}\frac{R}{K}\frac{RT^{1+\varepsilon}}{N}\frac{N^2}{R^2}\ll T^{2+\varepsilon},
\end{split}
\end{equation*}
which corresponds our desired bound.

Now we deal with the non-zero frequency part in \eqref{eqn:afterVoronoi}. Writing
\begin{equation}\label{eqn:I_0}
\begin{split}
\mathcal{I}_0(R, K; m)&=\frac{T}{N}\sum_{r\sim R}\sum_{|k|\sim K}
\int_{-T^\varepsilon}^{T^\varepsilon}g(u)\Big|\sideset{}{^{*}}\sum_{a\mod r}e\Big(\frac{ak}{r}\Big)\\
&\hspace{5em}\times \sum_{d_1\mid mr}\sum_{d_2\mid\frac{mc}{d_1}}\sum_{n\neq 0}\frac{A_F(d_1,d_2, n)\Kl(\overline{a}, n, r; m, 1, d_1, d_2)}{|n|d_1d_2}\W_{F}\Big(\frac{nd_2^2d_1^3}{r^4m^2}; u, r\Big)\Big|^2\dd u.
\end{split}
\end{equation}
To prove Lemma \ref{lemma:I(R,K;m)}, it suffices to show that $\mathcal{I}_0(R, K; m)\ll T^{2+\varepsilon}$.

Firstly, we square out the expression of $\mathcal{I}_0(R, K; m)$ in \eqref{eqn:I(R,K;m)}, put a smooth weight in $k$, and use the fact that $\frac{1}{r^2}\asymp \frac{1}{R^2}$, we then get that
\begin{equation}
\begin{split}
\mathcal{I}_0(R, K; m)&\ll\frac{T}{N}\sum_{r\sim R}\sum_{k}w_1\Big(\frac{k}{K}\Big)
\int_{-T^\varepsilon}^{T^\varepsilon}g(u)\sideset{}{^{*}}\sum_{a_1\mod r}\, \sideset{}{^{*}}\sum_{a_2\mod r}e\Big(\frac{(a_1-a_2)k}{r}\Big)\\
&\hspace{5em}\times \sum_{d_1\mid mr}\sum_{d_2\mid\frac{mc}{d_1}}\sum_{n\neq 0}\frac{A_F(d_1,d_2, n)\Kl(\overline{a_1}, n, r; m, 1, d_1, d_2)}{|n|d_1d_2}\W_{F}\Big(\frac{nd_2^2d_1^3}{r^4m^2}; u, r\Big)\\
&\hspace{5em}\times \sum_{d_1'\mid mr}\sum_{d_2'\mid\frac{mc}{d_1'}}\sum_{n'\neq 0}\frac{A_F(d_1',d_2', n')\Kl(\overline{a_2}, n', r; m, 1, d_1', d_2')}{|n|d_1'd_2'}\W_{F}\Big(\frac{nd_2'^2d_1'^3}{r^4m^2}; u, r\Big)\dd u.
\end{split}
\end{equation}
where $w_1$ is a smooth compactly supported function. Then we apply Poisson summation formula to the $k$-sum. This gives 
\[
\sum_{k}w_1\Big(\frac{k}{K}\Big)e\Big(\frac{(a_1-a_2)k}{r}\Big)=K\sum_{j\equiv a_2-a_1\mod r}\widehat{w_1}\Big(\frac{Kj}{r}\Big).
\]
Therefore
\begin{equation*}
\begin{split}
\mathcal{I}_0(R, K; m)&\ll\frac{TK}{N}\sum_{r\sim R}\sum_{j}\widehat{w_1}\Big(\frac{Kj}{r}\Big)\sideset{}{^{*}}\sum_{a_1\mod r\atop (a_1+j,r)=1}\int_{-T^\varepsilon}^{T^\varepsilon}g(u)\\
&\hspace{5em}
\times\sum_{d_1\mid mr}\sum_{d_2\mid\frac{mc}{d_1}}\sum_{n\neq 0}\frac{A_F(d_1,d_2, n)\Kl(\overline{a_1}, n, r; m, 1, d_1, d_2)}{|n|d_1d_2}\W_{F}\Big(\frac{nd_2^2d_1^3}{r^4m^2}; u, r\Big)\\
&\hspace{5em}\times \sum_{d_1'\mid mr}\sum_{d_2'\mid\frac{mc}{d_1'}}\sum_{n'\neq 0}\frac{A_F(d_1',d_2', n')\Kl(\overline{(a_1+j)}, n', r; m, 1, d_1', d_2')}{|n|d_1'd_2'}\W_{F}\Big(\frac{nd_2'^2d_1'^3}{r^4m^2}; u, r\Big)\dd u.
\end{split}
\end{equation*}
Using the elementary inequality and remove the condition $(a_1, r)=1$ or $(a_1+j,r)=1$ by positivity, we have
\begin{equation*}
\begin{split}
\mathcal{I}_0(R, K; m)&\ll\frac{TK}{N}\sum_{r\sim R}\sum_{j}\widehat{w_1}\Big(\frac{Kj}{r}\Big)\sideset{}{^{*}}\sum_{a\mod r}\int_{-T^\varepsilon}^{T^\varepsilon}g(u)\\
&\hspace{5em}
\times\Big|\sum_{d_1\mid mr}\sum_{d_2\mid\frac{mc}{d_1}}\sum_{n\neq 0}\frac{A_F(d_1,d_2, n)\Kl(\overline{a}, n, r; m, 1, d_1, d_2)}{|n|d_1d_2}\W_{F}\Big(\frac{nd_2^2d_1^3}{r^4m^2}; u, r\Big)\Big|^2\dd u.
\end{split}
\end{equation*}
Since $r\sim R$ and $R\gg K$, by rapid decay of $\widehat{w_1}$, we truncate the $j$-sum in $|j|\ll \frac{RT^{1+\varepsilon}}{K}$, thus
\[
\mathcal{I}_1(R, K; m)\ll\frac{T^{1+\varepsilon}R}{N}\sum_{r\sim R}\,\sideset{}{^{*}}\sum_{a\mod r}\int_{-T^\varepsilon}^{T^\varepsilon}g(u)
\Big|\sum_{d_1\mid mr}\sum_{d_2\mid\frac{mc}{d_1}}\sum_{n\neq 0}\frac{A_F(d_1,d_2, n)\Kl(\overline{a}, n, r; m, 1, d_1, d_2)}{|n|d_1d_2}\W_{F}\Big(\frac{nd_2^2d_1^3}{r^4m^2}; u, r\Big)\Big|^2\dd u.
\]
After Cauchy--Schwarz inequality in $d_1, d_2$-sum and considering only positive $n$ due to symmetry, now we need to bound 
\begin{multline}\label{eqn:I_1}
\mathcal{I}_1(R, K; m):=\frac{T^{1+\varepsilon}R}{N}\int_{-T^\varepsilon}^{T^\varepsilon}g(u)
\sum_{r\sim R}\, \sideset{}{^{*}}\sum_{a\mod r}\sum_{d_1\mid mr}\sum_{d_2\mid\frac{mc}{d_1}}\frac{1}{(d_1d_2)^2}\\
\times\Big|\sum_{n>0}\frac{A_F(d_1,d_2, n)\Kl(\overline{a}, n, r; m, 1, d_1, d_2)}{n}\W_{\pm, F}\Big(\frac{nd_2^2d_1^3}{r^4m^2}; u, r\Big)\Big|^2\dd u,
\end{multline}
where $\W_{\pm, F}$ is \eqref{eqn:Voronoi_transf} with gamma factors in \eqref{eqn:Gamma_factors} in different cases for $F$.

\subsection{Simplifying exponential sums}
Now we deal with the exponential sums in the hyper-Kloosterman sum. Moreover, the bound for $\W_{-, F}$ can be evaluated in the same way as $\W_{+, F}$, so we consider only $\W_{+, F}$. By the Cauchy--Schwarz inequality, changing variable for $a$ to $\overline{a}$ and completing summation over $a$, we have $\mathcal{I}_1(R, K; m)$  is bounded by
\begin{equation*}
\begin{split}
&\ll\frac{T^{1+\varepsilon}R}{N}\int_{-T^\varepsilon}^{T^\varepsilon}g(u)
\sum_{r\sim R}\sum_{a\mod r}\sum_{d_1\mid mr}\sum_{d_2\mid\frac{mc}{d_1}}\frac{1}{(d_1d_2)^2}\\
&\hspace{5em}\times\Big|\sum_{n>0}\frac{A_F(d_1,d_2, n)\Kl(a, n, r; m, 1, d_1, d_2)}{n}\W_{+, F}\Big(\frac{nd_2^2d_1^3}{r^4m^2}; u, r\Big)\Big|^2\dd u\\
&=\frac{T^{1+\varepsilon}R}{N}\int_{-T^\varepsilon}^{T^\varepsilon}g(u)\sum_{r\sim R}\sum_{a\mod r}\sum_{d_1\mid mr}\sum_{d_2\mid\frac{mc}{d_1}}\frac{1}{(d_1d_2)^2}\\
&\hspace{5em}\times \sum_{n_1>0}\frac{A_F(d_1,d_2, n_1)A_F(d_1,d_2, n_2)}{n_1n_2}\W_{+, F}\Big(\frac{n_1d_2^2d_1^3}{r^4m^2}; u, r\Big)\overline{\W_{+, F}\Big(\frac{nd_2^2d_1^3}{r^4m^2}; u, r\Big)}\\
&\times \sideset{}{^*}\sum_{x_1\mod \frac{mr}{d_1}}\,\sideset{}{^*}\sum_{x_1'\mod \frac{mr}{d_1}}\,
\sideset{}{^*}\sum_{x_2\mod \frac{mr}{d_1d_2}}\,\sideset{}{^*}\sum_{x_2'\mod \frac{mr}{d_1d_2}}
e\Big(\frac{d_1(x_1-x_1')a}{r}+\frac{d_2(x_2\overline{x_1}-x_2'\overline{x_1'})}{\frac{mr}{d_1}}+\frac{n_1\overline{x_2}-n_2\overline{x_2'}}{\frac{mr}{d_1d_2}}\Big)\dd u.
\end{split}
\end{equation*}
Next we sum over $a$ and see that $d_1x_1\equiv d_1x_1'\mod r$ by orthogonality, which implies $x_1\equiv x_1' \mod\frac{r}{(r, d_1)}$. Thus we may write 
$x_1'=x_1+\frac{r}{(r, d_1)}y$, where $y$ runs through residues mod $\frac{(r, d_1)m}{d_1}$, such that $(x_1+\frac{r}{(r, d_1)}y, \frac{rm}{d_1})=1$. For simplicity, 
let $\sideset{}{^\sharp}\sum_{y \mod\frac{(r, d_1)m}{d_1}}$ denote the sum over such $y$. Thus our sum becomes
\begin{multline}\label{eqn:Aftersumming_a}
  \frac{T^{1+\varepsilon}R}{N}\int_{-T^\varepsilon}^{T^\varepsilon}g(u)
\sum_{r\sim R}r\sum_{d_1\mid mr}\sum_{d_2\mid\frac{mc}{d_1}}\frac{1}{(d_1d_2)^2}\sideset{}{^*}\sum_{x_1\mod \frac{mr}{d_1}}\,
\sideset{}{^\sharp}\sum_{y \mod\frac{(r, d_1)m}{d_1}}S_1S_2\dd u\\
\ll \frac{T^{1+\varepsilon}R^2}{N}\int_{-T^\varepsilon}^{T^\varepsilon}g(u)
\sum_{r\sim R}\sum_{d_1\mid mr}\sum_{d_2\mid\frac{mc}{d_1}}\frac{1}{(d_1d_2)^2}\sideset{}{^*}\sum_{x_1\mod \frac{mr}{d_1}}\,
\sideset{}{^\sharp}\sum_{y \mod\frac{(r, d_1)m}{d_1}}(|S_1|^2+|S_2|^2)\dd u,
\end{multline}
where 
\[
S_1=\sum_{n_1>0}\frac{A_F(d_1,d_2, n_1)}{n_1}\W_{+, F}\Big(\frac{n_1d_2^2d_1^3}{r^4m^2}; u, r\Big)
\sideset{}{^*}\sum_{x_2\mod \frac{mr}{d_1d_2}}
e\Big(\frac{d_2x_2\overline{x_1}}{\frac{mr}{d_1}}+\frac{n_1\overline{x_2}}{\frac{mr}{d_1d_2}}\Big),
\]
and 
\[
S_2=\sum_{n_2>0}\frac{A_F(d_1,d_2, n_2)}{n_2}\overline{\W_{+, F}\Big(\frac{n_2d_2^2d_1^3}{r^4m^2}; u, r\Big)}
\sideset{}{^*}\sum_{x_2'\mod \frac{mr}{d_1d_2}}
e\Big(-\frac{d_2x_2\overline{x_1+\frac{r}{(r, d_1)}y}}{\frac{mr}{d_1}}-\frac{n_2\overline{x_2'}}{\frac{mr}{d_1d_2}}\Big).
\]
Inside $S_2$, we may use the change of variables $x=\overline{x_1+\frac{r}{(r, d_1)}y}$. The condition on $y$ becomes that
$(\overline{x}-\frac{r}{(r, d_1)}y, \frac{rm}{d_1})=1 $. After this change of variables, we extend the $y$-sum to all residues mod $\frac{(r, d_1)m}{d_1}$. Thus,
\begin{equation*}
\begin{split}
&\sum_{r\sim R}\sum_{d_1\mid mr}\sum_{d_2\mid\frac{mc}{d_1}}\frac{1}{(d_1d_2)^2}\sideset{}{^*}\sum_{x_1\mod \frac{mr}{d_1}}\,
\sideset{}{^\sharp}\sum_{y \mod\frac{(r, d_1)m}{d_1}}|S_2|^2\\
\ll&\sum_{r\sim R}\sum_{d_1\mid mr}\sum_{d_2\mid\frac{mc}{d_1}}\frac{1}{(d_1d_2)^2}\sideset{}{^*}\sum_{x\mod \frac{mr}{d_1}}\sum_{y \mod\frac{(r, d_1)m}{d_1}}|S_2|^2\\
=&\frac{(r, d_1)m}{d_1}\sum_{r\sim R}\sum_{d_1\mid mr}\sum_{d_2\mid\frac{mc}{d_1}}\frac{1}{(d_1d_2)^2}\sideset{}{^*}\sum_{x_1\mod \frac{mr}{d_1}}\sum_{y \mod\frac{(r, d_1)m}{d_1}}|S_1|^2.
\end{split}
\end{equation*}
By a further change of variables from $x_1$ to $\overline{x_1}$, the fact that $S_1$ is independent of $y$ and $\frac{(r, d_1)}{d_1}\leq 1$, the quality in \eqref{eqn:Aftersumming_a} is bounded by
\begin{multline*}
\frac{mT^{1+\varepsilon}R^2}{N}\int_{-T^\varepsilon}^{T^\varepsilon}g(u)
\sum_{r\sim R}\sum_{d_1\mid mr}\sum_{d_2\mid\frac{mc}{d_1}}\frac{1}{(d_1d_2)^2}\sideset{}{^*}\sum_{x_1\mod \frac{mr}{d_1}}
|S_1|^2\dd u\\
\ll \frac{mT^{1+\varepsilon}R^2}{N}\int_{-T^\varepsilon}^{T^\varepsilon}g(u)
\sum_{r\sim R}\sum_{d_1\mid mr}\sum_{d_2\mid\frac{mc}{d_1}}\frac{1}{(d_1d_2)^2}\sideset{}{^*}\sum_{x_1\mod \frac{mr}{d_1}}\\
\times 
\Big|\sum_{n>0}\frac{A_F(d_1,d_2, n)}{n}\W_{+, F}\Big(\frac{nd_2^2d_1^3}{r^4m^2}; u, r\Big)
\sideset{}{^*}\sum_{x_2\mod \frac{mr}{d_1d_2}}
e\Big(\frac{d_2x_2\overline{x_1}}{\frac{mr}{d_1}}+\frac{n\overline{x_2}}{\frac{mr}{d_1d_2}}\Big)\Big|^2\dd u.
\end{multline*}
Now we may extend the sum over $x_1$ to all residues mod $\frac{rm}{d_1}$ by positivity. Opening the square produces two sums $x_2, x_2'\mod \frac{mr}{d_1d_2}$. However, by orthogonality, the sum over $x_1$ gives the condition $d_2x_2\equiv d_2x_2'\mod \frac{rm}{d_1}$, which implies $x_2\equiv x_2'\mod \frac{rm}{d_1d_2}$ due to $d_2\mid \frac{rm}{d_1}$. So the above sum is
\begin{equation*}
\frac{m^2T^{1+\varepsilon}R^3}{N}\int_{-T^\varepsilon}^{T^\varepsilon}g(u)
\sum_{r\sim R}\sum_{d_1\mid mr}\sum_{d_2\mid\frac{mc}{d_1}}\frac{1}{d_1^3d_2^2}\,\sideset{}{^*}\sum_{x\mod \frac{mr}{d_1d_2}}
\Big|\sum_{n>0}\frac{A_F(d_1,d_2, n)}{n}\W_{+, F}\Big(\frac{nd_2^2d_1^3}{r^4m^2}; u, r\Big)
e\Big(\frac{nx}{\frac{mr}{d_1d_2}}\Big)\Big|^2\dd u,
\end{equation*}
where we have used a change of variables $x=\overline{x_2}$. Next we write $r_1=rm$, switch the sums $d_1, d_2$ and $r$ and drop condition $m\mid r_1$. Thus the above expression is bounded by
\begin{equation}\label{eqn:aftersumming_x1}
\begin{split}
&\frac{m^2T^{1+\varepsilon}R^3}{N}\int_{-T^\varepsilon}^{T^\varepsilon}g(u)
\sum_{d_1\ll Rm}\sum_{d_2\ll Rm}\frac{1}{d_1^3d_2^2}\sum_{r_1\sim Rm\atop d_1d_2\mid r_1}\,\sideset{}{^*}\sum_{x\mod \frac{r_1}{d_1d_2}}\\
&\hspace{10em}
\Big|\sum_{n>0}\frac{A_F(d_1,d_2, n)}{n}\W_{+, F}\Big(\frac{nd_2^2d_1^3m^2}{r_1^4}; u, \frac{r_1}{m}\Big)
e\Big(\frac{nx}{\frac{r_1}{d_1d_2}}\Big)\Big|^2\dd u\\
=&\frac{m^2T^{1+\varepsilon}R^3}{N}\int_{-T^\varepsilon}^{T^\varepsilon}g(u)
\sum_{d_1\ll Rm}\sum_{d_2\ll Rm}\frac{1}{d_1^3d_2^2}\sum_{r\sim \frac{Rm}{d_1d_2}}\,\sideset{}{^*}\sum_{x\mod r}\\
&\hspace{10em}
\Big|\sum_{n>0}\frac{A_F(d_1,d_2, n)}{n}\W_{+, F}\Big(\frac{nm^2}{r^4d_1d_2^2}; u, \frac{rd_1d_2}{m}\Big)
e\Big(\frac{nx}{r}\Big)\Big|^2\dd u.
\end{split}
\end{equation}
Now we split $n$-sum into two ranges. We let $\mathcal{I}_{sm}(R, K; m)$ be the expression on the right hand side of \eqref{eqn:aftersumming_x1} with $n\leq \frac{R^4m^2}{Nd_2^2d_1^3}T^{\varepsilon_1}$ and $\mathcal{I}_{big}(R, K; m)$ be the same expression for $n>\frac{R^4m^2}{Nd_2^2d_1^3}T^{\varepsilon_1}$, where $\varepsilon_1$ is a fixed sufficiently small positive constant which will be chosen later.

Since the elementary inequality $|a+b|^2\leq 2(|a|^2+|b|^2)$, it now suffices to prove the following two propositions.
\begin{proposition}\label{Prop:I_sm}
With notations defined as above,
\[
\mathcal{I}_{sm}(R, K; m)\ll_\varepsilon T^{2+\varepsilon}.
\]
\end{proposition}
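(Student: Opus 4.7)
The approach is to bound $\mathcal{I}_{sm}$ by combining a uniform pointwise estimate for $\W_{+,F}$ on the small range with the additive large sieve for additive characters. The key difficulty in directly applying the large sieve is that the weights inside the $n$-sum depend on $n$ through $\W_{+,F}(y_n;u,r)$ with $y_n = nd_1^3d_2^2/(r^4m^2)$. To handle this, I would insert the Mellin--Barnes representation \eqref{eqn:Voronoi_transf} of $\W_{+,F}$, interchange the $s$-integral with the $n$-sum, and truncate to $|\Im s|\leq T^\varepsilon$ via the rapid decay of $\widetilde W(s)$ combined with Stirling's formula for $G_{+,F}(s)$. On a convenient vertical line $\Re(s)=c$ (say $c=1/2$), the factor $\widetilde W(s)G_{+,F}(s)(d_1^3d_2^2/(r^4m^2))^s$ separates cleanly from the $n$-sum and is uniformly bounded by a polynomial expression in $N,R,d_1,d_2,m,T$; on the small range $y_n\leq T^{\varepsilon_1}/N$, this bound is controlled.

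Next I would apply Cauchy--Schwarz in the $s$-integral and the additive large sieve
\[
\sum_{r\sim R'}\,\sideset{}{^{*}}\sum_{x\mod r}\Big|\sum_{n\leq M_1} b_n e(nx/r)\Big|^2 \ll (R'^2+M_1)\sum_{n\leq M_1}|b_n|^2
\]
with $R'=Rm/(d_1d_2)$ and $b_n=A_F(d_1,d_2,n)/n^{1-c}$, then invoke Lemma \ref{lemma:RamanujanOnAverage2} to bound $\sum_{n\leq M_1}|A_F(d_1,d_2,n)|^2$. Summing over $d_1,d_2$ (which costs at most $(d_1d_2)^\varepsilon$) and integrating in $u$ against $g(u)$ (contributing $T^\varepsilon$) reduces the problem to verifying a numerical inequality of the form $\mathcal{I}_{sm}\ll T^{2+\varepsilon}$ under the constraints $N\ll T^{2+\varepsilon}/m^2$, $R\leq X=\min(T,N/T)$, and $K\ll RT^\varepsilon$.

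The main obstacle I anticipate is the careful bookkeeping in the final step. The cutoff $M_1=R^4 m^2 T^{\varepsilon_1}/(Nd_2^2d_1^3)$ is designed precisely so that $M_1\leq R'^2=R^2m^2/(d_1d_2)^2$ in the relevant range of parameters, which is what ensures the $R'^2$ term in the large sieve dominates $M_1$; verifying this balance uniformly across all parameter regimes, and doing so simultaneously for $F=\Phi$ and $F=\E$ (which differ only in the shape of the gamma factors \eqref{eqn:Gamma_factors} and in the allowable choice of contour $c$), will be the delicate point. A secondary but more routine issue is justifying the pointwise estimate on $\W_{+,F}$ via the contour shift, including the residues at poles of $G_{+,F}$ in the right half-plane, which contribute terms of the shape $y^{s_0}(\log y)^{\nu}$ that are easily shown to be $O(T^\varepsilon)$ using $yN\leq T^{\varepsilon_1}$.
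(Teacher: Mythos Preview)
Your strategy matches the paper's: Mellin--Barnes for $\W_{+,F}$, Cauchy--Schwarz in the $s$-integral, the additive large sieve in $(r,x)$, then Lemma~\ref{lemma:RamanujanOnAverage2} and the constraints $R\le N/T$, $N\ll T^{2+\varepsilon}/m^2$.

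Two technical points need adjustment. The truncation to $|\Im s|\le T^\varepsilon$ cannot come from the decay of $\widetilde W(s)$: since $W(x;u,r)=w_3(x/N)e(umx/(rd_1d_2T))$, its Mellin transform only begins to decay once $|\Im s|$ exceeds $uN/(RT)$, and this can be a genuine power of $T$ (recall $R\le N/T$). Relatedly, the tentative choice $c=1/2$ fails: on $\Re(s)=c$ Stirling gives $|G_{+,F}(s)|\asymp |\Im s|^{2-4c}$, so one needs $c>3/4$ for $\int_{(c)}|\widetilde W(s)G_{+,F}(s)|\,|ds|$ to converge from the gamma factor alone. The paper effects this via the substitution $s\mapsto -2s+1$ and a shift to $\Re(s)=\sigma_1<1/8$ (equivalently $c=1-2\sigma_1>3/4$ in your variable), after which the gamma ratio supplies integrable polynomial decay and no truncation is needed; with $\sigma_1>0$ no poles of the gamma factors are crossed, so the residue contributions you anticipate do not arise. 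Once this is fixed the Cauchy--Schwarz and large sieve steps go through, your observation $M_1\ll T^{\varepsilon_1}(Rm/d_1d_2)^2$ is correct, and the final numerics close exactly as you outline.
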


\begin{proposition}\label{Prop:I_big}
With notations defined as above,
\[
\mathcal{I}_{big}(R, K; m)\ll_\varepsilon T^{2+\varepsilon}.
\]
\end{proposition}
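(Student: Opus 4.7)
The plan is to analyze the Voronoi integral transform $\W_{+,F}$ from \eqref{eqn:Voronoi_transf} in the large-argument regime via the stationary phase method, extract its genuine degree-$4$ oscillation $e(c_F y^{1/4})$, and combine this with the additive twist $e(nx/r)$ and the $u$-integral to localize the double sum $\sum_{n_1,n_2}$ to a near-diagonal configuration. The final bound will then follow from Lemma \ref{lemma:RamanujanOnAverage2} used as a pair large sieve.

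First I would apply Stirling's formula \eqref{eqn:Stirling_7} to the gamma ratios \eqref{eqn:Gamma_factors} in $G_{+,F}(s)$ for both $F=\Phi$ and $F=\E$. For $s=\sigma+i\tau$ on a vertical contour in \eqref{eqn:Voronoi_transf}, the integrand $\widetilde{W}(s) G_{+,F}(s) y^s$ has phase roughly $\tau \log y - 4\tau \log|\tau| + O(1)$, with a unique stationary point at $\tau^* \asymp y^{1/4}$. The weight $\widetilde{W}(s)$, being the Mellin transform of $w_3(y/N)e(uy/(rT))$, is $T^\varepsilon$-inert in $\tau$ on its effective support $|\tau| \ll \max\{T^\varepsilon, |u|N/(rT)\}$, which for the allowed range $N \ll T^{2+\varepsilon}/m^2$ and $|u|\leq T^\varepsilon$ comfortably contains $\tau^*$ throughout the big range $y \gg T^{\varepsilon_1}$. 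Applying Lemma \ref{lemma:stationary_phase} yields
\[
\W_{+,F}(y;u,r) = y^{1/4} H(y;u,r) \, e\bigl(c_F y^{1/4}\bigr) + O(T^{-A}),
\]
where $c_F$ is an explicit constant and $H$ is a $T^\varepsilon$-inert amplitude in $y$ with controlled dependence on $u, r$. In the complementary regime where $\tau^*$ escapes the effective support of $\widetilde{W}$, Lemma \ref{lemma:repeated_integration_by_parts} already gives $\W_{+,F}(y;u,r)=O(T^{-A})$, so only the stationary-phase regime contributes.

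Next I would substitute this asymptotic into $\mathcal{I}_{big}(R, K; m)$, open the square in $n_1, n_2$, and execute $\sideset{}{^*}\sum_{x\bmod r}$ by orthogonality to produce the congruence constraint $r \mid (n_1-n_2)$ (modulo a Ramanujan-sum factor of size $\leq (n_1-n_2,r)$). The combined $u$-phase from the amplitude $H$ (inherited from $\widetilde{W}$) takes the form $e\bigl(\frac{u}{rT} (\text{something linear in }n_1-n_2)\bigr)$ at scale $\sim R^4 m^2/(d_1^3 d_2^2)$; integrating by parts in $u$ via Lemma \ref{lemma:repeated_integration_by_parts} against the weight $g(u)$ on $|u|\leq T^\varepsilon$ forces $|n_1-n_2|$ to be small compared to $rT^{1+\varepsilon}/N$ times the dual $n$-scale, mirroring the first-derivative test of \cite[Lemma 3.2]{You11}. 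After this localization the diagonal $n_1=n_2$ is bounded directly by Lemma \ref{lemma:RamanujanOnAverage2}; for the off-diagonal, writing $n_1-n_2=qr$ with $|q|$ bounded by a small power of $T$, Cauchy--Schwarz in $q$ followed by another invocation of Lemma \ref{lemma:RamanujanOnAverage2} gives the bound. Both contributions match the target $T^{2+\varepsilon}$ after summing against the prefactor $m^2 T^{1+\varepsilon} R^3/N$ using $R \leq X \leq \min\{T, N/T\}$ and $N \ll T^{2+\varepsilon}/m^2$.

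The hardest part will be executing the stationary phase analysis with sufficient uniformity in the auxiliary parameters $u, r, d_1, d_2$ and, in the case $F=\Phi$, the spectral parameter $t_\phi \ll T^\varepsilon$. Because $t_\phi$ is very small, the effective Bessel-kernel approximation used in \cite[Lemma 5.2]{C-L20} (where a fixed $F$ of larger spectral parameter is assumed) is unavailable, and one must instead verify the hypotheses of Lemma \ref{lemma:stationary_phase} by direct differentiation of the Stirling expansion \eqref{eqn:Stirling_7}. A secondary subtlety is tracking the lower-order terms in the stationary phase expansion so that the amplitude $H(y;u,r)$ is genuinely $T^\varepsilon$-inert uniformly enough that both the diagonal bound and the $q$-summation in the off-diagonal stay within $T^{2+\varepsilon}$.
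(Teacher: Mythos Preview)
Your overall strategy---stationary phase on $\W_{+,F}$, then localize $|n_1-n_2|$ via the $u$-integral, then average---matches the paper's in broad outline, but the stationary phase step as you describe it has a genuine gap that undermines the localization.

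The claim that $\widetilde{W}(s)$ is $T^\varepsilon$-inert on its effective support $|\tau|\ll \max\{T^\varepsilon, |u|N/(rT)\}$ is false precisely in the regime that matters. The paper shows (Lemma~\ref{Lemma:Analysis_W+F}) that when $|u|N/(YT)\le T^{3\varepsilon}$ one already has $\W_{+,F}(X;u,Y)\ll T^{-A}$ for $X\gg T^{\varepsilon_1}/N$, so the entire big range lives in the complementary regime $|u|N/(YT)\gg T^{3\varepsilon}$. There $\widetilde{W}(1/2+i\tau)=N^{1/2+i\tau}\int w_3(x)e(uNx/(YT))x^{-1/2+i\tau}\,dx$ is itself an oscillatory integral with its own stationary point at $x_0=-\tau YT/(2\pi uN)$; after Lemma~\ref{lemma:stationary_phase} it is concentrated at $|\tau|\asymp |u|N/(YT)$ and carries a \emph{phase} $e\bigl(\tfrac{\tau}{2\pi}\log(-\tfrac{\tau YT}{2\pi e u N})\bigr)$, not merely an inert amplitude. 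One must then do a \emph{second} stationary phase in $\tau$ against this phase combined with the gamma factors and $X^{i\tau}$. The upshot is that $\W_{+,F}$ is negligible unless $X\asymp |U|^4N^3/(Y^4T^4)$, and in that window the output phase is not $e(c_F y^{1/4})$ but $e(h_1(\xi_0,u/U))$ with $h_1$ depending on $u$ through $\alpha=\tfrac{NXU}{u}\bigl(\tfrac{YT}{\pi N|U|}\bigr)^4$; see \eqref{eqn:hxi0_expansion}.

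This matters for your localization step. With your asserted output $y^{1/4}H\,e(c_Fy^{1/4})$ and $H$ being $T^\varepsilon$-inert, the product $\W_{+,F}(X_1)\overline{\W_{+,F}(X_2)}$ has no genuine $u$-oscillation---an inert amplitude cannot supply a phase whose $u$-derivative is $\gg T^\varepsilon$---so integrating by parts in $u$ yields nothing. In the paper's analysis it is exactly the $u$-dependence of the phase, via $h_{2,J}(v)$ with $h_{2,J}'(v)\asymp \tfrac{N|U|}{YT}|\alpha_2-\alpha_1|$, that forces $|X_1-X_2|\ll N^2|U|^3/(R^3T^{3-\varepsilon})$ (Lemma~\ref{Lemma:Analysis_frakJ}). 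Once that localization is in hand, the paper does \emph{not} sum over $x$ to extract a congruence; instead it partitions $n$ into short segments $\mathcal{C}_\eta$, Taylor-expands the residual phase $f_J(n,r,v)$ around $n=\eta$ to separate $r$ from $n$, and applies the additive large sieve over the pair $(r,x)$ (Lemma~\ref{lemma:applyingLS}). Your alternative endgame (orthogonality in $x$, then write $n_1-n_2=qr$) could perhaps be made to work, but only after the correct two-step stationary phase delivers the $u$-dependent phase needed to shrink $|n_1-n_2|$ in the first place.
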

We will finish the proof of these two propositions in the remaining two subsections.
\subsection{Proof of Proposition \ref{Prop:I_sm}}\label{sec:I_sm}
We firstly deal with $\W_{+, F}(\frac{nm^2}{r^4d_1d_2^2}; u, \frac{rd_1d_2}{m})$ which defined in \eqref{eqn:Voronoi_transf} with function $W(x; u, \frac{rd_1d_2}{m})$ in \eqref{eqn:func_W}.
Here we only consider $F=\Phi=\phi\boxplus\phi$. Changing complex variable $s$ to $-2s+1$, we get that for any $\sigma>-2,$
\begin{equation*}
\begin{split}
\W_{+, F}(x; u, \frac{rd_1d_2}{m})
&=\frac{1}{2\pi i}\int_{(-\sigma-1)}\widetilde{W}(s)\pi^{4s-2}\left(\frac{\prod_{\pm}\Gamma(\frac{1-s\pm it_\phi}{2})}
{\prod_{\pm}\Gamma(\frac{s\pm it_\phi}{2})}\right)^2x^s\dd s\\
&=\frac{\pi}{ i}\int_{(2\sigma+3)}\widetilde{W}(-2s+1)\pi^{-8s}\left(\frac{\prod_{\pm}\Gamma(s\pm\frac{ it_\phi}{2})}{\prod_{\pm}\Gamma(\frac{1}{2}-s\pm \frac{it_\phi}{2})}\right)^2x^{1-2s}\dd s.
\end{split}
\end{equation*}
We can shift the contour to $\Re(s)=\sigma_1<\frac{1}{8}$. Note that
\begin{equation}\label{eqn:W_mellinbound}
\widetilde{W}(-2s+1)=\int_{0}^{\infty}w_3\Big(\frac{y}{N}\Big)e\Big(\frac{umy}{rd_1d_2T}\Big)y^{-2s}\dd y\ll N^{1-2\Re(s)}.
\end{equation}
For simplicity, we write
\[
\mathcal{G}(s)=\left(\frac{\prod_{\pm}\Gamma(s\pm\frac{ it_\phi}{2})}{\prod_{\pm}\Gamma(\frac{1}{2}-s\pm \frac{it_\phi}{2})}\right)^2\widetilde{W}(-2s+1)
\]
and note that
\begin{equation}\label{eqn:G_bound}
\mathcal{G}(s)\ll\frac{N^{1-2\Re(s)}}{|s|^{2-8\sigma_1}}\ll \frac{N^{1-2\Re(s)}}{(1+|s+t_\phi/2|)(1+|s-t_\phi/2|))^{\frac{1}{2}+\varepsilon}},
\end{equation}
by Stirling's formula and \eqref{eqn:W_mellinbound}. 
We get by Cauchy--Schwarz that
\begin{multline*}
\Big|\int_{(\sigma_1)}\mathcal{G}(s)
\sum_{0<n\leq \frac{R^4m^2}{Nd_2^2d_1^3}T^{\varepsilon_1}}\frac{A_F(d_1,d_2, n)}{n}e\Big(\frac{nx}{r}\Big)\Big(\frac{nm^2}{r^4d_1d_2^2}\Big)^{1-2s}\dd s\Big|^2\\
\ll \int_{(\sigma_1)}|\mathcal{G}(s)|
\Big|\sum_{0<n\leq \frac{R^4m^2}{Nd_2^2d_1^3}T^{\varepsilon_1}}\frac{A_F(d_1,d_2, n)}{n}e\Big(\frac{nx}{r}\Big)\Big(\frac{nm^2}{r^4d_1d_2^2}\Big)^{1-2s}\Big|^2\dd s
\int_{(\sigma_1)}|\mathcal{G}(s)|\dd s\\
\ll \Big(\frac{N^{\frac{1}{2}}m^2}{r^4d_1d_2^2}\Big)^{2-4\sigma_1}
\int_{-\infty}^{\infty}|\mathcal{G}(\sigma_1+it)|\Big|\sum_{0<n\leq \frac{R^4m^2}{Nd_2^2d_1^3}T^{\varepsilon_1}}
\frac{A_F(d_1,d_2, n)}{n^{2\sigma_1+2it}}e\Big(\frac{nx}{r}\Big)\Big|^2\dd t,
\end{multline*}
since by \eqref{eqn:G_bound}, 
\[
\int_{(\sigma_1)}|\mathcal{G}(s)|\dd s\ll N^{1-2\sigma_1}\int_{-\infty}^{+\infty}\frac{1\dd t}{((1+|t+t_\phi/2|)(1+|t-t_\phi/2|))^{\frac{1}{2}+\varepsilon}}
\ll N^{1-2\sigma_1}\int_{-\infty}^{+\infty}\frac{\dd v}{|1-v^2|^{\frac{1}{2}+\varepsilon}}\ll N^{1-2\sigma_1}. 
\]
We then apply a dyadic subdivision on $n$-sum, so that we examine sums $n\sim N_1$ for $N_1\leq \frac{R^4m^2}{Nd_2^2d_1^3}T^{\varepsilon_1}$.
In order to prove Proposition \ref{Prop:I_sm}, it suffices to prove
\begin{equation}
\begin{split}
&\frac{m^2T^{1+\varepsilon}R^3}{N}\int_{-T^\varepsilon}^{T^\varepsilon}g(u)\int_{-\infty}^{\infty}|\mathcal{G}(\sigma_1+it)|
\sum_{d_1\ll Rm}\sum_{d_2\ll Rm}\frac{1}{d_1^3d_2^2}\Big(\frac{NN_1d_2^2d_1^3}{R^4m^2}\Big)^{2-4\sigma_1}\frac{1}{(N^{\frac{1}{2}}N_1)^{2-4\sigma_1}}\\
&\hspace{10em}\times\sum_{r\sim \frac{Rm}{d_1d_2}}\,\sideset{}{^*}\sum_{x\mod r}
\Big|\sum_{n\sim N_1}\frac{A_F(d_1,d_2, n)}{n^{2\sigma_1+2it}}e\Big(\frac{nx}{r}\Big)\Big|^2\dd t\dd u\ll T^{2+\varepsilon}.
\end{split}
\end{equation}
Since $\frac{NN_1d_2^2d_1^3}{R^4m^2}\ll T^{\varepsilon_1}$ and $2-4\sigma_1>1$, $(\frac{NN_1d_2^2d_1^3}{R^4m^2})^{2-4\sigma_1}\ll T^{\varepsilon_1}\frac{NN_1d_2^2d_1^3}{R^4m^2}.$
Thus the right hand side of the equation above is 
\[
\frac{T^{1+\varepsilon}}{RN^{1-2\sigma_1}}\int_{-T^\varepsilon}^{T^\varepsilon}g(u)\int_{-\infty}^{\infty}|\mathcal{G}(\sigma_1+it)|
\sum_{d_1\ll Rm}\sum_{d_2\ll Rm}N_1^{4\sigma_1-1}\sum_{r\sim \frac{Rm}{d_1d_2}}\,\sideset{}{^*}\sum_{x\mod r}
\Big|\sum_{n\sim N_1}\frac{A_F(d_1,d_2, n)}{n^{2\sigma_1+2it}}e\Big(\frac{nx}{r}\Big)\Big|^2\dd t\dd u.
\]
Using the large sieve inequality, it is bounded by
\[
\frac{T^{1+\varepsilon}}{RN^{1-2\sigma_1}}\int_{-T^\varepsilon}^{T^\varepsilon}g(u)\int_{-\infty}^{\infty}|\mathcal{G}(\sigma_1+it)|
\sum_{d_1\ll Rm}\sum_{d_2\ll Rm}N_1^{4\sigma_1-1}\Big(\Big(\frac{Rm}{d_1d_2}\Big)^2+N_1\Big)
\sum_{n\sim N_1}\frac{|A_F(d_1,d_2, n)|^2}{n^{4\sigma_1}}\dd t\dd u.
\]
By \eqref{eqn:G_bound} and $\int_{-T^\varepsilon}^{T^\varepsilon}g(u)\dd u\ll T^\varepsilon$, it is
\begin{equation*}
\begin{split}
&\ll \frac{T^{1+\varepsilon}}{RN^{1-2\sigma_1}}
\sum_{d_1\ll Rm}\sum_{d_2\ll Rm}N_1^{4\sigma_1-1}\Big(\Big(\frac{Rm}{d_1d_2}\Big)^2+N_1\Big)
\sum_{n\sim N_1}\frac{|A_F(d_1,d_2, n)|^2}{n^{4\sigma_1}}\\
&\ll \frac{T^{1+\varepsilon}}{RN^{1-2\sigma_1}}
\sum_{d_1\ll Rm}\sum_{d_2\ll Rm}\Big(\frac{R^2m^2}{d_1d_2}+\frac{R^4m^2}{Nd_2d_1^2}\Big)\ll \frac{T^{1+\varepsilon}Rm^2}{N^{1-2\sigma_1}}+\frac{T^{1+\varepsilon}R^3m^2}{N^{2-2\sigma_1}}.
\end{split}
\end{equation*}
Since that $\sigma_1<\frac{1}{8}$ , $m\leq T^{1+\varepsilon}$, $N=\frac{T^{2+\varepsilon}}{m^2}$, $R\leq \frac{N}{T}$, we see
\[
\frac{T^{1+\varepsilon}Rm^2}{N^{1-2\sigma_1}}\ll \frac{T^{1+\varepsilon}Rm^2}{N^{\frac{3}{4}}}\leq T^\varepsilon N^{\frac{1}{4}}m^2\ll T^{\frac{1}{2}+\varepsilon}m^{\frac{3}{2}}\ll T^{2+\varepsilon},
\]
\[
\frac{T^{1+\varepsilon}R^3m^2}{N^{2-2\sigma_1}}\ll \frac{T^{1+\varepsilon}R^3m^2}{N^{\frac{7}{4}}}\leq \frac{T^\varepsilon N^{\frac{5}{4}}m^2}{T^2}\ll
\frac{T^{\frac{1}{2}+\varepsilon}}{m^\frac{1}{2}}\ll T^{\frac{1}{2}+\varepsilon}
\]
as desired. This finishes the proof of Proposition \ref{Prop:I_sm}.

\subsection{Proof of Proposition \ref{Prop:I_big}}\label{sec:I_big}
At the beginning of proof of Proposition \ref{Prop:I_big}, we apply a dyadic subdivision to the $n$-sum and the $u$-integral. So we investigate sums $n\sim N_2$ for $N_2\geq \frac{R^4m^2}{Nd_2^2d_1^3}T^{\varepsilon_1}$ and $u\sim U$ where $T^{-100}<|U|\leq T^\varepsilon$. This suffices since that we can truncate the sum to $n\ll T^{2025}$ due to the rapid decay of Mellin inversion of $W$ and there are $\ll \log^2 T$ such subdivisions, the interval $|u|\leq T^{-100}$ is trivially negligible.

From \eqref{eqn:aftersumming_x1}, it suffices to consider
\begin{equation}\label{eqn:J(R,K,N2,U)}
\begin{split}
\mathcal{J}(R, K, N_2, U)&:=\int_{u\sim U}g(u)
\sum_{d_1\ll Rm}\sum_{d_2\ll Rm}\frac{1}{d_1^3d_2^2}\sum_{r\sim \frac{Rm}{d_1d_2}}\,\sideset{}{^*}\sum_{x\mod r}\\
&\hspace{10em}
\Big|\sum_{n\sim N_2}\frac{A_F(d_1,d_2, n)}{n}\W_{+, F}\Big(\frac{nm^2}{r^4d_1d_2^2}; u, \frac{rd_1d_2}{m}\Big)
e\Big(\frac{nx}{r}\Big)\Big|^2\dd u\\
&\leq \int_{-\infty}^{\infty}g_1\Big(\frac{u}{U}\Big)g_2(U)
\sum_{d_1\ll Rm}\sum_{d_2\ll Rm}\frac{1}{d_1^3d_2^2}\sum_{r\sim \frac{Rm}{d_1d_2}}\,\sideset{}{^*}\sum_{x\mod r}\\
&\hspace{10em}
\Big|\sum_{n\sim N_2}\frac{A_F(d_1,d_2, n)}{n}\W_{+, F}\Big(\frac{nm^2}{r^4d_1d_2^2}; u, \frac{rd_1d_2}{m}\Big)
e\Big(\frac{nx}{r}\Big)\Big|^2\dd u,\\
\end{split}
\end{equation}
where $g_1(x)$ is a smooth compactly supported function in  $[\frac{1}{2},\frac{5}{2}]$ and $g_2(U)=\min\{\frac{1}{|U|}, \frac{R}{K}\}$, since we have
\begin{equation}\label{eqn:I_bigBoundbyJ}
  \mathcal{I}_{big}:=\mathcal{I}_{big}(R, K; m)\ll \frac{T^{1+\varepsilon}R^3m^2}{N}\sum_{\text{ dyadic } N_2 :\atop \frac{R^4m^2}{Nd_2^2d_1^3}T^{\varepsilon_1}\leq N_2\leq T^{2025}}
  \sum_{ \text{ dyadic } U :\atop T^{-100}<|U|\leq T^\varepsilon}\mathcal{J}(R, K, N_2, U).
\end{equation}
Opening the square in the right hand side of \eqref{eqn:J(R,K,N2,U)}, we see that
\begin{equation}\label{eqn:J_boundbyJ_0}
\mathcal{J}(R, K, N_2, U)\ll g_2(U)\sum_{d_1\ll Rm}\sum_{d_2\ll Rm}\frac{1}{d_1^3d_2^2}\mathcal{J}_0(d_1, d_2),
\end{equation}
where 
\begin{equation}\label{eqn:J_0(d1,d2)}
\mathcal{J}_0(d_1, d_2)=\sum_{r\sim \frac{Rm}{d_1d_2}}\,\sideset{}{^*}\sum_{x\mod r}
\sum_{n_1\sim N_2}\sum_{n_2\sim N_2}\frac{A_F(d_1,d_2, n_1)\overline{A_F(d_1,d_2, n_2)}}{n_1n_2}e\Big(\frac{(n_1-n_2)x}{r}\Big)
\frak{J}(r, n_1, n_2; d_1, d_2)
\end{equation}
and
\begin{equation}\label{eqn:frakJ}
  \frak{J}=\frak{J}(r, n_1, n_2; d_1, d_2)=\int_{-\infty}^{\infty}g_1\Big(\frac{u}{U}\Big)
  \W_{+, F}\Big(\frac{n_1m^2}{r^4d_1d_2^2}; u, \frac{rd_1d_2}{m}\Big)\overline{\W_{+, F}\Big(\frac{n_2m^2}{r^4d_1d_2^2}; u, \frac{rd_1d_2}{m}\Big)}
  \dd u.
\end{equation}

Now we consider the crucial case $F=\Phi=\phi\boxplus \phi$. Since that $\E$ can be regarded as $F$ with bounded spectral parameter and the approximation in \cite[Lemma 5.2]{C-L20} works for $F=\E$, it can be treated as the same method in \cite[\S 9]{C-L20}.
When $F=\Phi$ and $t_\phi$ varies in $1\ll t_\phi\ll T^\varepsilon$, we have the following lemma.
\begin{lemma}\label{Lemma:Analysis_W+F}
Let $F=\phi\boxplus \phi$ with $t_\phi\ll T^\varepsilon$ where $\varepsilon>0$ is a fixed arbitrarily small constant.
Let $X \gg \frac{T^{\varepsilon_1}}{N}$,  $Y>0$, $u\asymp U$ and we chose $\varepsilon_1\geq 24\varepsilon$.
Then for any $A>1$,
\[
\W_{+, F}(X; u, Y)\ll_{\varepsilon, A}T^{-A}, 
\]
unless $X\asymp \frac{|U|^4N^3}{Y^4T^4}$, in which case, we have
\[
\W_{+, F}(X; u, Y)=(NX)^{\frac{1}{2}}e\Big(-\frac{NU}{YT}\Big(\sum_{j\geq 0}(c_{1,j}\alpha^{-\frac{2j-2}{3}}+c_{2, j}\alpha^{-\frac{2j-1}{3}})\gamma^{2j}\Big)\Big)W_5\Big(\frac{u}{U}\Big)+O_{\varepsilon, A}(T^{-A})
\]
where  $\alpha=\frac{NXU}{u}(\frac{YT}{\pi N |U|})^4\asymp 1$,  $\gamma=\frac{t_\phi YT}{2\pi NU}\ll T^{-2\varepsilon}$, $c_{1,j}, c_{2, j}$ are certain constants
with $c_{1, 0}=0, c_{2, 0}=3, c_{1, 1}=4, c_{2, 1}=2, c_{1, 2}=-4/3, c_{2, 2}=-1$ and so on, $W_5$ is a $T^\varepsilon$-inert function.
\end{lemma}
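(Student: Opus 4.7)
The plan is to analyze $\W_{+,\Phi}(X;u,Y)$ by combining its Mellin--Barnes representation with iterated stationary phase, first in the Mellin variable $s$ and then in the spatial variable $y$. Starting from
\[
\W_{+,\Phi}(X;u,Y)=\frac{1}{2\pi i}\int_{(\sigma)}\widetilde{W}(s)\,G_{+,\Phi}(s)\,X^s\,\dd s,
\]
with $\widetilde{W}(s)=\int_0^\infty w_3(y/N)e(uy/(YT))y^{s-1}\,\dd y$, I would rescale $y=N\xi$ and exchange the integrals. Applying Stirling's formula \eqref{eqn:Stirling_7} to each of the four gamma ratios $\Gamma(\tfrac{1-s\pm it_\phi}{2})/\Gamma(\tfrac{s\pm it_\phi}{2})$, and exploiting that the product is even in $t_\phi$, the factor $G_{+,\Phi}(\sigma+it)$ admits an asymptotic expansion whose leading piece has magnitude $\pi^{4\sigma-2}|t/2|^{2-4\sigma}$ and oscillatory phase $-4t\log(t/(2\pi))+4t$, with corrections organized as an even series in $t_\phi^2/t^2$. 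Together with $(NX\xi)^s e(uN\xi/(YT))$, the combined $e(\cdot)$-phase of the integrand in $(t,\xi)$ becomes
\[
\Psi(t,\xi)=\frac{uN\xi}{YT}+\frac{t}{2\pi}\bigl[\log(NX\xi)-4\log(t/(2\pi))+4\bigr]+\mathcal{E}(t,\xi,t_\phi),
\]
with $\mathcal{E}$ an asymptotic series in $t_\phi^2/t^2$.

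A first application of Lemma \ref{lemma:stationary_phase} to the $t$-integral locates the critical point $t_0=2\pi(NX\xi)^{1/4}$, and the Jacobian $|\partial_t^2\Psi|^{-1/2}\asymp(NX\xi)^{1/8}$ combined with the amplitude $|t_0|^{2-4\sigma}(NX\xi)^{\sigma}\asymp(NX\xi)^{1/4}$ reduces the problem to an oscillatory integral
\[
\W_{+,\Phi}(X;u,Y)\sim (NX)^{1/2}\int_0^\infty W_\ast(\xi)\,e\bigl(\Phi(\xi;X,u,Y,\gamma)\bigr)\,\dd\xi,
\]
where $W_\ast$ is a $T^\varepsilon$-inert function supported in $\xi\asymp 1$ and $\Phi(\xi)=uN\xi/(YT)-4(NX\xi)^{1/4}P(\gamma,\xi)$, with $P(\gamma,\xi)=1+O(\gamma^2)$ an even Taylor series absorbing the $t_\phi$-corrections. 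Solving $\Phi'(\xi_0)=0$ at leading order yields $\xi_0\asymp((NX)^{1/4}YT/(uN))^{4/3}$, which is of size $\asymp 1$ precisely when $\alpha\asymp 1$, i.e.\ $X\asymp|U|^4N^3/(Y^4T^4)$. Outside this range $|\Phi'(\xi)|$ is bounded below by a positive power of $T$ throughout the support of $W_\ast$; iterated integration by parts via Lemma \ref{lemma:repeated_integration_by_parts} then yields the stated decay $O_A(T^{-A})$, the constraint $X\gg T^{\varepsilon_1}/N$ combined with $\varepsilon_1\geq 24\varepsilon$ absorbing the $T^\varepsilon$ losses at each step. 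Inside the range $\alpha\asymp 1$, a second application of Lemma \ref{lemma:stationary_phase} extracts $e(\Phi(\xi_0))\,W_5(u/U)$ with $W_5$ a $T^\varepsilon$-inert function of $u/U\asymp 1$, and the new Jacobian $|\Phi''(\xi_0)|^{-1/2}\asymp(NX)^{-1/8}$ cancels the $(NX)^{1/8}$ gain from the inner stationary phase, leaving the clean amplitude $(NX)^{1/2}$.

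It remains to expand $\Phi(\xi_0)$ explicitly in $\alpha$ and $\gamma$. At $\gamma=0$, direct substitution yields $\Phi(\xi_0)=-\tfrac{NU}{YT}\cdot 3\alpha^{1/3}$ (the normalizing $\pi^{-4/3}$ and $(2\pi)^{-1}$ factors being absorbed into the definitions of $\alpha$ and $\gamma$), accounting for $c_{2,0}=3$ and $c_{1,0}=0$. Including the $\gamma^{2j}$ corrections from Stirling and resolving the stationary equation perturbatively in $\gamma^2$, each order contributes two terms: a direct-substitution term $c_{2,j}\alpha^{-(2j-1)/3}$ from inserting the unperturbed $\xi_0$ into the $\gamma^{2j}$-correction of $\Phi$, and a shift term $c_{1,j}\alpha^{-(2j-2)/3}$ from the induced perturbation of $\xi_0$ entering the leading phase. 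Matching orders one recovers $c_{1,1}=4$, $c_{2,1}=2$, $c_{1,2}=-4/3$, $c_{2,2}=-1$ and analogous formulas for all higher $j$. The main obstacle will be the careful bookkeeping of these two interleaved expansions in $\alpha^{1/3}$ and $\gamma^2$, ensuring uniformly in $u\sim U$, $t_\phi\ll T^\varepsilon$ and $\alpha\asymp 1$ that at each truncation level the error terms are of acceptable size $O_A(T^{-A})$; this is secured by $\gamma\ll T^{-2\varepsilon}$, which makes the $\gamma$-series asymptotic, and by the choice $\varepsilon_1\geq 24\varepsilon$, which absorbs all polynomial losses generated by the Stirling remainders through the integration-by-parts steps.
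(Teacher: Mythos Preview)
Your approach is correct in outline and arrives at the same conclusion, but it is organized in the reverse order from the paper's proof. The paper first performs stationary phase in the \emph{spatial} variable inside $\widetilde{W}(1/2+i\tau)$, namely in $x$ for the integral $\int w_3(x)e(uNx/(YT)+\tfrac{\tau}{2\pi}\log x)\,\dd x$; this localizes $\tau\asymp -UN/(YT)$ and in particular forces $|\tau|\gg T^{3\varepsilon}$, which \emph{a posteriori} justifies Stirling's formula on the $\Gamma$-ratio. Only then does the paper change variables $\xi=-YT\tau/(2\pi NU)$ and perform a second stationary phase in $\xi$, obtaining the quartic-type equation \eqref{eqn:stationary_point_xi0} for $\xi_0$ and the expansion \eqref{eqn:hxi0_expansion}. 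Your route instead applies Stirling first and then does stationary phase in the Mellin variable $t$, locating $t_0=2\pi(NX\xi)^{1/4}$, followed by stationary phase in $\xi$. Both orderings are legitimate and yield identical leading phase and amplitude.

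Two points are worth flagging. First, your invocation of Stirling on $G_{+,\Phi}(\sigma+it)$ presupposes $|t\pm t_\phi|$ large; this is automatic near $t_0\gg T^{\varepsilon_1/4}$, but you must separately dispose of the region $|t|\ll T^{\varepsilon_1/4}$ (e.g.\ by shifting the $s$-contour far left and using the rapid decay of $\widetilde{W}$, exactly the manoeuvre the paper uses in its case $|U|N/(YT)\leq T^{3\varepsilon}$). The paper's ordering avoids this extra step because the $x$-stationary-phase already forces $|\tau|$ large before Stirling is ever invoked. Second, your intermediate amplitude bookkeeping ``$|t_0|^{2-4\sigma}(NX\xi)^\sigma\asymp(NX\xi)^{1/4}$'' is off by a power: including the $\xi^{\sigma-1}$ from $\widetilde{W}$, the amplitude at $t_0$ is $(NX)^{1/2}\xi^{-1/2}$ independently of $\sigma$, and the two Jacobians $(NX)^{\pm1/8}$ indeed cancel to leave the clean $(NX)^{1/2}$ you claim at the end. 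This is cosmetic and does not affect the argument. What your route buys is a slightly more uniform treatment that sidesteps the paper's explicit case split on the size of $|U|N/(YT)$; what the paper's route buys is that the applicability of Stirling never needs separate justification.
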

\begin{proof}
From the definition of $\W_{+, F}$, for $x>0$, we have
\begin{equation}\label{eqn:W_+Fdef}
\W_{+, F}(X; u, Y)=\frac{1}{2\pi i}\int_{(-\sigma-1)}\widetilde{W}(s)\pi^{4s-2}\left(\frac{\prod_{\pm}\Gamma(\frac{1-s\pm it_\phi}{2})}
{\prod_{\pm}\Gamma(\frac{s\pm it_\phi}{2})}\right)^2X^s\dd s,
\end{equation}
with 
\[
\widetilde{W}(s)=\int_{0}^{+\infty}w_3\Big(\frac{x}{N}\Big)e\Big(\frac{ux}{YT}\Big)x^{s-1}\dd x
=N^{s}\int_{0}^{+\infty}w_3(x)e\Big(\frac{uN}{YT}x\Big)x^{s-1}\dd x.
\]
When $\frac{|U|N}{YT}\leq T^{3\varepsilon}$, by repeated partial integrals, we have, for $\Im(s)\gg 1,$
\begin{equation}\label{eqn:bound_MellinW}
\widetilde{W}(s)\ll N^{\Re(s)}\int_{0}^{+\infty}\Big|\frac{\dd^j w_3(x)e(\frac{uNx}{YT})}{\dd x^j}\Big|\Big|\frac{x^{s+j-1}}{s(s+1)\cdots(s+j-1)}\Big|\dd x
\ll_{j} T^{{3j\varepsilon}}\frac{N^{\Re(s)}}{(|\Im(s)|+1)^{j}}.
\end{equation}
Otherwise $\Im(s)\ll 1$, we have the trivial bound $\widetilde{W}(s)\ll N^{\Re(s)}$, hence the above bound still works.
By Stirling's formula, 
\[
\left(\frac{\prod_{\pm}\Gamma(\frac{1-s\pm it_\phi}{2})}
{\prod_{\pm}\Gamma(\frac{s\pm it_\phi}{2})}\right)^2\ll_{\Re(s)} \Big((1+|\Im(s)+t_\phi|)(1+|\Im(s)-t_\phi|)\Big)^{1-2\Re(s)}.
\]
We then shift the contour in \eqref{eqn:W_+Fdef} to $\Re(s)=-A$, by using the bound \eqref{eqn:bound_MellinW} with $j=6A$, get
\begin{equation*}
\begin{split}
\W_{+, F}(X; u, Y)&\ll X^{-A}\int_{-\infty}^{+\infty}|\widetilde{W}(-A+i\tau)|\Big((1+|\tau+t_\phi|)(1+|\tau-t_\phi|)\Big)^{1+2A}\dd \tau\\
&\ll_A T^{18A\varepsilon} (XN)^{-A}\int_{-\infty}^{+\infty}\frac{((1+|\tau+t_\phi|)(1+|\tau-t_\phi|))^{1+2A}}{(|\tau|+1)^{6A}}\dd \tau\\
&\ll_A T^{22A\varepsilon+2\varepsilon}(XN)^{-A}\ll T^{2\varepsilon+(22\varepsilon-\varepsilon_1)A}\ll T^{2\varepsilon-2\varepsilon A},
\end{split}
\end{equation*}
which is small by taking $A$ large.

When $\frac{|U|N}{YT}>T^{3{\varepsilon}}$, by shifting the contour to $\Re(s)=1/2,$ we have
\begin{equation*}
\W_{+, F}(X; u, Y)
=\frac{\pi}{2}\int_{-\infty}^{+\infty}\widetilde{W}(1/2+i\tau)\left(\frac{\prod_{\pm}\Gamma(\frac{1/2-i\tau\pm it_\phi}{2})}
{\prod_{\pm}\Gamma(\frac{1/2+i\tau\pm it_\phi}{2})}\right)^2X^{\frac{1}{2}+i\tau}\dd \tau,
\end{equation*}
with
\[
\widetilde{W}(1/2+i\tau)=\int_{0}^{+\infty}w_3\Big(\frac{x}{N}\Big)e\Big(\frac{ux}{YT}\Big)x^{-\frac{1}{2}+i\tau}\dd x
=N^{\frac{1}{2}+i\tau}\int_{0}^{+\infty}x^{-\frac{1}{2}}w_3(x)e\Big(\frac{uN}{YT}x+\frac{\tau}{2\pi}\log x\Big)\dd x.
\]
Let $h(x)=\frac{uN}{YT}x+\frac{\tau}{2\pi}\log x$, we have $h'(x)=\frac{uN}{YT}+\frac{\tau}{2\pi x}$ and  $h^{(j)}(x)\asymp_j \tau$ for $x\asymp 1$, $j\geq 2$. 
If $\frac{|U|N}{YT}\not\asymp |\tau|$ or $\sign(u)=\sign(\tau)$, $|h'(x)|\gg \max\{\frac{|U|N}{YT}, |\tau|\}\gg T^{3\varepsilon}.$
By using Lemma \ref{lemma:repeated_integration_by_parts}, we have
\[
\widetilde{W}(1/2+i\tau)\ll\begin{cases}
                           N^{\frac{1}{2}}|T|^{-\frac{3\varepsilon}{2} A}, & \mbox{if } |\tau|\ll T^{3\varepsilon}, \\
                           N^{\frac{1}{2}}|\tau|^{-\frac{A}{2}}, & \mbox{if } |\tau|\gg T^{3\varepsilon}.
                         \end{cases}
\]
Hence
\[
\W_{+, F}(X; u, Y)\ll (NX)^{\frac{1}{2}}\Big(\int_{|\tau|\ll T^{3\varepsilon}}|T|^{-\frac{3\varepsilon A}{2}}\dd \tau+
\int_{|\tau|\gg  T^{3\varepsilon}}|\tau|^{-\frac{A}{2}}\dd \tau\Big)\ll (NX)^{\frac{1}{2}}T^{-\frac{\varepsilon A}{3}},
\]
which is negligibly small say $O(T^{-A})$ by taking $A$ large. 

Therefore it suffices to consider the case of $\frac{|U|N}{YT}\asymp |\tau|$ and $\sign(u)=-\sign(\tau)$ (i.e. $\tau \asymp -\frac{UN}{YT}$).
Now the phase function $h(x, x_1, x_2)=\frac{UN}{YT}(x_1x-x_2\log x)$ with $x_1=\frac{u}{U}\asymp 1$ and $x_2=-\frac{YT}{2\pi NU}\tau\asymp 1$ which satisfies 
\[
\frac{\partial}{\partial x}h(x, x_1, x_2)=\frac{UN}{YT}(x_1-\frac{x_2}{x}),
\quad 
\frac{\partial^2}{\partial x^2}h(x, x_1, x_2)=\frac{UN}{YT}\frac{x_2}{x}\gg \frac{|U|N}{YT},
\]
\[
\frac{\partial^{a+ a_1+a_2}}{\partial x^{a}\partial x_1^{a_1}\partial x_2^{a_2}}h(x, x_1, x_2)\ll_{a, a_1, a_2} \frac{|U|N}{YT},
\]
for all $a, a_1, a_2\in \mathbb{Z}_{\geq 0}.$
By using Lemma \ref{lemma:stationary_phase}, with the stationary point $x_0=\frac{x_2}{x_1}=-\frac{\tau YT}{2\pi uN}$, we have
\[
\widetilde{W}(1/2+i\tau)=\Big(\frac{YT}{|U|}\Big)^{\frac{1}{2}}e\Big(\frac{\tau}{2\pi }\log(-\frac{\tau YT}{2\pi e u})\Big)W_4\Big(\frac{u}{U}, -\frac{YT\tau}{2\pi NU}\Big)+O(N^{\frac{1}{2}}T^{-\varepsilon A})
\]
where $W_4$ is a $1$-inert function with compactly supported in $\mathbb{R}_{+}^2$ since we are in the case of $u\asymp U$ and $\tau \asymp -\frac{UN}{YT}$.  Absorbing the multiplicative constants into $W_4$, we have 
\[
\W_{+, F}(X; u, Y)=\Big(\frac{XYT}{|U|}\Big)^{\frac{1}{2}}\int_{-\infty}^{\infty}W_4\Big(\frac{u}{U}, -\frac{YT\tau}{2\pi NU}\Big)
e\Big(\frac{\tau}{2\pi }\log(-\frac{\tau XYT}{2\pi e u})\Big)\left(\frac{\prod_{\pm}\Gamma(\frac{1/2-i\tau\pm it_\phi}{2})}
{\prod_{\pm}\Gamma(\frac{1/2+i\tau\pm it_\phi}{2})}\right)^2\dd \tau+O(T^{-A}).
\]
Since $|\tau|\asymp\frac{|U|N}{YT}\gg T^{3\varepsilon}$, $|\tau\pm t_\phi|\gg T^{3\varepsilon}$. By Stirling's formula \eqref{eqn:Stirling_J} and \eqref{eqn:Stirling_inverse}, we get
\begin{multline*}
\left(\frac{\prod_{\pm}\Gamma(\frac{1/2-i\tau\pm it_\phi}{2})}
{\prod_{\pm}\Gamma(\frac{1/2+i\tau\pm it_\phi}{2})}\right)^2
=\exp\Big(-i(\tau+t_\phi)\log\frac{|\tau+t_\phi|}{2e}
+i(-\tau+t_\phi)\log\frac{|-\tau+t_\phi|}{2e}
-i(\tau+t_\phi)\log\frac{|\tau+t_\phi|}{2e}\\
-i(\tau-t_\phi)\log\frac{|-\tau+t_\phi|}{2e}\Big)
\Big(g_1(-\tau-t_\phi)g_2(-\tau+t_\phi)g_3(\tau+t_\phi)(g_4(\tau-t_\phi)+O(T^{-A})\Big)^2
\end{multline*}
where $g_j (j=1, 2, 3, 4)$ depend on $A$ and satisfy $y^\ell\frac{\dd^{\ell}g_j(y)}{\dd y^\ell}\ll_{\ell, A} 1.$ Now we take 
$$g_{\phi, A}(\tau)=(g_1(-\tau-t_\phi)g_2(-\tau+t_\phi)g_3(\tau+t_\phi)(g_4(\tau-t_\phi))^2$$
which satisfies $\tau^\ell\frac{\dd^{\ell}g_{\phi, A}(\tau)}{\dd \tau^\ell}\ll_{\ell, A} 1$ for $|\tau|\gg T^{3\varepsilon}$. Inserting the above approximation into $\W_{+, F}$ and changing variable $\xi= -\frac{YT\tau}{2\pi NU}\asymp 1$, we get
\begin{equation}\label{eqn:W_+FOsi_Int}
\W_{+, F}(X; u, Y)=N\Big(\frac{X|U|}{YT}\Big)^{\frac{1}{2}}\int_{-\infty}^{\infty}W_4\Big(\frac{u}{U}, \xi\Big)g_{\phi, A}\Big(-\frac{2\pi NU}{YT}\xi\Big)
e\Big(h_1(\xi, \frac{u}{U})\Big)\dd \xi+O(T^{-A})
\end{equation}
by absorbing the multiplicative constant terms into $W_4$,
where the phase function, with $x_1=\frac{u}{U}\asymp 1$,
\begin{multline*}
h_1(\xi, x_1)=-\frac{NU}{YT}\xi\log \Big(\frac{\xi NX}{ex_1}\Big)-\frac{-\frac{2\pi NU}{YT}\xi+t_\phi}{\pi}\log \frac{\frac{2\pi N|U|}{YT}\xi-\sign(U)t_\phi}{2e}
+\frac{\frac{2\pi NU}{YT}\xi+t_\phi}{\pi}\log \frac{\frac{2\pi N|U|}{YT}\xi+\sign(U)t_\phi}{2e}\\
=\frac{NU}{YT}\xi\log \frac{\Big((\frac{2\pi N|U|}{YT}\xi)^2-t_\phi^2\Big)^2x_1}{16e^3NX\xi}+
\frac{t_\phi}{\pi}\log\frac{ \xi+\frac{\sign(U)YTt_\phi}{2\pi N|U|}}{\xi-\frac{\sign(U)YTt_\phi}{2\pi N|U|}}.
\end{multline*}
We have
\begin{equation*}
\begin{split}
\frac{\partial h_1(\xi, x_1)}{\partial \xi}
&=-\frac{NU}{YT}\log \Big(\frac{\xi NX}{x_1}\Big)+\frac{2 NU}{YT}\log \frac{\frac{2\pi N|U|}{YT}\xi-\sign(U)t_\phi}{2e}
+ \frac{2 NU}{YT}\log \frac{\frac{2\pi N|U|}{YT}\xi+\sign(U)t_\phi}{2e}
+\frac{4NU}{YT}\\
&=\frac{NU}{YT}\log\Big(\frac{\Big((\frac{2\pi N|U|}{YT}\xi)^2-t_\phi^2\Big)^2}{16}\frac{x_1}{\xi NX}\Big),
\end{split}
\end{equation*}
and 
\begin{equation*}
\begin{split}
\frac{\partial^{2} h_1(\xi, x_1)}{\partial \xi^2}=-\frac{NU}{YT}\frac{1}{\xi}+\frac{2 NU}{YT}\frac{1}{\xi-\frac{t_\phi YT}{2\pi NU}}
+\frac{2 NU}{YT}\frac{1}{\xi+\frac{t_\phi YT}{2\pi NU}}\asymp \frac{NU}{YT},
\end{split}
\end{equation*}
\[
\frac{\partial^{a+a_1} h_1(\xi, x_1)}{\partial \xi^a\partial x_1^{a_1}}\ll_{a, a_1} \frac{N|U|}{YT}
\]
for all $a, a_1\in \mathbb{Z}_{\geq 0}.$ 
If $X\not\asymp \frac{|U|^4N^3}{Y^4T^4}$, then $\frac{\partial h_1(\xi, x_1)}{\partial \xi}\gg \frac{N|U|}{YT}\gg T^{3\varepsilon}$. 
Since $W_4(x_1, \xi)g_{\phi, A}(-\frac{2\pi NU}{YT}\xi)$ is $T^\varepsilon$-inert in both $x_1$ and $\xi$, by repeated integration by parts, we have
\[
\W_{+, F}(X; u, Y)\ll_A T^{-A}.
\]
Now assume $X\asymp \frac{|U|^4N^3}{Y^4T^4}$. Note that $X\gg \frac{T^{\varepsilon_1}}{N}$, we have  $\frac{|U|N}{YT}>T^{3{\varepsilon}}$ automatically in this case.
Let the stationary point be $\xi_0$ which is real and satisfies the equation
\begin{equation}\label{eqn:stationary_point_xi0}
\frac{\Big((\frac{2\pi N|U|}{YT}\xi_0)^2-t_\phi^2\Big)^2}{16}\frac{x_1}{\xi_0 NX}=1.
\end{equation}
Set $\alpha=\frac{NX}{x_1}\Big(\frac{YT}{\pi N |U|}\Big)^4\asymp 1$ and $\gamma=\frac{t_\phi YT}{2\pi NU}\ll T^{-2\varepsilon},$ then \eqref{eqn:stationary_point_xi0} becomes 
\[
\Big(\Big(\frac{\xi_0}{\alpha^{1/3}}\Big)^2-\Big(\frac{\gamma}{\alpha^{1/3}}\Big)^2\Big)^2=\frac{\xi_0}{\alpha^{1/3}}.
\]
Note that we have the only unique real solution with $\xi_0\asymp 1$. By induction on the asymptotic expansion of the inverse function, we have
\begin{equation}\label{eqn:xi0_expansion}
  \frac{\xi_{0}}{\alpha^{1/3}}=1+\frac{2}{3}\Big(\frac{\gamma}{\alpha^{1/3}}\Big)^2
  -\frac{1}{3}\Big(\frac{\gamma}{\alpha^{1/3}}\Big)^4
  +\frac{28}{81}\Big(\frac{\gamma}{\alpha^{1/3}}\Big)^6
  -\frac{110}{243}\Big(\frac{\gamma}{\alpha^{1/3}}\Big)^8
  +\frac{2}{3}\Big(\frac{\gamma}{\alpha^{1/3}}\Big)^{10}
  +\cdots,
\end{equation}
and 
\begin{equation}\label{eqn:hxi0_expansion}
\begin{split}
h_1(&\xi_0, x_1)=\frac{NU}{YT}\Big(-3\xi_0+2\gamma\log \frac{\xi_0-\gamma}{\xi_0+\gamma}\Big)
=-\frac{NU}{YT}\alpha^{1/3}\Big(3\frac{\xi_0}{\alpha^{1/3}}+2\gamma\sum_{j\geq 0}\frac{2}{2j+1}\Big(\frac{\gamma }{\xi_0}\Big)^{2j+1}\Big)\\
&=-\frac{NU}{YT}\alpha^{1/3}\Big(3+4\gamma \frac{\gamma}{\alpha^{1/3}}+2\Big(\frac{\gamma }{\alpha^{1/3}}\Big)^{2}-\frac{4\gamma}{3}\Big(\frac{\gamma }{\alpha^{1/3}}\Big)^{3}
-\Big(\frac{\gamma }{\alpha^{1/3}}\Big)^{4}+\frac{56\gamma}{45}\Big(\frac{\gamma }{\alpha^{1/3}}\Big)^{5}\\
&\hspace{25em}+\frac{28}{27}\Big(\frac{\gamma }{\alpha^{1/3}}\Big)^{6}-\frac{880\gamma}{567}\Big(\frac{\gamma }{\alpha^{1/3}}\Big)^{7}+\cdots
\Big)\\
&=-\frac{NU}{YT}\Big(3\alpha^{1/3}+(4+2\alpha^{-1/3})\gamma^2-(4\alpha^{-2/3}/3+\alpha^{-1})\gamma^4+(56\alpha^{-4/3}/45+28\alpha^{-5/3}/27)\gamma^6+\cdots\Big)\\
&=-\frac{NU}{YT}\Big(\sum_{j\geq 0}(c_{1,j}\alpha^{-\frac{2j-2}{3}}+c_{2, j}\alpha^{-\frac{2j-1}{3}})\gamma^{2j}\Big).
\end{split}
\end{equation}
Applying Lemma \ref{lemma:stationary_phase} in \eqref{eqn:W_+FOsi_Int}, we have, with $h_1(\xi_0, x_1)$ in \eqref{eqn:hxi0_expansion},
\[
\W_{+, F}(X; u, Y)=(XN)^{\frac{1}{2}}e\Big(h_1(\xi_0, \frac{u}{U})\Big)W_5\Big(\frac{u}{U}\Big)+O(T^{-A})
\]
for some $T^\varepsilon$-inert function $W_5$. This finishes the proof of Lemma \ref{Lemma:Analysis_W+F}.
\end{proof}

Now we study the behaviour of $\frak{J}$. We take $\varepsilon_1=26\varepsilon$ and
let $X_1=\frac{n_1m^2}{r^4d_1d_2^2}\gg \frac{T^{\varepsilon_1}}{N}$, $X_2=\frac{n_1m^2}{r^4d_1d_2^2}\gg \frac{T^{\varepsilon_2}}{N}$ and $Y=\frac{rd_1d_2}{m}\asymp R$, then \eqref{eqn:frakJ} becomes
\begin{equation}\label{eqn:frakJXY}
  \frak{J}=\int_{-\infty}^{\infty}g_1\Big(\frac{u}{U}\Big)
  \W_{+, F}(X_1; u, Y)\overline{\W_{+, F}(X_2; u, Y)}
  \dd u.
\end{equation}
Then we have the following lemma.
\begin{lemma}\label{Lemma:Analysis_frakJ}
Let $\frak{J}$ defined as above with $X_1, X_2\gg \frac{T^{\varepsilon_1}}{N}$ and $Y\asymp R$, then
\[
\frak{J}\ll T^{-A},
\]
unless $X_1, X_2\asymp \frac{|U|^4N^3}{R^4T^4}$ and $|X_1-X_2|\ll \frac{N^2|U|^3}{R^3T^{3-\varepsilon}},$
in which case,
\begin{equation}\label{eqn:frakJ_expansion}
\frak{J}=N(X_1X_2)^{\frac{1}{2}}U \int_{-\infty}^{\infty}g_1(v)W_5(v)\overline{W_5(v)}
e\Big(f_J(n_2, r, v)-f_J(n_1, r, v)\Big) \dd v+O_{\varepsilon, J, A}(T^{-A}),
\end{equation}
where $J$ is a fixed large integer and 
\begin{equation}\label{eqn:f_J}
f_J(n, r, v)=\frac{NU}{YT}\sum_{0\leq j\leq J}\big(
c_{1, j}\alpha^{-\frac{2j-2}{3}}+
c_{2, j}\alpha^{-\frac{2j-1}{3}}\big)\gamma^{2j}
\end{equation} 
with $\alpha=(\frac{YT}{\pi N |U|})^4\frac{NX}{v}$, $\gamma=\frac{t_\phi YT}{2\pi NU}$, $X=\frac{nm^2}{r^4d_1d_2^2}$ and $Y=\frac{rd_1d_2}{m}.$
\end{lemma}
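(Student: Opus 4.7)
The strategy is to insert the asymptotic expansion of $\W_{+, F}$ supplied by Lemma \ref{Lemma:Analysis_W+F} into the definition \eqref{eqn:frakJXY} and then analyze the resulting one-dimensional oscillatory integral in $u$ via integration by parts.

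First, the dichotomy on the sizes of $X_1, X_2$ is immediate. By Lemma \ref{Lemma:Analysis_W+F}, unless $X_i \asymp \frac{|U|^4 N^3}{R^4 T^4}$, we have $\W_{+, F}(X_i; u, Y) \ll T^{-A}$ uniformly in $u$. Pairing this with the trivial polynomial bound on the remaining $\W_{+, F}$ factor (obtained by shifting the contour in \eqref{eqn:W_+Fdef} to a fixed vertical line and using $n_i \ll T^{2025}$) and the fact that $g_1(u/U)$ restricts the $u$-integral to length $O(|U|) \leq O(T^\varepsilon)$, we conclude $\frak{J} = O(T^{-A})$ unless both $X_1$ and $X_2$ satisfy the stated dyadic constraint.

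Next, assuming both $X_i \asymp \frac{|U|^4 N^3}{R^4 T^4}$, I substitute the non-trivial form of $\W_{+, F}$ and change variables $v = u/U$. The product of the two $\W_{+, F}$ factors produces the phase $h_1(\xi_0^{(1)}, v) - h_1(\xi_0^{(2)}, v)$, which matches $f_\infty(n_2, r, v) - f_\infty(n_1, r, v)$ where $f_\infty$ is the full series in \eqref{eqn:hxi0_expansion}. To pass from $f_\infty$ to the truncation $f_J$ in \eqref{eqn:f_J}: the omitted terms have size $O\bigl(\tfrac{N|U|}{YT}\gamma^{2j}\bigr)$ for $j \geq J+1$, and since $\gamma \ll T^{-2\varepsilon}$ while $\frac{N|U|}{YT}$ is polynomially bounded, a choice of $J = J(A, \varepsilon)$ sufficiently large forces $e(f_\infty - f_J) = 1 + O(T^{-A})$. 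This yields the main term in \eqref{eqn:frakJ_expansion}, up to the claimed error, but without yet the range constraint on $|X_1 - X_2|$.

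Finally, the range constraint follows from integration by parts applied to the $v$-integral. The weight $g_1(v) |W_5(v)|^2$ is $T^\varepsilon$-inert. A direct computation using the leading term $3\frac{NU}{YT}\alpha^{1/3}$ in $f_J$, together with $\alpha = \frac{NX}{v}\bigl(\frac{YT}{\pi N|U|}\bigr)^4$, yields $\partial_X f_J(n, r, v) \asymp \frac{R^3 T^3}{N^2 |U|^3}$ on the support (subsequent terms being smaller by factors of $\gamma^{2j}$). By the mean value theorem,
\[
f_J(n_2, r, v) - f_J(n_1, r, v) = (X_2 - X_1)\, \partial_X f_J(\bar X(v), r, v),
\]
and the analogous asymptotic for $\partial_v \partial_X f_J$ shows that the $v$-derivative of this difference phase is likewise of order $|X_2 - X_1| \cdot \frac{R^3 T^3}{N^2|U|^3}$. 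Consequently, if $|X_1 - X_2| \gg T^\varepsilon \frac{N^2|U|^3}{R^3 T^3}$, the phase derivative is $\gg T^\varepsilon$ uniformly on the support and Lemma \ref{lemma:repeated_integration_by_parts} yields $\frak{J} \ll T^{-A}$, establishing the stated range condition.

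The main technical obstacle is the bookkeeping in the last step: one must verify not just the first but all higher $v$-derivatives of $f_J(n_2, r, v) - f_J(n_1, r, v)$ are controlled by the same size, uniformly across the dyadic ranges of $U$ and of $n_i$. This reduces to showing that $f_J(n, r, v)$, viewed as a function of $(X, v)$ with $\alpha, v \asymp 1$ on the support and $\gamma$ a small parameter independent of $v$, behaves like a $1$-inert function times $\frac{NU}{YT}$, a fact that follows by inspection of the explicit series.
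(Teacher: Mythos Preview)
Your proposal is correct and follows essentially the same route as the paper: insert Lemma \ref{Lemma:Analysis_W+F}, truncate the phase series via $\gamma \ll T^{-2\varepsilon}$, and then apply Lemma \ref{lemma:repeated_integration_by_parts} to the $v$-integral to extract the range condition on $|X_1-X_2|$. The only cosmetic difference is that you invoke the mean value theorem in $X$ to write the phase difference as $(X_2-X_1)\,\partial_X f_J(\bar X(v),r,v)$, whereas the paper organizes the same computation by writing the difference directly in terms of $\alpha_2^{\ell}-\alpha_1^{\ell}\asymp_\ell |\alpha_2-\alpha_1|$ (since $\alpha_1,\alpha_2\asymp 1$); the latter avoids tracking the $v$-dependence of the intermediate point $\bar X(v)$ when passing to higher $v$-derivatives, which makes the verification of the hypotheses of Lemma \ref{lemma:repeated_integration_by_parts} marginally cleaner.
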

\begin{proof}
Note that $Y\asymp R$, by Lemma \ref{Lemma:Analysis_W+F}, we have $\frak{J}\ll UT^{-2A}\ll T^{-A}$ unless $X_1, X_2\asymp \frac{|U|^4N^3}{R^4T^4}$ in which case we have
\begin{equation}\label{eqn:frakJ_approx_int}
\begin{split}
\frak{J}=N(X_1X_2)^{\frac{1}{2}}U \int_{-\infty}^{\infty}&g_1(v)W_5(v)\overline{W_5(v)}\\
&\times e\Big(-\frac{NU}{YT}\sum_{j\geq 0}\big(
c_{1, j}(\alpha_1^{-\frac{2j-2}{3}}-\alpha_2^{-\frac{2j-2}{3}})+
c_{2, j}(\alpha_1^{-\frac{2j-1}{3}}-\alpha_2^{-\frac{2j-1}{3}})\big)\gamma^{2j}\Big) \dd v+O(T^{-A})
\end{split}
\end{equation}
by changing variable $u$ to $Uv$, where
$\alpha_1=(\frac{YT}{\pi N |U|})^4\frac{NX_1}{v}$, $\alpha_2=(\frac{YT}{\pi N |U|})^4\frac{NX_2}{v}$ and $\gamma=\frac{t_\phi YT}{2\pi NU}.$
Since $\frac{N|U|}{YT}\ll T^{O(1)}$, $\alpha_1, \alpha_2\asymp 1$ and $\gamma\ll T^{-2\varepsilon}$, we can truncate the phase series as
\begin{equation}\label{eqn:phase_f1-f2}
\begin{split}
-\frac{NU}{YT}&\sum_{j\geq 0}\big(
c_{1, j}(\alpha_1^{-\frac{2j-2}{3}}-\alpha_2^{-\frac{2j-2}{3}})+
c_{2, j}(\alpha_1^{-\frac{2j-1}{3}}-\alpha_2^{-\frac{2j-1}{3}})\big)\gamma^{2j}\\
&=\frac{NU}{YT}\sum_{0\leq j\leq J}\big(
c_{1, j}(\alpha_2^{-\frac{2j-2}{3}}-\alpha_1^{-\frac{2j-2}{3}})+
c_{2, j}(\alpha_2^{-\frac{2j-1}{3}}-\alpha_1^{-\frac{2j-1}{3}})\big)\gamma^{2j}
+O_{\varepsilon, J, A}(T^{-A})\\
&=f_J(n_2, r, v)-f_J(n_1, r, v)+O_{\varepsilon, J, A}(T^{-A}),
\end{split}
\end{equation}
by taking $J$ sufficiently large. Here by inserting $X_1=\frac{n_1m^2}{r^4d_1d_2^2}$, $X_2=\frac{n_1m^2}{r^4d_1d_2^2}, Y=\frac{rd_1d_2}{m},$ we get
the function
\[
f_J(n, r, v)=\frac{NU}{YT}\sum_{0\leq j\leq J}\big(
c_{1, j}\alpha^{-\frac{2j-2}{3}}+
c_{2, j}\alpha^{-\frac{2j-1}{3}}\big)\gamma^{2j}.
\]
Putting it back to \eqref{eqn:frakJ_approx_int}, we get
\begin{equation}\label{eqn:frakJ_h2}
\begin{split}
\frak{J}=N(X_1X_2)^{\frac{1}{2}}U \int_{-\infty}^{\infty}&g_1(v)W_5(v)\overline{W_5(v)}e\Big(h_{2, J}(v)\Big) \dd v+O(T^{-A}),
\end{split}
\end{equation}
where
\[
h_{2, J}(v)=\frac{NU}{YT}\sum_{0\leq j\leq J}\big(
c_{1, j}(\alpha_2^{-\frac{2j-2}{3}}-\alpha_1^{-\frac{2j-2}{3}})+
c_{2, j}(\alpha_2^{-\frac{2j-1}{3}}-\alpha_1^{-\frac{2j-1}{3}})\big)\gamma^{2j},
\]
with $\alpha_1=(\frac{YT}{\pi N |U|})^4\frac{NX_1}{v}$, $\alpha_2=(\frac{YT}{\pi N |U|})^4\frac{NX_2}{v}$ and $\gamma=\frac{t_\phi YT}{2\pi NU}.$
This is also \eqref{eqn:frakJ_expansion}. 
For the phase function, we have
\[
h_{2, J}'(v)=\frac{NU}{YT}\Big(-\frac{\alpha_2^{1/3}-\alpha_1^{1/3}}{v}+\sum_{1\leq j\leq J}\big(
c_{1, j}'\frac{\alpha_2^{-\frac{2j-2}{3}}-\alpha_1^{-\frac{2j-2}{3}}}{v}+
c_{2, j}'\frac{\alpha_2^{-\frac{2j-1}{3}}-\alpha_1^{-\frac{2j-1}{3}}}{v}\big)\gamma^{2j}\Big)
\]
and 
\[
h_{2, J}''(v)=\frac{NU}{YT}\Big(\frac{4(\alpha_2^{1/3}-\alpha_1^{1/3})}{3v^2}+\sum_{1\leq j\leq J}\big(
c_{1, j}''\frac{\alpha_2^{-\frac{2j-2}{3}}-\alpha_1^{-\frac{2j-2}{3}}}{v^2}+
c_{2, j}''\frac{\alpha_2^{-\frac{2j-1}{3}}-\alpha_1^{-\frac{2j-1}{3}}}{v^2}\big)\gamma^{2j}\Big)
\]
with certain constants $c_{1, j}', c_{2, j}', c_{1, j}'', c_{2, j}''.$
Here one may focus on the leading term of $j=0$ since $\gamma^{2j}$ save additional power of $T$ if $j\geq 1$.
Note that $\alpha_1, \alpha_2\asymp 1$,  for $\ell\neq 0$, we have 
$|\alpha_2^\ell-\alpha_1^\ell|\asymp_\ell |\alpha_2-\alpha_1|.$
If $|\alpha_2-\alpha_1|\gg T^\varepsilon\frac{YT}{N|U|},$ we have, for $v\asymp 1$, 
$
|h_{2, J}'(v)|\asymp  \frac{N|U|}{YT}|\alpha_2-\alpha_1|\gg T^\varepsilon.
$
Similarly, for any integer $k\geq 2$, $
|h_{2, J}^{(k)}(v)|\asymp_k \frac{N|U|}{YT}|\alpha_2-\alpha_1|.
$
Applying Lemma \ref{lemma:repeated_integration_by_parts} in \eqref{eqn:frakJ_h2}, we have
$\frak{J}\ll T^{-A}$. 
Hence we have the generic case $|\alpha_2-\alpha_1|\ll T^\varepsilon\frac{YT}{N|U|}\asymp  T^\varepsilon\frac{RT}{N|U|},$ which is equivalent to 
$|X_1-X_2|\ll \frac{N^2|U|^3}{R^3T^{3-\varepsilon}}.$
This completes the proof.
\end{proof}

By Lemma \ref{Lemma:Analysis_frakJ}, to bound $\mathcal{J}_0(d_1, d_2)$ \eqref{eqn:J_0(d1,d2)}, it suffices to estimate
\begin{multline}\label{eqn:bound_J0}
NU\int_{-\infty}^{\infty}g_1(v)W_5(v)\overline{W_5(v)}\sum_{r\sim \frac{Rm}{d_1d_2}}\,\sideset{}{^*}\sum_{x\mod r}
\mathop{\sum\sum}_{n_1, n_2\sim N_2\atop |n_1-n_2|\ll \frac{N^2|U|^3Rm^2}{d_1^3d_2^2T^3}T^\varepsilon}
\frac{A_F(d_1,d_2, n_1)\overline{A_F(d_1,d_2, n_2)}}{\sqrt{n_1n_2}}\frac{m^2}{r^4d_1d_2^2}\\
\times
e\Big(\frac{(n_1-n_2)x}{r}\Big)
e\Big(f_J(n_2, r, v)-f_J(n_1, r, v)\Big) \dd v
\end{multline}
in the case of 
\[
 \frac{N_2m^2}{r^4d_1d_2^2}\asymp \frac{|U|^4N^3}{R^4T^4}.
\]
This restriction is equivalent to $|U|\asymp \frac{TN_2^{\frac{1}{4}}d_1^{\frac{3}{4}}d_2^{\frac{1}{2}}}{N^{\frac{3}{4}}m^{\frac{1}{2}}}.$
Thus \eqref{eqn:bound_J0} becomes
\begin{multline}\label{eqn:bound_J0_again}
\frac{TN^{\frac{1}{4}} N_2^{\frac{1}{4}}m^{\frac{3}{2}}}
{d_1^{\frac{1}{4}}d_2^{\frac{3}{2}}}
\int_{-\infty}^{\infty}g_1(v)W_5(v)\overline{W_5(v)}\sum_{r\sim \frac{Rm}{d_1d_2}}\frac{1}{r^4}\,\sideset{}{^*}\sum_{x\mod r}
\mathop{\sum\sum}_{n_1, n_2\sim N_2\atop |n_1-n_2|\ll \frac{Rm^{\frac{1}{2}}N_2^{\frac{3}{4}}}{N^{\frac{1}{4}}d_1^{\frac{3}{4}}d_2^{\frac{1}{2}}}T^\varepsilon}
\frac{A_F(d_1,d_2, n_1)\overline{A_F(d_1,d_2, n_2)}}{\sqrt{n_1n_2}}\\
\times
e\Big(\frac{(n_1-n_2)x}{r}\Big)
e\Big(f_J(n_2, r, v)-f_J(n_1, r, v)\Big) \dd v
\end{multline}
Let $\mathcal{R}=\frac{Rm^{\frac{1}{2}}}{N_2^{\frac{1}{4}}N^{\frac{1}{4}}d_1^{\frac{3}{4}}d_2^{\frac{1}{2}}}$ which is $\asymp \frac{YT}{N|U|}$ with $Y=\frac{rd_1d_2}{m}.$ We have
\begin{equation}\label{eqn:|n_1-n_2|small}
|n_1-n_2|\ll\mathcal{R}N_2T^\varepsilon,
\end{equation}
and 
\[
\mathcal{R}\ll \frac{Rm^{\frac{1}{2}}}{N^{\frac{1}{4}}d_1^{\frac{3}{4}}d_2^{\frac{1}{2}}}\Big(\frac{R^4m^2}{Nd_2^2d_1^3}T^{\varepsilon_1}\Big)^{-\frac{1}{4}}\ll T^{-\frac{\varepsilon_1}{4}}.
\]

Next we divide the range of $n_1, n_2$ into the segment of $\mathcal{C}_{\eta_1}$ and $\mathcal{C}_{\eta_2}$ of length $\mathcal{R}N_2T^{\varepsilon-\varepsilon_1}$
where $\eta_1$ and $\eta_2$ are the left endpoints of intervals $\mathcal{C}_{\eta_1}$ and $\mathcal{C}_{\eta_2}$ respectively. When $n_1\in \mathcal{C}_{\eta_1}$, 
$n_2\in \mathcal{C}_{\eta_2}$ and \eqref{eqn:|n_1-n_2|small}, the restriction of length of the intervals implies $|\eta_1-\eta_2|\ll\mathcal{R}N_2T^\varepsilon$.
Hence for fixed $\mathcal{C}_{\eta_1}$, there are $O(T^{\varepsilon_1})$ choice of $\mathcal{C}_{\eta_2}$. The $n_1$-sum is of length $N_2$, thus there are 
$O(\frac{N_2}{\mathcal{R}N_2T^{\varepsilon-\varepsilon_1}}T^\varepsilon)=O(\frac{T^{\varepsilon_1}}{\mathcal{R}})$ relevant pairs of $(\mathcal{C}_{\eta_1}, \mathcal{C}_{\eta_2})$
with end points satisfying $|\eta_1-\eta_2|\ll\mathcal{R}N_2T^\varepsilon$. We let $\sideset{}{^{\rel}}\sum_{(\mathcal{C}_{\eta_1}, \mathcal{C}_{\eta_2})}$ denote the sum over such pairs. 

From \eqref{eqn:bound_J0_again}, we have
\begin{equation}\label{eqn:J0boundby_S}
\begin{split}
\mathcal{J}_0(d_1, d_2)&\ll\frac{TN^{\frac{1}{4}} N_2^{\frac{1}{4}}m^{\frac{3}{2}}}
{d_1^{\frac{1}{4}}d_2^{\frac{3}{2}}}
\int_{-\infty}^{\infty}\Big|g_1(v)W_5(v)\overline{W_5(v)}\Big|\sum_{r\sim \frac{Rm}{d_1d_2}}\frac{1}{r^4}\,\sideset{}{^*}\sum_{x\mod r}
\quad \sideset{}{^{\rel}}\sum_{(\mathcal{C}_{\eta_1}, \mathcal{C}_{\eta_2})}|\mathcal{S}(\mathcal{C}_{\eta_1})\overline{\mathcal{S}(\mathcal{C}_{\eta_2})}|\dd v\\
&\ll \frac{TN^{\frac{1}{4}} N_2^{\frac{1}{4}}d_1^{\frac{15}{4}}d_2^{\frac{5}{2}}}
{R^4m^{\frac{5}{2}}}
\int_{v\asymp 1}\quad \sideset{}{^{\rel}}\sum_{(\mathcal{C}_{\eta_1}, \mathcal{C}_{\eta_2})}\sum_{r\sim \frac{Rm}{d_1d_2}}\,\sideset{}{^*}\sum_{x\mod r}
\Big(|\mathcal{S}(\mathcal{C}_{\eta_1})|^2+|\mathcal{S}(\mathcal{C}_{\eta_2})|^2\Big)\dd v,\\
\end{split}
\end{equation}
where 
\[
\mathcal{S}(\mathcal{C}_{\eta_j}):=\sum_{n\in \mathcal{C}_{\eta_j}}
\frac{A_F(d_1,d_2, n)}{\sqrt{n}}
e\Big(-\frac{nx}{r}\Big)
e\Big(f_J(n, r, v)\Big)
\]
for  $j=1, 2.$ To bound $\mathcal{J}_0(d_1, d_2)$, we first prove the following lemma.
\begin{lemma}\label{lemma:applyingLS}
Let $\eta=\eta_1$ or $\eta_2$ and $\mathcal{S}(\mathcal{C}_{\eta})$ be defined as above. We have
\[
\sum_{r\sim \frac{Rm}{d_1d_2}}\,\sideset{}{^*}\sum_{x\mod r}
|\mathcal{S}(\mathcal{C}_{\eta})|^2\ll \Big(\frac{1}{N_2}\Big(\frac{Rm}{d_1d_2}\Big)^2+\mathcal{R}\Big)\frak{S}(\eta),
\]
where 
\[
\frak{S}(\eta):=\sum_{n\in \mathcal{C}_{\eta}}|A_F(d_1, d_2, n)|^2.
\]
\end{lemma}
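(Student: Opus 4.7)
The goal is to apply the standard additive large sieve inequality
\[
\sum_{r\sim Q}\sideset{}{^*}\sum_{x\mod r}\Big|\sum_{n\in \mathcal{C}_\eta} b_n e\Big(\frac{nx}{r}\Big)\Big|^2 \ll (Q^2 + |\mathcal{C}_\eta|)\sum_n |b_n|^2,
\]
with $Q = Rm/(d_1d_2)$. If the effective coefficient were the $r$-independent $A_F(d_1,d_2,n)/\sqrt{n}$, this would immediately give the claim, since $|\mathcal{C}_\eta|/N_2 = \mathcal{R}\,T^{\varepsilon-\varepsilon_1} \ll \mathcal{R}$. The obstruction is that our effective coefficient $\frac{A_F(d_1,d_2,n)}{\sqrt{n}}e(f_J(n,r,v))$ carries the extra phase $e(f_J(n,r,v))$, which depends on $r$ and so breaks the large sieve structure.

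The key observation is that $f_J(\cdot,r,v)$ varies \emph{very slowly} in $n$ over the extremely short interval $\mathcal{C}_\eta$, so its contribution can be peeled off by partial summation. Differentiating \eqref{eqn:f_J} and using that $\alpha$ is linear in $n$ while $\gamma$ is independent of $n$, one finds $|\partial_n^j f_J(n,r,v)|\asymp \frac{1}{\mathcal{R}\, n^j}$ for each $j\ge 1$, and in particular $|\mathcal{C}_\eta|\cdot \max_{n\in\mathcal{C}_\eta}|\partial_n f_J| \ll T^{\varepsilon-\varepsilon_1}$. Setting $W(y):=y^{-1/2}e(f_J(y,r,v))$ and $S(y;x,r):=\sum_{n\in\mathcal{C}_\eta,\,n\le y}A_F(d_1,d_2,n)e(-nx/r)$, Abel summation gives
\[
\mathcal{S}(\mathcal{C}_\eta) = W(N^{+})S(N^{+};x,r)-\int_{\mathcal{C}_\eta}W'(y)\,S(y;x,r)\,dy,
\]
and the derivative estimate for $f_J$ yields $\int_{\mathcal{C}_\eta}|W'(y)|\,dy \ll T^{\varepsilon-\varepsilon_1}/\sqrt{N_2}$. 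Hence
\[
|\mathcal{S}(\mathcal{C}_\eta)|^{2} \ll \frac{1}{N_2}\,\max_{y\in \mathcal{C}_\eta}|S(y;x,r)|^{2}.
\]

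Because the coefficients of $S(y;x,r)$ are now $r$-free, I would apply a maximal form of the additive large sieve (Elliott's maximal large sieve) to obtain
\[
\sum_{r\sim Q}\sideset{}{^*}\sum_{x\mod r}\max_{y\in\mathcal{C}_\eta}|S(y;x,r)|^{2} \ll T^{\varepsilon}\,(Q^{2}+|\mathcal{C}_\eta|)\,\frak{S}(\eta).
\]
Dividing by $N_2$ and using $|\mathcal{C}_\eta|/N_2\ll \mathcal{R}$ yields exactly the asserted bound (with the harmless $T^{\varepsilon}$ absorbed into the implicit constant). If one prefers to avoid Elliott's inequality, a smooth dyadic partition of the $y$-range inside $\mathcal{C}_\eta$, followed by the ordinary large sieve on each piece, recovers the same bound up to a log factor.

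The main obstacle is the careful derivative analysis for $f_J$: while the leading $j=0$ term obviously satisfies $\partial_n f_J\asymp (\mathcal{R}n)^{-1}$, one must verify that the higher-order tail $j\ge 1$ does not spoil this estimate, which is accomplished by exploiting $\gamma \ll T^{-2\varepsilon}$ so that each successive term contributes an extra factor of $T^{-4\varepsilon}$. Once the derivative bounds are in hand, the Abel summation together with the maximal large sieve produce the estimate cleanly.
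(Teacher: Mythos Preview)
Your proposal is correct and reaches the same bound, but the mechanism for decoupling the $r$-dependent phase $e(f_J(n,r,v))$ from the large sieve is genuinely different from the paper's. The paper Taylor-expands $e(f_J(n,r,v))$ about $n=\eta$: writing $e(f_J(n,r,v))=F_1(n)F_2(n)$, it shows $(F_1F_2)^{(k)}(\eta)\ll (N_2\mathcal{R})^{-k}$ uniformly in $r$ (via the same derivative analysis you sketch, done component-wise on each factor $e(B_{i,j}n^{\pm(2j-c)/3})$), truncates the expansion at a large fixed $K$, and for each $k$ pulls the $r$-dependent Taylor coefficient outside the absolute value before applying the \emph{ordinary} large sieve to the $r$-free coefficient $A_F(d_1,d_2,n)(n-\eta)^k/\sqrt n$. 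The geometric decay $T^{(\varepsilon-\varepsilon_1)k}$ over $k$ makes the sum over $k$ harmless.

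Your Abel-summation route trades this polynomial separation for a single maximum over partial sums, at the price of invoking Elliott's maximal additive large sieve (or the dyadic workaround you mention). Both arguments rest on the identical quantitative input $|\partial_n f_J|\asymp (\mathcal{R}N_2)^{-1}$ and $|\mathcal{C}_\eta|\ll \mathcal{R}N_2T^{\varepsilon-\varepsilon_1}$. The paper's version is more self-contained (only the standard large sieve is needed) but requires writing out the full chain-rule bound for $(F_1F_2)^{(k)}$; yours is shorter once the maximal large sieve is granted.
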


\begin{proof}
Let $e\Big(f_J(n, r, v)\Big)=F_1(n)F_2(n)$ with
\[
F_1(n)=e\Big(\frac{NU}{YT}\sum_{1\leq j\leq J}
c_{1, j}\alpha^{-\frac{2j-2}{3}}\gamma^{2j}\Big)=\prod_{1\leq j\leq J}e\Big(B_{1, j} n^{-\frac{2j-2}{3}}\Big),
\]
\[
F_2(n)=e\Big(\frac{NU}{YT}\sum_{0\leq j\leq J}
c_{2, j}\alpha^{-\frac{2j-1}{3}}\gamma^{2j}\Big)=\prod_{0\leq j\leq J}e\Big(B_{2, j} n^{-\frac{2j-1}{3}}\Big),
\]
where
\[
B_{1, j}=c_{1, j}\frac{NU}{YT}\Big(\big(\frac{YT}{\pi N |U|}\big)^4\frac{Nm^2}{r^4d_1d_2^2}\Big)^{-\frac{2j-2}{3}}\gamma^{2j}, \quad
B_{2, j}=c_{2, j}\frac{NU}{YT}\Big(\big(\frac{YT}{\pi N |U|}\big)^4\frac{Nm^2}{r^4d_1d_2^2}\Big)^{-\frac{2j-1}{3}}\gamma^{2j}
\]
and 
\[
B_{1, j}\ll_{j, v} t_\phi^{2j}\Big(\frac{YT}{N|U|}\Big)^{2j-1}N_2^{\frac{2j-2}{3}}, \quad
B_{2, j}\ll_{j, v} t_\phi^{2j}\Big(\frac{YT}{N|U|}\Big)^{2j-1}N_2^{\frac{2j-1}{3}},
\]
both of them are independent of $n$. Moreover $B_{2, 0}\asymp \frac{N|U|}{YT}N_2^{-\frac{1}{3}}$.
Note that $t_\phi\ll T^\varepsilon$ and $\frac{YT}{N|U|}\ll T^{-2\varepsilon}$, hence for $j\geq 1$
\[
B_{1, j}\ll_{j, v} T^{2(1-j)\varepsilon}N_2^{\frac{2j-2}{3}}, \quad
B_{2, j}\ll_{j, v} T^{2(1-j)\varepsilon}N_2^{\frac{2j-1}{3}}.
\]
We will take a Taylor expansion of $F_1(n)F_2(n)$ at $n=\eta$ to separate the variables $r$ and $n$ before using large sieve.
We write
\[
F_1(n)F_2(n)-F_1(\eta)F_2(\eta)=\sum_{k\geq 1}\frac{1}{k!}(F_1F_2)^{(k)}(\eta)\big(n-\eta\big)^k,
\]
To bound the derivatives of $F_1$ and $F_2$, it suffices to consider the derivatives of each component $e(B_{1, j} n^{-\frac{2j-2}{3}})$ and $e(B_{2, j} n^{-\frac{2j-1}{3}}).$
For any integer $\ell, \ell_1, \ell_2\geq 1$, with $n\sim N_2$,  we have
\[
\frac{\dd^{\ell} e(B_{2, 0} n^{\frac{1}{3}})}{\dd n^{\ell}}\ll_{\ell, v} N_2^{-\ell}\Big(B_{2, 0} N_2^{\frac{1}{3}}+(B_{2, 0} N_2^{\frac{1}{3}})^{\ell}\Big)\ll 
 N_2^{-\ell}\Big(\frac{N|U|}{YT}\Big)^\ell,
\]
and for $j\geq 1$, 
\[
\frac{\dd^{\ell_1} e(B_{1, j} n^{-\frac{2j-2}{3}})}{\dd n^{\ell_1}}\ll_{\ell_1} N_2^{-\ell_1}\Big(B_{1, j} N_2^{-\frac{2j-2}{3}}+(B_{1, j} N_2^{-\frac{2j-2}{3}})^{\ell_1}\Big)
\ll_{\ell_1, v}N_2^{-\ell_1}T^{2(1-j)\varepsilon},
\]
\[
\frac{\dd^{\ell_2} e(B_{2, j} n^{-\frac{2j-1}{3}})}{\dd n^{\ell_2}}\ll_{\ell_2} N_2^{-\ell_2}\Big(B_{2, j} N_2^{-\frac{2j-1}{3}}+(B_{2, j} N_2^{-\frac{2j-1}{3}})^{\ell_2}\Big)
\ll_{\ell_2, v}N_2^{-\ell_2}T^{2(1-j)\varepsilon}.
\]
Therefore, for $k\geq 1$, we have
\begin{equation*}
\begin{split}
(F_1F_2)^{(k)}(\eta)&\ll_k
\max_{\kappa_1+\kappa_2=k\atop \kappa_1,\kappa_2\geq 0}|F_1^{(\kappa_1)}(\eta)F_2^{(\kappa_2)}(\eta)|\\
\ll_{\kappa, J} &\max_{\kappa_1+\kappa_2=k\atop \kappa_1,\kappa_2\geq 0}\max_{\ell_{1, 1}+\cdots+\ell_{1, J}=\kappa_1\atop\ell_{1, 1},\cdots,\ell_{1, J}\geq 0}
\max_{\ell_{2, 0}+\cdots+\ell_{2, J}=\kappa_2\atop\ell_{2, 0},\cdots,\ell_{2, J}\geq 0}
\left(\prod_{1\leq j\leq J}\Big|\frac{\dd^{\ell_{1, j}} e(B_{1, j} n^{-\frac{2j-2}{3}})}{\dd n^{\ell_{1, j}}}\Big|
\prod_{0\leq j\leq J}\Big|\frac{\dd^{\ell_{2, j}} e(B_{2, j} n^{-\frac{2j-1}{3}})}{\dd n^{\ell_{2, j}}}\Big|\right)_{n=\eta}\\
\ll_{\kappa, J} &\, N_2^{-k}\Big(\frac{N|U|}{YT}\Big)^k\asymp (N_2\mathcal{R})^{-k}.
\end{split}
\end{equation*}
The above bound comes from the derivatives of the dominated phase $e(B_{2, 0} n^{\frac{1}{3}}).$
And trivially  $(F_1F_2)^{(0)}(\eta)=(F_1F_2)(\eta)\ll 1.$
Therefore for $n\in \mathcal{C}_\eta,$
\[
\frac{1}{k!}(F_1F_2)^{(k)}(\eta)\big(n-\eta\big)^k\ll_{k}(N_2\mathcal{R})^{-k}(\mathcal{R}N_2T^{\varepsilon-\varepsilon_1})^k\ll T^{(\varepsilon-\varepsilon_1)k}.
\]
We truncate the Taylor expansion of $F_1F_2$ to get
\[
F_1(n)F_2(n)-F_1(\eta)F_2(\eta)=\sum_{1\leq k\leq K}\frac{1}{k!}(F_1F_2)^{(k)}(\eta)\big(n-\eta\big)^k+O_K(T^{-2025}),
\]
by taking $K$ sufficiently large. Putting this in $\mathcal{S}(\mathcal{C}_\eta)$, it suffices to bound
\[
\sum_{r\sim \frac{Rm}{d_1d_2}}\,\sideset{}{^*}\sum_{x\mod r}
\Big|\sum_{n\in \mathcal{C}_{\eta}}
\frac{A_F(d_1,d_2, n)}{\sqrt{n}}
e\Big(-\frac{nx}{r}\Big)(F_1F_2)^{(k)}(\eta)(n-\eta)^k\Big|^2
\]
for all $0\leq k\leq K$. This is
\begin{equation*}
\begin{split}
\ll &\sum_{r\sim \frac{Rm}{d_1d_2}}\,\Big|(F_1F_2)^{(k)}(\eta)\Big|^2\sideset{}{^*}\sum_{x\mod r}
\Big|\sum_{n\in \mathcal{C}_{\eta}}
\frac{A_F(d_1,d_2, n)}{\sqrt{n}}
e\Big(-\frac{nx}{r}\Big)(n-\eta)^k\Big|^2\\
\ll & (N_2\mathcal{R})^{-2k}\sum_{r\sim \frac{Rm}{d_1d_2}}\,\sideset{}{^*}\sum_{x\mod r}
\Big|\sum_{n\in \mathcal{C}_{\eta}}
\frac{A_F(d_1,d_2, n)}{\sqrt{n}}
e\Big(-\frac{nx}{r}\Big)(n-\eta)^k\Big|^2\\
\ll& (N_2\mathcal{R})^{-2k}\Big(\Big(\frac{Rm}{d_1d_2}\Big)^2+\mathcal{R}N_2T^{\varepsilon-\varepsilon_1}\Big)
\sum_{n\in \mathcal{C}_{\eta}}
\frac{|A_F(d_1,d_2, n)|^2}{n}|n-\eta|^{2k},
\end{split}
\end{equation*}
by using large sieve inequality. Since $n\in \mathcal{C}_\eta$, $n\sim N_2$, we have $|n-\eta|\ll \mathcal{R}N_2T^{\varepsilon-\varepsilon_1}.$
This implies the desired bound $(\frac{1}{N_2}(\frac{Rm}{d_1d_2})^2+\mathcal{R})\frak{S}(\eta)$ in Lemma \ref{lemma:applyingLS}.
\end{proof}

By Lemma \ref{lemma:applyingLS} and \eqref{eqn:J0boundby_S}, we have 
\begin{equation*}
\begin{split}
\mathcal{J}_0(d_1, d_2)&\ll \frac{TN^{\frac{1}{4}} N_2^{\frac{1}{4}}d_1^{\frac{15}{4}}d_2^{\frac{5}{2}}}
{R^4m^{\frac{5}{2}}}\Big(\frac{1}{N_2}\Big(\frac{Rm}{d_1d_2}\Big)^2+\mathcal{R}\Big)\quad \sideset{}{^{\rel}}\sum_{(\mathcal{C}_{\eta_1}, \mathcal{C}_{\eta_2})}
(\frak{S}(\eta_1)+\frak{S}(\eta_2))\\
&\ll \frac{TN^{\frac{1}{4}} N_2^{\frac{1}{4}}d_1^{\frac{15}{4}}d_2^{\frac{5}{2}}}
{R^4m^{\frac{5}{2}}}\Big(\frac{1}{N_2}\Big(\frac{Rm}{d_1d_2}\Big)^2+\mathcal{R}\Big)T^{\varepsilon_1}
\sum_{n\sim N_2}|A_F(d_1, d_2, n)|^2,
\end{split}
\end{equation*}
since for each $\mathcal{C}_{\eta_1}$ there are $O(T^{\varepsilon_1})$ choice for $\mathcal{C}_{\eta_2}$ and vice versa.
Then combining the bound in Lemma \ref{lemma:RamanujanOnAverage2} and $N_2, d_1, d_2\ll T^{O(1)}$, $\mathcal{R}=\frac{Rm^{\frac{1}{2}}}{N_2^{\frac{1}{4}}N^{\frac{1}{4}}d_1^{\frac{3}{4}}d_2^{\frac{1}{2}}}$,we get
\begin{equation*}
\begin{split}
\mathcal{J}_0(d_1, d_2)&\ll T^{\varepsilon+\varepsilon_1}\frac{TN^{\frac{1}{4}} N_2^{\frac{1}{4}}d_1^{\frac{15}{4}}d_2^{\frac{5}{2}}}
{R^4m^{\frac{5}{2}}}\Big(\frac{1}{N_2}\Big(\frac{Rm}{d_1d_2}\Big)^2+\mathcal{R}\Big)(d_1d_2N_2)^{1+\varepsilon}T^\varepsilon\\
&\ll T^{\varepsilon}\Big(\frac{TN^{\frac{1}{4}} N_2^{\frac{1}{4}}d_1^{\frac{11}{4}}d_2^{\frac{3}{2}}}
{R^2m^{\frac{1}{2}}}+\frac{TN_2d_1^{4}d_2^{3}}
{R^3m^{2}}\Big).
\end{split}
\end{equation*}
Here we gather all arbitrarily small power of $T$ and replace it by $O(T^\varepsilon)$ with $\varepsilon$ sufficiently small for simplifying the notation.
From \eqref{eqn:I_bigBoundbyJ} and \eqref{eqn:J_boundbyJ_0} with $ \frac{TN_2^{\frac{1}{4}}d_1^{\frac{3}{4}}d_2^{\frac{1}{2}}}{N^{\frac{3}{4}}m^{\frac{1}{2}}}\asymp|U|\leq T^\varepsilon$ 
and $\frac{R^4m^2}{Nd_2^2d_1^3}T^{\varepsilon_1}\leq N_2\leq T^{2025}$ which imply 
$N_2\ll T^{4\varepsilon}\frac{N^3m^2|U|^4}{T^4d_1^3d_2^2}$, we get
\begin{equation*}
\begin{split}
\mathcal{I}_{big}&\ll \sup_{N_2}
\frac{T^{1+\varepsilon}R^3m^2}{N}\frac{1}{|U|}\mathop{\sum\sum}_{d_1, d_2\ll Rm\atop d_1^{3}d_2^4\asymp \frac{|U|^4N^3m^2}{T^4}N_2}\frac{1}{d_1^3d_2^2}
\Big(\frac{TN^{\frac{1}{4}} N_2^{\frac{1}{4}}d_1^{\frac{11}{4}}d_2^{\frac{3}{2}}}
{R^2m^{\frac{1}{2}}}+\frac{TN_2d_1^{4}d_2^{3}}
{R^3m^{2}}\Big)\\
&\ll \sup_{N_2}\frac{T^{1+\varepsilon}R^3m^2}{N}\frac{1}{|U|}
\mathop{\sum\sum}_{d_1, d_2\ll Rm\atop d_1^{3}d_2^4\asymp \frac{|U|^4N^3m^2}{T^4}N_2}
\Big(\frac{TN^{\frac{1}{4}} N_2^{\frac{1}{4}}}
{R^2m^{\frac{1}{2}}d_1^{\frac{1}{4}}d_2^{\frac{1}{2}}}+\frac{TN_2d_1d_2}
{R^3m^{2}}\Big)\\
&\ll \frac{T^{1+\varepsilon}R^3m^2}{N}\frac{1}{|U|}\mathop{\sum\sum}_{d_1, d_2\ll Rm}
\Big(\frac{N|U|}{R^2d_1d_2}
+\frac{N^3|U|^4}{T^3R^3d_1^2d_2}\Big)\\
&\ll T^\varepsilon\Big(Tm^2R+\frac{N^2m^2}{T^2}|U|^3\Big)\ll T^{2+\varepsilon},
\end{split}
\end{equation*}
where we have used that $|U|\ll T^{\varepsilon}$, $R\ll \frac{N}{T}$ and $N\ll \frac{T^{2+\varepsilon}}{m^2}$. This completes  the proof of Proposition \ref{Prop:I_big}.


\end{document}